\numberwithin{equation}{section}
\newcommand{\teq}{\arabic{section}.\arabic{equation}}
\newcommand{\teql}{\Alph{section}.\arabic{equation}}
\newcommand{\sqr}[2]{{\vcenter{\vbox{\hrule height.#2pt\hbox{\vrule width.#2pt
height#1pt \kern#1pt\vrule width.#2pt}\hrule height.#2pt}}}}
\newcounter{eqcount}
\renewcommand{\labelenumi}{{{\rm (\teq \alph{enumi})}}} 
\newenvironment{edesc}{\refstepcounter{equation}\begin{enumerate}}%
{\end{enumerate}}
\newenvironment{triv}{\refstepcounter{equation}\begin{list}%
{{\hbox{\rm(\teq)\ }}} \item }{\end{list}}
\newenvironment{trivl}{\refstepcounter{equation}\begin{list}%
{{\hbox{\rm(\teql)\ }}} \item }{\end{list}}
\newcommand{\ring}[1]{{\mathbb #1}}
\newcommand\bZ{{\ring{Z}}}
\newcommand\bC{{\ring{C}}} \newcommand\bR{{\ring{R}}}
\newcommand\bF{{\ring{F}}} \newcommand\bQ{{\ring{Q}}}
\newcommand{\csp}[1]{{\mathbb #1}}
\newcommand{\tsp}[1]{{\mathcal #1}}
\newcommand{\prP}{\csp{P}}
\newcommand{\afA}{\csp{A}}
\newcommand{\sC}{{\tsp{C}}} 
\newcommand{\sO}{{\tsp{O}}} \newcommand{\sI}{{\tsp{I}}}
\newcommand{\sQ}{\tsp{Q}}
\newcommand{\sP}{{\tsp {P}}} 
\newcommand{\sL}{{\tsp {L}}} 
\newcommand{\sT}{{\tsp {T}}} \newcommand{\sH}{{\tsp {H}}}
 \newcommand{\sD}{{\tsp {D}}}
\newcommand{\sR}{{\tsp {R}}} 
 \newcommand{\bN}{{\csp {N}}}
\renewcommand{\ni}{\texto{Ni}}
\newcommand{\eql}[2]{{\rm (\ref{#1}\ref{#2})}} 
\newcommand{\vect}[1]{{\pmb #1}} 
\newcommand{\ba}{{\vect{a}}} \newcommand{\bg}{\vect{g}}
\newcommand{\bp}{{\vect{p}}} \newcommand{\bx}{{\vect{x}}}
 \newcommand{\bz}{{\vect{z}}}
  \newcommand{\bu}{{\vect{u}}}
\newcommand{\row}[2]{{#1_1,\ldots,#1_{#2}}}
\newcommand{\smatrix}[4]{{\big(\begin{array}{cc}
\!\lower2pt\hbox{$\scriptstyle#1$} &\lower2pt\hbox{$\scriptstyle#2$}\!
\\\! \raise2pt\hbox{$\scriptstyle#3$} &\raise2pt\hbox{$\scriptstyle#4$}
\!\end{array}\big)}}
\newcommand{\texto}[1]{{\textr{#1}}}
\newcommand{\GL}{\texto{GL}} \newcommand{\SL}{\texto{SL}}
 \newcommand{\ind}{\texto{ind}}
\newcommand{\PSL}{\texto{PSL}} \newcommand{\PGL}{\texto{PGL}}
 \renewcommand{\ni}{\texto{Ni}}
\newcommand{\Spec}{\texto{Spec}} 
\newcommand{\textr}[1]{{\text{\rm #1}}}
\newcommand{\tr}{\textr{tr}} 
\newcommand{\abs}{\textr{abs}}  
 \newcommand{\inn}{\textr{in}}
 \newcommand{\Aut}{\textr{Aut}}
\newcommand{\pr}{\textr{pr}}
\newcommand{\rd}{\texto{rd}}
\newcommand{\sph}{{\vphantom 1}}
\newcommand{\textb}[1]{{\text{\bf #1}}}
\newcommand{\bfC}{{\textb{C}}}
\newcommand{\longmapright}[2]{\smash{\mathop{\hbox to
#2pt{\rightarrowfill}}\limits^{#1}}}
\newcommand{\longmapleft}[2]{\smash{\mathop{\hbox to
#2pt{\leftarrowfill}}\limits^{#1}}}
\newcommand{\mapright}[1]{\smash{\mathop{\longrightarrow}\limits^{#1}}}
\newcommand{\np}{{+}}   \newcommand{\nm}{{-}}
\newcommand{\lrang}[1]{{\langle #1\rangle}}
\newcommand{\eqdef}{\stackrel{\text{\rm def}}{=}}
\newfont{\sevenrm}{cmr7}
\newfont{\bsevenrm}{cmbx7}
\newfont{\mathseven}{cmsy7}
\newfont{\bigmath}{cmsy10 scaled 1200}
\newfont{\fiverm}{cmr5}
\newfont{\bfiverm}{cmbx5}
\newfont{\hel}{cmbx10 scaled 1200}
\newfont{\eu}{eufb10}
\newfont{\sseu}{eufm5}
\newfont{\seu}{eufm7}
\newfont{\Cal}{cmmib10}
\newfont{\sCal}{cmmib7}
\newfont{\zch}{eusb10}
\theoremstyle{plain}
\newtheorem{thm}{Theorem}[section] 
\newtheorem{lem}[thm]{Lemma}
\newtheorem{prop}[thm]{Proposition}
\newtheorem{res}[thm]{Result}
\theoremstyle{definition}
\newtheorem{defn}[thm]{Def}
\newtheorem{exmp}[thm]{Example}
\newtheorem{guess}[thm]{Conj}
\newtheorem{prob}[thm]{Problem}
\theoremstyle{remark}
\newtheorem{rem}[thm]{Remark}
\newcommand{\xs}{\times^s\!}
\def\pic #1 by #2 (#3){\vbox to #2{\hrule width 
#1 height 0pt depth 0pt\vfill\special{picture #3}}}
\def\scaledpicture#1
\newcommand{\comm}[1]{{}}
\newcommand{\sB}{{\tsp {B}}}
\renewcommand{\phi}{\varphi}
\newcommand{\Fr}{\text{Fr}}
\newcommand{\sV}{\tsp V}
\newcommand{\lcm}{\text{lcm}}
\newcommand{\set}{\text{set}}
\newcommand{\C}{{\text{\rm C}}}
\newcommand{\psigma}{{\pmb \sigma}}
\newcommand{\ab}{{{}_{\text{\rm ab}}}}
\newcommand{\geng}{{{\text{\bf g}}}}
\newcommand{\sh}{{{\text{\bf sh}}}}
\newcommand{\br}{{{\text{\rm br}}}}
\newcommand{\red}{{{\text{\rm rd}}}}
\newcommand{\sF}{{\tsp F}}
\newcommand{\by}{{\pmb y}}
\newcommand{\tP}{{\text{\rm P}}}
\newcommand{\Sup}{{\text{\rm Sup}}}
\begin{document}

\title[Variables Separated]{Variables separated equations: \\ \small{Strikingly different roles for the Branch Cycle Lemma and the Finite Simple Group Classification}}

\author[M.~D.~Fried]{Michael
D.~Fried${}^*$}

\date{\today}

\address{Emeritus, UC Irvine, Irvine, CA 92697, USA}
\email{mfried@math.uci.edu}

\begin{abstract}
\!\!\!\!H.~Davenport's Problem asks: What can we expect of two polynomials, over $\bZ$, with the same ranges on almost all residue class fields?  This stood out among many separated variable problems posed by  Davenport, D.J.~Lewis and A.~Schinzel.  

By bounding the degrees, but expanding the maps and variables in Davenport's Problem, {\sl Galois stratification\/} enhanced the separated variable theme,  solving an Ax and Kochen problem from their Artin Conjecture work. J.~Denef and F.~Loeser applied this to add {\sl Chow motive coefficients\/} to previously introduced zeta functions on a diophantine statement. 

By restricting the variables, but leaving the degrees unbounded, we found the striking distinction  between Davenport's problem over $\bQ$, solved by applying the Branch Cycle Lemma, and its generalization over any number field, solved using the simple group classification. This encouraged J.~Thompson  to formulate the {\sl genus\/} 0 problem on rational function monodromy groups. R.~Guralnick and Thompson led its solution  in stages. 

We look at at two developments since the solution of Davenport's problem.   \begin{itemize} \item Stemming from C.~MacCluer's 1967 thesis, identifying a general class of problems, including Davenport's, as  {\sl monodromy precise\/}. 
\item R(iemann) E(xistence) T(heorem)'s role as a converse to problems generalizing Davenport's, and Schinzel's (on reducibility).\end{itemize}  We use these to  consider: Going beyond the simple group classification to handle imprimitive groups; and what is the role of covers and correspondences  in going from algebraic equations to zeta functions with Chow motive coefficients. 
\end{abstract}

\subjclass[2010]{Primary  11G18, 141130, 14H25, 14M41, 20B15, 20C15, 30F10; Secondary 11R58, 12D05, 
12E30, 12F10,  20E22} 

\keywords{group representations, normal varieties, Galois stratification,Davenport pair,
Monodromy group, primitive group, covers, fiber products, Open Image Theorem, Riemann's existence theorem, genus zero problem, Chebotarev density, motivic zeta functions}

\thanks{This is a mathematically more complete version of \cite{UM08}, based on themes from my  graduate student time at University of Michigan ('64-'67). }
\maketitle

\tableofcontents


\section{Davenport's Problem} 
Algebraic equations occur in many modern data problems. They represent relations between variables 
defining data. The data variable gives us a monodromy (or Galois) group with a faithful 
permutation representation.  Data-variable problems  should have 
convenient coefficients, such as ordinary fractions, $\bQ$. Then, there are two monodromy 
groups: the {\sl arithmetic\/} (over $\bQ$) and a normal subgroup of it, the {\sl geometric\/} (over the 
algebraic closure, $\bar \bQ$).   There is then an encompassing {\sl inverse problem\/}. Suppose you are given such a pair of groups 
(with their compatible permutation representations), one normal in the other.  Find an equation and 
data-variable over $\bQ$, having that (arithmetic, geometric) monodromy pair.   

\subsection{Relating four problems} \label{secI.0} 
The data-variable has many applications. For example, mappings of the sphere to the sphere, are everywhere in Cryptography:  Over infinitely 
many prime residue classes, an {\sl exceptional\/} rational function $f$ maps one-one from the data to its values. The {\sl Schur Conjecture\/}, Prop.~\ref{schurConj}, was the proposed classification of such covers where $f$  is a polynomial.     Davenport's problem was, essentially, to classify polynomials over 
$\bQ$ by their ranges on almost all residue class fields. \S\ref{App.1b} explains the notation for residue class fields, $\sO_K/\bp$, of a number field $K$, defined by primes $\bp$. Problems that interested Davenport seem extremely different from those that attract algebraists interested in {\sl motives\/}. Yet Davenport's  {\sl very\/} specific problem  led to two general results: The genus 0 problem, and the encoding of all diophantine statements into zeta functions. 

Davenport's problem, restricted to 
polynomials not composable ({\sl indecomposable}) from lower degree polynomials, gave two very 
different conclusions.    
\begin{edesc} \label{davpart} \item  \label{davparta} D$_1$: Over $\bQ$, two polynomials with the same range are linearly equivalent: 
obtainable, one from the other, by a linear change of variables.  
\item  \label{davpartb} D$_2$: Linearly inequivalent polynomials can have the same ranges 
on all residue classes of a number field, but  a fixed constant (31) bounds the degrees of these exceptions.  \end{edesc} 

Schur's conjecture was a stop on the way to completing Davenport's Problem. Still, in  an analog of Schur for rational functions (\cite[\S2]{Fr78}, \cite[\S6.1--6.3]{Fr05b}, \cite{GMS03}) reinterpret Serre's O(pen)I(mage)T(heorem) to connect the monodromy method (\S\ref{prodBCs})  
 to modular curves \cite{Se68}. Since the rational functions here have dihedral geometric monodromy group, you might think their analysis trivial. That's not so, for the properties of their natural families gives the depth of the story. Likewise, one reason for returning to Davenport's problem is to document modern methods that simplify describing the families that occur there (\S\ref{App.5}).

 Two tools for investigating equations came early in the monodromy method: 
 \begin{itemize} \item the 
{\sl B(ranch)C(ycle)L(emma)\/}; and 
\item the {\sl Hurwitz monodromy group}. \end{itemize} By walking through Davenport's problem 
with hindsight, we see why the -- rarely acknowledged -- preoccupation with variables separated 
equations gave important lessons on these tools. To simplify the presentation of the BCL, we have broken its use into deciding when covers can't be over $\bQ$ (\S\ref{secVI.1}), and figuring out the natural cyclotomic field that appears (\S\ref{famdeffield}).   Davenport's Problem  {\sl explicitly\/} used both aspects, by comparison with general applications starting with \cite{Fr77}.  That shows in what the  solutions of \eql{davpart}{davpartb} contributed to the Genus 0 Problem. We call attention to the use of function theory in these results through these lessons:    
\begin{edesc} \label{bigProblems} \item What allows us to produce branch cycles \S\ref{secVI.4}. 
\item What is the relation between covers and Chow motives \S\ref{secVII.2}.  
\item What 'in nature' (a phrase from \cite{So01}, see ¤\ref{secI.3}) gives today's challenges to group 
theory  \S\ref{secVII.3}.  \end{edesc} 
 Each phrase addresses an aspect of formulating problems based on equations. That is, many 
disciplines seem to need algebraic equations. Yet why, and how much do we lose in using more easily 
manipulated surrogates for them? 

\S\ref{secIII.2} says that some conclusions drawn from applying  Cebotarev's density theorem, can be made precise. Chebotarev usually gives a crude translation between statements over finite fields and the monodromy group of a cover, rarely capturing the diophantine statement over the ramification locus. We develop two aspects of Davenport's problem that generalize to support {\sl monodromy precision\/}  (\S\ref{MPres}) and an {\sl RET Converse\/} (\S\ref{RETConv}). My summary starts with a Davenport-Lewis 
paper \cite{DL63}. We interpret this as the first special case of Monodromy Precision: About exceptional polynomials, but  now known to apply  far more generally.

Our examples tie theory to 
the enterprise of writing explicit equations.   It continues an Abel-Galois-Riemann 
tradition of solving problems where 
algebraic covers fall in continuous (connected) families. Often we complete the problem  by distinguishing (reduced Hurwitz space) components containing the desired solutions.  

Aspects of Davenport's problem would have surprised even Abel, Galois and 
Riemann. My examples: How it used the classification of finite simple groups; and how it led to the the genus 0 problem.  \cite{De99}, not as historical or elementary, and less connected to group theory, concentrates on how the Hilbert-Siegel problem of \S\ref{limitAlt} motivated using Hurwitz spaces. I wrote \cite{Fr73a} for an audience often discomfited by Grothendieck's geometry. So, unlike the French school, I often limited statements to rational functions in one variable (genus 0 covers).  

To amend that App.~\ref{whatCover} reminds of the Grothendieck cover definition: a finite, flat morphism. It then notes that most proofs not referring to branch cycles, work very generally. Example: \cite[Prop.~2]{Fr73a} is a much cited  lemma from Davenport's problem. It reverts factorization of separated variable equations to where two covers have identical Galois closures. It applies far beyond genus 0 covers. We call attention to this in how \S\ref{laurent} and \S\ref{fibproduniv} refer to extending Lem.~\ref{pullbackcomps}. 

Another subtlety raised by App.~\ref{whatCover}  occurs because I insist on restricting covers to normal varieties.  The subtleties arise only when their dimension exceeds one.  That affects our Galois Stratification vs Chow motives topic when, say, we consider the monodromy precision property on Davenport pairs. 

\subsection{Introduction to Davenport's Problem} \label{secI.1} Davenport stated his problem at a conference at Ohio State during my 2nd year of graduate school. The anchors for this story are the result I proved, D$_2$ \eql{davpart}{davpartb}, and the problem  that "seized" John Thompson -- his own words -- that came from it (G$_1$(0) below). 

D$_1$ said that two polynomials $f$ and $g$ over $\bQ$ with the same ranges on almost all finite fields, with $f$ indecomposable, must be related by an inner change of variables $\alpha$, $f(\alpha(x))=g(x)$, $\alpha(x)=ax+b$ a degree 1 (linear, or  affine)  transformation. That is, the conclusion is that  $f $ and $g$ are {\sl affine equivalent}. Davenport didn't include the indecomposable hypothesis. It translates to a primitive monodromy group (\S\ref{secIV}); progress would have been slow without it (see 
M\"uller's Conjecture \ref{MullConj}).

If $f$ and $g$ are a pair of rational functions having the same ranges for almost all primes $p$, then so will $\alpha\circ f$ and $\alpha\circ g$, their outer composition with $\alpha$ an affine transformation with coefficients in $\bQ$. If you compose $f$ with both inner and outer M\"obius transformations, we say the result is {\sl M\"obius equivalent\/} to $f$.

Problem G$_1$(0), {\sl The genus 0 Problem}, posed that all genus 0 {\sl primitive\/} covers have special covering (monodromy) groups and associated permutation representations (\S\ref{secI.3}). This has three parts: 

\begin{itemize} \item Genus 0 monodromy groups related to alternating, symmetric, dihedral and cyclic groups come in large families; 
\item within  alternating and symmetric related groups, large families occur only with a restricted set of  associated permutation representations; and  
\item there are but finitely many genus 0 monodromy groups outside these.\end{itemize} 

 All higher rank projective linear groups -- examples of almost simple groups (\S \ref{App.1}) -- over finite fields might have yielded solutions countering the expected Davenport conclusion. Yet, function theory showed only finitely many contribute to  D$_2$. Further, the most striking exceptional genus 0 monodromy groups appeared either from Davenport's problem, or from genus 0 upper half plane quotients that are \lq close-to\rq\ modular curves. Those from problem G$_2$(0) (\S \ref{secI.2}). 

Abel and Galois were aware of long monographs by Lagrange and his students. Here is one quick summary of much early 1800s mathematics. Galois showed the impossibility of uniformizing the function fields of the modular curves $X_0(p)$ (introduced by Abel), $p$ a prime $\ge  5$,  by radicals.

The 20th century didn't much use the phrase \rq\rq uniformized by radicals.\rq\rq\ Yet, despite attempts to avoid such an old formulation, a variant of it dominated published results in the 1960s. The algebraic equations I heard most about in graduate school had separated variables:

\begin{triv} \label{*} $f(x)-g(y) = 0$ with $f$ and $g$ polynomials, whose degrees we take (respectively) to be $m$ and $n$. \end{triv} 

By introducing a pair of covers of the Riemann sphere, we open the territory to using group theory. Rewrite \eqref{*} by introducing $z$ so as to split the variables:
\begin{equation} \label{*2}       f(x)-z = 0 \text{ and } g(y)-z = 0. \end{equation} 

Questions on solutions of \eqref{*},  in \eqref{*2} form, are equivalent to those with 
$(\alpha\circ f (\alpha'(x)), \alpha \circ g(\alpha''(y)))$ replacing $(f(x),g(y))$, with $\alpha$, $\alpha'$ and $\alpha''$ affine transformations. We say the former pair is affine equivalent to the latter.

Using  \eqref{*2} interprets \eqref{*} as relating two genus 0 covers (\S \ref{secII.1}).

Is it surprising that there are still mysteries about genus 0 covers? We will be precise about the most jarring ingredient from R(iemann)'s E(xistence) T(heorem) (\S \ref{secVI.4}). That is, how covers of the Riemann sphere relate to {\sl branch cycles}.  When the covers have genus 0 and appear naturally, many feel uncomfortable -- as did Kronecker and Weierstrass -- without an explicit uniformization. 

\subsection{Detecting a {\sl few\/} exceptions} \label{secI.2} The major surprise in Davenport's Problem was that D$_2$ was almost --  for all but finitely many degrees -- true. 
The explication  to the community of D$_2$ -- its finitely many exceptional degrees -- gave three results relating finite group theory to algebraic equations. \cite{Fr80} emphasized the connection between these problems and the (finite) simple group classification. Thm.~\ref{DS4} lists those exceptional degrees. \S\ref{App.5} emphasizes how group theory detected those exceptional degrees. It is brief considering how much comes from it. 

My  unofficial group theory background came from talking with many affiliated with the UM mathematics department. A year lapsed between graduate school and the conversation with Tom Storer (\S\ref{secVI}) -- during the summer of 1968. I needed that year to distinguish between D$_1$ and D$_2$. 

Also, Thompson's name was attached to another genus 0 problem, which I call G$_2$(0). This said that the $j$-line covers appearing in G$_2$(0) should have explicit uniformization by (upper half-plane) automorphic functions attached to representations of the Monster Simple group. They called it {\sl Monstrous Moonshine\/} and its resolution won Borchards a Fields Medal.

I first heard of G$_2$(0) during the group theory conference called Santa Cruz. (Its proceedings included \cite{Fr80}.) Even more time elapsed between the paper purporting to connect G$_1$(0) and G$_2$(0). The conversation I had with Thompson, while walking to lunch not long after my arrival at U.~of Florida (\S \ref{secVII.1}), was a planned serendipity.

\subsection{The Genus 0 Problem} \label{secI.3} The first tentative statement of the Genus 0 problem -- motivated by the solution of Davenport's problem (Prop.~\ref{DS5}) -- is in the last introduction paragraph  of \cite[p.~41]{Fr73b}. \cite[\S 7.2]{Fr05b} has its precise statement and its background (the attached html file has references and context). Roughly, due to the nature of branch cycles (\S \ref{secVI.4}), monodromy groups of rational functions fall -- with rare  exception -- among groups known to most mathematicians. Those exceptions, as in Davenport's Problem, have a serious impact.  The Genus 0 Problem formulation, and much work on it, is due to Bob Guralnick. 

Still, for those monodromy groups (with their representations) that do arise  in abundance,  researcher must ask if in their particular problems they occur often or not. \S \ref{limitDih} considers the arising of dihedral (and related) groups, and \S\ref{limitAlt} of alternating (and related) groups. These   example results put us in territory not -- at first -- limited by the genus 0 problem conclusion. 

Separated variable equations appeared with hyperelliptic curves, say, where Riemann first proved a generalization of Abel's Theorem. The Genus 0 Problem is a key step in considering what attributes of these equations qualify as special.
 
From the solution of D$_2$, three genus 0 curves, each a natural upper half-plane quotient and $j$-line cover, though not a modular curve, arise as parameter spaces for Davenport pairs.  From \S \ref{App.5} (for $n = 7$, 11, 13),  each space has an attached group representation of a projective linear group.  Does this lead to an explicit automorphic uniformizer, as in the Uniformizer Problem of  \S \ref{App.3}, for each?

Group theory can be demandingly intricate. What I, not a group theorist, found is that it can accomplish goals that would be worse than tiresome with equation manipulation. Much of modern group theory has little to do with permutation representations, though much of group theory's birth does. While \cite[p. 315--317]{So01}  does give a view on early group theory, even in its referral to Galois it differs much from mine. An audience question to  Ron Solomon, when he gave his history lecture \cite{So01} at UF, was [roughly], \lq\lq How did Galois' work survive?\rq\rq\ 

I suggested then an elaboration of the path  through Jacobi's interest in the uniformization result mentioned in \S \ref{secI.1}. I also mentioned that savior -- Crelle -- to both Abel and Galois. That was prior  to the renovation and update of Galois' work -- crucial to its survival -- by the  brilliant Jordan. 

Continuing an attempt at dialog with group theorists, I cite  \cite[p. 347]{So01}: \begin{quote} \dots\  experience shows that most of the finite groups which occur 'in nature' -- in the broad sense not simply of chemistry and physics, but of number theory, topology, combinatorics, etc. -- are 'close' either to simple groups or to groups such as dihedral groups [include affine groups as in  \S \ref{App.1}?], Heisenberg groups, etc, which arise naturally in the study of simple groups.\end{quote}  \S \ref{secVII.3} considers a more precise question: Do rational functions occur \lq\lq in nature?\rq\rq\  Nothing is more important to algebra than rational functions. To avoid trivial assurances of \lq\lq Yes, they do!\rq\rq consider that \cite[Chap. 3, \S 7.2.3]{chpfund.pdf} demonstrates the  
traditional renderings of rational function covers in $\bR^3$, say as in \cite[p. 243]{Con78}, are illusions, albeit one that Riemann himself used. Even with degree 2  covers. 

\subsection{UM affiliates and later work} \label{secI.4} 
With a superscript "a" for visiting junior faculty, "v" for visiting senior faculty, and "s" for (fellow) student, this  is a review of how the mathematicians A.~Brumer$^a$, R.~Bumby$^a$, H.~Davenport$^v$, D.J.~Lewis, W.~Leveque, R.~Lyndon, C.~MacCluer$^s$, R.~MacRae$^a$, 
R. Misera$^s$, J.~Mclaughlin, A.~Schinzel$^v$, J. Smith$^a$ influenced me as I sought to  meld a set of problems into a coherent story. They were at University of Michigan during my three years -- 1964--67 -- of graduate school. Soon after T.~Storer played a crucial role. 

The list above includes early influences on papers from my first three years (though later for publication) out of graduate school. The first six sections go over tools that solved Davenport's problem with emphasis on their relation to others' later work. \S\ref{secVI} on branch cycles and \S \ref{App.4} on the braid group -- the least used ingredients from Davenport's problem -- epitomize the {\sl monodromy method}.  

The long \S \ref{secVII} connects work of other authors -- R.~Abhyankar, R.M.~Avanzi,  J.~Ax, J.-M. Couveignes,  P.~D\`ebes, J.~Denef, W.~Feit, R.~Guralnick, I.~Gusi\'c, F.~Loeser, P.~M\"uller, F.~Pakovich, J.~Saxl, J.P.~Serre, J.~Thompson and U.~Zannier 
 -- to  the group, equation and function themes of \eqref{bigProblems}. These are people I've talked to (by e-mail at least), and here quote substantially. (I've left out direct reference to those -- unbeknownst to them -- whose papers I've refereed.) Often, however, I say those connections differently than do they. The biggest difference between \S \ref{secVII}  and the earlier sections is in the minimal use of branch cycles by others. Maybe this is my fault or that the applications aren't \lq\lq mainstream.\rq\rq\  Maybe, but \S\ref{nobcs} begs  to differ by offering two historical observations that affected all of mathematics. 

\section{Separated variables equations and group theory} \label{secII} By using  form \eqref{*2} in place of \eqref{*} we  relate two covers by Riemann spheres, $f: \prP^1_x \to \prP^1_z$ and $g: \prP^1_y \to   \prP^1_z$, of the Riemann sphere $\prP^1_z$. Recall: $\prP^1_z$  is just projective 1-space. The subscript  $z$ indicates an explicit isomorphism with affine 1-space union a point, $\infty$, at infinity. We always assume $f$ and $g$ are nonconstant.

\subsection{The effect of splitting the variables} \label{secII.1} Equation \eqref{*} defines an algebraic curve in affine 2-space. It has a completion in projective 2-space, with homogeneus variables $(x,y,w)$, by forming the curve $w^u(f(x/w)-g(y/w))=0$, $u$ the maximum of $m$ and $n$. This, however, is  likely singular. 

An advantage of \eqref{*2} is that it geometrically describes such singularities. They correspond to the pairs $(x',y')$  that both ramify in the respective maps $f$ and $g$ to $\prP^1_z$. That is, regard \eqref{*} as the fiber product -- set of pairs $(x',y')$  with $f(x')=g(y')$ -- of the two maps $f$ and $g$, but extend the fiber product over $\infty$. Papers use the notation $\prP^1_x\times^\set_{\prP^1_z}\prP^1_y$ for this set theoretic fiber product. We call any $z$ value over which there is a ramified point on $\prP^1_x$ a {\sl branch point\/} of $f$. Note: If $f = g$ (and $m > 1$), then the fiber product has at least two components, one the diagonal.

Also, $\prP^1_x\times^{\set}_{\prP^1_z}\prP^1_y$ is projective:  a closed subset of 
$\prP^1_x\times \prP^1_y$. Still, this contains \eqref{*} as a subset, so might be singular.  This is relevant, since Thm.~\ref{DS1} (DS$_1$) reduces consideration to $f $ and $g$ (not affine equivalent) with $m=n$  where  $f$ and $g$ have exactly the same branch points.  We will always use the projective {\sl normalization\/} of $\prP^1_x\times^{\set}_{\prP^1_z}\prP^1_y$,  denoting this object by $\prP^1_x\times_{\prP^1_z}\prP^1_y$. In our 1-dimensional curve case, this is the unique  nonsingular projective model of \eqref{*}.  It maps naturally to  $\prP^1_x\times^\set_{\prP^1_z}\prP^1_y$;  one-one  (an immersion) except over singular points. Yet, as with modular curves (a special case),  finding equations for the unique normalization is nontrivial.

\cite[\S 3.3.2, \S 4.2.2 and \S 4.3]{chpfund.pdf}  discuss these compactifications in much more detail, including elaborating on the following remarks.

\begin{edesc} \label{algGeom}  \item \label{algGeoma} Any closed subscheme (covered by affine pieces) of projective space is the zero set of homogeneous algebraic equations \cite[Cor. 5.16]{Har77}.
\item \label{algGeomb} The normalization of any projective variety is projective and its connected components correspond to its algebraic components: Segre's Embedding \cite[Thm. 4, p. 400]{Mum66}. \end{edesc} 

\S\ref{whatCover} reminds of cover basics,  the generality of fiber products and of their universal property \eqref{UnivPropGen}. A particular case might start with this hypothesis. Suppose a cover of nonsingular curves $\phi_W: W\to \prP^1_z$ factors through both  $f$ and $g$. 
\begin{triv} \label{UnivPropCurves} Then, $\phi_W$ factors through the fiber product $\prP^1_x\times_{\prP^1_z}\prP^1_y$. \end{triv} It notes, also, that if the varieties have dimension 1 (are curves) and they are irreducible and normal over a characteristic 0 field (so nonsingular), then any nonconstant morphism is automatically a cover. 

\subsection{From classical to modern} \S\ref{secII.2} is part of the history behind Davenport's problem, while \S\ref{secII.2b} notes two modern techniques that came from its solution. 

\subsubsection{Formulations between the 1920's and the 1960's} \label{secII.2} Equations like \eqref{*} (sometimes $f$ and $g$  are rational functions), combined with questions about solutions, say in the rationals $\bQ$, explains many  papers of that time.  Here are examples fitting this paradigm I heard from Davenport, Leveque, Lewis and Schinzel my second year of graduate school. All assumed $f$ and $g$ had coefficients in $\bQ$: $\bZ/p$ refers to the integers modulo a prime $p$.

   \begin{edesc} \label{equats} \item \label{equats1} Which equations \eqref{*} have infinitely many solutions in $\bZ$ (or $\bQ$)?
\item  \label{equats2} Schur (1921): If $f(x)=x$ in \eqref{*}, when are there infinitely many primes $p$ satisfying this: For each $x' \in \bZ/p$  there is  $y' \in \bZ/p$ satisfying \eqref{*}?
\item  \label{equats3} Davenport (1966, at Ohio State): For which equations  \eqref{*}  and almost all primes $p$ does the following hold. For each $x' \in \bZ/p$  (resp.~$y' \in \bZ/p$) there is $y' \in \bZ/p$ (resp.~$x' \in \bZ/p$) satisfying \eqref{*}. 
\item \label{equats4} Schinzel (papers from the late '50s): Which equations \eqref{*} factor into lower degree polynomials in $x$ and $y$ \cite{Sc71}? \end{edesc} 

In referring to these below, I will always assume the hypotheses hold nontrivially. For example: exclude $g(x)=f(ax+b)$ in Davenport's problem, for then the conclusion to his question is obviously \lq\lq yes\rq\rq\ if $a, b$ are in $\bQ$. Lem.~\ref{affineChange} shows we often need not assume $a,b\in  \bQ$ (for example, when $f$ is indecomposable), and that it follows automatically from $g\in \bQ[x]$. Refer to a nontrivial pair $(f,g)$ satisfying \eql{equats}{equats3} as a {\sl Davenport pair\/} (over $\bQ$). Using almost all residue class fields of a number field $K$  gives meaning to a Davenport pair over $K$.

For $f=\sum_{i=0}^m c_ix^i\in K[x]$, $K$  a field, denote $\{i >0 \mid c_i\ne 0\}$ by $I_f$. 
\begin{triv} \label{coefficRule} If $K$ has characteristic prime to $\deg(f)$, then $f(x-c_{m-1}/mc_m)$ has penultimate coefficient 0.  \end{triv} 

\S\ref{App.1a} reminds of the trace function, $\tr$,  from a representation of a group. If $G\le S_n$ then we denote the subgroup of $G$ fixing $i$ by $G(i)$. When a group has several permutation representations, distinguishing them requires more notation. 

\begin{lem} \label{affineChange} Suppose, for $a\not\in K$ and $b$ constants, $f(x), f(ax+b)\eqdef g(x)\in K[x]$. Then, $f=h(x^{k_f})$ with $h\in K[x]$ and $k_f$, the \text{\rm gcd}  of $I_f$, exceeds 1.  

Denote $a^m$ by $a'$. Assume further that  $(f, g)$ form a Davenport pair (over $K$). Then, either  $\deg(h)>1$ or, $a'x^{k_f} -1$ has a zero modulo almost all residue class fields of $K$. If $K=\bQ$, then $f$ must be decomposable. \end{lem}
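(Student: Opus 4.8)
The plan is to get the structural conclusion $f=h(x^{k_f})$, $k_f>1$, by Galois descent applied to the finite group of affine maps fixing $f$ under precomposition, and then to extract the two Davenport consequences from an elementary computation of ranges on residue fields together with Chebotarev's theorem. First I would normalize by \eqref{coefficRule}: replacing $f(x)$ by $f(x-c_{m-1}/mc_m)$ is a translation over $K$ that leaves $g$, hence every hypothesis and conclusion, in place up to renaming the constant $b$, so we may assume the penultimate coefficient of $f$ is $0$ (and we compute $k_f$ for this $f$). Since the coefficients of $g$ are $K$-polynomial expressions in $a$ and $b$, both are algebraic over $K$. For $\sigma\in\mathrm{Gal}(\bar K/K)$, applying $\sigma$ to $g(x)=f(ax+b)$ and using $f,g\in K[x]$ gives $f(ax+b)=f(a^\sigma x+b^\sigma)$; substituting $u=ax+b$ shows $f$ is fixed by precomposition with $\tau_\sigma\colon u\mapsto (a^\sigma/a)u+\big(b^\sigma-(a^\sigma/a)b\big)$. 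Comparing the leading and $(m-1)$-st coefficients of $f(\lambda u+\nu)=f(u)$, for a polynomial with vanishing penultimate coefficient, forces $\nu=0$ and $\lambda^i=1$ for all $i\in I_f$, i.e.\ $\lambda\in\mu_{k_f}$ (this is where $\mathrm{char}\,K\nmid m$ and $k_f=\gcd I_f$ enter). Hence $a^\sigma=\zeta_\sigma a$ and $b^\sigma=\zeta_\sigma b$ with $\zeta_\sigma\in\mu_{k_f}$, and three things drop out: $a^{k_f}$ and $b/a$ are fixed by $\mathrm{Gal}(\bar K/K)$, so $a^{k_f}\in K$, and since $m\in I_f$ also $a'=a^m=(a^{k_f})^{m/k_f}\in K$, while $b=ra$ with $r\in K$; if $k_f=1$ then every $\zeta_\sigma=1$, forcing $a\in K$ against hypothesis, so $k_f>1$; and since $k_f\mid i$ for each $i\in I_f$ we get $f=h(x^{k_f})$ with $h\in K[x]$, $\deg h=m/k_f$.

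For the Davenport assertion, if $\deg h>1$ the first alternative already holds, so suppose $\deg h=1$. Then $m=k_f$, $f(x)=c_mx^{k_f}+c_0$, and — using $b=ra$ and $a^{k_f}=a'$ — $g(x)=f\big(a(x+r)\big)=c_ma'(x+r)^{k_f}+c_0\in K[x]$. For all but finitely many primes $\bp$ of $K$, reduction mod $\bp$ is defined and $c_m,a'$ are units in $\sO_K/\bp$; as $t$ runs over $\sO_K/\bp$, each of $t^{k_f}$ and $(t+\bar r)^{k_f}$ runs over $H\cup\{0\}$, where $H$ is the subgroup of $k_f$-th powers in $(\sO_K/\bp)^\times$. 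So the ranges of $f$ and $g$ on $\sO_K/\bp$ are $\bar c_0+\bar c_m\,(H\cup\{0\})$ and $\bar c_0+\bar c_m\bar{a'}\,(H\cup\{0\})$; the Davenport hypothesis equates them, whence $\bar{a'}H=H$, i.e.\ $\bar{a'}\in H$, i.e.\ $a'x^{k_f}-1$ has a zero modulo $\bp$. That is the second alternative.

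Now take $K=\bQ$. If $\deg h>1$, then $f=h\circ x^{k_f}$ is a composition of polynomials each of degree $\ge 2$, so $f$ is decomposable. If $\deg h=1$, then $f$ is affine equivalent to $x^{k_f}$, which is decomposable exactly when $k_f$ is composite; assume for contradiction $k_f=\ell$ is prime. If $a'$ is an $\ell$-th power in $\bQ^\times$, say $a'=c^\ell$, then $(a/c)^\ell=1$ and $g(x)=c_mc^\ell(x+r)^\ell+c_0=f\big(c(x+r)\big)$ exhibits $g$ as the image of $f$ under an affine map over $\bQ$ — exactly the trivial situation excluded by our standing convention on Davenport pairs. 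Otherwise $a'\notin(\bQ^\times)^\ell$, so (since $\ell$ is prime) $x^\ell-a'$ is irreducible over $\bQ(\zeta_\ell)$, and by Kummer theory together with Chebotarev's theorem a positive density of primes $p$ has $a'$ not an $\ell$-th power in $\bZ/p$, contradicting the previous paragraph. Hence $k_f$ is composite and $f$ is decomposable.

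The steps through the descent are a standard computation once one notices the Galois identity $f(ax+b)=f(a^\sigma x+b^\sigma)$. The point that needs genuine care is the $\bQ$-statement: one must invoke the running convention that only nontrivial Davenport pairs are considered in order to dispose of the case $a'\in(\bQ^\times)^\ell$, and then apply Chebotarev (equivalently Kummer theory) to the remaining non-power case — with the side observation that the Grunwald--Wang exceptional moduli are multiples of $8$, hence composite, so a prime $\ell$ never meets that exception. This is the main obstacle.
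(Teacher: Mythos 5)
Your proof is correct, and it departs from the paper's in two instructive ways.

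For the structural conclusion ($f=h(x^{k_f})$ with $k_f>1$, together with $a^{k_f}, a^m, b/a\in K$) the paper takes a shorter computational route: it normalizes both $f$ and $g$ so their penultimate coefficients vanish, which forces $b=0$ directly from $(mc_m b)a^{m-1}=0$; then $f(ax)\in K[x]$ gives $a^i\in K^*$ for each $i\in I_f$, and a B\'ezout relation $k_f=\sum u_i i$ yields $a^{k_f}\in K^*$. You keep $b$ free and argue by Galois descent instead: applying $\sigma$ to $g=f(ax+b)$ shows $f$ is fixed under precomposition by $u\mapsto (a^\sigma/a)u+\bigl(b^\sigma-(a^\sigma/a)b\bigr)$, and the vanishing penultimate coefficient forces the translation to vanish and the multiplier to lie in $\mu_{k_f}$. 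Both arguments are sound; yours exposes the cocycle $\sigma\mapsto a^\sigma/a$ with values in $\mu_{k_f}$, the paper's is shorter.

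The genuine divergence is at the end. The paper shows, via its displayed Chebotarev consequence, that if $a'x^{k_f}-1$ has a zero modulo almost all $\bp$ then it is reducible over $K$, and then asserts that for $k_f$ prime and $K=\bQ$ ``it is well-known that $a'x^{k_f}-1$ is irreducible.'' Taken at face value that claim fails exactly when $a'$ is a $k_f$-th power in $\bQ^\times$; the paper is silently leaning on its standing convention, stated just before the lemma, that $g(x)=f(\alpha x+\beta)$ with $\alpha,\beta\in\bQ$ does not count as a Davenport pair. You make the dependence explicit: when $a'=c^{k_f}$ with $c\in\bQ^\times$, your descent gives $g=f(c(x+r))$ with $c, cr\in\bQ$, the excluded trivial case; and in the remaining case $a'x^{k_f}-1$ is genuinely irreducible over $\bQ$, so the Chebotarev count finishes. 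That is the more careful writeup of the lemma's last sentence. One small economy: Grunwald--Wang and Kummer theory over $\bQ(\zeta_{k_f})$ are more than you need here -- once $k_f$ is prime and $a'\notin(\bQ^\times)^{k_f}$, the basic transitivity-plus-Chebotarev argument the paper already displays gives infinitely many primes with no zero of $a'x^{k_f}-1$, which is the contradiction.
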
 

\begin{proof} Apply \eqref{coefficRule} to each of $f(x)$ and $g(x)$ to assume their penultimate coefficients are 0. The results are still affine equivalent. Also, translating by some $c\in K$ doesn't change the domain; if they started as a Davenport pair (over $K$), they remain such. 

Using \eqref{coefficRule},  the penultimate coefficient of $g(x)$ is $(mc_mb+c_{m-1})a^{m\nm 1}=0$. So, we can assume  $b=0$. Therefore, $f(ax)\in K[x]$, a statement equivalent to 
\begin{equation} \label{coefficConc1} \{a^i\mid i\in I_f\}\subset K^*.\end{equation} 

Write $k_f$ as a linearly combination $\sum_{i\in I_f} u_ii$ with the collection of $u_i\,$s relatively prime to draw these conclusions: 
\begin{triv}  $a^{k_f}\in K^*$ $\Leftrightarrow$ \eqref{coefficConc1}; and since $a\not\in K$, $f(x)=h(x^{k_f})$, and $k_f> 1$. \end{triv} 

Now assume  $(f,g)$ is a Davenport pair over $K$, but $\deg(h)=1$ and $k_f$ is prime.  
Denote a residue class field of a prime $\bp$ of $K$ by $O_K/\bp$. Being a Davenport pair implies the following for almost all $\bp$. For each $x_0\in O_K/\bp$, there is $y_0\in O_K/\bp$ with $(x_0)^{k_f}=a'(y_0)^{k_f}$. Conclude:  $a'x^{k_f} -1$ has a zero mod $\bp$ for almost all $\bp$. 

We now give a major use of  Cebotarev's theorem. We use the cover version later (see the Chebotarev discussion of \S\ref{secIII.2}). 
Assume $f$ is an irreducible polynomial (resp.~$\phi: X\to Z$ is an irreducible  cover) over a number field $K$.  
\begin{edesc} \label{irrCeb} \item  \label{irrCeba}  Then $f$ (resp.~$\phi$) has a transitive Galois (resp.~monodromy) group. 
\item  \label{irrCebb} In any transitive subgroup  $T: G\to S_n$, there is an element $\sigma$ that fixes no letter of the permutation action: $\sigma \not\in \cup_{i=1}^n G(i)$, or $\tr(T(\sigma))=0$. 
\item  \label{irrCebc} For infinitely many primes $\bp$ of $K$, $f\mod \bp$ has no zero (resp.~$\phi$ is not onto as a map on residue class fields). \end{edesc}  The polynomial version implies $a'x^{k_f} -1$ is reducible. Finally, if $k_f$ is a prime, and $K=\bQ$, then it is well-known that $a'x^{k_f} -1$ is irreducible. This contradition completes the proof of the lemma.  \end{proof}

In addition to the problems above, a  H(ilbert)'s I(rreducibility) T(heorem) variant kept appearing. Archetypal of problems unsolved at the time was this:

\begin{triv} \label{*3} For which $f$ are there infinitely many  $z' \in \bZ$ for which $f(x) - z'$ factors over $\bQ$, but it has no $\bQ$ zero (the Hilbert-Siegel Problem of  Prop.~\ref{DeFrMC})? \end{triv} 

\begin{exmp}[Davenport pair?] \label{x^8exmp} Lem.~\ref{affineChange} ended with  $m_{a,k}(x)=ax^k-1\in \bQ[x]$ with a zero mod $p$ for almost all $p$, but no zero in $\bQ$. Then, $m_{16,8}(x)=16x^8-1$ is an example. See this by factoring $m_{16.8}$ into quadratics. From multiplicative properties of the Legendre symbol:  $f(x)=h(m_{16,8}(x))$ and $g(x)=h(x^8)$ form a Davenport  pair. But, with  $g(x)=f(a'x)$, $a'\not\in \bQ$. As in Def.~\ref{excAnaloga} with $f=f_d=T_{8,d}(x)$ and $g=g_d=f_d(\sqrt{2}x)$. 
\cite[Thm]{Mu06}  uses the Legendre symbol, as above, to show  $(f_d,g_d)$, $d\in \bQ$, form a Davenport pair. He also shows for degree 8, this gives them all, up to our usual equivalence. 
Conj.~\ref{MullConj} states this example is serious. Yet, rather than suggesting D${}_1$ in \eqref{davpart} is wrong, it suggests, even if $f$ is decomposable, it  might actually be true. \end{exmp}

\subsubsection{Extrapolating from Davenport's Problem} \label{secII.2b} In treating variants of Schur's or Davenport's problems, papers of the time considered special polynomials $f$ and $g$,  concluding these problems negatively. Example: For $f$ in some specific set of polynomials, the answer to \eql{equats}{equats2} would be that none had Schur's property. 

Extending Chebotarev's theorem to function fields  was necessary to consider  Davenport's Problem in such detail. Yet, it was the mysteries of algebraic equations over number fields that guided developments, especially Riemann's approach to algebraic functions. That is, inverse results gave the greatest motivation. 
 
Sometimes the essence of algebraic equations, in two variables, is caught by the isomorphism class of the equation, represented by a point on the moduli space of curves of a given genus. Sometimes, not! For that doesn't hint at the relations (correspondences) between equations. 

Further, equations that -- with a change of variables -- have coefficients in the algebraic numbers, maybe even in $\bQ$, differ extremely from those that do not. Using zeta functions attached to Chow motives -- we can ask about their behavior when the variables assume values in, say, finite fields.  As in \S\ref{secVII.2} it is historically accurate to use Davenport's problem to illustrate this. 

Most significant for developments, was that over certain number fields there were Davenport pairs in great abundance. That is, they formed nontrivial algebraic families of such pairs. In depicting those, especially in describing efficient parameters, I ran up against how few algebraists had any  experience with a moduli problem. 

\S \ref{App.5} recounts the three families of Davenport pairs -- degrees 7, 13 and 15 -- and the  equivalences on those pairs that gave parameters describing them. These parameter spaces  each have a genus 0 curve at their core. The techniques for describing these are now so efficient they can be used for many problems. 

\subsection{Galois Theory and Fiber Products} \label{secII.3} Groups appeared little  in \S \ref{secI.1} problems  up to 1967. Yet, progress came quickly after introducing them. Here is how they enter. For simplicity assume $f$ and $g$ over $\bQ$. Each of the maps $f: \prP^1_x \to \prP^1_z$ and $g: \prP^1_y \to \prP^1_z$ has a Galois closure cover over $\bQ$, 
$\hat f: \hat X  \to \prP^1_x$ and $\hat g: \hat Y \to \prP^1_y$. 

So, they have Galois groups $^aG_f$ and $^aG_g$ -- their respective (arithmetic) {\sl monodromy\/} groups -- the automorphism groups of these covers. Indeed, the Galois closure of $f$ has a natural description. Take  (normalization of) any connected component (over $\bQ$) of the $m$-fold fiber product of $f$ minus the (fat) diagonal components \cite[\S 8.3.2]{chpfund.pdf}.

The small \lq\lq a\rq\rq\ at the left stands for a(rithmetic), and indicates  one complication.  Consider situations like Schur's or Davenport's problems, where the polynomials $f$ and $g$ are far from general. Then, an absolutely irreducible component (over $\bar \bQ$; see \S\ref{secIII.1}) of the cover $\hat X$ may have equations over a field  $\hat \bQ_f$, larger than $\bQ$.

It was standard in the literature of the time to assume $\hat \bQ_f = \bQ$. In the general problems I faced, that didn't hold. Especially in Problem \S \ref{secII.2} \eql{equats}{equats2}, and the connection of that problem to one of Serre's {\sl Open Image Theorems}.

There is also a minimal Galois cover of $\prP^1_z$ that factors through both $\hat X$ and $\hat Y$. Its group, $^aG_{f,g}$, is naturally a fiber product. Indeed, define $\hat W$ to be the largest (nonsingular) Galois cover of $\prP^1_z$, over $\bQ$, through which both $\hat f$ and $\hat g$ factor. So, there is $\hat f_w: \hat X \to \hat W$ and $\hat g_w: \hat Y \to \hat W$ factoring through the maps to $\prP^1_z$. Each automorphism $\sigma$ of $\hat X$ or $\hat Y$  induces an automorphism $^r\sigma$ of $\hat W$. (The superscript "r" stands for restriction.)

Then, $^aG_{f,g}$ is the fiber product, $$\{(\sigma_1,\sigma_2) \in {}^aG_f\times {}^aG_g \mid {}^r\sigma_1= {}^r\sigma_2 \text{ on } \hat W\}.$$ With $m$ and $n$ the respective degrees of $f$ and $g$, then $^aG_{f,g}$ naturally has permutation representations $T_f$ and $T_g$ of degree $m$ and $n$. Also, a {\sl tensor\/} representation $T_{f,g}$ of degree $m\cdot n$ on the pairs of  letters for  the two representations $T_f$ and $T_g$. 

\section{Moving from Chebotarev translation to Riemann Surfaces} \label{secIII} Brumer taught Algebraic Number Theory while Lewis was in England, Fall semester of my 2nd year. Brumer attended a course by McLaughlin on group theory and included comments on groups during our private black board discussions.

\subsection{My Choice of Thesis Topic} \label{secIII.1} In Brumer's course I learned the fiber product construction of the group of the composite of two Galois extensions of a field. His treatment of the standard (number field) Cebotarev density theorem included a version of the Chebotarev statement in  Lem.~\ref{affineChange} and using groups to interpret it.

During Lewis' algebraic curve course  (Spring 1966) my thesis topic congealed on properties of a collection of polynomials $g_1, \dots, g_t$. We always assume algebraic sets are locally closed subsets of some projective space: a quasiprojective variety. Recall an algebraic set  $X$ over a field $K$ is {\sl absolutely irreducible} if it is irreducible over $K$ and remains so over the algebraic closure of $K$. If $K$ is a number field, for all but finitely many of its primes $\bp$ we can reduce the coefficients defining $X$ and consider it as an algebraic set $X_\bp$ over the residue field. Over any field you may consider the points $X(K)$ on $X$ with coordinates in $K$.  

Use the acronym a.a. (resp.~i.m.) for {\sl almost all\/} (resp.~{\sl infinitely many\/}) primes $p$. We refer to the following statements below for a.a. and for i.m.~$p$. 

\begin{edesc} \label{*4} \item \label{*4a} Characterize a polynomial $f$ whose range on $\bZ/p$ is in the union of the ranges of $\bZ/p$ under $g_1, \dots, g_t$.  
\item \label{*4b} More generally, consider covers $f_i: X_i\to Z$, $i=1,2$ of normal varieties over $\bQ$, with $Z$ absolutely irreducible.  Characterize  that the range of $f_2$ on $X_{2,p}(\bZ/p)$ contains the range of  $f_1$ on $X_{1,p}(\bZ/p)$. \end{edesc}  The first distinguishing property of any cover (say, $f$ in \eql{*4}{*4a}) is transparently its degree. Its  {\sl monodromy group\/}  is subtler.  

To see \eql{*4}{*4b} generalizes \eql{*4}{*4a} take $f=f_1$ and  $f_2$ the natural map from the simultaneous fiber product of $g_i: \prP^1_{w_i}\to \prP^1_z$, $i=1,\dots, t$. Generalize \eqref{*2} to consider $^aG_{f,g_1,\dots,g_t}$, the monodromy formed from many fiber products, with  representations $T_f$ and $T_{g_1},\dots ,T_{g_t}$ of respective degrees $m$ and $n_1,\dots, n_t$. That generalizes --  the same fiber product construction -- to form $^aG_{f_1,f_2}$ in \eql{*4}{*4b}. 

\subsection{Precise Versions of Chebotarev's Theorem} \label{secIII.2} Chebotarev's  theorem -- for function fields over number fields -- says that \eqref{*4} implies a statement on $^aG_{f,g_1,\dots,g_t}$. We explain two possible converses for Chebotarev. Respectively, these are {\sl Monodromy Precision\/} and an {\sl RET Converse}. Much of this section is on the former, though much of the paper's remainder is on the latter. 

\subsubsection{Monodromy Precision} \label{MPres} Generally, in applying Cebotarev, you expect implications in only one direction.  Yet, for  \eqref{*4} monodromy groups are precise: a monodromy statement implies \eqref{*4}. 

MacCluer's Thesis \cite{Mac67} answered the main question of \cite{DL63}   by showing, for tamely ramified polynomial covers, that the property of being  exceptional (\S\ref{secI.0})  over a finite field is  monodromy precise. (He said it differently.) We indicate the growth of this result -- using examples from  the special cases that dominate this paper -- below Thm.~\ref{Chebp}.  \S\ref{secVII.2} elaborates on the point of this Cebotarev strengthening. Everything applies over a general number field $K$.  We simplify by taking $K=\bQ$, and using the notation of \eql{*4}{*4a}  though it applies equally to \eql{*4}{*4b}.  

Assume a component of the cover whose group is $^aG_{f,g_1,\dots,g_t}=^aG_{f,\bg}$ (the arithmetic monodromy) has definition field $\hat \bQ_{f,\bg}$.  Then, $^aG_{f,\bg}$ maps surjectively to the Galois group $G(\hat \bQ_{f,\bg}/\bQ)$.  The kernel is the geometric monodromy, $G_{f,\bg}$, of the cover.
For $\tau\in  G(\hat \bQ_{f,\bg}/\bQ)$ denote the $^aG_{f,\bg}$ coset  mapping to $\tau$ by $\tau {}^aG_{f,\bg}$. We call \eqref{*5} the {\sl monodromy conclusion}. Again, $\tr$ denotes the trace (\S\ref{App.1a}). 

\begin{thm} \label{Chebp} Assume \eqref{*4} holds for i.m. ~(resp.~a.a.) primes $p$. Then, for some (resp.~for each) coset $\tau {}^aG_{f,\bg}$, and  for each $\sigma \in \tau {}^aG_{f,\bg}$: 

\begin{triv} \label{*5} $\tr(T_f(\sigma))>0$  if and only if for some $i$, $\tr(T_{g_i}(\sigma))>0$. \end{triv} Further,  the converse holds: \eqref{*5} implies \eqref{*4}. Finally, all these statements apply directly with the field $\bQ$ replaced by $\bZ/p$. \end{thm}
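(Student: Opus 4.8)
The plan is to push the whole assertion down to a single statement over a finite field and then reassemble the number-field clauses by two applications of Chebotarev --- one for the function-field cover with group ${}^aG_{f,\bg}$, one for the arithmetic extension $\hat\bQ_{f,\bg}/\bQ$. First I would record the point-counting dictionary. Fix a prime power $q$ and a point $z_0\in Z(\mathbb{F}_q)$ outside the branch loci of $f$ and of every $g_i$; then $f^{-1}(z_0)$ is a reduced $\mathbb{F}_q$-scheme of degree $m$ on which geometric Frobenius acts through a well-defined conjugacy class $\mathrm{Frob}_{z_0}$ of ${}^aG_{f,\bg}$, and the number of its $\mathbb{F}_q$-rational points is the number of letters of $T_f$ fixed by $\mathrm{Frob}_{z_0}$, namely $\tr(T_f(\mathrm{Frob}_{z_0}))$ --- and likewise for each $g_i$ with $T_{g_i}$. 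So, off the branch loci, ``$z_0$ lies in the range of $f$ over $\mathbb{F}_q$'' is equivalent to $\tr(T_f(\mathrm{Frob}_{z_0}))>0$, and ``$z_0$ lies in the union of the ranges of the $g_i$'' to $\tr(T_{g_i}(\mathrm{Frob}_{z_0}))>0$ for some $i$. I would also note how $\mathrm{Frob}_{z_0}$ moves: its image in $G(\hat\bQ_{f,\bg}/\bQ)$ is the Frobenius class of $q$, independent of $z_0$, so every $\mathrm{Frob}_{z_0}$ sits in one fixed coset $\tau_q{}^aG_{f,\bg}$; and as $z_0$ ranges over $Z(\mathbb{F}_q)$ its conjugacy class equidistributes in that coset, the proportion landing in a class $C$ being $|C|/|G_{f,\bg}|+O(q^{-1/2})$ by Lang--Weil, the constant depending only on the cover.

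Granting the dictionary, the converse half, that \eqref{*5} implies \eqref{*4}, is the easy one. For $p$ outside the finite bad set where $\hat\bQ_{f,\bg}$ ramifies or a reduction degenerates: if \eqref{*5} holds on the coset $\tau_p{}^aG_{f,\bg}$, then any $z_0\in f(X(\bZ/p))$ off the branch loci has $\mathrm{Frob}_{z_0}$ in that coset with positive $T_f$-trace, hence positive $T_{g_i}$-trace for some $i$, hence $z_0\in g_i(X_i(\bZ/p))$; so \eqref{*4} holds over $\bZ/p$ up to the branch loci. Imposed on every coset of $G(\hat\bQ_{f,\bg}/\bQ)$ this yields \eqref{*4} for a.a.\ $p$; imposed on a single coset $\tau{}^aG_{f,\bg}$ it yields \eqref{*4} exactly for the infinitely many $p$ with $\tau_p=\tau$ --- the ``i.m.'' clause, after the usual Chebotarev for $\hat\bQ_{f,\bg}/\bQ$.

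For the forward direction I would argue by contradiction over one large prime. Suppose \eqref{*4} holds for i.m.\ (resp.\ a.a.)\ $p$; fix such a $p$ large enough that the $O(p^{-1/2})$ equidistribution error is below $|C|/|G_{f,\bg}|$ for every non-empty $C$. If some $\sigma_0\in\tau_p{}^aG_{f,\bg}$ had $\tr(T_f(\sigma_0))>0$ but $\tr(T_{g_i}(\sigma_0))=0$ for all $i$, equidistribution would produce a $z_0\in Z(\bZ/p)$ off the branch loci whose class is that of $\sigma_0$, so $z_0$ lies in the range of $f$ but in none of the ranges of the $g_i$ --- contradicting \eqref{*4} for $p$. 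Hence \eqref{*5} holds for the coset $\tau_p$. Since $\tau_p$ runs through the finite group $G(\hat\bQ_{f,\bg}/\bQ)$, pigeonhole over admissible primes produces one recurring coset in the ``i.m.'' case, while in the ``a.a.'' case every coset is realised by a positive density of primes (Chebotarev again), hence by arbitrarily large admissible ones, so \eqref{*5} holds on each coset. The theorem's closing sentence is then not an addendum but a description of the machine: the argument already lives over $\bZ/p$, and the $\bQ$-statements are precisely what pigeonhole plus Chebotarev for $\hat\bQ_{f,\bg}/\bQ$ distil from it.

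The step I expect to be the genuine obstacle is the comparison at the branch locus, where $f^{-1}(z_0)$ is non-reduced and the clean count $\#\,f^{-1}(z_0)(\mathbb{F}_q)=\tr(T_f(\mathrm{Frob}_{z_0}))$ fails. In the curve case these are finitely many points and one compares inertia generators of $f$ with those of the $g_i$ directly --- workable here because, via Thm.~\ref{DS1}, one may reduce to $f,g$ with $m=n$ and the same branch points. In the higher-dimensional Galois-stratification setting flagged in the introduction, the ramification is a lower-dimensional divisor contributing $O(p^{\dim Z-1})$ points, swamped by the main term $\sim p^{\dim Z}$; making this precise is where the normality hypothesis on the covers and a Lang--Weil bound on the ramification divisor enter, and it is also why several clauses must read ``a.a.'' rather than ``all''. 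A subsidiary nuisance is making ``$p$ large enough'' effective, i.e.\ bounding the Lang--Weil constant for the cover in terms of its degree and number of branch points.
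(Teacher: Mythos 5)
Your argument and the paper's are the same in spirit: both amount to Chebotarev applied twice --- the function-field version (equidistribution of Frobenius classes of $Z(\bZ/p)$-points among the conjugacy classes of $G_{f,\bg}$ lying in the coset determined by $p$), and the number-field version on the constant extension $\hat\bQ_{f,\bg}/\bQ$ --- assembled by pigeonhole or positive density over the finitely many cosets to sort out the i.m.\ versus a.a.\ clauses. The difference is that the paper's proof is a set of pointers (\cite[p.~212-13]{Fr76}, \cite[Chap.~5]{FrJ86}$_1$, and \cite[\S2]{Fr74a} for $\eqref{*4}\Rightarrow\eqref{*5}$; \cite[Cor.~3.6]{Fr05b} for the converse), while you have unpacked them into a self-contained argument. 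Your bookkeeping of the coset $\tau_q$ is exactly what the ``Non-regular Analog of the Cebotarev Theorem'' of \cite[\S2]{Fr74a} addresses, and your pigeonhole/density step reproduces the i.m./a.a.\ logic of the statement correctly.

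The one place your argument falls genuinely short is the one you flag yourself: the converse direction, as you have it, yields the range containment of \eqref{*4} only off the branch locus. That is not a negligible remainder here, because capturing the diophantine statement over ramification is precisely the ``Monodromy Precision'' content of Thm.~\ref{Chebp} and what the surrounding text contrasts with the crude Chebotarev heuristic. Your proposed patch --- invoke Thm.~\ref{DS1} to equalize degrees and branch points of $f$ and $g$ and then compare inertia generators --- is a Davenport-pair-specific reduction and does not apply at the generality of \eqref{*4} (which allows $t$ covers $g_i$ of arbitrary degrees, not a pair). The paper routes this difficulty through \cite[Cor.~3.6]{Fr05b}, a pr-exceptionality argument; the comments following the theorem stress explicitly that this requires neither tame ramification nor that the maps be polynomials, so it cannot be the inertia-generator comparison you sketch. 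In short: same method, and you rightly located the hard spot, but the fallback you offer for it would not cover the case the theorem actually asserts.
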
 

\begin{proof}[Comments]  Denote the fixed field of $\tau$ in $\hat \bQ_{f,\bg}$ by $\hat \bQ_{f,\bg}^\tau$. 
The implication \eqref{*4} $\implies$ \eqref{*5} is a combination of Cebotarev -- actually not then in the literature \cite[p.~212-13] {Fr76}, or \cite[Chap.~5]{FrJ86}$_1$ --  for number fields and for function fields.  The subtlety of the extension of constants $\hat \bQ_{f,\bg}$ not being $\bQ$ is precisely treated in \cite[\S2]{Fr74a} under the title:  \lq\lq Non-regular Analog of the Cebotarev Theorem.\rq\rq\  

The converse is from \cite[Cor.~3.6]{Fr05b}, a conclusion from pr-exceptionality (comments on \eql{DavEx}{DavExe} below), where pr stands for {\sl possibly reducible\/} cover. 

\cite{Fr05b} shows this applies for any prime $p$ satisfying these two properties: 
\begin{edesc} \label{achieveExc} \item \label{achieveExca} $\tau$ is the Frobenius element in $\hat Q_{f,\bg}$; and 
\item  \label{achieveExcb} the subgroup of ${}^aG_{f,\bg}$ fixing $\hat Q_{f,\bg}^\tau$ naturally equals  the analog of ${}^aG_{f,\bg}$ over $\bZ/p$ obtained by reducing all polynomials mod $p$. \end{edesc}  \cite[Lem.~1]{Fr74a} says \eql{achieveExc}{achieveExcb} holds for a.a.~ $p$  
with  $\tau$ the Frobenius in $G(\hat \bQ_{f,\bg}/\bQ)$. (There are i.m.~such $p$ by  Chebotarev's theorem.)  This requires avoiding a potentially large -- but finite -- set of primes, including those dividing denominators of coefficients, or for which some polynomial becomes inseparable. 
\end{proof} 

Now consider these special cases of \eqref{*4} as concluded by Thm.~\ref{*5}.
\begin{edesc} \label{DavEx} \item \label{DavExa} All $f_1\,$s work  in \eql{*4}{*4b}: The range of $f_2$ is the complete set, $Z_p(\bZ/p)$, of $\bZ/p$ points on $Z$ for i.m.~ (resp.~a.a.) $p$. 
\item \label{DavExb} {\sl Exceptional functions}: In \eql{*4}{*4b}, $f_1$ is trivial (degree 1), and $X_2$ is absolutely irreducible: For i.m.~$p$, the range of $f_2$ is $Z(\bZ/p)$. 
\item \label{DavExc} {\sl Exceptional polynomials}: $t=1$, and $f$ is trivial in \eql{*4}{*4a}: For i.m.~$p$, the range of $g=g_1$ on $\bZ/p\cup \{\infty\}$ is $\bZ/p\cup \{\infty\}$. 
\item \label{DavExd} {\sl pr-exceptional functions}: Exactly the same as \eql{DavEx}{DavExc}, except we allow $X_2$ to have more than one component. 
\item \label{DavExe} {\sl Davenport pairs}:  For a.a.~$p$, \eql{*4}{*4b} holds as stated, but it also holds after switching  $f_1$ and $f_2$. \end{edesc} 
 
\subsubsection{Monodromy Precision Comments} \label{MComm} We comment on the cases of  \eqref{DavEx} using the notation  $\bF_q$ for the finite field of cardinality $q=p^t$ for some prime $p$. 

{\sl Comments on \eql{DavEx}{DavExa}}: If $X_2$ is absolutely irreducible, then it remains absolutely irreducible for almost all  $p$: a case of \cite[Lem.~1]{Fr74a}. From \eql{irrCeb}{irrCebb}  the geometric monodromy group $G_{f_2}$ contains some $\sigma$ fixing no letter of the permutation set.  

For primes  $\bp$ of $K$ where $\sigma$ is in the arithmetic coset  of \eqref{*5}, $\sigma$ (according to the monodromy conclusion) prevents $f_2$ from being an onto map over $\sO_K/\bp$. So, $X_2$ has several components if \eql{DavEx}{DavExa} holds for a.a.~$\bp$. Allowing $X_2$ to  have several components -- to be {\sl  p(ossibly)r(educible)-exceptional\/} -- put Davenport pairs and exceptional covers (comments on \eql{DavEx}{DavExd} and \eql{DavEx}{DavExe}) under one umbrella  \cite{Fr05b}. 

{\sl Comments on \eql{DavEx}{DavExb}: Exceptionality sets}: Suppose   $\phi: X\to Z$ is a cover of absolutely irreducible varieties over  $\bF_q$. Denote the extension of constants field in the arithmetic monodromy, $^a G_\phi$ (its corresponding representation is $T_\phi$), by $\hat \bF_q$. 

Denote  the coset in  $^a G_\phi$ that restricts to the  $q$-power map, $\Fr_q$, on $\hat \bF_q$  by $\Fr_q{}^aG_\phi$. If one of the notions of \eqref{excChar} -- all equivalent according to \cite[Cor.~3.6]{Fr05b} -- hold, call $\phi$ an $\bF_q$ {\sl exceptional cover}.  
\begin{edesc} \label{excChar} \item \label{excChara} $\phi: X(\bF_{q^t})\to Z(\bF_{q^t})$ is onto (resp.~injective) for infinitely many $t$. 
\item \label{excCharb} The fiber product $X \times_Z X$ with the diagonal component removed has no absolutely irreducible $\bF_q$ components. 
\item \label{excCharc} With $\sigma$ running over $\Fr_q{}^aG_\phi$, then, $\tr(T_\phi(\sigma))> 0$ (resp. $\tr(T_\phi(\sigma))\le 1$).  \end{edesc} 

Each of \eql{excChar}{excChara} and \eql{excChar}{excCharc} are a pair of characterizations.  The former says $\phi$ is one-one and onto $\bF_{q^t}$ points for infinitely many $t$. The latter says $\tr(T_\phi(\sigma))=1$ for all  $\sigma$ extending the Frobenius.  With $(Z,\bF_q)$ fixed, \cite[Prop.~4.3]{Fr05b} says the collection of exceptional covers of $Z$ over $\bF_q$ form a category with fiber products. 

We explain. If $\phi$ and $\phi':X'\to Z$ are two such covers, then the  fiber product $X\times_Z X'$ has exactly one absolutely irreducible $\bF_q$ component, though it may have many $\bF_q$ components. That absolutely irreducible component is the fiber product of $\phi$ and $\phi'$ in this category. (Note:  \S\ref{roleFlatness} says, if  $\dim(Z)>1$, then we may have to extend the notion of cover.)

Return to the notation of  \eql{DavEx}{DavExb}. We say $f_2$ is exceptional (over $\bQ$; but it applies to any number field), if there are i.m.~$p$ with -- upon applying \eqref{achieveExc} -- the reduction of $f_2$ mod $p$ exceptional as above. Denote the set of such  {\sl primes of exceptionality\/} for $f_2$ by $E_{f_2}$. There may be primes $p$ for which $f_2$ on $\bZ/p$ points is onto, but they don't fit the \eqref{excChar} criterion of exceptional. Still, if $f_2$ on $\bZ/p$ points is onto for infinitely many $p$, all but finitely many will be in $E_{f_2}$. That is, even using Chebotarev roughly, for $p$ large the ontoness forces the monodromy statement of \eqref{*5},  equivalent -- in this case -- to the other criteria of \eqref{excChar}.

{\sl Comments on \eql{DavEx}{DavExc}: }: Since $\prP^1_w$ is absolutely irreducible, the comment on \eql{DavEx}{DavExa} says $E_g$ {\sl excludes\/} infinitely many primes.  Assume $g$ has reduction mod $p$ giving a tamely ramified polynomial with \eqref{achieveExc} holding. Then, \cite{Mac67} showed the converse statement \eqref{*5}, but in the fiber product form \eql{excChar}{excCharb}. 
 
Yet, Thm.~\ref{Chebp} says tame ramification, even that $g$ is a polynomial, was unnecessary. Comments on  \eql{DavEx}{DavExd} start the discussion on explicit (algebraic) equations, versus avoiding equations as in \S\ref{secV}, \S\ref{writeEquats} and  \S\ref{App.1b}.

Suppose $g:\prP^1_w \to \prP^1_z$ is an exceptional rational function over a number field $K$.  Suppose, further, $\ell_1,\ell_2\in \PGL_2(\bar \bQ)$ (linear fractional transformations), but   $\ell_1\circ g \circ \ell_2^{-1} \eqdef g_{\ell_1,\ell_2}\in K(w)$. We say $g$ and  $g_{\ell_1,\ell_2}$ are M\"obius equivalent  (over $\bar \bQ$). If $\ell_1,\ell_2\in \PGL_2(K)$, then clearly  $g_{\ell_1,\ell_2}$ is also exceptional. This trivial production of new exceptional covers encourages regarding  
M\"obius equivalence classes over $K$ as essentially the same.  

{\sl Comments on \eql{DavEx}{DavExd}}: Isn't \eql{excChar}{excCharb} a pleasanter  characterization of exceptionality  than using group theory? Yet, it was groups that precisely characterized  tamely ramified exceptional polynomials:  \cite{Fr70}, or \cite[\S5]{FrGS93}, or \cite[Prop.~5.1]{Fr05b}.  

In Davenport's problem the efficiency of using groups is even more striking. As \S\ref{secII.3} reminds, anything using the Galois closure of a cover is about fiber products. Still, there is no simple analog of \eql{excChar}{excCharb} for pr-exceptional covers. In analogy for exceptional covers, there is a set of primes,  $E_{f_2}$, for pr-exceptionality.  

{\sl Comments on \eql{DavEx}{DavExe}}:  Consider the natural fiber product projections  
$$X_1\times_ZX_2\mapright{\pr_i} X_i, 1=1,2.$$  A special case of \cite[Cor.~3.6]{Fr05b} tells us that $(f_1,f_2)$ form a Davenport pair if and only if both $\pr_i\,$s  are pr-exceptional covers with exceptionality sets consisting of a.a.~$p$. This is what gives the Monodromy Converse for Davenport pairs. 

\subsubsection{Using equations and Chebychev conjugates}  \S\ref{depSch} -- on displaying Davenport pairs -- deepens our distinction  between using branch cycles  (\S\ref{secVI.4}) and using equations to describe covers. Prop.~\ref{schurConj} gives the result/conjecture that attracted so much number theory attention to Cheybchev polynomials. This allows me a preliminary contrast of my techniques with a traditional use of explicit equations. 

A functional equation defines the $n$th Chebychev polynomial, $T_n$: 
\begin{equation} \label{Chebychev} T_n((x+1/x)/2)=(x^n+x^{-n})/2.\end{equation}
For $a\in \bF_q^*$ and  $a=u^2$, $u\in \bF_{q^2}^*$, denote multiplication by $u$ by $m_u$. 

\begin{defn}[Dickson analogs of $T_n$] \label{excAnaloga} Convolution by $m_u$ gives a {\sl Chebychev conjugate\/}  $T_{n,a}=m_u\circ T_n \circ m_{u}^{-1}$, scaling the branch points from $\pm1$ to $\pm u$.  Chebychev conjugates are constants times {\sl Dickson polynomials\/} \cite[Prop.~5.3]{Fr05b}. \end{defn}

\begin{prop}[Schur's 'Conjecture'] \label{schurConj}   With $K$ a number field, the $f\in \sO[x]$ for which $E_{f,K}$ is infinite are compositions with maps $a\mapsto ax +b$ (affine) over $K$ with polynomials of the following form running over odd primes $u$: 
\begin{triv} \label{excType} cyclic $x^u$ or Chebychev conjugates of $T_u$, $u> 3$ \cite[Thm.~2]{Fr70}. \end{triv}  \end{prop}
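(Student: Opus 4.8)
\textbf{Step 1: pass to monodromy and reduce to indecomposable $f$.} The plan is first to read the hypothesis through Thm.~\ref{Chebp} together with the criteria \eqref{excChar}. By those results, $E_{f,K}$ being infinite means exactly that, in the notation of Thm.~\ref{Chebp}, there is a coset $\tau\,{}^aG_f$ of the geometric monodromy group $G\eqdef G_f$ inside the arithmetic monodromy group $\hat G\eqdef {}^aG_f$ (both faithful transitive subgroups of $S_n$, $n=\deg f$) such that every $\sigma\in\tau\,{}^aG_f$ fixes exactly one letter, i.e.\ $\tr(T_f(\sigma))=1$; equivalently (criterion \eql{excChar}{excCharb}) the fiber product of $f$ with itself, diagonal removed, acquires no absolutely irreducible component over the residue fields $\sO_K/\bp$ for the relevant $\bp$. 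Since $f$ is a polynomial, the branch cycle over $\infty$ is an $n$-cycle, so $G$ contains an $n$-cycle. Exceptionality is multiplicative under composition of polynomials (again via \eql{excChar}{excCharb}; cf.\ \cite[Prop.~4.3]{Fr05b}), and the identities $x^m\circ x^k=x^{mk}$ and $T_m\circ T_k=T_{mk}$ show the target class is composition-closed up to affine maps; so it suffices to treat $f$ \emph{indecomposable}, reassembling a general $f$ at the end.

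\textbf{Step 2: exceptionality $+$ an $n$-cycle force affine monodromy of prime degree.} Indecomposability makes $G$ \emph{primitive}. By Burnside's theorem (for $n$ prime) and Schur's theorem (in general), a primitive subgroup of $S_n$ containing an $n$-cycle is either doubly transitive, or $n=p$ is prime with $C_p\le G\le\mathrm{AGL}_1(p)$. The first alternative is excluded by exceptionality: if $G$ is doubly transitive then the fiber product of $f$ with itself, diagonal removed, is geometrically irreducible, hence stays absolutely irreducible over $\sO_K/\bp$ for almost all $\bp$, violating \eql{excChar}{excCharb}. So $n=p$ is prime --- and $p$ is \emph{odd}, since $p=2$ makes $f$ affine equivalent to $x^2$, whose exceptional primes lie over $2$, giving $E_{f,K}$ finite. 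Finally, $\hat G$ normalizes the unique Sylow $p$-subgroup $C_p$ of $G$, so $\hat G\le N_{S_p}(C_p)=\mathrm{AGL}_1(p)$; write $G=C_p\rtimes C_e\trianglelefteq\hat G=C_p\rtimes C_d$, $e\mid d\mid p-1$.

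\textbf{Step 3: Riemann--Hurwitz isolates $x^p$ and $T_p$.} Inside $\mathrm{AGL}_1(p)$ a finite branch cycle of $f$ is either a $p$-cycle (index $p-1$) or an affine element of order $k\ge 2$ (one fixed point, $(p-1)/k$ cycles of length $k$, index $(p-1)(k-1)/k$); the $\infty$-cycle has index $p-1$. Writing $a$ for the number of finite branch points carrying a $p$-cycle and $k_1,\dots,k_b$ for the orders at the rest, Riemann--Hurwitz for the genus $0$ cover $f\colon\prP^1_x\to\prP^1_z$ becomes
\[
2p-2=(p-1)+a(p-1)+\sum_{i=1}^b(p-1)\tfrac{k_i-1}{k_i},\qquad\text{i.e.}\qquad 1=a+\sum_{i=1}^b\Bigl(1-\tfrac{1}{k_i}\Bigr).
\]
As each term $1-1/k_i\ge\tfrac12$, the only solutions are $(a,b)=(1,0)$ and $(a,b)=(0,2)$ with $k_1=k_2=2$. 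In the first $f$ has two branch points, both totally ramified, so $f$ is affine equivalent to $x^p$. In the second $G$ is generated by two affine involutions, so $G=D_p$; a genus $0$ degree-$p$ cover with monodromy $D_p$ is, up to affine equivalence over $\bar K$, the Chebychev polynomial $T_p$, and the single arithmetic degree of freedom --- the placement over $K$ of the degree-$2$ branch divisor, a scaling $m_u$ with $u^2=a\in K^*$ --- shows $f$ is affine equivalent over $K$ to a Chebychev conjugate $T_{p,a}$, $a\in K^*$ (Def.~\ref{excAnaloga}).

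\textbf{Step 4: arithmetic bookkeeping, reassembly, and the main obstacle.} It remains to impose the coset condition of Step 1. For $x^p$: every nontrivial coset of $C_p$ in $\mathrm{AGL}_1(p)$ consists of elements with a single fixed point, so $x^p$ is exceptional precisely when $\hat G\neq G$, i.e.\ $\zeta_p\notin K$ --- automatic over $\bQ$, so every odd prime $u$ occurs. For $D_p$: a coset by $(0,a)$ has all elements fixing one letter exactly when $a\neq\pm1$, forcing the arithmetic part $C_d$ of $\hat G$ to strictly contain $\langle-1\rangle$, i.e.\ essentially $[K(\zeta_p):K]>2$, which over $\bQ$ reads $p>3$; this matches $u>3$ (and for $p=3$ one has $D_3=S_3$, already removed as doubly transitive). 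Undoing Step 1 presents a general $f$ with $E_{f,K}$ infinite as a composition of affine maps over $K$ with polynomials $x^u$ ($u$ an odd prime) and Chebychev conjugates $T_{u,a}$ ($u$ a prime $>3$), which is the form asserted in \eqref{excType}; cf.\ \cite[Thm.~2]{Fr70}. I expect the real work to be not Step 3 (elementary once one sits inside $\mathrm{AGL}_1(p)$) but the arithmetic of Steps 2 and 4: tracking the constant-field extension $\hat K_f/K$ through the Chebotarev/monodromy translation so that the Chebychev twists are seen to be parametrized by $a\in K^*$ rather than by squares --- which is exactly why the statement must say ``Chebychev conjugates of $T_u$'', not ``$T_u$''.
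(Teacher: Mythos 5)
Your proof is correct and follows the paper's own roadmap (which it cites as \cite[Thm.~2]{Fr70}): reduce to indecomposable $f$ via multiplicativity of exceptionality under composition, apply Burnside's and Schur's theorems from \S\ref{secIV.1} together with the fiber-product exceptionality criterion \eql{excChar}{excCharb} to rule out doubly transitive $G_f$ and land inside $\mathrm{AGL}_1(p)$, then use Riemann--Hurwitz on branch cycles to isolate $x^p$ and the $D_p$-Chebychev case exactly as in Lem.~\ref{ChebChar}. Your closing remark correctly locates the remaining delicacy --- the coset conditions inside $C_p\rtimes C_d$ and the cyclotomic arithmetic of $\hat K_f/K$ that force Chebychev \emph{conjugates} $T_{u,a}$, $a\in K^*$, and the threshold $u>3$.
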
 

\cite[p.~49]{Fr10} -- essentially Lem.~\ref{ChebChar}  characterizing the Chebychev conjugates -- used the name Chebychev for all Chebychev conjugates (instead of Dickson analogs).   \cite{LMT93} is dedicated to using explicit expressions for Chebychev and closely related functions.  If exceptionality is important, it behooves us to know precisely over what finite fields Chebychev conjugates are exceptional. Our comments on Lem.~\ref{monPrecCheb} are an example of monodromy precision close to MacCluer's motivation in \cite{Mac67} (before the proof of Prop.~\ref{schurConj}). 

\begin{lem} \label{monPrecCheb} Assume $n$ is odd (and prime to $p$), and $a\in \bF_q^*$, with $q$ odd. Then, the Chebychev conjugate  $T_{n,a}$ is exceptional if and only if $(n,q^2-1)=1$. 
\end{lem}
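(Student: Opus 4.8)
The plan is to read off exceptionality from the trace criterion \eql{excChar}{excCharc}. Since $T_{n,a}\colon \prP^1\to\prP^1$ is an absolutely irreducible polynomial cover (the geometric monodromy below is transitive, and $n$ prime to $p$ keeps things tame in characteristic $p$), it is exceptional over $\bF_q$ precisely when every $\sigma$ in the Frobenius coset $\Fr_q{}^aG_{T_{n,a}}$ has exactly one fixed letter in the degree-$n$ permutation representation -- the value $0$ being impossible for an onto map, and ``$\le 1$'' encoding injectivity. So the whole argument reduces to a fixed-point count along that coset.

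First I would recall the geometric monodromy. Because $n$ is prime to $p$ the cover $T_n\colon\prP^1_w\to\prP^1_z$ is tamely ramified, and its branch cycle description -- an $n$-cycle over $\infty$ and, over each of the two finite branch points, a product of $(n-1)/2$ transpositions with one fixed letter -- shows the geometric monodromy group is the dihedral group $D_n$ of order $2n$, acting on $\bZ/n$ in the standard way: a rotation $r\colon i\mapsto i+1$ together with the reflections $i\mapsto c-i$. As $n$ is odd, $2\in(\bZ/n)^*$, so every reflection fixes exactly one letter and every nontrivial power of $r$ fixes none. Conjugating by $m_u$ does not disturb the geometric monodromy, so $T_{n,a}$ carries this same group and representation; what the $m_u$-twist can change is only how the arithmetic Frobenius sits over it.

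Next I would pin down the Frobenius coset inside $^aG_{T_{n,a}}$. A Frobenius lift $\phi$ normalizes $D_n$ and acts on the relevant roots of unity by a $q$-power, up to a sign that is present exactly when $a$ is a nonsquare in $\bF_q^*$ (i.e. $u\notin\bF_q$); as a permutation of $\bZ/n$ it is therefore an affine map $i\mapsto c_0 i+e_0$ with $c_0\in\{q,-q\}$. The key point -- and the reason the final condition comes out symmetric under $q\mapsto -q$ regardless of that sign -- is that the full coset is $\phi D_n$ and $D_n$ already contains the reflections, so running $\phi$ over $D_n$ produces exactly the $2n$ affine maps $\{\,i\mapsto qi+v\,\}\cup\{\,i\mapsto -qi+v\,\}$ with $v$ ranging over $\bZ/n$. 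An element $i\mapsto \epsilon q\,i+v$, $\epsilon=\pm1$, fixes $\gcd(q-\epsilon,n)$ letters when that gcd divides $v$ and no letters otherwise, and $v=0$ does occur in the coset. Hence the largest fixed-letter count along the Frobenius coset is $\max\bigl(\gcd(q-1,n),\gcd(q+1,n)\bigr)$; this equals $1$ exactly when $\gcd(n,q^2-1)=1$, and when it equals $1$ every coset element fixes exactly one letter, so every $\sigma$ has $\tr=1$. Feeding this into \eql{excChar}{excCharc} yields the lemma.

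The step I expect to be the real obstacle is the middle one: justifying that a Frobenius element acts affinely on $\bZ/n$ with slope $\pm q$ -- this is the non-regular Chebotarev and extension-of-constants bookkeeping behind Thm.~\ref{Chebp} applied to the specific families of Def.~\ref{excAnaloga}, including tracking the sign introduced by $m_u$ -- together with the observation that the reflections already present in the geometric monodromy drag both slopes $\pm q$ into the coset, which is what makes the exceptionality condition insensitive to whether $a$ is a square. Everything downstream is the elementary gcd count above.
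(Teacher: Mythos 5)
Your argument is correct, but it takes a different route from the one in the paper. The paper handles the direction ``$d=\gcd(n,q^2-1)>1\Rightarrow$ not exceptional'' via the fiber-product criterion \eql{excChar}{excCharb}: using Lem.~\ref{ChebChar} it identifies the $\frac{n-1}{2}$ degree-two components of $\prP^1_x\times_{\prP^1_z}\prP^1_x\setminus\Delta$, notes that the component indexed by $j=n/d$ has constants field the symmetric functions in $\{\zeta_n^{n/d},\zeta_n^{-n/d}\}$, and observes that $d\mid q^2-1$ forces the Frobenius to preserve this pair, so that component is $\bF_q$-rational. You instead work with the trace criterion \eql{excChar}{excCharc}, pinning down the Frobenius coset inside $^aG$ as the $2n$ affine maps $i\mapsto\pm qi+v$ on $\bZ/n$ and reading off the maximal fixed-letter count along the coset as $\max(\gcd(q-1,n),\gcd(q+1,n))$. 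The two criteria are equivalent (the paper cites \cite[Cor.~3.6]{Fr05b}), and under the hood the computations match: the $D_n$-orbit $O_k=\{(i,j):j-i\equiv\pm k\}$ being Frobenius-stable is exactly the condition that some $i\mapsto\pm qi$ has a nontrivial fixed set. What the paper's version buys is the explicit algebraic component of $X\times_Z X$ that witnesses non-exceptionality -- this matches MacCluer's fiber-product formulation, and the constants field of that component tells you precisely over which $\bF_{q^t}$ the failure becomes visible. What yours buys is that it lives entirely inside the arithmetic monodromy group, never touching the geometry of the fiber product, and it makes the insensitivity to whether $a$ is a square transparent: the reflections already sitting in $D_n$ absorb the sign twist from $m_u$. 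One small point to make explicit in a polished version: producing the shift-$v=0$ element of the coset uses $\gcd(q,n)=1$, i.e.\ the hypothesis that $n$ is prime to $p$, which is doing double duty (it also underwrites the tameness that gives you the branch-cycle description of $D_n$).
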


\begin{proof}[Comments on two different types of proof]  \cite[Lem.~13]{Fr70} for $a=1$, but there is a typo in the statement: $N(\bp)-1$ should be $N(\bp)^2-1$. The proof of sufficiency of $(n,q^2-1)=1$ for all $a$ -- the first part of \cite[Thm.~3.2]{LMT93} -- is exactly the same. Except rather than stating this gives exceptionality for these primes, they say only that it  the Chebychev conjugate is a {\sl permutation polynomial\/} -- it maps one-one -- on $\bF_q$. \cite[p.~39]{LMT93} does the converse -- if $(n,q^2-1)=d > 1$, then a Chebychev conjugate is not exceptional/permutation -- based also on  \eqref{Chebychev}. 

We do the converse using the monodromy precise characterization in \eql{excChar}{excCharb}.  
From the $T_{n,a}$ characterization of Lem.~\ref{ChebChar}, we know $\prP^1_x\times_{\prP^1_z} \prP^1_x\setminus \Delta$ consists of $\frac{n\nm1}2$ absolutely irreducible components of degree 2 over $\prP^1_z$. With $\zeta_n$ a  primitive $n$th root of 1 over $\bF_q$, each component has definition field the symmetric functions in $U_j=\{\zeta_n^j,  \zeta_n^{-j}\}$. As  $q^2-1\equiv 0 \mod d$,  the $q$th power map -- Frobenius -- acts as either $+1$ or $-1$ on the elements of $U_{n/d}$: $U_{n/d}\mapsto \{\zeta_n^{qn/d},  \zeta_n^{-qn/d}\}=U_{n/d}$. So, a component corresponding to $U_{n/d}$ is defined over $\bF_q$, and $T_{n,a}$ is not exceptional.  \end{proof} 

\begin{rem}[Continuing on Lem.~\ref{monPrecCheb}]  The proof of $(n,q^2-1)=1$ being exact for Chebychev conjugate exceptionality is what I gave as a referee of  \cite{Mat84}. That was to algorithmically, from degrees, find which compositions of cyclics and Chebychev conjugates in Prop.~\ref{schurConj} are exceptional.  Maybe the  version from \cite[Thm.~3.2]{LMT93} is  more comforting than using monodromy precision. 

Yet, could equation manipulation work on the exceptional primes arising from Serre's Open Image Theorem \S\ref{limitDih}, as does monodromy precision \cite[\S6.2, esp.~Prop.~6.6]{Fr05b}? There is a structure to exceptionality that imediately differentiates it from accidents when $f$, though not exceptional, might  permute elements of $\bF_q$. I could find no reference to exceptionality in \cite{LMT93}. \cite{FrL87} -- on forming higher dimensional Chebychev analogs, so exceptional covers, using Weil's restriction of scalars -- indicates I did try to communicate about such matters. 
\end{rem} 

\subsubsection{RET Converse}  \label{RETConv} We return to \eql{DavEx}{DavExe}, using notation of   \eql{*4}{*4a}:  $f=f_1$, $g=f_2$, with $(f,g)$ a Davenport pair of polynomials. 

C$_1$: {\sl Formulating a geometric converse}: A converse of the group version  \eqref{*5} might  ask this. Given any group statement of this ilk, are there $(f,g)$ that produce the group conditions. This question is appropriate far beyond Davenport's problem. 

C$_2$: {\sl Formulating an arithmetic converse}: Statement \eqref{*5} has a group version about an arithmetic monodromy. A converse might give two groups $^aG$ and $G$ satisfying a statement like \eqref{*5}, then ask: Are there covers realizing these groups as their arithmetic/geometric monodromy  over some number field? 

R(iemann)'sE(xistence)T(heorem), \S \ref{secVI.4}, can invert these statements. Constraining permutation representations to produce polynomial covers (or rational functions) is in the group theory, through R(iemann)-H(urwitz) \eqref{*10}.

We model our method for deciding over what number fields the arithmetic inversion is achievable on the two pieces D${}_1$ and D${}_2$ to Davenport's Problem \eqref{davpart}. Other related problems, like Serre's OIT,  show how a few precise problems can coral considerable progress, despite the surrounding unknown territory.  

\subsection{Meeting UM Faculty and going to $\infty$} \label{secIII.4} The graduate student population was over 200 at UM in those years. I later realized that the department was large, too, compared to other departments in which I ever held a position. Therefore, seminars -- not driven by the research  of a resident faculty -- often started with many attending, but dropped rapidly each week.

\subsubsection{How fiber products and other tools arose} 
I  learned fiber products at UM from a seminar on Diudonne's version, EGA, of Grothendieck's writing, summer 1965. From the 50$^+$ who first showed, soon there was just Brumer, Bumby and me. I recall practicing sheaves, direct limits and projective limits especially from a famous Grothendieck paper -- Tohoku-- under their tutelage. Bumby, especially,  guided my intuition on much profinite homological algebra. 

Lewis arranged for my attendence at two Bowdoin college NSF-funded summers. Eight weeks each on Algebraic Number Theory (summer of 1966) and Algebraic Geometry (summer of 1967).  Both summers I learned everything put in front of me.  I also learned I would be subject to pejoratives for not having the background prevalent then at Harvard, MIT or Princeton. It never intimidated me.

Brumer left for Columbia at the start of my 3rd year. Imitating Brumer, I engaged McLaughlin directly in blackboard discussions when I could catch him, about permutation representations.  Roger Lyndon and I lectured in his seminar on Discontinuous groups acting on the upper half plane. Also, I read notes of Brumer on modular curves from lectures of Gunning. As with theta functions, this became my hidden tool,  augmented sharply by two years around Shimura while I was at the I(nstitute for)A(dvanced)S(tudy) 1967-1969. 

\subsubsection{Grabbing a thesis and learning from it} \label{secIII.3} I was aware, by Summer 1966 that the implication \eql{*4}{*4a} $\implies$ \eqref{*5} would receive little regard  for these reasons. 

\begin{enumerate} 
  \item  It said nothing about the polynomials involved, not even suggesting what, of significance, one might say.
 \item The problem didn't register with the MIT-Princeton-Harvard students at the 1966 Bowdoin Conference on Algebraic Number Theory.  \end{enumerate} 

If a mature algebraic geometer had cued my next step -- say Artin or Mumford,  I later knew both  -- it wouldn't have resonated. Through, however, my student eyes it opened a new way of thinking. Later I realized it was a stride even for Riemann. Lefschetz admitted he finally understood Picard from something similar. 

Yet, isn't this elementary?: I looked at $\infty$,  Christmas morning 1966, at a time I despaired at finding any structure to Problem \eqref{*4}. I saw a finger circling $\infty$ on the Riemann sphere, clockwise (so, unlike their use by many, my loops go clockwise around points to this day), and then coming back to a basepoint -- at my feet. 

Here's what it meant for the values of a polynomial $f:  \prP^1_x \to  \prP^1_z$. You knew for certain one element, $\sigma_\infty$, in $G_f$ (and so in $^aG_f$): an $n$-cycle coming from the cover totally ramifying over $\infty$.  The proof of Prop.~\ref{Gusic} displays $\sigma_\infty$ in an  elementary way. Recall, $\infty$ was not initially considered a value of  $f$, but that is irrelevant.  

\subsubsection{Combining data at $\infty$ with Chebotarev} \label{secIII.5} That finger circling $\infty$ corresponded to a path on the punctured sphere. So, in considering \eql{*4}{*4a}  it corresponds to a generator, $\sigma_\infty$, for the inertia group over $\infty$ for the fiber product of all covers given by $f$ and the $g_i\,$s. In each corresponding permutation representation $\sigma_\infty$ appears  respectively as an $m$-cycle or an $n_i$-cycle.

\begin{prop} \label{useInfty}  Apply the conclusion of Chebotarev  in \eqref{*5}: With $N$ the least common multiple of the $n_i\,$s, $m$ divides $N$. In particular, in Davenport's problem \eql{equats}{equats3} the degrees of a Davenport pair $( f, g)$ must be the same. \end{prop}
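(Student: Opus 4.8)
The plan is to exhibit one element of the common monodromy group $^aG_{f,\bg}$ whose cycle lengths in $T_f$ and in each $T_{g_i}$ are pinned down by the behaviour of all the covers over $\infty$, and then to substitute a suitable power of it into the monodromy conclusion \eqref{*5}.

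Since $f$ and each $g_i$ is a polynomial, the point $\infty\in\prP^1_z$ is totally ramified under every one of the covers $f,g_1,\dots,g_t$. Pick a place over $\infty$ in the Galois cover of $\prP^1_z$ with group $^aG_{f,\bg}$; its inertia group is cyclic (tame, as we are in characteristic $0$), and a generator $\sigma_\infty$ acts as a single $m$-cycle in $T_f$ and as a single $n_i$-cycle in each $T_{g_i}$. This is exactly the ``finger circling $\infty$'' element of \S\ref{secIII.3}, now read inside the fiber product of all the covers. Being an inertia generator, $\sigma_\infty$ lies in the geometric monodromy $G_{f,\bg}$, hence so does every power of $\sigma_\infty$; so when \eqref{*5} is available for \emph{every} coset---which holds under the Davenport-pair hypothesis by Thm~\ref{Chebp}---it is in particular available at $\sigma_\infty^k$ for all $k$.

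Now recall the elementary fact that for an $\ell$-cycle $\rho$ in a permutation representation $T$ one has $\tr(T(\rho^k))>0$ exactly when $\ell\mid k$, since $\rho^k$ has a fixed letter precisely then. Take $\rho=\sigma_\infty$ and $k=N=\lcm(n_1,\dots,n_t)$: since $n_i\mid N$ for every $i$, we get $\tr(T_{g_i}(\sigma_\infty^N))>0$, and one such index already suffices to invoke \eqref{*5}, which then forces $\tr(T_f(\sigma_\infty^N))>0$, i.e.\ $m\mid N$. That is the first assertion. For Davenport's problem \eql{equats}{equats3} one has $t=1$, so $N=n$ and $m\mid n$; but \eql{equats}{equats3} is symmetric under interchanging $f$ and $g$ (it asserts the two ranges mod $p$ are equal), so the same argument with the roles reversed gives $n\mid m$, and therefore $m=n$.

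The only genuinely delicate step is the first one: one must be sure that the separate local-at-$\infty$ data really do assemble into a single element of the single group $^aG_{f,\bg}$ carrying the claimed simultaneous cycle type, and that this element lies in a coset to which the version of \eqref{*5} at hand applies. This rests on the compatibility of inertia subgroups under the fiber-product construction---the inertia over $\infty$ in $^aG_{f,\bg}$ surjects onto the inertia over $\infty$ in each factor cover---together with tameness in characteristic $0$ forcing every inertia group to be cyclic. Everything past that point is the trace-and-cycle bookkeeping above.
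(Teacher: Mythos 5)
Your argument is the same as the paper's: apply \eqref{*5} to $\sigma_\infty^N$, observe it fixes everything in each $T_{g_i}$ because $n_i\mid N$, conclude $T_f(\sigma_\infty^N)$ must fix something, and hence $m\mid N$ since a power of an $m$-cycle fixes a letter only when $m$ divides the exponent; the symmetry of the Davenport hypothesis then gives $m=n$. You add a bit more scaffolding than the paper's three-sentence proof (cyclicity of inertia, compatibility under fiber product, and the remark that the a.a.\ version of Thm.~\ref{Chebp} is what guarantees \eqref{*5} holds on the identity coset containing $\sigma_\infty$), but the underlying idea is identical.
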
 

\begin{proof} 
The element $\sigma_\infty^N$ fixes every letter in $T_{g_i}$ (corresponding to $g_i$). So, from  \eqref{*5},  $T_f(\sigma_\infty^N)$ must fix something. Yet, unless $m$ divides $N$, as $T_f(\sigma_\infty^N)$ is an $m$-cycle to 
the $N$-th power; it fixes nothing. This contradiction shows the result. \end{proof} 

From here on we take this common degree of a Davenport pair as $n$. In fact, there is a stronger  conclusion, which Lem.~\ref{pullbackcomps} explains more fully. 

\begin{thm}[DS$_1$] \label{DS1} Suppose $f$ and $g$ nontrivially satisfy Davenport's hypothesis. Then their Galois closure covers are the same \cite[Prop.~2]{Fr73a}. \end{thm}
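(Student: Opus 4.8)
The plan is to translate Davenport's hypothesis, through Chebotarev, into a condition on the geometric monodromy group $G_{f,g}$ of the combined cover — equipped with its two degree-$n$ permutation representations $T_f,T_g$ (Prop.~\ref{useInfty} already supplies $\deg f=\deg g=n$) — and then to extract $\hat X=\hat Y$ from one elementary fact about transitive permutation groups. Write $M$ and $N$ for the function fields of the Galois closure covers $\hat X$ and $\hat Y$, and let $L=M\cdot N$ be the function field of the component of the normalized fiber product $\hat X\times_{\prP^1_z}\hat Y$ used to build $G_{f,g}=\mathrm{Gal}(L/\bar\bQ(z))$; then $L\supseteq M$, $L\supseteq N$, the group $G_{f,g}$ acts through $T_f$ (resp.~$T_g$) on the $n$ cosets attached to $f$ (resp.~$g$), and $\ker T_f=\mathrm{Gal}(L/M)$, $\ker T_g=\mathrm{Gal}(L/N)$. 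In this language the conclusion $\hat X=\hat Y$ is exactly $\ker T_f=\ker T_g$, i.e.~$M=N$.

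First I would record the two inputs. Since $f$ and $g$ are polynomials, hence irreducible covers, \eqref{irrCeb} makes both $T_f(G_{f,g})$ and $T_g(G_{f,g})$ transitive. Since $(f,g)$ is a Davenport pair, statement \eqref{*4} holds for a.a.~primes and in \emph{both} directions, so Thm.~\ref{Chebp} applies with $\tau=1$, i.e.~on $G_{f,g}$ itself, and yields for every $\sigma\in G_{f,g}$ the biconditional \eqref{*5}: $T_f(\sigma)$ fixes a letter if and only if $T_g(\sigma)$ fixes a letter. It is precisely the strengthening from ``i.m.'' to ``a.a.'' primes that licenses using the trivial coset here rather than merely some coset — the feature that separates Davenport pairs from exceptional covers.

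Now the main step: I claim $\ker T_g\subseteq\ker T_f$. If not, pick $\sigma\in\ker T_g$ with $\sigma\notin\ker T_f$. Then $1\neq T_f(\sigma)$ lies in $T_f(\ker T_g)$, a nontrivial normal subgroup of the transitive group $T_f(G_{f,g})$. But the fixed-point set of any normal subgroup of a transitive permutation group is invariant under the whole group, hence empty or everything; it is not everything (that would force $T_f(\ker T_g)=1$), so it is empty, and in particular $T_f(\sigma)$ fixes no letter — contradicting \eqref{*5}, because $\sigma\in\ker T_g$ fixes every $g$-letter. Hence $\ker T_g\subseteq\ker T_f$, and, Davenport's condition being symmetric in $f$ and $g$, the same argument with the roles reversed gives the opposite inclusion. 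Therefore $\ker T_f=\ker T_g$, i.e.~$M=N$ and $\hat X=\hat Y$.

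The delicate point — the ``hard part'' in spirit, not in difficulty — is the bookkeeping: pinning down $\ker T_f=\mathrm{Gal}(L/M)$ and checking that Thm.~\ref{Chebp} really delivers \eqref{*5} on the \emph{geometric} group, given the constant-field subtlety that $\hat\bQ_{f,g}$ need not be $\bQ$ (treated around \cite{Fr74a}); once that dictionary is fixed the group theory is the two lines above. This is the content of \cite[Prop.~2]{Fr73a}. Lem.~\ref{pullbackcomps} will repackage the same reasoning — applied to the subgroups of the tensor representation $T_{f,g}$ — more functorially, and running it with the arithmetic monodromy ${}^aG_{f,g}$ in place of $G_{f,g}$ also yields agreement of the Galois closures as covers over $\bQ$, the form needed to start the Branch Cycle Lemma step toward D$_1$.
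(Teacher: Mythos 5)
Your reduction to $\ker T_f=\ker T_g$ is the right target, but the final group-theoretic step is wrong. From ``the common fixed-point set of the normal subgroup $T_f(\ker T_g)$ is empty'' you infer ``$T_f(\sigma)$ fixes no letter.'' That does not follow: empty common fixed-point set means no letter is fixed by \emph{all} of $T_f(\ker T_g)$, while an individual element of that subgroup can perfectly well have fixed points. What you would actually need is that every nontrivial normal subgroup of a transitive group contains a derangement, and that is false. Let $V=(\bZ/p)^2$ act on $n=p(p\np1)$ points arranged in $p\np1$ blocks indexed by the lines $\ell$ through the origin of $V$, where $V$ acts on the block $B_\ell$ by translation via $V/\ell\cong\bZ/p$. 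Then $V$ is a nontrivial normal subgroup of the transitive group $V\rtimes\GL_2(\bZ/p)$, yet every nonzero $v\in V$ fixes all $p$ points of $B_{\bF_pv}$ -- $V$ contains no derangement at all. So the ``two lines of group theory'' do not close the argument.

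What saves the theorem is precisely what your proof never uses: the $n$-cycle $\sigma_\infty$ (total ramification over $\infty$) together with the \emph{other} direction of the biconditional \eqref{*5}. Suppose $K=\ker T_f\neq 1$ and set $\Gamma_0=K\cdot\lrang{\sigma_\infty}$. Since $T_f(\sigma_\infty)$ is an $n$-cycle, $K\cap\lrang{\sigma_\infty}=1$, so $[\Gamma_0:K]=n$. For $\kappa\in K$ and $0<i<n$, $T_f(\kappa\sigma_\infty^i)=T_f(\sigma_\infty)^i$ is a nontrivial power of an $n$-cycle, hence a derangement, so by \eqref{*5} (the direction from $T_f$ to $T_g$) $T_g(\kappa\sigma_\infty^i)$ is also a derangement; while for $\sigma\in K$, $T_g(\sigma)$ has a fixed point. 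The same observation shows $\Gamma_0\cap\ker T_g=1$, so $T_g|_{\Gamma_0}$ is faithful and $T_g(\Gamma_0)$ is transitive (it contains the $n$-cycle $T_g(\sigma_\infty)$). Burnside over $T_g(\Gamma_0)$ then gives $|T_g(\Gamma_0)|=n|T_g(K)|$ for the total fixed-point count, while only $T_g(K)$ contributes, yielding $|T_g(K)|\cdot(\text{number of }T_g(K)\text{-orbits})$; hence $T_g(K)$ has $n$ orbits and is trivial, so $K\subseteq\ker T_g$ and $K=1$. (If you added the hypothesis that $f$ is indecomposable, so $G_f$ is primitive, your simpler route would work -- a nontrivial normal subgroup of a primitive group is transitive and Jordan supplies a derangement -- but DS$_1$ makes no such assumption, and in any case ``in particular $T_f(\sigma)$ fixes no letter'' for the chosen $\sigma$ is not what Jordan gives you.)
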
 

\subsection{Double transitivity versus primitivity} \label{secIV} Unless you are a group theorist, or have,  through a particular problem met groups seriously, then you likely know finite groups only through their permutation representations. So, you wouldn't know there is an intimate relation between primitive groups and simple groups (\S \ref{App.2}) -- excluding primitive affine groups (\S \ref{App.1a}), which may resist any classification. 

I didn't know these things, which came partly from \cite{AOS85}, when I started either. I luckily could skirt the easier edge of the doubly transitive/primitive divide. This section runs lightly over \cite{Fr70} to   review how the primitive group property arose early. The more intense analysis of \cite{Fr73a} starts in \S\ref{secV}.   

\subsubsection{Translating Primitivity} \label{secIV.1} The monodromy group ${}^aG_f$ of a cover $f: X \to \prP^1_z$ over a field $K$ is {\sl primitive\/} if and only if the cover does not properly factor  through another cover (over $K$). Also, ${}^aG_f$ is doubly transitive if and only if the fiber product $X\times_{\prP^1_z}X$ has exactly two irreducible $K$ components (one is the diagonal). 

When $X=\prP^1_x$, primitive means $f$  doesn't decompose (over $K$) as $f_1\circ f_2$ with both $\deg(f_i)\,$s  exceeding 1.  Doubly transitive translates as follows:  $(f(x)-f(y)/(x-y)$ is, after clearing denominators by multiplying -- with $h$ the denominator of $f$ -- by $h(x)h(y)$, an irreducible polynomial in two variables over $K$. 
Galois theory translates these  respective statements as conditions on ${}^aG_f$ under the permutation representation $T_f$. For a group $G$ under a degree $n$ representation $T$,  $G(i)$ indicates the subgroup of $G$ fixing $i$. 

\begin{edesc} \label{primdt} 
    \item \label{primdta} $(G,T)$ is {\sl Primitive}: No group lies properly between $G$ and $G(1)$.
    \item \label{primdtb} $(G,T)$ is {\sl Doubly Transitive}: $G(1)$ is transitive on $\{2,\dots,n\}$. \end{edesc} 

If $G_f$ is primitive, then so is $^aG_f$, but the converse does not in general hold. Still, we have the following. Denote the characteristic of $K$ by $\text{Char}(K)$. 

\begin{lem}[Polynomial Primitivity]  \label{decompPoly} If $f\in K[x]$, of degree prime to $\text{\rm Char}(K)$, decomposes over $\bar K$, then it decomposes over $K$ \cite[Prop.~3.2]{FrM69}. In this case, if it is indecomposable, then $G_f$ is doubly transitive  unless it is affine equivalent over $\bar K$ to a cyclic  ($x^n$) or Chebychev polynomial (as in \eqref{Chebychev}) \cite[Thm.~1]{Fr70}. \end{lem}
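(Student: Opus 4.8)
The plan is to prove the two assertions in turn, using in both the one structural feature distinguishing a polynomial cover $f\colon\prP^1_x\to\prP^1_z$ of degree $n$ from a general degree-$n$ cover: the fibre over $\infty$ is totally ramified, and -- as $n$ is prime to $\text{Char}(K)$ -- tamely so. Hence the geometric monodromy group $G_f$ (and therefore ${}^aG_f$) contains an $n$-cycle $\sigma_\infty$, exactly as in \S\ref{secIII.3}; and Riemann--Hurwitz \eqref{*10} is available in clean form at $\infty$, and everywhere in characteristic $0$. These are the only levers I would use.

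\emph{Descent of decompositions.} Over any field $L\supseteq K$ a decomposition $f=f_1\circ f_2$ with $\deg f_i>1$ is the same as an intermediate field $L(z)\subsetneq M\subsetneq L(x)$; by L\"uroth $M=L(w)$, and since $z$ has its only pole in $L(x)$ at $x=\infty$ one can choose $w$ to be a polynomial in $x$ over $L$ with $z$ a polynomial in $w$ over $L$ -- an intermediate cover of a polynomial is automatically a polynomial. For $L=\bar K$ such $M$ correspond to groups strictly between $G_f$ and a point stabilizer $G_f(1)$, i.e.\ to nontrivial block systems of $G_f$ on the $n$ roots of $f(x)-z$. The key observation is that $\sigma_\infty$ being an $n$-cycle forces a block through a given root to be an orbit of the unique subgroup of $\langle\sigma_\infty\rangle$ of the pertinent order; so for each $d\mid n$ there is \emph{at most one} block system with blocks of size $d$. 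As $G(\bar K/K)$ permutes the block systems of $G_f$ preserving block size, uniqueness makes every one of them $G(\bar K/K)$-stable, hence defined over $K$. I would write out only this uniqueness point; the descent of the intermediate field and the L\"uroth step are routine. This is \cite[Prop.~3.2]{FrM69}.

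\emph{Indecomposable $\Rightarrow$ doubly transitive, or $x^n$ or $T_n$.} By the first part ``indecomposable'' means $G_f$ is primitive of degree $n$ and contains the $n$-cycle $\sigma_\infty$. Invoke the classical theorem of Burnside and Schur: a primitive group containing a full cycle is either doubly transitive, or else $n=p$ is prime, $G_f\le\mathrm{AGL}_1(p)$, and $G_f$ contains the translations $\langle\sigma_\infty\rangle$. In this last case run Riemann--Hurwitz (reducing to characteristic $0$ if need be). A non-identity $x\mapsto ax+b$ in $\mathrm{AGL}_1(p)$ has index $p-1$ if $a=1$ (a $p$-cycle) and index $(p-1)\tfrac{e-1}{e}$ if $a$ has order $e>1$; since $\sigma_\infty$ already contributes index $p-1$ and the genus $0$, degree $p$ formula gives total index $2p-2$, the finite branch points contribute index exactly $p-1$, each at least $(p-1)/2$, so there are at most two. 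With one finite branch point its index is $p-1$, forcing a $p$-cycle; then $G_f\cong\bZ/p$ and $f$ is affine equivalent over $\bar K$ to $x^p$. With two, $\tfrac{1}{e_1}+\tfrac{1}{e_2}=1$ forces $e_1=e_2=2$, so the two finite branch cycles are reflections, $G_f$ is dihedral of order $2p$, and the branch-cycle datum $(\sigma_\infty,\tau_1,\tau_2)$ -- a $p$-cycle and two reflections with product $1$ -- is rigid: the $p$ reflections are $\langle\sigma_\infty\rangle$-conjugate and $\tau_1$ determines $\tau_2=\tau_1\sigma_\infty^{-1}$. So by R(iemann)'s E(xistence) T(heorem) (\S\ref{secVI.4}) there is a unique such cover up to equivalence; normalizing the three branch points to $\pm1,\infty$ identifies it with the one given by the Chebychev polynomial $T_p$ of \eqref{Chebychev}, and since both are polynomials the equivalence is affine over $\bar K$ (up to the scaling of Def.~\ref{excAnaloga}, to a Chebychev conjugate). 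This is \cite[Thm.~1]{Fr70}.

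The main obstacle is the affine case of the second part. That $G_f$ must lie in $\mathrm{AGL}_1(p)$ is the Burnside--Schur input -- to be quoted, not reproved -- and, once the dihedral branch-cycle description is in hand, one must still check that it pins the cover down to $T_p$; that is the small rigidity computation in $D_p$ above, together with the remark that a M\"obius move of the three branch points costs nothing. Everything else -- the cycle at $\infty$, tameness, L\"uroth, and the Galois action on block systems -- is routine once that structure is available.
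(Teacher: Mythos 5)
Your proof is correct and follows the same route as the paper's cited references, \cite[Prop.~3.2]{FrM69} and \cite[Thm.~1]{Fr70} (the paper itself supplies no in-text proof of this lemma): the tame $n$-cycle over $\infty$ forces each block system to be the orbit lattice of $\langle\sigma_\infty\rangle$ -- exactly the uniqueness the paper also exploits in its Lem.~\ref{PolyComp} -- giving Galois descent of decompositions, and Burnside--Schur plus the Riemann--Hurwitz count in $\mathrm{AGL}_1(p)$ then pin the non--doubly-transitive indecomposables to $x^p$ and the dihedral (Chebychev) Nielsen class. Two points are worth flagging as slightly underspecified but not wrong: the step from a $G(\bar K/K)$-stable block system to a decomposition actually over $K$ hides a short cocycle/Hilbert-90 (or fixed-field-of-$^aG_f$) argument rather than being purely formal; and in positive characteristic the dihedral case can acquire wild ramification at the finite branch points (e.g.\ characteristic $2$), so the clean R-H tally needs the tame-lifting \lq reduce to characteristic $0$\rq\ you gesture at, and the conclusion there is really \lq Dickson analog\rq\ in the sense of Def.~\ref{excAnaloga} rather than the literal $T_n$ of \eqref{Chebychev}.
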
  

In Schur's Conjecture we can revert to primitivity quickly. A composite of polynomials gives a one-one map on a finite field, if and only if each does. Polynomial Primitivity \ref{decompPoly} then reverts to the case $G_f$ (the geometric group) is primitive. Two famous group theory results from early in the 20th century help immensely.

\begin{itemize}
\item {\sl Schur}: If $G_f$ is primitive and $n$ is composite, since $G_f$   contains an $n$-cycle under $T_f$, it must be doubly transitive.
\item {\sl  Burnside}: If $n$ is a prime, and $G_f$ is not doubly transitive, then it is a subgroup of the semi-direct product $\bZ/n \xs (\bZ/n)^*$ (\S\ref{App.1a}). 
    \end{itemize}
Lem.~\ref{ChebChar} gives the branch cycle characterization of Chebychev polynomials, an easy forerunner of the branch cycle characterization of Davenport pairs as in \S\ref{secVI.5}. 

\subsubsection{Group Theory in Grad School} \label{secIV.2} After 35 years of evidence that we know all simple groups, unless a permutation group is primitive, even the classification isn't so helpful (\S\ref{secVII.3}). Still,  primitive groups aren't \lq\lq simple\rq\rq\ ( irony intended). 

Richard Misera, a fellow graduate student -- I never saw him again after getting my degree -- was studying with Don Higman. After once seeing me discuss the distinction between permutation representations and group representations with McLaughlin he volunteered an  example  that became a powerful partner when I was ready to solve Davenport's Problem (\S \ref{secV.3}).

Soon after graduate school, I knew enough to solve Schur's Conjecture (\S\ref{secI.2}). Still, it was John Smith, whom I thought I saw by accident at IAS -- he actually came to discuss a problem with me -- who told me of Schur's and Burnside's Theorems. Smith was the 3rd  (and last, including MacRae and Schinzel) affiliate of Michigan  during my graduate years with whom I wrote papers (in each case two).

My Erd\"os number is 2 because Schinzel's is 1. 

\section{Equation properties without writing equations} \label{secV}  Rare among algebraic equation papers, even those using the monodromy method, solving Davenport's problem used general principles, not equation manipulation. For a Davenport pair, $(f,g)$, list the zeros $x_i$ of $f(x)-z$ (resp.~$y_i$ of $g(y)-z$), $i=1,\dots,n$, in an algebraic closure of $K(z)$. Do a penultimate normalization: change $x$ to $x+b$, $b \in K$, so the coefficient of $x^{m\nm 1}$ is 0 (similarly for $g(y)$).

\subsection{A linear relation in Davenport's problem} \label{secV.1}  DS$_1$ (Thm.~\ref{DS1}) says $$K(x_i, i=1,\dots,n)= K(y_i, i=1,É,n).$$ Yet, \eql{DavRes}{DavResa} is an even stronger relation. \cite[Thm.~1]{Fr73a} gives \eql{DavRes}{DavResa} and \eql{DavRes}{DavResc} with the converse statement in \eql{DavRes}{DavResb} a special case of Thm.~\ref{Chebp}. 

\begin{thm}[DS$_2$] \label{DS2} Assume $f$ and $g$ nontrivially satisfy Davenport's hypothesis, with f  indecomposable.
\begin{edesc} \label{DavRes} 
\item \label{DavResa} Then, $T_f$ and $T_g$ are inequivalent permutation representations of \\ $^aG_f = {}^aG_g$. Yet, they are equivalent as group representations.
\item \label{DavResb} Further, the converse holds: Such $T_f$ and $T_g$, equivalent as representations, imply $f$ and $g$ satisfy Davenport's hypothesis; and (for a.a.~$p$) $f $ and $g$ assume each value mod $p$ with exactly the same multiplicity.
\item \label{DavResc} Finally, since $f$ is indecomposable, so is $g$ and \eql{DavRes}{DavResa} is equivalent to $f(x)-g(y)$ being reducible (Shinzel's problem, \eql{equats}{equats4}). \end{edesc} \end{thm}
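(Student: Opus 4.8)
The plan is to push everything through the common Galois closure, convert the ``equal range'' hypothesis into a statement about traces of permutation representations, and then isolate the single group-theoretic fact that does all the real work. First, by DS$_1$ (Theorem~\ref{DS1}) the Galois closures of $f$ and $g$ coincide, so there is one arithmetic monodromy group ${}^aG := {}^aG_f = {}^aG_g$, with geometric subgroup $G := G_f = G_g$, carrying two permutation representations $T_f, T_g$; by Proposition~\ref{useInfty} these have the same degree $n$, and since $f, g$ are polynomials each is totally ramified over $\infty$, yielding one element $\sigma_\infty \in G$ that is an $n$-cycle in both $T_f$ and $T_g$. Next I would feed Davenport's hypothesis into monodromy precision: Theorem~\ref{Chebp}, applied in the Davenport-pair case \eql{DavEx}{DavExe}, says that ``$f$ and $g$ have the same range on almost all residue fields'' is equivalent to the monodromy conclusion \eqref{*5} holding on every coset, i.e.\ $\tr(T_f(\sigma)) > 0 \iff \tr(T_g(\sigma)) > 0$ for all $\sigma \in {}^aG$: the two representations have the same fixed-point-free set. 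The arithmetic-versus-geometric subtlety — whether the field of definition $\hat K_{f,g}$ of the closure is trivial — is flagged here and carried through exactly as in \S\ref{secIII.2}; below I argue with $G$, the upgrade to ${}^aG$ being the usual bookkeeping.

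Second, reduce to the doubly transitive case. Since $f$ is indecomposable it is indecomposable over $\bar K$ (Lemma~\ref{decompPoly}), so $G$ is primitive in $T_f$, and Lemma~\ref{decompPoly} leaves three possibilities. If $f$ is affine equivalent over $\bar K$ to $x^n$, then $n$ is prime, $G \cong \bZ/n$ has no proper subgroup of index $n$, so $T_g \cong T_f$ and $g = f(ay + b)$; the endgame of Lemma~\ref{affineChange} (irreducibility of $a'x^n - 1$) shows this is never a \emph{nontrivial} Davenport pair, so the statement is vacuous here. If $f$ is affine equivalent over $\bar K$ to a Chebychev polynomial $T_n$, then again $n$ is prime, and the branch-cycle characterization (Lemma~\ref{ChebChar}) together with the equality of branch loci forced by DS$_1$ pins $g$ down to be $T_n\circ\iota$ for an affine $\iota$, and then (as $\gcd I_{T_n}=1$) $\iota$ is defined over $K$, so $(f,g)$ is trivial and the case is again vacuous. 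So from now on $G$ is doubly transitive in $T_f$; then $T_f = \mathbf 1 + \chi_f$ with $\chi_f$ irreducible of degree $n-1$, and $T_g = \mathbf 1 + \chi_g$ with $\chi_g$ of degree $n-1$ but not yet known to be irreducible.

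Everything now collapses to the single inequality $\langle \chi_f, \chi_g \rangle \ge 1$. Granting it, irreducibility of $\chi_f$ and $\deg \chi_f = \deg \chi_g$ force $\chi_g = \chi_f$; hence $T_g$ is doubly transitive, so $g$ is indecomposable (first half of \eql{DavRes}{DavResc}), and $T_f$ and $T_g$ afford the same character — the group-representation equivalence of \eql{DavRes}{DavResa} — while $T_f \not\cong T_g$ as permutation representations unless $(f,g)$ is trivial, since a $K$-isomorphism $\prP^1_x \cong \prP^1_y$ commuting with the maps to $\prP^1_z$ must fix the unique point over $\infty$, hence is affine, giving $g(y) = f(ay + b)$. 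Part \eql{DavRes}{DavResb} then follows from plain Chebotarev applied with $\chi_f = \chi_g$: $\#\mathrm{fix}\, T_f(\sigma) = \#\mathrm{fix}\, T_g(\sigma)$ for every $\sigma$, so for almost all primes and every value $z_0$ the equations $f(x) = z_0$ and $g(y) = z_0$ have equally many solutions mod that prime — in particular equal ranges, which is Davenport's hypothesis. And \eql{DavRes}{DavResc} is finished off by the standard dictionary (Lemma~\ref{pullbackcomps}) identifying reducibility of $f(x) - g(y)$ with $\langle T_f, T_g \rangle \ge 2$, i.e.\ with $\langle \chi_f, \chi_g \rangle \ge 1$ when $f$ is indecomposable, so that \eql{DavRes}{DavResa}, reducibility of $f(x) - g(y)$, and Schinzel's problem \eql{equats}{equats4} become literally the same statement.

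Thus the whole theorem rests on $\langle \chi_f, \chi_g \rangle \ge 1$ — equivalently, the set-theoretic fibre product $\prP^1_x \times^\set_{\prP^1_z} \prP^1_y$, i.e.\ the plane curve $f(x) - g(y) = 0$, acquires a second component over $\bar K$ — and this is the step I expect to be the genuine obstacle. I do not think it follows from the trace condition alone: Burnside's identity $\tfrac{1}{|G|}\sum_{\sigma \in G} \#\mathrm{fix}\, T_f(\sigma) = 1$, applied to $T_f$ and to $T_g$, together with the common $n$-cycle $\sigma_\infty$, constrains the conjugacy-class distribution of fixed-point numbers but does not by itself split the fibre product. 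The plan is instead to use the Branch Cycle Lemma twice: the product-one relation on a branch-cycle description $(\sigma_\infty, \sigma_1, \dots, \sigma_r)$ coming from Riemann's Existence Theorem severely restricts which doubly transitive pairs $(G, T_f)$ occur as the monodromy of an indecomposable polynomial, and matching this against the classification of doubly transitive groups that contain an $n$-cycle — where the finite simple group classification enters, exactly as the abstract advertises — leaves only the projective-linear configurations (the degree $7, 13, 15$ families of \S\ref{App.5}); in each of those the second maximal parabolic supplies a companion representation $T_g$ sharing the $(n-1)$-dimensional constituent of $T_f$, which is precisely $\langle \chi_f, \chi_g \rangle \ge 1$. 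So the hard content — and the place where function theory rather than equation manipulation does the work — is turning ``$f$ and $g$ have the same value set modulo almost all primes'' into ``the fibre product has an extra component''; once that is in hand, the three parts of the theorem are bookkeeping.
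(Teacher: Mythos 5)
Your reduction is the right one: DS$_1$ gives a common Galois closure, Theorem~\ref{Chebp} converts Davenport's hypothesis into the trace condition $\tr T_f(\sigma)>0\iff \tr T_g(\sigma)>0$, and once you have $\langle\chi_f,\chi_g\rangle\ge 1$ the rest is bookkeeping exactly as you say (irreducibility of $\chi_f$ and equal degrees force $\chi_f=\chi_g$, hence $T_g$ doubly transitive, hence $g$ indecomposable; inequivalence of the permutation representations from nontriviality of the pair; the multiplicity statement in \eql{DavRes}{DavResb} by Chebotarev since $T_f$ and $T_g$ have the same character). But the crux you declare to be the genuine obstacle — passing from the trace condition to $\langle\chi_f,\chi_g\rangle\ge 1$, i.e.\ reducibility of $f(x)-g(y)$ — is exactly where you go wrong. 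You write that you ``do not think it follows from the trace condition alone'' and propose to bring in the product-one relation, the BCL, and the classification of doubly transitive groups with an $n$-cycle. None of that is needed, and using it here inverts the paper's logic: the classification and BCL are used in DS$_4$/DS$_5$ to pin down \emph{which} degrees and definition fields admit Davenport pairs, not to prove that the trace condition forces reducibility.

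The step you are missing is Remark~\ref{dropIndec} (\cite[Lem.~3]{Fr73a}), which is elementary group theory. Suppose $f(x)-g(y)$ were irreducible, i.e.\ $G(x_1)$ transitive on $\{y_1,\dots,y_n\}$. The trace condition says every $\sigma$ fixing $x_1$ must fix some $y_j$, so $G(x_1)=\bigcup_{j=1}^n\bigl(G(x_1)\cap G(y_j)\bigr)$. By transitivity these are all the $G(x_1)$-conjugates of the proper subgroup $H=G(x_1)\cap G(y_1)$, so a proper subgroup's conjugates would cover $G(x_1)$ — contradicting Jordan's lemma. That single observation converts the trace condition directly into $\langle T_f,T_g\rangle\ge 2$, with no Riemann--Hurwitz constraint, no appeal to RET, and no classification. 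Notice also that this argument does not even use that $f$ is indecomposable or a polynomial — exactly as the paper points out — so it is considerably cheaper and more general than your proposed route. Your instinct that the whole theorem hinges on the reducibility of the fibre product is correct; your guess about what it costs to get there is not.
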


What DS$_2$ says is that $x_i$ is a sum of distinct $y_j\,$s times a nonzero element $a \in K$. With no loss, take $a=1$, and write

\begin{triv} \label{*7} $x_1=y_1+ y_{\alpha_2}+\cdots +y_{\alpha_k}$,  with $2 \le  k \le (n-1)/2$ (because the complementary sum of $y_i\,$s now works as well). \end{triv}

Let $f(x)$ and $g(y)$ be rational functions over a field $K$ (assume $\text{Char}K=0$, or that the covers given by $f$ and $g$ are separable). Suppose  $f$ (resp.~$g$)  decomposes as $f_1\circ f_2$ (resp.~$g_1\circ g_2$). Write the projective normalization of the fiber product of the covers $(f,g)$ (resp.~$(f_1,g_1)$) as $W=\prP^1_x\times_{\prP^1_z}\prP^1_y$ (resp.~$W_1=\prP^1_{u}\times_{\prP^1_z}\prP^1_{v})$: $W$ naturally maps surjectively to $W_1$. From \eql{algGeom}{algGeomb} the irreducible factors of $f(x)-g(y)$ (resp.~$f_1(u)-g_1(v)$) correspond one-one with the connected components of $W$ (resp.~$W_1$).  The 1st sentence of Lem.~\ref{pullbackcomps}  says, in the Zariski topology, the image of a connected space is connected. Result \eql{DavRes}{DavResc}  is  geometric. The rest of  Lem.~\ref{pullbackcomps} is a preliminary to it from  \cite[Prop.~2]{Fr73a}.  

\begin{lem} \label{pullbackcomps} Each irreducible factor of $f_1(u)-g_1(v)$ is the image of one or more irreducible factors of $f(x)-g(y)$.  Further, if $f(x)-g(y)$ does factor, then you can choose $(f_1,g_1)$ so the following holds.  \begin{edesc}  \label{schcond} 
    \item  \label{schconda}  The irreducible factors of $f(x)-g(y)$ correspond one-one with the irreducible factors of $f_1(u)- g_1(u)$; and
    \item  \label{schcondb}  the Galois closure covers of $f_1$ and $g_1$ are the same. \end{edesc} 
\end{lem}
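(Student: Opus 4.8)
The plan is to pass everything to the Galois–closure dictionary, after which the first assertion is formal and the construction of $(f_1,g_1)$ is a short lattice argument about double cosets; only one combinatorial lemma on double cosets will cost real work. Using the separability hypothesis, let $\Omega$ be the compositum over $K(z)$ of the Galois closures of the covers $f$ and $g$, put $G=\mathrm{Gal}(\Omega/K(z))$, and let $G_f\le G$ (resp.\ $G_g\le G$) be the stabilizer of a root $x_1$ of $f(x)-z$ (resp.\ of a root $y_1$ of $g(y)-z$). Three standard translations are in force: (i) decompositions $f=f_1\circ f_2$ correspond bijectively to groups $H_f$ with $G_f\le H_f\le G$, with $f_1\colon\prP^1_u\to\prP^1_z$ the subcover $\Omega^{H_f}\to\Omega^{G}$ (and likewise $g=g_1\circ g_2\leftrightarrow G_g\le H_g\le G$); (ii) the irreducible factors of $f_1(u)-g_1(v)$ are in bijection with the $G$-orbits on $G/H_f\times G/H_g$, equivalently with the double cosets $H_f\backslash G/H_g$, and in particular those of $f(x)-g(y)$ with $G_f\backslash G/G_g$; (iii) the Galois closure of the cover $f_1$ over $\prP^1_z$ is $\Omega^{C_f}\to\Omega^{G}$ with $C_f=\mathrm{core}_G(H_f)=\bigcap_{\sigma\in G}\sigma H_f\sigma^{-1}$. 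Under (i)--(ii) the projection $W\to W_1$ of the normalized fiber products is induced by the canonical $G$-equivariant surjection $G/G_f\times G/G_g\to G/H_f\times G/H_g$.

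The first assertion is then formal. The map $W\to W_1$, being induced by a surjective $G$-map, is surjective; and the image of a component $W_i$ of $W$ is irreducible and one-dimensional (the composite $W_i\to W_1\to\prP^1_z$ is nonconstant), hence is a full component of $W_1$. Surjectivity of $W\to W_1$ then shows that every component of $W_1$ is the image of one or more components of $W$.

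For the main assertion, call a pair $(H_f,H_g)$ with $G_f\le H_f\le G$, $G_g\le H_g\le G$ \emph{admissible} when $|H_f\backslash G/H_g|=|G_f\backslash G/G_g|$. Admissible pairs form a finite nonempty set (it contains $(G_f,G_g)$), so fix one, $(H_f^{\ast},H_g^{\ast})$, that is maximal for componentwise inclusion, and let $f_1,g_1$ be the covers attached to $H_f^{\ast},H_g^{\ast}$ by (i). Then $W\to W_1$ is a surjection between finite sets of components of equal cardinality, hence a bijection, which is the one–one correspondence of irreducible factors in (a). For (b) it suffices to prove $\mathrm{core}_G(H_f^{\ast})=\mathrm{core}_G(H_g^{\ast})$, since by (iii) this makes $f_1$ and $g_1$ have the same Galois closure. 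Here the key lemma enters: \emph{if $N\trianglelefteq G$ and $N\le A$, then $|A\backslash G/B|=|A\backslash G/NB|$ for every $B\le G$}. Granting it, the pair $(H_f^{\ast},\,\mathrm{core}_G(H_f^{\ast})\cdot H_g^{\ast})$ is again admissible and contains $(H_f^{\ast},H_g^{\ast})$, so maximality forces $\mathrm{core}_G(H_f^{\ast})\le H_g^{\ast}$; as $\mathrm{core}_G(H_f^{\ast})\trianglelefteq G$ this gives $\mathrm{core}_G(H_f^{\ast})\le\mathrm{core}_G(H_g^{\ast})$, and the symmetric argument gives the reverse inclusion. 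The hypothesis that $f(x)-g(y)$ factors is what keeps this nontrivial: it rules out $(H_f^{\ast},H_g^{\ast})=(G,G)$, so $f_1,g_1$ genuinely record the factorization.

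I expect the combinatorial lemma to be the one real obstacle. Because the $G$-map $G/A\times G/B\to G/A\times G/NB$ is the identity on the first factor and $G$-equivariant, it is enough to show that two elements with the same image lie in one $G$-orbit; writing them as $(aA,bB)$ and $(aA,b'B)$ with $b'=bnc$, $n\in N$, $c\in B$, we must produce $\delta\in aAa^{-1}$ with $\delta bB=bnB$. Since $N\trianglelefteq G$ and $N\le A$ one has $N\le aAa^{-1}$, and conjugation by $b$ is an automorphism of $N$, hence surjects onto $N/(N\cap B)$; taking $\delta\in N$ that maps to $n$ there does the job. No step uses that the covers have genus $0$ or that $\mathrm{char}\,K=0$, which is the generality attributed to \cite[Prop.~2]{Fr73a}.
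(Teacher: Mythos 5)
Your proof is correct, and it follows the same Galois-theoretic route the paper ascribes to \cite[Prop.~2]{Fr73a} (see \S\ref{fibproduniv}, where the result is called a ``see-saw correspondence'' and emphasized to be purely group-theoretic, independent of genus $0$ or of dimension $1$). What you do differently is package the descent as a single maximality argument: instead of the see-saw's alternating enlargements ($H_g\mapsto \mathrm{core}_G(H_f)H_g$, then $H_f\mapsto\mathrm{core}_G(H_g)H_f$, $\dots$), you take a maximal pair with the prescribed double-coset count at the outset and read off both (\ref{schconda}) and (\ref{schcondb}) directly. This is tidier and makes the termination of the see-saw automatic, at the cost of being slightly less constructive about which $(f_1,g_1)$ one ends with.

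One simplification worth noting: your ``one real obstacle'' is actually trivial. The combinatorial lemma --- $N\trianglelefteq G$, $N\le A$ implies $|A\backslash G/B|=|A\backslash G/NB|$ --- holds because the two partitions of $G$ are literally the same: for any $g\in G$,
\[
A\,g\,NB \;=\; A\,(Ng)\,B \;=\; (AN)\,g\,B \;=\; A\,g\,B,
\]
using normality of $N$ for the first equality and $N\le A$ for the third. Your orbit-lifting argument is correct but is doing several times more work than is needed. With this, your claim that the maximal admissible pair $(H_f^*,H_g^*)$ cannot have either entry equal to $G$ (granted $f(x)-g(y)$ reducible), and that $\mathrm{core}_G(H_f^*)=\mathrm{core}_G(H_g^*)$, both go through; the equality of cores translates, via (iii), into $f_1$ and $g_1$ having the same Galois closure cover of $\prP^1_z$ over $K$, which is (\ref{schcondb}). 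Your reading of the first sentence of the lemma --- that $W\to W_1$ is a finite surjection between normal curves, so it carries components onto components --- matches the paper's own gloss that it is just ``the image of a connected space is connected.''
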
 

The end of \S\ref{secI.0} notes many papers quote \cite[Prop.~2]{Fr73a}. Also, as prior to Lem.~\ref{indecompLem}, the original proof works far more generally than those quoters realize. For rational functions, however, \eqref{*7}  won't hold without that $n$-cycle;  you can't even say $\deg(f_1)=\deg(g_1)$. Classifying variables separated factorizations was Schinzel's Problem, not Davenport's. Their mathematical common ground appears to have been their interest in variables separated equations.

They had not considered the equivalence of their problems for the case $f $ is an indecomposable polynomial. They aren't equivalent without the indecomposable assumption. All attempts to write equations for Davenport pairs, especially \cite{CoCa99} (see \S\ref{writeEquats}), used Schinzel's factorization condition. 

Below we  denote the letters of $T_f$ (resp.~$T_g$) by $x_i$ (resp.~$y_i$), $i=1,\dots,n$. Also, $G(x_i)$ is the stabilizer in $G$ of $x_i$. Rem.~\ref{dropIndec} doesn't even assume $n=m$. 

\begin{rem}[Davenport without $f$ indecomposable] \label{dropIndec} \cite[Lem.~3]{Fr73a}, used in  \S\ref{SchinzG}, does not assume $f$ is indecomposable, or even that $f$ and $g$ are polynomials. Suppose $(f,g)$ is a (nontrivial) Davenport pair, so $$T_f(\sigma)> 0 \Leftrightarrow T_g(\sigma)>0, \text{ for each } \sigma\in G \text{ the Galois closure group}.$$ Then, $f(x)-g(y)$ is reducible, or else,  $G(x_1)$ is transitive on $\row y n$. But, then,  conjugates of $G(x_1)\cap G(y_1)=H$ under $G(x_1)$ would cover $G(x_1)$. This contradicts that conjugates of a proper subgroup of $G$  can't cover $G$. \end{rem} 

\subsection{Difference Sets and a Classical Pairing} \label{secV.2} People who like cyclotomy (both Gauss and Davenport did) see difference sets in many situations. The kind that arises in this problem is special (cyclic),  though it is an archetype.

Normalize the naming of $\row x n$ and $\row y n$ in $T_f$ and $T_g$  so that $\sigma_\infty$ (\S\ref{secIII.5}) cycles the $x_i\,$s (and the $y_j\,$s) according to their subscripts. We now combine double transitivity and the action of $\sigma_\infty$ on both sides of \eqref{*7}. From this we see how  the definition of difference set arises. The proof of Prop.~\ref{DS3} includes a shorter proof of  \cite[Lem.~4]{Fr73a}, and a completely different approach to  \cite[Lem.~5]{Fr73a}. The latter included the statement I alluded to from Storer (Prop.~\ref{storer}).  

\begin{prop} \label{DS3} In the nonzero differences from $\sD_1=\{1,\alpha_2,\dots,\alpha_k\} \mod n$  each integer, $\{1,\dots, n\nm1\}$, appears exactly $u=k(k-1)/(n-1)$ times.  Further, writing the $y_i\,$s as expressions in the $x_j\,$s gives the attached different set (up to translation) as $\sD_1$ multiplied by -1.\end{prop}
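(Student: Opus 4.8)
The plan is to squeeze a cyclic difference set out of \eqref{*7} by playing the $n$-cycle $\sigma_\infty$ against the double transitivity of ${}^aG_f$ in force throughout \S\ref{secV.2}. Names are fixed as in the paragraph opening \S\ref{secV.2}, so $\sigma_\infty$ sends $x_i\mapsto x_{i+1}$ and $y_j\mapsto y_{j+1}$ with subscripts mod $n$, and the penultimate normalization of \S\ref{secV} gives $\sum_i x_i=\sum_j y_j=0$, which is what permits $a=1$ in \eqref{*7}. Applying $\sigma_\infty^{\,i-1}$ to \eqref{*7} yields, for every $i\in\bZ/n$,
\[
x_i=\sum_{j\in B_i}y_j,\qquad B_i:=\sD_1+(i-1),\quad |B_i|=k ;
\]
I call the $B_i$ the \emph{blocks}. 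I would first check that ${}^aG_f$ carries this incidence system to itself. The space of $K(z)$-linear relations among $y_1,\dots,y_n$ is a submodule of the degree-$n$ permutation module; it contains the all-ones vector since $\sum_j y_j=0$; and by double transitivity (characteristic $0$) that module is trivial $\oplus$ irreducible, so the relation space is exactly the all-ones line. Hence $\sum_{j\in A}y_j=\sum_{j\in A'}y_j$ with $|A|=|A'|$ forces $A=A'$, and applying $\sigma\in{}^aG_f$ to $x_i=\sum_{B_i}y_j$ and comparing with the analogous identity for the letter $T_f(\sigma)(i)$ gives $T_g(\sigma)(B_i)=B_{T_f(\sigma)(i)}$.

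Next, for $d\ne0$ in $\bZ/n$ a pair of points $\{j,j'\}$ with $j'-j=d$ lies in exactly $\lambda_d:=\#\{(s,s')\in\sD_1\times\sD_1 : s-s'=d\}$ of the blocks $B_i$ (substitute $s=j-(i-1)$). Since ${}^aG_f$ acts on the blocks compatibly with the doubly transitive action $T_g$ on points, $\lambda_d$ is independent of $d$; writing $u$ for the common value and counting ordered pairs of distinct points block by block gives $n\,k(k-1)=n(n-1)\,u$, i.e.\ $u=k(k-1)/(n-1)$. This is the first assertion, and it forces $(n-1)\mid k(k-1)$, hence $u\ge1$.

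For the second assertion, write $\mathbf x=M\mathbf y$ with $M$ the $0/1$ circulant $M_{ij}=[\,j-i+1\in\sD_1\,]$. The difference count just proved says $MM^{T}=(k-u)I+uJ$ with $J$ the all-ones matrix, while $M\mathbf 1=k\mathbf 1$; as $k-u>0$ (since $k<n$) this makes $M$ invertible with $M^{-1}=\tfrac1{k-u}\bigl(M^{T}-\tfrac uk J\bigr)$. Because $J\mathbf x=(\sum_i x_i)\mathbf 1=0$, we get $\mathbf y=\tfrac1{k-u}M^{T}\mathbf x$, that is $y_j=\tfrac1{k-u}\sum_{s\in\sD_1}x_{\,j+1-s}$: each $y_j$ is a nonzero rational multiple — consistent with the $K$-scalar in the DS$_2$ relation for the pair $(g,f)$ — of the sum of the $x_i$ over the translate $(j+1)-\sD_1$ of $-\sD_1$. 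Since negating a difference set negates its differences and so again realizes each nonzero difference $u$ times, the difference set attached to expressing the $y$'s through the $x$'s is $-\sD_1$ up to translation, as claimed.

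I expect the only genuinely delicate point to be the block-stability statement in the first paragraph, which rests on identifying the relation module of the $y_j$; granting it, the rest is the classical ``doubly transitive group $\Rightarrow$ $2$-design'' count and a circulant inversion. As a check, the second assertion could instead be obtained by rerunning the first two paragraphs for the Davenport pair $(g,f)$ — legitimate by \eql{DavRes}{DavResc} — and then matching the two difference sets through the same identity $MM^T=(k-u)I+uJ$.
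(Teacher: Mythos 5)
Your argument is correct and follows the same route as the paper's own proof: translate $\sD_1$ by $\sigma_\infty$ to produce the blocks, use double transitivity of $G_f$ to make the pair-count constant (hence $u=k(k-1)/(n-1)$), then invert the circulant incidence matrix to read off the difference set for the inverse relation. You have the standard design-matrix identity right, $M M^{T}=(k-u)I+uJ$; the paper's displayed $(k-1)I_n+u1_{n\times n}$ has a typo ($k-1$ should be $k-u$), and your version is the one that makes $M$ invertible and gives $y_j=\tfrac{1}{k-u}\sum_{s\in\sD_1}x_{j+1-s}$. The one place you add genuine detail is justifying block-stability under ${}^aG_f$ from the relation module of the $y_j$ being exactly the all-ones line by double transitivity; the paper takes that as understood, so your version is a slightly more careful rendering of the same argument.
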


\begin{proof}   Acting by  $\sigma_\infty$ on $\sD_1$ -- translating subscripts -- produces $\sD_i$,  $i=1,\dots,n$. The permutation action of $G_f$ gives a representation equivalent to $T_f$. The number of times an integer $u \mod n$ appears as a (nonzero) difference from $\sD_1$ is the same as the number of times the pair $\{1, u+1\}$ appears in the union of the $\sD_i\,$s. That is, you are normalizing its appearance as a difference where the first integer is a 1. Double transitivity of $G_f$ is equivalent to transitivity of $G_f(1)$ on $2,\dots,n$. So, the count of appearances of {1, u+1} in all the $\sD_i\,$s is independent of $u$.

Now consider, as in the last sentence, writing the $y_i\,$s in terms of the $x_j\,$s. To do so form a classical $n\times n$  incidence matrix: $I_{x,y}$: rows consist of 0s and 1s with a 1 (resp. 0) at $(i,j)$ if $y_j$ does (resp.~not) appear in $x_i$ (according to the translate of subscripts on \eqref{*7}). Then, applying $I_{x,y}$ to the transpose of  $[y_1\ \dots\ y_n]$ (so it is a column vector) gives the column vector of the $x_i\,$s. Denote the transpose of $I_{x,y}$ by $^\tr I_{x,y}$. From the difference set definition, notice:
$$^\tr I_{x,y} \times  I_{x,y}  = I_{x,y}  \times  {}^\tr I_{x,y}= (k\nm 1) I_n + u1_{n\times n},$$ with $I_n$ the $n\times n$ identity matrix, and $1_{n\times n}$ the matrix having 1s everywhere.

Apply both sides to the transpose of $ [y_1\,\cdots\, y_n]$  to conclude the matrix $^\tr I_{x,y}$ has rows giving the difference set attached to inverting the relation between the $x\,$s and $y\,$s. Now look at the last column of $I_{x,y} $. A 1 appears at position $j$ if and only if row 1 has a 1 at column $n-j+1$. That is, $\mod n$, column $n$ is -1 times row 1 translated by 1. That concludes the last line of the proposition. \end{proof} 

On numerology alone, we may consider which triples $(n,k,u)$ from Prop.~\ref{DS3}  afford difference sets. These are the only possibilities up to n=31:

\begin{equation}\begin{array}{rl} \label{*8}  & (7,3,1), (11,5,2), (13,4,1), (15,7,3), (16,6,2), (19,9,4), (21,5,1),  \\ &(22,7,2), (23,11,5), (25,9, 3), (27,13,6), (29,8,2), (31,6,1).\end{array} \end{equation} 

I eliminated the cases $n = 22$, 23 and 27 with the Chowla-Ryser Thm., which I discovered in \cite[Thms. 3, 4 and 5]{Ha63}. It says, for $n$ even (resp.~odd),  existence of a difference set implies $k-u$ is a square (resp.~$z^2=(k-u)x^2+(-1)(n-1)/2y^2$ has a nontrivial integer solution). Hall's book suggests Chowla-Ryser is \lq\lq if and only if\rq\rq\ for existence of a difference set. Still, we now know  for sure, if there were such a converse, it would not  produce a difference set in a doubly transitive design because we know  Collineation Conjecture  \ref{collConj} is true.

The next section shows how we guessed which groups  -- and conjugacy classes -- arose as monodromy of Davenport pairs: Problem D${}_2$ in \eqref{davpart}. This appearance of projective linear groups, combined with  Riemann-Hurwitz, shows why we stopped the list of  \eqref{*8} with $n=31$. This was the first inkling of the Genus 0 Problem. 

\subsection{Misera's example (sic)} \label{secV.3}  Take a finite field $\bF_q$: $q=p^t$ for some value of $t$, $p$ a prime. For any integer $v \ge 2$, consider $\bF_{q^{v+1}}$ as a vector space $V$ over $\bF_q$ of dimension $v+1$, so identifying it with $(\bF_q)^{v+1}$. 
The projective linear group, $\PGL_{v+1}(\bF_q)=\GL_{v+1}(\bF_q)/(\bF_q)^*$, acts on the lines minus the origin in $(\bF_q)^{v+1}$: on the points of projective $v$-space, $\prP^v(\bF_q)$. Take $n=(q^{v+1}-1)(q-1)$. 

Conclude:  $\PGL_{v+1}(\bF_q)$ has two (inequivalent) doubly transitive permutation representations, on lines and on hyperplanes. Yet, these representations are equivalent as group representations by an incidence matrix -- as in the proof of Prop.~\ref{DS3} -- that conjugates one representation to the other. 

Finally, here is what Misera told me. Apply Euler's Theorem to produce a cyclic generator, $\gamma_q$, of the nonzero elements of $\bF_{q^{v+1}}$. Let $\gamma_q$ act by multiplication on $\bF_{q^{v+1}}$. It induces  (as does $(\gamma_q)^{q-1}$)  an $n$-cycle  in $\PGL_{v+1}(\bF_q)$ acting on $\prP^v(\bF_q)$. 

Misera's example allowed me to produce examples fulfilling  Thm.~\ref{DS2}. At the end of my first year at IAS I took the following step: 

\begin{thm}[DS$_4$] \label{DS4} \cite[p.~134]{Fr73a} writes difference sets for $$\begin{array}{rl} &n=7=1\np 2\np2^2, 11, 13=1\np3\np 3^2, 15=1\np 2\np 2^2\np 2^3, \\ &21=1\np 4\np 4^2 \text{ and } 31=1\np 5\np 5^2.\end{array}$$ My notes to Feit in 1969 give Davenport pairs $(f,g)$ (\S\ref{secII.2}), branch cycles (\S\ref{secVI.4}) and appropriate number fields over which they are defined   for each case. \end{thm}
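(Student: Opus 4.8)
The plan is to prove both assertions of DS$_4$ by the same construction performed once for each admissible degree $n\in\{7,11,13,15,21,31\}$: first exhibit the cyclic difference set, then name the monodromy group together with its pair of representations, then build a genus~$0$ branch cycle description, and finally use Riemann's Existence Theorem to pass to covers over $\mathbb{C}$ and the Branch Cycle Lemma to descend to the predicted number field. Everything rests on reductions already available. By Thm.~\ref{DS1} and Prop.~\ref{useInfty}, a Davenport pair of indecomposable polynomials has one Galois closure, one monodromy group, one common degree $n$, and one $n$-cycle $\sigma_\infty$ (total ramification over $\infty$, forced because the covers are polynomial); by Thm.~\ref{DS2}, the arithmetic monodromy $^aG$ carries two inequivalent doubly transitive permutation representations $T_f,T_g$ that coincide as linear representations; and by Prop.~\ref{DS3}, the $\sigma_\infty$-translates of a section $\sD_1$ of \eqref{*7} form a difference set with parameters $(n,k,u)$, $u=k(k-1)/(n-1)$.

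First I would pin down the group. Combining Schur's and Burnside's theorems (\S\ref{secIV}) with the requirements that the geometric group $\bar G$ be doubly transitive, contain an $n$-cycle, and admit two incidence-conjugate representations, the survivors of the numerology list \eqref{*8} --- after the Chowla--Ryser eliminations of $n=22,23,27$ --- are exactly Misera's projective groups $\PGL_{v+1}(\bF_q)$ acting on points and hyperplanes of $\prP^v(\bF_q)$ (with $v\ge 2$, so $n=1+q+\dots+q^v$): this yields $n=7=1+2+2^2$, $13=1+3+3^2$, $15=1+2+2^2+2^3$ (with $\PGL_4(\bF_2)\cong A_8$), $21=1+4+4^2$, $31=1+5+5^2$; together with the sporadic pair $\mathrm{PSL}_2(11)$ on $11$ letters. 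The remaining candidates $n=16,19,25,29$ of \eqref{*8} fit neither shape and drop out. In each projective case the Singer element $\gamma_q$ of \S\ref{secV.3} is the required $n$-cycle, and the Singer difference set read off the powers of $\gamma_q$ (the incidence data of $\prP^v(\bF_q)$) is the difference set named in the theorem; for $n=11$ one takes the quadratic-residue (Paley biplane) difference set $\{1,3,4,5,9\}\bmod 11$, whose automorphism group is $\mathrm{PSL}_2(11)$. This settles the ``writes difference sets'' half of DS$_4$.

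Next I would complete $\sigma_\infty$ to a branch cycle description. As $\sigma_\infty$ is an $n$-cycle, Riemann--Hurwitz \eqref{*10} applied to the genus~$0$ cover $f$ leaves an index budget of $n-1$ over the finite branch points, so one must produce a tuple $(\sigma_1,\dots,\sigma_r,\sigma_\infty)$ lying in prescribed conjugacy classes of $\bar G$, with product the identity, generating $\bar G$, and with $\sum_i\mathrm{ind}(\sigma_i)=n-1$. Each verification is a finite computation inside one of the six groups; read in reverse, the same index count is the obstruction that forbids any generating product-one tuple once $n>31$ --- the group has outgrown what an index budget $n-1$ plus a single $n$-cycle can generate --- which is why the list terminates at $31$. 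Given a valid tuple, Riemann's Existence Theorem (\S\ref{secVI.4}) produces a cover of $\prP^1_z$ over $\mathbb{C}$, i.e.\ a point of the associated reduced Hurwitz space, and since $T_f$ and $T_g$ are conjugate by the incidence matrix of Prop.~\ref{DS3} the companion polynomial $g$ is read off the same cover; these are the branch cycles in the notes to Feit.

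Finally, the number field. The two forms of the Branch Cycle Lemma isolated in \S\ref{secVI.1} and \S\ref{famdeffield} apply: the first shows no single $f$ here is definable over $\bQ$ (this is D$_1$ of \eqref{davpart}), because the cyclotomic action of $\mathrm{Gal}(\bar\bQ/\bQ)$ on the branch locus cannot match the prescribed classes once the point and hyperplane representations have to be interchanged; the second computes the minimal field $K_n$ as a small abelian (typically quadratic, cyclotomic-flavoured) extension absorbing that interchange, over which the pair $(f,g)$, with $g$ the $K_n/\bQ$-conjugate of $f$, does descend. Explicit polynomials then follow either by solving the monodromy constraints directly --- feasible for $n=7,11,13$, where the Hurwitz space is a genus~$0$ $j$-line cover --- or by narrowing the search with Schinzel's factorization condition via Lem.~\ref{pullbackcomps}. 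I expect the main obstacle to be this middle step: showing, group by group and with the genus~$0$ index budget fixed, that a generating product-one tuple in the right classes really exists and sits in a braid orbit rational over the predicted $K_n$, while simultaneously excluding all larger degrees. That is precisely the point where function theory (genus~$0$), the structure of doubly transitive groups, and the Branch Cycle Lemma have to be orchestrated together, and it is the mechanism behind the bound $31$ in D$_2$ of \eqref{davpart}.
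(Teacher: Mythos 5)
Your reconstruction of the constructive core --- use $\PGL_{v+1}(\bF_q)$ (or $\mathrm{PSL}_2(11)$) for each listed $n$, the Singer cycle as $\sigma_\infty$, the Singer (or Paley) difference set, a Riemann--Hurwitz index budget of $n\nm1$ to produce branch cycles, RET to get the cover, and the BCL to locate the number field --- is indeed the method the paper scatters over \S\ref{secV.3}, \S\ref{secVI.3}--\ref{secVI.5} and \S\ref{App.5}, carried out in full only for $n=7$. As a plan for regenerating the content of the notes to Feit, that part is sound, and it matches the paper's route.

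Where you overreach is in the middle paragraph, and one slip at the end. DS$_4$ is a purely positive, constructive statement: difference sets were exhibited and Davenport pairs/branch cycles/number fields produced for the six listed degrees. It makes no exclusivity claim. You imported an exclusion step (``$n=16,19,25,29$ fit neither shape and drop out'') and attributed the group classification to Schur and Burnside. That does not follow from those two theorems: knowing $G$ is doubly transitive, contains an $n$-cycle, and carries two incidence-conjugate doubly transitive representations does not, by elementary means, force $G$ to be projective linear or $\mathrm{PSL}_2(11)$. That step is exactly the Collineation Conjecture \ref{collConj}, which the paper repeatedly stresses rests on the simple group classification (\S\ref{FeitInteractions}); indeed the whole D$_1$/D$_2$ contrast of \eqref{davpart} turns on separating what does and does not need the classification. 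Likewise, the list stops at $31$ not because ``an index budget $n\nm1$ plus an $n$-cycle cannot generate the group'' --- on its own this rules nothing out --- but because of Feit's theorem (\cite[Thm.~3]{Fe70}, \cite[Prop.~1]{Fr73a}) that every nontrivial branch cycle in these actions moves at least half the letters; only combined with Riemann--Hurwitz does this bound $n$. Finally, the degrees with $r=4$ and genus-$0$ reduced Hurwitz spaces over the $j$-line are $n=7,13,15$, not $n=7,11,13$; the degrees $n=11,21,31$ have $r=3$ and hence only finitely many pairs up to M\"obius equivalence (\S\ref{App.5}, Prop.~\ref{DS6}).
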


In rereading, I see \cite[(1.25)]{Fr73a} left out $n=15$ in its list of difference sets. I'll do that case now for use below.

Take an irreducible degree 4 polynomial over $\bZ/2$ (say, $x^4\np x\np 1$). Then, multiply the nonzero elements (nonzero linear combinations of $1, x, x^2, x^3$ corresponding to 1, 2, 3, 4) by $x$ and use the relation $x^4\np x\np 1=0$, to label them $1, 2, \dots, 15$. Example: $x^4 = x+1$ corresponds to 5.

Choose a hyperplane: Say, the linear combinations of $1, x$ and $x^2$. Then, a difference set, $\sD_{15}=\{1, 2, 3, 5, 6, 9, 11\} \mod 15$ is a list of elements on this hyperplane. 

\begin{defn} \label{multiplier} A {\sl multiplier\/} of difference set $\sD \mod n$ is $c \in  (\bZ/n)^*$ with $c\sD$ a translate of $\sD \mod n$. Denote by $M_{\sD}$ the group of multipliers of $D$. \end{defn}

\begin{exmp} 2 is a multiplier of $\sD_{15}$, generating $M_{\sD_{15}}$, an order four subgroup of the invertible integers $\mod 15$. A translate of the one \cite[\S 2.2.5]{CoCa99} took is $\{1, 2, 3, 8, 10, 13, 14\}$. After multiplication by -1, this is a translation of $\sD_{15}$. \end{exmp}

Here, as for $n= 7$, the non-multipliers of the difference set consist of the coset of multipliers time -1, compatible with the contribution of Storer from the opening of \S\ref{secVI}. In that section we refer to $\gamma_q$ as $\sigma_\infty$. We do that here to allow directly referring to the following observation. Use the notation of \S\ref{App.1}, with $q = p^t$.  
A choice of $\sigma_\infty$, up to conjugacy, defines the inertia generator from \S\ref{secIII.4} attached to a polynomial $f$ that has   geometric monodromy  between $\PGL_n(\bF_q)$ and P$\Gamma$L$_n(\bF_q)$. Further, $\sigma_\infty$, up to conjugacy, defines the attached difference set up to translation given in \eqref{*7}.

\begin{lem}[Multiplier]  \label{multiplierlem} The subgroup of $(\bZ/n)^*$ that corresponds to powers of $\sigma_\infty$ conjugate to $\sigma_\infty$  (in {\rm P$\Gamma$L$_n(\bF_q)$}) equals $M_{\sD}$. \end{lem}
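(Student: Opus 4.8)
The plan is to translate both sides of the claimed equality into the cyclic labelling of $\prP^v(\bF_q)$ by $\bZ/n$ set up in \S\ref{secV.3}. Recall the setup: $\sigma_\infty$ is the image in $\PGL_{v+1}(\bF_q)$ of $m_{\gamma_q}$, multiplication by a generator $\gamma_q$ of $\bF_{q^{v+1}}^*$, and the point of $\prP^v(\bF_q)$ spanned by $\gamma_q^i$ gets the label $i\bmod n$, so $\sigma_\infty$ acts on labels by $i\mapsto i+1$. Fix the hyperplane $H_0$ used to define $\sD$, so $\sD\subset\bZ/n$ is its label set; for $s\in\bZ/n$ put $H_s=\sigma_\infty^s(H_0)$, with label set $\sD+s$, and (exactly as in the proof of Prop.~\ref{DS3}) the label $i$ lies on $H_s$ precisely when $i-s\in\sD$. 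Since $|\sD|$ is prime to $n$ the $H_s$ are distinct, hence run over all $n$ hyperplanes. For $c\in(\bZ/n)^*$ let $\mu_c$ denote the bijection $i\mapsto ci$ of $\prP^v(\bF_q)$; a direct computation with the labelling shows that if $h$ fixes the label $0$ and acts as $\mu_c$ then $h\sigma_\infty h^{-1}=\sigma_\infty^c$ (because $h\sigma_\infty^i(0)=\sigma_\infty^{ci}h(0)=ci$).

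First I would prove $M_\sD\subseteq\{\,c\in(\bZ/n)^*:\sigma_\infty^c\ \text{is conjugate to}\ \sigma_\infty\ \text{in}\ \mathrm{P}\Gamma\mathrm{L}_{v+1}(\bF_q)\,\}$. Suppose $c\in M_\sD$, say $c\sD=\sD+t$. Then $\mu_c$ sends $H_s$ to the set $c\sD+cs=\sD+(t+cs)=H_{t+cs}$, and it preserves incidence, since $i\in H_s\iff i-s\in\sD\iff ci-cs\in c\sD\iff ci\in H_{t+cs}$. A point‑bijection of $\prP^v(\bF_q)$ with $v\ge 2$ that carries hyperplanes to hyperplanes and preserves incidence is a collineation, hence lies in $\mathrm{P}\Gamma\mathrm{L}_{v+1}(\bF_q)$ by the fundamental theorem of projective geometry; call it $h_c$. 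As $\mu_c$ already fixes the label $0$, the computation above gives $h_c\sigma_\infty h_c^{-1}=\sigma_\infty^c$.

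For the reverse inclusion, take $g\in\mathrm{P}\Gamma\mathrm{L}_{v+1}(\bF_q)$ with $g\sigma_\infty g^{-1}=\sigma_\infty^c$. Let $s$ be the label of $g(0)$ and replace $g$ by $\sigma_\infty^{-s}g$; this is still a collineation, it fixes the label $0$, and, as $\langle\sigma_\infty\rangle$ is abelian, it still satisfies $g\sigma_\infty g^{-1}=\sigma_\infty^c$. Then $g(i)=g\sigma_\infty^i(0)=\sigma_\infty^{ci}g(0)=ci$, so $g=\mu_c$. Being a collineation, $g$ carries $H_0$ to some hyperplane, whose label set is $g(\sD)=c\sD$; hence $c\sD$ is a translate of $\sD$, i.e. $c\in M_\sD$. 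Together with the previous paragraph this gives the two sets coincide, which is the Lemma.

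As a check one can instead run the dictionary on the group side: the normalizer of the Singer cycle $\langle\sigma_\infty\rangle$ in $\mathrm{P}\Gamma\mathrm{L}_{v+1}(\bF_q)$ is $\langle\sigma_\infty\rangle$ extended by $\mathrm{Gal}(\bF_{q^{v+1}}/\bF_p)$, whose generator $x\mapsto x^p$ conjugates $m_{\gamma_q}$ to $m_{\gamma_q^p}=m_{\gamma_q}^p$; so the conjugacy set of powers is $\langle p\rangle\le(\bZ/n)^*$, which recovers the classical fact (a special case of Hall's multiplier theorem, cf.~\cite{Ha63}) that $p$, and hence $q$, is a multiplier of $\sD$, and even that these generate $M_\sD$. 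The only genuine obstacle in the argument is the appeal to the fundamental theorem of projective geometry: one must know that a set‑bijection respecting the translate structure $\{\sD+s\}_{s\in\bZ/n}$ is actually induced by a semilinear map of $V=\bF_{q^{v+1}}$; everything else is bookkeeping with the labelling and the identity $h_c\sigma_\infty h_c^{-1}=\sigma_\infty^c$.
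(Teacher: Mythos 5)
Your proof is correct, and it is worth noting that the paper itself does not supply an argument for this Lemma---it is asserted immediately after the Misera/Singer-cycle discussion of \S\ref{secV.3} and then used. So there is no ``paper's proof'' to compare against; what you have written is a genuine, self-contained proof, and it is a clean one.

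The two inclusions are both sound. For $M_{\sD}\subseteq\{\text{conjugate powers}\}$, you correctly observe that $\mu_c$ permutes the point set and carries the translate $H_s$ to $H_{t+cs}$; since any point bijection of $\prP^v(\bF_q)$ with $v\ge 2$ that permutes hyperplanes necessarily permutes all flats (a line is the intersection of the hyperplanes through two of its points), the fundamental theorem of projective geometry puts $\mu_c$ in $\mathrm{P}\Gamma\mathrm{L}_{v+1}(\bF_q)$, and the label computation then gives $h_c\sigma_\infty h_c^{-1}=\sigma_\infty^c$. For the reverse inclusion, replacing $g$ by $\sigma_\infty^{-s}g$ is legitimate exactly because $\langle\sigma_\infty\rangle$ is abelian, and then $g(i)=ci$ forces $g=\mu_c$ on labels; since $g$ is a collineation, $g(H_0)=c\sD$ must be some $H_{s'}=\sD+s'$, giving $c\in M_{\sD}$. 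Your parenthetical justification that the $H_s$ exhaust the hyperplanes---from $\gcd(|\sD|,n)=\gcd\bigl(1+q+\dots+q^{v-1},\,1+q+\dots+q^v\bigr)=1$---is exactly right and worth keeping, as it is the one place the argument could silently fail.

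Two small remarks. First, the appeal to the fundamental theorem of projective geometry does require $v\ge 2$; this is fine because the paper's \S\ref{secV.3} only sets up the Singer labelling for $v\ge 2$, and the one Davenport degree not of projective-space type ($n=11$) is handled separately in the paper. Second, your closing ``check'' paragraph is logically downstream of, not independent of, the main argument: computing the normalizer of $\langle\sigma_\infty\rangle$ in $\mathrm{P}\Gamma\mathrm{L}_{v+1}(\bF_q)$ shows the set of conjugate powers is $\langle p\rangle$, and it is only \emph{via} the Lemma you have just proved that this yields $M_{\sD}=\langle p\rangle$. That is a nice corollary (stronger than Hall's multiplier theorem, which only gives $p\in M_{\sD}$), but you should present it as a consequence rather than as a ``recovery'' of the classical fact.
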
 

\subsection{Group theory immediately after Graduate School} \label{secV.4} I knew J.~Ax from my two years at IAS. I went  with him to SUNY at Stony Brook (leaving soon after getting tenure), instead of to U. of Chicago which first offered me tenure. Ax suggested I should explain what I was after to W.~Feit. His rationale: While my difference set conditions were complicated, group theory could handle  intricate matters by comparison to what one could do with algebraic geometry. From Ax's suggestion, I  learned to partition a problem into its group theory, number theory and Riemann surface theory pieces, so that I could handle each separately.

\subsubsection{The Collineation Conjecture}  Here is what I expected. The case $n = 11$ is special. It corresponds to a difference set with a doubly transitive group of automorphisms that doesn't fit into the points/hyperplane pairing on a projective space over a finite field. Still, my reading suggested that I now knew all possibilities for these doubly transitive designs -- as described in \S \ref{secVI.3} -- through Riemann's Existence Theorem. Consider the following condition on a group $G$:  

\begin{triv} \label{rephypoth} It has two inequivalent doubly transitive permutations representations, that are equivalent as group representations (of degree $n$). \end{triv} 

Here was the group theory guess. 

\begin{guess}[Collineation Conjecture] \label{collConj} Assume \eqref{rephypoth} and that $G$  also contains  an $n$-cycle.  Then, $G$ either has degree 11, or it lies between $\PGL_{v+1}(\bF_q)$ and P$\Gamma$L$_{v+1}(\bF_q)$, $n=(q^{v+1}-1)(q-1)$, for some $v$ and $q$. \end{guess}

Given Conjecture \ref{collConj}, I described from it the only possible -- finite set of -- Davenport pair degrees $n$ (as in the rest of this report) over some number field. I could give branch cycle descriptions for all Davenport pairs, thus solving problem D$_2$ \eql{davpart}{davpartb}. Indeed, it gave the full nature of these pairs, without writing equations (as in \S\ref{App.5}) the toughest issue to explain to algebraists. 

\subsubsection{My interactions with Feit 1968-69} \label{FeitInteractions} These were complicated -- in those days all through regular mail. Even without the Collineation Conjecture, it was also possible to bound  degrees of Davenport pairs and use Riemann-Hurwitz  to cut down the total number of branch cycles. This  came from knowing that each branch cycle moved at least half the points. I suggested this to Feit in my description of its consequences, and he proved it  (\cite[Thm.~3]{Fe70},  or \cite[Prop.~1]{Fr73a}). 

Yet, it was Conjecture \ref{collConj} that made a case for the Genus 0 Problem.  Feit suggested that if I accepted the simple group classification, then extant literature might prove the Collineation Conjecture. That allowed me to finish it  (published in \cite[\S9]{Fr99}), and several other pieces of pure group theory. \S\ref{secVII.3} models how a (non-group theory) researcher might approach this.

Yet, the biggest surprise didn't come from group theory. It was possible  (\S\ref{secVI.2}) to finish Davenport's Problem over $\bQ$, D$_1$ \eql{davpart}{davparta}, without the Collineation Conjecture -- or anything related to the classification of simple groups. This used a device whose general  applicability opened up directions that went far beyond discussions of separated variables. The next section explains this, and relates my only specific mathematical interaction with UM beyond graduate school (see \S\ref{secVII.4}). 

\section{The B(ranch)C(ycle)L(emma) and Solving Davenport's Problem} \label{secVI} I was immensely assured -- at the time (see \S\ref{secVI.2}) -- by Storer's Statement \ref{storer}. Yet, the 2nd sentence of Prop.~\ref{DS3} -- which I first overlooked, but used later -- already gives its main thrust.  By assumption $T_f$ and $T_g$ are distinct permutation representations. If, however, -1 was a multiplier, then they would not be. 

\begin{prop}[Storer's Statement]  \label{storer} \cite[ p.~132]{Fr73a} says this: "According to T. Storer the fact that -1 is not a multiplier is an old chestnut in the theory of difference sets. He has provided us with a simple proof of this fact, upon which we base the proof of Lemma 5." \end{prop}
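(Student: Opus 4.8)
The plan is to extract the statement from the structure already in place in Proposition~\ref{DS3}, rather than to argue from inside difference-set theory as Storer did. I would work in the ambient situation of the section: $f$ indecomposable, $(f,g)$ a nontrivial Davenport pair, and the labelings of $x_1,\dots,x_n$ in $T_f$ and $y_1,\dots,y_n$ in $T_g$ normalized so that $\sigma_\infty$ acts on each side as the $n$-cycle $i\mapsto i+1$. The goal is to show $-1\notin M_{\sD_1}$, equivalently that $-\sD_1$ is not a translate of $\sD_1\bmod n$.

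First I would record two inputs. (i) By the last sentence of Proposition~\ref{DS3}, the transpose incidence matrix $^\tr I_{x,y}$ expressing the $x_i$'s in terms of the $y_j$'s has rows forming the $n$ cyclic translates of $-\sD_1$; thus $T_g$ is the cyclic development of $-\sD_1$ in $\bZ/n$ under $\sigma_\infty$, exactly as $T_f$ is the development of $\sD_1$. (ii) By DS$_2$ (Theorem~\ref{DS2}\,\eql{DavRes}{DavResa}), $T_f$ and $T_g$ are \emph{inequivalent} as permutation representations of $^aG_f={}^aG_g$ --- nontriviality alone forces this, since equivalent permutation representations would give isomorphic covers $\prP^1_x\to\prP^1_z$ and $\prP^1_y\to\prP^1_z$, hence an affine change of variable carrying $f$ to $g$.

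Now the argument: suppose for contradiction $-1$ is a multiplier, say $-\sD_1=\sD_1+c$ for some $c\in\bZ/n$. Then the index substitution $j\mapsto -j+c'$ (with $c'$ absorbing $c$ together with the shift built into \eqref{*7}) carries the incidence pattern of the blocks $\sD_i$ onto that of the blocks $-\sD_i$, i.e.\ it simultaneously relabels rows and columns so as to turn $I_{x,y}$ into $^\tr I_{x,y}$. Transported to the groups, the associated relabeling conjugates the action of $^aG_{f,g}$ on the $x$-letters into its action on the $y$-letters, so $T_f\cong T_g$ as permutation representations --- contradicting (ii). Hence $-1$ is not a multiplier; and the argument is uniform over the parameter triples of \eqref{*8} that survive, since it uses only the doubly transitive structure that produced the difference-set count in Proposition~\ref{DS3}.

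The step I expect to be the real work is the bookkeeping in that last paragraph: promoting ``$-\sD_1$ is a translate of $\sD_1$'' to a genuine intertwining operator between $T_f$ and $T_g$ over the \emph{arithmetic} group $^aG_{f,g}$ --- not merely over its geometric part --- while tracking how the chosen $n$-cycle shape of $\sigma_\infty$ pins the two labelings together. The alternative, reproducing Storer's self-contained ``old chestnut,'' needs instead a purely combinatorial obstruction: for instance a multiplier-theorem argument combined with the fact that a $(-1)$-multiplier would force $k-u$ into a shape these doubly transitive designs do not possess, or a Legendre-symbol count in the spirit of the computations around \eqref{Chebychev}. Carrying that out uniformly --- in particular handling the even-$n$ entries of \eqref{*8} separately from the odd ones --- is where that route gets delicate, which is presumably why isolating the short monodromy argument above is worthwhile.
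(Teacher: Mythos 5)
Your proposal is correct and is essentially the paper's own argument for why $-1$ is not a multiplier: the author explicitly notes (at the opening of \S\ref{secVI}, and again in the proof of Prop.~\ref{DS5}) that the second sentence of Prop.~\ref{DS3} together with the inequivalence of $T_f$ and $T_g$ from Thm.~\ref{DS2} gives the ``main thrust'' of Storer's Statement --- exactly the deduction you carry out. Your closing remark correctly identifies Storer's original self-contained difference-set argument as the alternative route that the paper references from \cite{Fr73a} but does not reproduce.
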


Now I explain the BCL and how it  finished Davenport's Problem over $\bQ$. 

\subsection{Branch cycles and the BCL} \label{secVI.1} As in \S\ref{App.1},  denote the automorphisms of the algebraic numbers $\bar \bQ$ fixed on  a field  $K\subset \bar \bQ$ by $G_K$. 

\subsubsection{Branch points}  \label{branchPoints} 
Algebraic relations have coefficients. If the coefficients are in $\bar \bQ$, then Hilbert's Nullstellensatz says points with $\bar\bQ$ coordinates satisfying these relations determine all points satisfying the algebraic relations. 

\S\ref{secII.1} reminds of the distinction between affine sets (defined by equations in a finite set of variables) and projective sets (defined by homogeneous equations in a finite set of variables). You can view a point ($x_0,\dots,x_n$) satisfying homogenous equations as a point on an affine space, but the projective points are equivalence classes $\{a(x_0,\dots,x_n)\}$, $a \ne 0$. We require that one of the $x_i\,$s is nonzero.

In practice, here is the significance of a point lying on an algebraic set, versus, say, lying on a general complex analytic set.  Take any algebraic set, $V$, over $\bar \bQ$ and act on an algebraic point $v\in V$ by $\gamma\in G_\bQ$.  Then the image ${}^{\gamma}v$  will lie on the set defined by $\gamma$ acting on coefficients of the equations for $V$. 

Consider a degree $n$ ($>0$) rational function $f$ in $x$ (or any cover, \S\ref{whatCover}) as a map to  $\prP^1_z$. Then, points of $\prP^1_z$ with fewer than $n$  points of $\prP^1_x$ above them are {\sl branch points\/}, $\row z r$, of $f$.  To be explicit with polynomial covers, we'll take $z_r$ to be $\infty$. 
If $\gamma\in G_\bQ$ fixes the coefficients of  $f$, then $\gamma$  permutes $\row z r$: $\gamma \mapsto \tau_\gamma\in S_r$. 

\subsubsection{Branch cycles, the tie to groups} \label{secVI.4} Recall $\sigma_\infty$  in \S\ref{secIII.5}, a generator of inertia over $\infty$.  Whatever the branch points, $\row z r$, in \S\ref{branchPoints},  for a compact Riemann surface cover  $f : X \to \prP^1_z$, each produces a representative, $\row \sigma r$, of conjugacy classes $\bfC={\row \C r}$ in the geometric monodromy $G_f\le S_n$. This is by the same process, a finger walking (again,  clockwise) around  $z_i$,  along a closed path $P_i$. Then, $\sigma_i$ permutes the points over the base point by following that path. 

Further, the disjoint cycles of $\sigma_i$ correspond to the points of $X$ lying over $z_i$, and the disjoint cycle length is the ramification index of that point over $z_i$. 

App.~\ref{classgens} explains {\sl classical generators\/} \cite{CGen} of the fundamental group of 
$$ \prP^1_z\setminus  \{z_1,\dots,z_r\}=U_\bz: \text{ denoted }\row P r.$$ It indicates we need two further visually verifiable constraints on $\row P r$  to assure they generate the fundamental group of $\pi_1(U_\bz)$ with only one relation (up to uniform conjugation of the paths):  $P_1\cdots P_r$ is homotopic to the trivial path. An explicit one-one correspondence -- albeit, dependent on the choice  of the classical generators unless the covers have abelian monodromy -- goes between branch cycles (\S\ref{prodBCs}) and algebraic covers of the sphere branchcd over  $\{z_1,\dots,z_r\}$. 

A self-contained treatment, filling in everything from material in \cite{Ah79} is in \cite[Chap.~4]{Fr09}, with a survey in http://math.uci.edu/deflist-cov/$\tilde{\  }$mfried/Nielsen-Classes.html. Before we do an exposition on the use of branch cycles we first introduce the Branch Cycle Lemma. This is essentially a separate formula. Solving Davenport's Problem represents its first use. 

The index, $\ind(\sigma)$, of a permutation $\sigma \in S_n$ is just $n$ minus the number of disjoint cycles in the permutation. Example: an $n$-cycle in $S_n$ has index $n\nm 1$, and an involution has index equal to the number of disjoint 2-cycles in it. The Riemann-Hurwitz formula says the {\sl genus}, $\geng_X$ of $X$ satisfies

\begin{equation} \label{*10}     2(n + \geng_X- 1) = \sum_{i=1}^r \ind(\sigma_i). \end{equation}

\subsubsection{Branch Cycle Lemma} \label{BCLtreat} Continue the notation above. Assume $f: X\to \prP^1_z$ is a cover defined over $K$. Denote the order of elements in $\C_i$ by $e_i$, the least common multiple of the $e_i\,$s by $N = N_\bfC$ and  the elements of $\C_i$  put to the power $c$ by $\C_i^c$. 

As in \cite[exp.~(5.7)]{Fr77}, $\gamma\in G_K$ also  acts through the arithmetic monodromy $^aG_f$ (\S\ref{secII.3}) and so through the normalizer, $N_{S_n}(G)$, of $G$ in $S_n$. Write this action with $\omega_\gamma$ acting on the right  of Puiseux expansions of function field  elements $\alpha$, centered at the $z_i\,$s.  That is, $\alpha$ evaluated in a neighborhood of a point $\bp$ over $z_i$ expands as a power series in $(z-z_i)^{\frac 1 k}$, with $k$ the ramification index of $\bp$ over $z_i$. Denote the subgroup of $N_{S_n}(G)$ that permutes the conjugacy classes of $\bfC$, with  multiplicity, by $N_{S_n}(G, \bfC)$. 
The B(ranch)C(ycle)L(emma) compares $\omega_\gamma$  and $\tau_\gamma$ (\S\ref{branchPoints}) with the cyclotomic character   $$\gamma: e^{2\pi i/N} \mapsto e^{c_\gamma2\pi i/N}.$$ 
\begin{triv} \label{*9} If  $j=(i)\tau_\gamma$, then $\omega_\gamma\C_j\omega_\gamma^{-1} = \C_i^{-c_\gamma}$ \cite[p.~62--64]{Fr77}. \end{triv}
Suppose  putting  $\bfC$ to all powers $c\in (\bZ/N_{\bfC})^*$ (resp.~all $c$ fixed on $K\cap \bQ(\zeta_{N_\bfC})$)   leaves $\bfC$ invariant. Then, we say $\bfC$ is a {\sl rational union\/} (resp.~$K$-{\sl rational union}). Denote the  extension of $\C_i$ to $^a G_f$ by $^a\C_i $.  

\begin{rem}[Remembering the BCL]  \label{remBCL} Here is a quick mnemonic for the identifications in \eqref{*9}. Apply both sides to $(z-z_i)^{\frac 1 k}$ for the correct power of $\zeta_k$ on the right side. \cite[p.~39]{Vo96} has written $-c_\gamma$ for our $c_\gamma$. We would love to apply the formula directly to the $\sigma_i\,$s. Yet, as \cite{Fr77} explains, you can't expect to consistently label Puiseux expansions  of function field elements at different points $z_i$ and $z_j$. This is compatible with the topological nature of classical generators (Prob.~\ref{CGtop}). So, the formula only relates conjugacy classes, except, when you work over the real numbers as in the explicit application to real covers in \cite[\S2.4]{DeFr90}. 
\end{rem} 

\begin{res}[Example use of the BCL] \label{exBCL} Assume $f$ has definition field $K$. 
\begin{edesc} \label{useBCLf} \item \label{useBCLfa}  If each $\omega\in {}^a G_f/G$ is in $N_{S_n}(G,\bfC)$, then $\bfC$ is a $K$-rational union. 
\item \label{useBCLfb}  If  $z_i \in K$, then $^a\C_i $ is a $K$-rational  class in $^aG_f$. \end{edesc} \end{res} 

A field extension $L/K(z)$ is {\sl regular\/} if the only constants in $L$ consist of $K$. The condition $^aG_f=G_f$ says the Galois closure of the function field extension $K(X)/K(z)$ is a regular extension of $K(z)$: we have a {\sl regular realization\/} of $G$.  Then, \eql{useBCLf}{useBCLfa} says only by using conjugacy classes where $\bfC$ is a rational (resp.~$K$-rational) union can we find a regular realization of $G$ over $\bQ$ (resp.~over $K$). 

Schur's Conjecture  (see Lem.~\ref{ChebChar} for a more elementary use of the BCL) and Serre's Open Image Theorem (see \S\ref{limitDih}) are especially sensitive to using \eqref{useBCLf} to distinguish between $^aG_f\le N_{S_n}(G)$  -- always true -- and the conclusion of \eql{useBCLf}{useBCLfa}. 

More general, and with much more application than the regular realization of groups are $(G,G^*)$-realizations (with $G^*\le N_{S_n}(G)$) larger than $G$. That is, find covers over $\bQ$ where the geometric/arithmetic monodromy pair is $(G,G^*)$ as in \S\ref{RETConv} on the RET converse C$_2$. 

$(A_n, S_n)$-realizations from polynomials in $\bQ[x]$ disproved three conjectures in the literature  \cite{Fr95a}. It will come in handy for others, too.   \cite{Fr95a}  left unsolved if there are odd {\sl square\/} degree polynomials  in  $\bQ$ giving an $(A_n,S_n)$-realization. \cite{Mu98b} showed such polynomials do not exist, a practical addition to the BCL.

\subsection{Fields supporting Davenport pairs} \label{secVI.2} 
Suppose $f\in K[x]$, $n = \deg(f)$.  Then total ramification over $\infty$ (a  $K$ point) implies any geometric component of the Galois closure has definition field $\hat K_f$ (\S \ref{secI.3}) a subfield of $K(e^{2\pi i/n})$ . 

\subsubsection{Apply the BCL to Davenport pairs}  \label{applyBCL}  Apply $\gamma\in G_\bQ$ to the coefficients of $f$ and $g$, and  
denote  solutions for $x$ in  $^\gamma f(x)-z=0$ (resp.~$^\gamma g(y)-z=0$)) by  $^\gamma x_i$ (resp. ~$^\gamma y_i$). For each $c\in (\bZ/n)^*$, choose $\gamma \in G_K$ whose restriction to $\bQ(e^{2\pi i/n})$ is $c$. This gives an action of $(\bZ/n)^*$ on equation \eqref{*7}, producing a relation
$$^\gamma x_1={}^\gamma y_c+ {}^\gamma y_{c\alpha_2}+\dots + {}^\gamma y_{c\alpha_k}.$$ Expanding  these solutions at $\infty$ in $z^{-\frac 1 n}$ allows tracing this action. Consider the corresponding difference set (from Prop.~\ref{DS3}): $\sD_f =\{1, \alpha_2,\dots, \alpha_k\}$. Denote the fixed field of the multiplier $M_f$ (Def.~\ref{multiplier}) of $\sD_f$ in $\bQ(e^{2\pi i/n})$ by $\bQ_{M_f}$. 

\begin{prop} \label{DS5} Suppose $(f, g)$ is a Davenport pair -- with $f$ indecomposable -- over some number field $K$: the hypotheses of D${}_2$ (or, Thm.~\ref{DS2}, but over $K$). Then: $K$ contains $\bQ_{M_f}$. More generally the following conclusions hold.
\begin{edesc} \label{davpairs} 
\item  \label{davpairs1}  Since -1 is not a multiplier (Prop.~\ref{storer}),  the reals do not contain $\bQ_{M_f}$. So, for any Davenport pair, $K$ is not $\bQ$, thereby solving  \eql{equats}{equats3} of \S\ref{secII.2} with  the hypothesis that $f$ is indecomposable. 
\item  \label{davpairs2}  For each degree in Thm.~\ref{DS4}, there are Davenport pairs over $K$ if and 
only if $K$ contains $\bQ_{M_f}$. For just the degrees $n=7, 13, 15$, there are infinitely many distinct Davenport pairs, mod M\"obius equivalence (\S\ref{secI.1}).
\item  \label{davpairs3}  For the degrees in \eql{davpairs}{davpairs2}, there are Davenport pairs $(f,g)$ with branch points defined over fields disjoint from $\bQ(e^{2\pi i/n})$. For those, consider $\gamma\in  G_\bQ$ mapping $e^{2\pi i/n}$ to $e^{-2\pi i/n}$, but acting trivially on branch points. Then, $f(x)={}^\gamma g(x)$ (action on the coefficients by $\gamma$). \end{edesc} \end{prop}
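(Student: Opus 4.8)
The plan is to get the main claim — that any number field $K$ carrying a Davenport pair $(f,g)$ with $f$ indecomposable contains $\bQ_{M_f}$ — from a single application of the Branch Cycle Lemma at the totally ramified place $\infty$, and to get the converse half of \eql{davpairs}{davpairs2}, its infinitude clause, and \eql{davpairs}{davpairs3} from Riemann's Existence Theorem (\S\ref{secVI.4}). For the forward step I would start from the linear relation \eqref{*7} of \S\ref{secV.1}, a consequence of DS$_2$ (Thm.~\ref{DS2}): after the penultimate normalizations, with $\sigma_\infty$ chosen to cycle the subscripts of the $x_i$ and of the $y_j$, one has $x_m=\sum_{s\in\sD_f+(m-1)}y_s$ for all $m$, where $\sD_f=\{1,\alpha_2,\dots,\alpha_k\}$. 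Fix $\gamma\in G_K$ with cyclotomic character $c_\gamma$ on $\bQ(e^{2\pi i/n})$. Because $\infty$ is a $K$-rational branch point of both $f$ and $g$, $\tau_\gamma$ fixes it, so there is no permutation of branch points and \eqref{*9} applies directly, pinning the class of $\omega_\gamma\sigma_\infty\omega_\gamma^{-1}$ to a definite power of $\sigma_\infty$. Since the centralizer of an $n$-cycle is the cyclic group it generates, $\omega_\gamma$ must act on the $x$-subscripts, and (total ramification of $g$ over $\infty$) on the $y$-subscripts, as a single affine substitution $i\mapsto c_\gamma i+b$ — the two additive shifts may differ, but the multiplier $c_\gamma$ is shared, both coming from conjugating the same $\sigma_\infty$. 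Applying $\omega_\gamma$ to $x_1=\sum_{s\in\sD_f}y_s$ and using that distinct roots $y_t$ satisfy no $\{0,\pm1\}$-linear relation other than $\sum_t y_t=0$ (so equal subset-sums force equal subsets, the subsets here having size $k<n/2$), one gets that $c_\gamma\sD_f$ is a translate of $\sD_f$, i.e.\ $c_\gamma\in M_f$. As $\gamma$ ranges over $G_K$ its cyclotomic image fills out $\mathrm{Gal}(\bQ(e^{2\pi i/n})/(K\cap\bQ(e^{2\pi i/n})))$, so that group lies in $M_f$; taking fixed fields gives $K\cap\bQ(e^{2\pi i/n})\supseteq\bQ_{M_f}$, hence $K\supseteq\bQ_{M_f}$. (By the second sentence of Prop.~\ref{DS3}, $\sD_g$ is $-\sD_f$ up to translation, so $M_g=M_f$ and the conclusion is symmetric in $f$ and $g$.)

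For \eql{davpairs}{davpairs1}: Prop.~\ref{storer} gives $-1\notin M_f$. Were $K\subseteq\bR$, complex conjugation would lie in $G_K$ with cyclotomic character $-1$, forcing $-1\in M_f$ — a contradiction. Hence $K\not\subseteq\bR$, and in particular $K\neq\bQ$, which is exactly D$_1$ \eql{davpart}{davparta} and settles \eql{equats}{equats3} under the indecomposability hypothesis.

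For the converse and existence statements, which run the other way, I would invoke RET. For each degree in Thm.~\ref{DS4}, the difference set coming from the $\PGL$-action of \S\ref{secV.3} (and its exceptional analogue for $n=11$) prescribes a Nielsen class of branch cycles: an $n$-cycle at $\infty$ together with, by Riemann--Hurwitz \eqref{*10} and the fact that every remaining branch cycle moves at least half the points, two further branch cycles in the rigid degrees and three in the degrees $n=7,13,15$ (hence a one-parameter, $j$-line, family there). RET realizes these data over $\bar\bQ$; a Branch Cycle Lemma / field-of-moduli computation — using that the class of $\sigma_\infty$ is rational over $\bQ_{M_f}$ and over no smaller field, and that the branch points may be taken $K$-rational — shows the resulting covers, hence the Davenport polynomials, descend to exactly the fields $K\supseteq\bQ_{M_f}$, with the positive dimension for $n=7,13,15$ yielding infinitely many M\"obius-inequivalent pairs. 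For \eql{davpairs}{davpairs3} one picks, inside such a family, a member whose branch points lie in a field disjoint from $\bQ(e^{2\pi i/n})$; then $\gamma\in G_\bQ$ with $\gamma(e^{2\pi i/n})=e^{-2\pi i/n}$ and $\gamma$ trivial on the branch points carries $(f,g)$ to the Davenport pair $({}^\gamma f,{}^\gamma g)$ with the same branch points, and the action of $c_\gamma=-1$ on $\sD_f$ — which by Prop.~\ref{DS3} is $\sD_g$ up to translation — identifies ${}^\gamma g$ with $f$. The explicit construction and the component/field count are supplied by Thm.~\ref{DS4} and \S\ref{App.5}.

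The main obstacle is the sign bookkeeping in the forward step: \eqref{*9} only determines a conjugacy class, $\omega_\gamma$ is pinned only up to the centralizer of $\sigma_\infty$, and both the clockwise-loop convention of \S\ref{secIII.3} and the side on which $\omega_\gamma$ acts on Puiseux expansions contribute signs; one must check these combine so that it is $c_\gamma$ itself, not $-c_\gamma$, that is forced into $M_f$ — otherwise, since $1$ lies in $\mathrm{Gal}(\bQ(e^{2\pi i/n})/(K\cap\bQ(e^{2\pi i/n})))$, one would spuriously deduce $-1\in M_f$, against Prop.~\ref{storer}. Consistency with Prop.~\ref{storer} is in fact the cleanest way to confirm the orientation has been tracked correctly. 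On the converse side the genuine work, deferred to \S\ref{App.5}, is the Hurwitz-space rigidity and braiding that both produce the components and give the descent of the field of moduli down to $\bQ_{M_f}$.
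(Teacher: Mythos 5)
Your proposal is correct and follows essentially the same route as the paper: the Branch Cycle Lemma applied at the totally ramified place $\infty$ (equivalently, tracking the $(\bZ/n)^*$-action on the Puiseux expansions in \eqref{*7}) gives the multiplier condition $K\supseteq\bQ_{M_f}$ and, combined with Storer's Statement (or the second sentence of Prop.~\ref{DS3}), item \eql{davpairs}{davpairs1}; while \eql{davpairs}{davpairs2} and \eql{davpairs}{davgpairs3} are deferred, as in the paper, to RET and the Hurwitz-space/braid machinery of \S\ref{secVI.3}, \S\ref{App.4} and \S\ref{App.5}. The sign-bookkeeping issue you flag is the genuine subtlety acknowledged in Remark~\ref{remBCL}, and checking consistency against Storer's Statement is indeed the cleanest way to pin the orientation.
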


\subsubsection{Start of  Prop.~\ref{DS5}}    
Multiplier Lem.~\ref{multiplierlem} shows   \eql{davpairs}{davpairs1} is about conjugacy classes, not  merely  cycle types. The multiplier $M_f$ measures -- special case of \eql{davpairs}{davpairs1}  -- how far the class of $\sigma_\infty$ is from rational (Result \ref{exBCL}). With $(f, g)$ a Davenport pair, \eql{davpairs}{davpairs1} follows from concluding in Prop.~\ref{DS3} that -1 times the difference set $\sD_f$ gives the difference set  $\sD_g$. Since $g$ and $f$ give  inequivalent covers, this says the difference set for multiplication by -1 cannot be a translate of the original difference set. I didn't, however, make that observation in \cite{Fr73a}.

By contrast with nonexistence in  \eql{davpairs}{davpairs1}, \eql{davpairs}{davpairs2}  is an existence result.  It uses that the BCL precisely gives definition fields of total families of cover. Explaining this, and those total families takes up the remainder of \S\ref{secVI} and all of \S\ref{App.4}.  

I went after this general context because, while Schur's Conjecture was easy compared to Davenport's problem, there were other problems, much tougher, that acceded to this method. Although I think "attempting to write equations out" is not a road to success, many do want equations. So \S\ref{App.5} revisits this topic.

Lewis knew Al Whiteman, who was at IAS my first year there. I had seen him talk on difference sets, his speciality.  He responded to my questions  by suggesting I talk to his student Storer, who had just been hired by Michigan.

I stayed at UM part of the summer of '68 to write up \cite{Fr73a}. The combinatorial trick \cite[ (1.19)]{Fr73a} is Storer's. He  often told his opinions of me. Especially:  There must be something wrong with me for knowing so much mathematics. His thought: It must be because I spent all of my time slaving in the library. (For the record: I learned mostly by being attentive at talks; secondly from seriously refereeing hard papers. That's relevant to my comments on  group theory in \S\ref{secVII.3}.)

\!\!\subsection{Branch cycles produce Davenport pairs} \label{secVI.3} We use use Davenport's problem to teach Riemann's  approach to algebraic functions beyond abelian functions.  

\subsubsection{Questions aimed at Statement \eql{davpairs}{davpairs2} of Prop.~\ref{DS5}}  \label{questionsDPs} Use notation from \S\ref{secVI.1}. 

\begin{edesc} \label{genus0mon} 
\item  \label{genus0mon1}  What data allows finding Davenport pairs $(f,g)$ (over some number field;  $f$ indecomposable ) of  each degree 7, 11, 13, 15, 21 and 31?
\item  \label{genus0mon2}  Given an affirmative to \eql{genus0mon}{genus0mon1}, how might you describe all such Davenport pairs and their definition fields for each such degree?
 \item \label{genus0mon3}  What has this to do with simple groups, and how  might you persuade others the value of this approach to finding  Davenport pairs? 
\item  \label{genus0mon4}  Assuming success in the above, what general conclusions might you dare about monodromy groups of polynomials or rational functions? \end{edesc}

We start with  $n=7$, to how it works, then refer to the case $n=13$ to compare others who have considered the production of equations. 

The group $\PGL_3(\bZ/2)$ (\S\ref{secV.3}) acts on the 7 points and 7 lines of 2-dimensional projective space over $\bZ/2$. An involution (order 2 element) fixes all 3 points on a line; every other nonidentity element fixes no fewer points. That means the minimal possible index of the $\sigma_i\,$ is 2, and $\sigma_r$ has index six. Since the top space for a polynomial cover $f$ is $\prP^1_w$, that means  $\geng_{\prP^1_w}=0$. 

\S\ref{secVI.5} shows why there are Davenport pairs with their geometric monodromy group equal to $\PGL_3(\bZ/2)$,  answering question \eql{genus0mon}{genus0mon1} of \S\ref{secVI.3}, for degree 7. The method works for all degrees in that question.

First consider the possibility that $r=4$.  What could be the minimal possible indices for branch cycles of a polynomial $f$  with monodromy group $\PGL_3(\bZ/2)$, where $\sigma_4$ is a 7-cycle?  Then, the minimal possible sum of the four indices of corresponding $\sigma_i\,$ in  \eqref{*10}   is $3\cdot 2+6=12$. In our case the right side is 12, and the genus is 0. So, no other choices with $r=4$ would produce genus 0.

Further, if such a polynomial exists representing $f$ in a Davenport pair, we now know that these $\sigma_i\,$s, $i=1, 2, 3$, all lie in this hyperplane fixing conjugacy class. One difference set here is $\{1, 2, 4\}$. \S\ref{prodBCs} shows  why there is a Davenport pair  $(f,g)$ with $T_f$ for $f$ acting on $\{1,2, \dots,7\}$, with these properties: An inertia generator over $z=\infty$, acts as $\sigma_\infty=(1\,2 \dots 7)$, while it acts as translates of $\{1,2,4\}$ for $T_g$. 

\subsubsection{Cover producing branch cycles} \label{prodBCs} 
What we need is a converse -- cover producing conditions -- from such $\sigma_i\,$s. There is one: R(iemann)'s E(xistence) T(heorem).  Given such $\sigma_i$, $ i=1,\dots , r$, in a group $G$,  we are asking when there is a cover $f : X \to \prP^1_z$ branched at any given points, $\row z r$, with its geometric monodromy group $G$, and having the attached conjugacy classes $\bfC=\{\row C r\}$ of  $\row \sigma r$. 

The answer: Such covers correspond to  $\sigma_i'$, conjugate (in $G$) to $\sigma_i$, $i=1,\dots, r$, for which these expression \eqref{pathcond} interpreting conditions hold:

\begin{edesc} \label{cycleconds} 
\item \label{cycleconds1} {\sl Generation}:  $\lrang{\sigma_i' | i=1,\dots, r} = G \le  S_n$; and 
\item \label{cycleconds2} {\sl  Product-one}: $\sigma_1' \cdots  \sigma_r'=1$.  \end{edesc} 

From \eql{cycleconds}{cycleconds2}, any $r\nm 1$ of the $\sigma_i'\,$s in  \eql{cycleconds}{cycleconds1} generate $G$.
Those who use the {\sl monodromy method\/} call such $\sigma_i'\,$s satisfying \eql{cycleconds}{cycleconds1} and  \eql{cycleconds}{cycleconds2}  {\sl branch cycles}. We call the collection of all such, in the respective conjugacy classes $\bfC$, the {\sl Nielsen class\/} $\ni(G, \bfC)$ of the cover. Further, covers corresponding to two such choices of $r$-tuples satisfying \eqref{cycleconds}  will be isomorphic as covers (of $\prP^1_z$) if and only if some element in $S_n$ conjugates the one $r$-tuple to the other.

As in  \S\ref{BCLtreat} 
consider, $N_{S_n}(G,\bfC)$, the subgroup of $S_n$ that normalizes $G$, and permutes the classes in  $\bfC$ (preserving their multiplicity).  
Two covers of $\prP^1_z$  are {\sl absolute\/} equivalent (isomorphic by a map commuting with the maps to $\prP^1_z$)  when their corresponding $r$-tuples are conjugate by  $N_{S_n}(G,\bfC)$. We use two other equivalences than absolute later. 
\S\ref{App.3} explains why branch cycles give algebraic covers. (In the Davenport cases -- genus 0 with $\sigma_r$ an $n$-cycle -- each a polynomial map.) 

The genus $\geng_X$  in \eqref{*10} depends only on the images of $\row \sigma r$ in $S_n$, corresponding to the  representation $T_f$. For that, distinguishing conjugacy classes from cycle-type is irrelevant. Still, Multiplier Lem.~\ref{multiplierlem} exposes that distinguishing conjugacy classes of  $n$-cycles is significant in projective linear groups. Using Storer's Statement \ref{storer} ( as in Prop.~\ref{DS5}), there is more than one such class. 

For $n=7$ there are two, represented by $\sigma_\infty$ and $\sigma_\infty^{-1}$. For $n=13$, $\{1,2,4,10\}$ (translation equivalent to $\{0,1,3,9\}$) is a difference set \cite[p.~60]{Fr05a}, with 3 generating the multipliers. So, $\sigma_\infty^a$, with $a$  running over powers of  $3 \!\!\mod 13$ are conjugate to $\sigma_\infty$.  So there are 4 (translation) inequivalent difference sets mod 13. In \S\ref{secVI.5} this tells us why the covers we produce --  Davenport pairs -- fall in four families, conjugate over the degree 4 extension of $\bQ$ in $\bQ(\zeta_{13})$.  

\subsection{Covers from a Nielsen class} \label{secVI.5}  \S\ref{cont7} continues with $n=7$ and the  classes from \S\ref{secVI.4}. Then, \S\ref{App.3} shows how the Nielsen class computation produces the data for covers. \S\ref{App.4} turns this into properties of Davenport pair {\sl families\/}. 

\!\subsubsection{Branch cycles for $n=7$} \label{cont7} The group $G$ in  \eql{cycleconds}{cycleconds1} of \S\ref{secVI.4} must be $\PGL3(\bZ/2)$ (and not smaller) to assure we get the pair of doubly transitive representations. 

We can write by hand all involutions that could appear as $\sigma_1$, $\sigma_2$ or $\sigma_3$.  In \eqref{*7}, start with  the hyperplane containing the fixed points corresponding to 1, 2 and 4. Then, involutions fixing the points on this hyperplane are one of $(3\,5)(6\,7)$, $(3\, 6)(5\, 7)$ or $(3\,7)(5\, 6)$. Conjugate by (powers of) $\sigma_\infty$ to get all others. 

Now find all  involution 3-tuples $(\sigma_1, \sigma_2, \sigma_3)$ with product this specific 7-cycle $$\sigma_\infty^{-1}=(7\, 6\, 5\, 4\, 3\, 2 \,1) \text{ (done in detail in \cite[p.~349]{Fr95a})}.$$  Therefore, the covers with fixed branch points $( z_1, z_2, z_3, \infty)$, and fixed conjugacy classes attached to these in a given order) correspond to this {\sl absolute Nielsen class}:
$$\ni(\PGL_3(\bZ/2), \bfC)^\ab = \ni(\PGL_3(\bZ/2), \bfC)/\PGL_3(\bZ/2). $$
By listing the 4th entry as $\sigma_\infty^{-1}$, we  fix an absolute Nielsen class element up to conjugation by $\sigma_\infty$. There are precisely 7.  Suppose given $(\sigma_1, \sigma_2, \sigma_3, \sigma_\infty)=\psigma$, and a set of classical generators relative to 3 distinct finite branch points $z_1,z_2,z_3$ (as in \S\ref{classgens}). Then, this produces $f(x)\in \bC[x]$ uniquely up to affine change of $x$. 

Apply the permutation representation $T^{\text{hyp}}$ of $\PGL_3(\bZ/2)$ from acting on the lines of $\prP^2(\bZ/2)$ to $\psigma$ in the Nielsen class. To compute this,  write the hyperplanes as unordered collections of integers given by the translations of the difference set $\{1,2,4\}$.   If the result is $\psigma'=(\sigma_1', \sigma_2', \sigma_3', \sigma_\infty')$, then this is the branch cycle description for $g$: the other half of the Davenport pair for $f$. 

The monodromy method can often be precise about the collection of covers in a given Nielsen class without writing them  explicitly. Here is an example of that. Denote the field $\bQ_{M_f}$ in Prop.~\ref{DS5} by $\bQ_n$. Example: $\bQ((-7)^{\frac1 2})=\bQ_7$. 

\begin{prop}[DS$_6$]  \label{DS6} There are infinitely many (M\"obius inequivalent -- 
\S\ref{secI.1}) degree $7$ Davenport pairs over any extension $K$ of $\bQ_7$. They correspond to the $K$ values of a uniformizer, $t_7$, of a genus zero $j$-line cover $\sH^{\abs,\red}_7$ defined over $\bQ$. A similar result,  with  $\bQ_n$ and a parameter $t_n$, holds for $n=13$ and 15. \end{prop}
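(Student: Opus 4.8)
The plan is to realize the degree $7$ Davenport pairs as the fibers of a reduced Hurwitz space, show that space is a genus $0$ curve defined over $\bQ$, pin down its field of moduli as $\bQ_7$ using the Branch Cycle Lemma, and then read off the infinitely many $K$-rational points; the cases $n=13,15$ follow the identical outline.

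First I would set up the moduli space. By \S\ref{cont7} the absolute Nielsen class $\ni(\PGL_3(\bZ/2),\bfC)^\ab$ is represented by exactly $7$ tuples once the fourth entry is normalized to $\sigma_\infty^{-1}$, with $r=4$ branch points and $\bfC=(\C_1,\C_2,\C_3,\C_4)$ consisting of three copies of the hyperplane-fixing involution class together with the $n$-cycle class of $\sigma_\infty$. Because $r=4$, the configuration space of branch points modulo $\PGL_2$ is the $j$-line, so the associated \emph{absolute reduced Hurwitz space} $\sH^{\abs,\red}_7$ is a curve, finite over $\prP^1_j$, whose points classify the covers $f\colon \prP^1_x\to \prP^1_z$ in the Nielsen class up to the Möbius/affine equivalence under which \ref{DS6} counts pairs. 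Via the hyperplane pairing $T^{\text{hyp}}$ of \S\ref{cont7}, each $f$ carries its Davenport partner $g$, so $\sH^{\abs,\red}_7$ is literally a space of degree $7$ Davenport pairs, and \ref{DS5}\eql{davpairs}{davpairs2} already guarantees it is nonempty over $\bQ_7$.

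Next I would compute its genus and definition field. The \emph{Hurwitz monodromy (braid) group} (\S\ref{App.4}) acts on the $7$-element Nielsen class; writing its standard generators explicitly as permutations of those $7$ tuples gives the branch cycles of $\sH^{\abs,\red}_7\to \prP^1_j$ over $j=0,1,\infty$, and Riemann--Hurwitz \eqref{*10} for this finite cover yields $\geng=0$, so $\sH^{\abs,\red}_7\cong \prP^1$ over $\bar\bQ$; call a coordinate $t_7$. For the arithmetic: the involution classes are rational and the braid orbit, together with $N_{S_7}(\PGL_3(\bZ/2),\bfC)$, is $G_\bQ$-stable, so $\sH^{\abs,\red}_7$ descends to $\bQ$ as a cover of the $\bQ$-rational $j$-line; meanwhile Result~\ref{exBCL}, combined with the fact (Multiplier Lemma~\ref{multiplierlem} and Storer's Statement~\ref{storer}) that $-1$ is not a multiplier of $\{1,2,4\}\bmod 7$, so the $\sigma_\infty$-class has field exactly $\bQ_7=\bQ((-7)^{\frac12})$, forces each member of the family to have definition field containing $\bQ_7$ --- the necessity side of \ref{DS5}\eql{davpairs}{davpairs1}. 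A genus $0$ curve over $\bQ$ with a $\bQ_7$-rational point is $\prP^1$ over $\bQ_7$, so $t_7$ may be taken over $\bQ_7$, and for any $K\supseteq \bQ_7$ the set of $K$-points of $\prP^1_{t_7}$ is infinite; deleting the finitely many cusps of $\sH^{\abs,\red}_7$ and any finitely many parameter values where the construction degenerates, distinct $t_7\in K$ yield Möbius-inequivalent degree $7$ Davenport pairs.

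For $n=13$ (group $\PGL_3(\bZ/3)$, difference set $\{1,2,4,10\}$, hence four translation-inequivalent difference sets and four conjugate families over the degree $4$ subfield $\bQ_{13}$ of $\bQ(\zeta_{13})$ as in \S\ref{prodBCs}) and $n=15$ (group $\PGL_4(\bZ/2)$, difference set $\sD_{15}$ with $M_{\sD_{15}}$ of order $4$, so $\bQ_{15}$ quadratic) I would rerun the three steps: confirm $r=4$ via \eqref{*10}, compute the braid action on the relevant Nielsen class to get $\geng=0$ for $\sH^{\abs,\red}_n$, and apply Result~\ref{exBCL} to identify $\bQ_n=\bQ_{M_f}$ and the uniformizer $t_n$. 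The main obstacle is exactly this middle step in each degree: determining the braid action on the Nielsen class and carrying the Riemann--Hurwitz count through to $\geng=0$ (a genuine computation, and a larger one for $n=13,15$), together with the arithmetic bookkeeping that separates the field of moduli of the whole family from the definition field of an individual pair and produces an actual $\bQ_n$-rational --- not merely $\bar\bQ$-rational --- point on $\sH^{\abs,\red}_n$. Once the genus is $0$ and one rational point is available, the infinitude over $K$ is automatic.
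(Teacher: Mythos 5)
Your overall framework matches the paper's route through Thm.~\ref{vectorBundle}: compute the absolute reduced Nielsen class, run the $\bar M_4$-action to get branch cycles over $\prP^1_j$, apply Riemann--Hurwitz to get genus $0$, and use the Branch Cycle Lemma with Storer's statement to locate $\bQ_7=\bQ(\sqrt{-7})$. But there is a genuine gap in the arithmetic of the parameter space, and it sits exactly where the proposition asserts more than you prove.

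You claim $\sH^{\abs,\red}_7$ descends to $\bQ$ because ``the involution classes are rational and the braid orbit, together with $N_{S_7}(\PGL_3(\bZ/2),\bfC)$, is $G_\bQ$-stable.'' That premise is false: $\bfC$ contains a single $7$-cycle class, and applying a $\gamma\in G_\bQ$ with cyclotomic character $c_\gamma=-1$ swaps the two $\PGL_3(\bZ/2)$ classes of $7$-cycles (this is precisely why $-1$ failing to be a multiplier forces $\bQ_7$, as you already use on the fiber level). So $\bfC$ is only $G_{\bQ_7}$-stable, and the reasoning you give pins the definition field of $\sH^{\abs,\red}_7$ as a moduli space of Davenport pairs at $\bQ_7$, not $\bQ$. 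Your conclusion then honestly delivers a uniformizer $t_7$ over $\bQ_7$ --- but the proposition asserts $t_7$ is over $\bQ$.

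The paper's mechanism for the extra descent to $\bQ$ has two pieces you omit. First, since $N_{S_7}(\PGL_3(\bZ/2))=\PGL_3(\bZ/2)$ in the point representation, $\Psi^{\inn,\abs}$ of \eqref{FrV} has degree $1$, so $\sH^{\abs,\red}_7$ identifies with the corresponding inner reduced Hurwitz space; and by Thm.~\ref{DS2} the Galois closures of the two halves of a Davenport pair coincide, so the inner spaces attached to ${}_1\bfC$ and ${}_2\bfC$ are the \emph{same curve}, which (Prop.~\ref{absHurwitzSpaceDef}, inner version) is defined over $\bQ$ as a $j$-line cover even though its interpretation as a space of Davenport pairs costs $\bQ_7$. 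Second, to get an actual $\bQ$-point on this genus $0$ curve the paper does not invoke \ref{DS5}\eql{davpairs}{davpairs2} (which would risk circularity, since \S\ref{App.5} is offered as the substantiation of that clause); it reads off that $\gamma_\infty'$ has a unique $3$-cycle and a unique $4$-cycle, so the two cusps over $j=\infty$ have distinct ramification indices and are therefore individually $G_\bQ$-fixed, giving $\bQ$-rational points and hence $\sH^{\abs,\red}_7\cong\prP^1_{t_7}$ over $\bQ$. With those two steps in place your infinitude argument is fine; without them you have proved a weaker statement.
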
 

\S\ref{App.5} shows braid computations for $n=7$ that dispell any mystery about $\bQ_n$ that also give these properties of $\sH^{\abs,\red}_n$. (They also hold for $n=13,15$.)   
\begin{edesc} \label{redcomps7} \item  \label{redcomps7a} As a carrier of Davenport pairs, $\sH^{\abs,\red}_n$ has just one component defined over $\bQ_n$; and 
\item  \label{redcomps7b} as a $j$-line cover, $\sH^{\abs,\red}_n$ has definition field $\bQ$ rather than $\bQ_n$.
\end{edesc} 

M\"obius equivalence is  also called {\sl reduced equivalence\/} of covers. This equates two covers $\phi_i:X_i\to \prP^1_z$ if for some $\alpha\in \PGL_2(\bC)$,  $\alpha\circ \phi_1$ is absolute equivalent to $\phi_2$. Nielsen classes are a surrogate for data that canonically  produces a family of covers. By considering reduced (absolute) equivalence, we aim for a  normal form -- here of polynomials -- from which we can generate any family of covers. 

What \eql{redcomps7}{redcomps7b} says is that -- like any reduced Hurwitz space with $r=4$ -- the parameter space is a curve, and a natural $j$-line cover.  \S\ref{spacesCovers} shows how to list  irreducible reduced Hurwitz space components for any $r$. When $r=4$, so these are curves, it shows how to calculate the genuses of  their (compactified) components.

You might ask, "Where are these Davenport pairs?" \S\ref{App.5} discusses their specifics, coming  from alternate treatments -- based on this one -- that produced the pairs.  

\subsubsection{Branch cycles versus algebraic covers} \label{App.3}  \S\ref{cont7} produced a polynomial $f$ (cover) from a set of branch cycles and classical generators. Fixing the classical generators (and branch points) gives a one-one correspondence between $r$-branched covers of $\prP^1_z$ and branch cycles. Here is the major unsolved problem in using RET. 
\begin{prob}[Classical generation] \label{CGtop} Both sides of this correspondence are algebraic, but classical generators are not. Prove such a correspondence without using such a topological gadget.   
\end{prob} 

http://math.uci.edu/deflist-cov/Alg-Equations.html has examples of Prob.~\ref{CGtop}.   \cite[p.~27]{Mum76} lists an imprecise equivalent to classical generators to  relate Teichmuller and Torelli space.  Applications in \cite{Vo96} seem to be only about the Inverse Galois Problem, but really its motivation was from applications  we discuss here. 

It is not immediate that having a cover $f: X \to \prP^1_z$ means that $X$ is algebraic (projective: \S\ref{secII.1}). Still, that follows  given a single further function that {\sl  separates\/} -- has different values on -- the fiber over {\sl some\/} point of $U_\bz$. The R(iemann)-R(och) Theorem guarantees such a function. Though non-trivial, no one argues over RR.  

When $X$ has genus 0, shouldn't it be easy to produce such a function (lets call it $w$)?  Here is an historical track to finding $w$. You take the differential $df$ of $f$.  From general principles it has degree $2\geng_X - 2 = -2$. Similarly, for the function $w: X \to \prP^1_w$ (once we have it): It's differential $dw$ has degree -2. An especially good $w$ would be one that separates all points (is an isomorphism of $X$ to $\prP^1_w$).  The support of its polar divisor is concentrated over $w=\infty$. Since $X$ is simply connected, any meromorphic differential with this property, being locally integrable, is globally integrable to a function.  

\begin{prob} \label{seppoints} When $\geng_X=0$, what types of data allow automatic creation of such a function $w$ giving the isomorphism $w: X \to \prP^1_w$? \end{prob}  

\section{Hurwitz monodromy and braids} \label{App.4} 
\S\ref{App.3}  points to the essential object -- {\sl classical generators\/} on the $r$-punctured sphere $U_{\bz'}$. These assign a cover of $\prP^1_z$ to each element in an absolute Nielsen class. 

\subsection{Grabbing a cover by its branch points}  \label{HurMonUr} Denote the space of $r$ distinct, but unordered,  points on $\prP^1_z$ by $U_r$. Start with one cover $f: X\to \prP^1_z$ branched over $\bz'$. Then,  deform the punctures $\bz'$, keeping them distinct, to another set of $r$ points $\bz''$.   That is, give a  path (continuous and piecewise differentiable) $\sL$: $t \in [0,1] \mapsto \bz'(t)$, in  $U_r$,   with $\bz'(0)=\bz'$ and $\bz'(1)=\bz''$. 

Now consider the case $\bz'=\bz''$: equality of sets of branch points.  Then, $\sL$ may permute the order of the points in $\bz'$. Along $\sL$ we also can deform the initial classical generators $\sP'$. At the end  we have a new set of classical generators $\sP''$. 

A base point distinct from the branch points  is necessary to talk about classical generators. Therefore, freely following $\sL$ may force us to deform the base point $z_0'$, too: $t\in [0,1] \mapsto z_0(t)'$, with $z_0'=z_0(0)'$ and $z_0''=z_0(1)'$.  

You can always {\sl wiggle\/} $\sP''$ fixing its isotopy class and assuring neither $z_0''$ or $z_0'$ are on any of its paths. Then, you can further deform $z_0'' $ to $z_0'$, leaving all points on $\sP''$ fixed, just to get the original base point. Mapping the elements of  $\sP'$ in order to those of $\sP''$ induces an automorphism of $\pi_1(U_{\bz'},z_0')$. Since there  is no canonical way to deform $z_0''$ back to $z_0'$, mod out by the conjugation action of $\pi_1(U_{\bz'},z_0')$ on itself to make this automorphism unambiguous. 

Following  the branch point path produces an automatic analytic continuation of the cover $f$: http://math.uci.edu/\~{}mfried/deflist-cov/Hurwitz-Spaces.html, \S V.

Running over all such paths $\sL$ induces the {\sl Hurwitz monodromy group\/}, $H_r$. It acts as automorphisms on $\pi_1(U_{\bz'},z_0')$ modulo this inner action.  Two elements of $H_r$ generate it. We call these $q_1$ and $\sh$. For our purposes we have only to know their action (\cite[\S4]{Fr77} or \cite[Def.~9.3]{Vo96}) on a Nielsen class representative: $\bg=(g_1,g_2,g_3,\dots,g_r)\in \ni(G,\bfC)^\abs$. 

\begin{edesc}
\item $q_1: \bg \mapsto (g_1g_2g_1^{-1},g_1,g_3,\dots,g_r)$ the {\sl 1st\/} (coordinate) {\sl twist\/}, and
\item  $sh: \bg\mapsto  (g_2,g_3,\dots,g_r, g_1)$, the {\sl left\/} shift. \end{edesc} 
They both preserve generation, product-one and the conjugacy class collection conditions of \eqref{cycleconds}, 
Conjugating $q_1$ by $\sh$, gives $q_2$, the twist moved to the right. Repeating gives $q_3,\dots, q_{r-1}$. Three relations generate all relations for $H_r$:
\begin{edesc} \label{hurrelations} \item  \label{hurrelations1} Sphere: $q_1q_2\cdots q_{r-1}q_{r-1}\cdots q_1$; 
\item  \label{hurrelations2} Commuting: $q_iq_{j}=q_{j}q_i$, for $|i-j|\ge 2$ (read subscripts  mod $r\nm 1$); and 
\item  \label{hurrelations3} (Braid) Twisting: $q_iq_{i\np1}q_i=q_{i\np1}q_iq_{i\np1}$. \end{edesc}
The group $H_r$ inherits  \eql{hurrelations}{hurrelations2} and \eql{hurrelations}{hurrelations3} from the Artin braid group. 

\subsection{Spaces of covers}  \label{spacesCovers} A permutation representation of any fundamental group produces a(n unramified) cover. In particular, the  $\pi_1(U_r,\bz')$ permutation action on $\ni(G,\bfC)^\abs$ (\S\ref{prodBCs}) produces a cover:  $\sH=\sH(G,\bfC)^\abs   \to U_r$. 

\subsubsection{The points of the space} Each (complex) point $\bp \in \sH$   represents an equivalence class of sphere covers. The equivalence --  the simplest possible (called absolute) -- of $\phi: X \to  \prP^1_z$ and $\phi': X' \to \prP^1_z$ is where there is a continuous map from $X$ to $X'$ commuting with the projections to $\prP^1_z$. 

\begin{defn} \label{centralizer} A permutation representation $G\le S_n$ satisfies the {\sl centralizer\/} condition if no nontrivial element of $S_n$ commutes with $G$. It satisfies the {\sl normalizer\/} condition if the normalizer of $G(1)$ in $G$ is just $G(1)$.  \end{defn} 

From \cite[Lem.~2.1]{Fr77} the Def.~\ref{centralizer} conditions are equivalent. If a cover $\phi: X\to Y$ corresponds to the permutation representation, this is equivalent to there being no (nontrivial) automorphisms that commute with $\phi$. For example, the following gives a practical application of knowing the geometric monodromy group. 

\begin{lem} \label{commuteCyc} Suppose $G\le S_n$ is primitive (as in \eql{primdt}{primdta}), it contains an $n$-cycle $\sigma_\infty$, and $G(1)$ is nontrivial. Then, a cover $\phi$ with monodromy $G$ has no automorphisms. \end{lem}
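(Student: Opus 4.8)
The plan is to reduce the statement to the purely group-theoretic \emph{centralizer condition} of Definition~\ref{centralizer} and then verify that condition from primitivity. The first step is the translation: after choosing a base point $z_0\notin U_\bz$ and identifying the fiber $\phi^{-1}(z_0)$ with $\{1,\dots,n\}$, an automorphism of the cover $\phi$ (one commuting with the map to $\prP^1_z$) is exactly an element of $S_n$ that commutes with the monodromy group $G\le S_n$. So $\phi$ has no nontrivial automorphism iff no nontrivial element of $S_n$ centralizes $G$, i.e. iff $G$ satisfies the centralizer condition. By \cite[Lem.~2.1]{Fr77} the centralizer condition is equivalent to the \emph{normalizer condition}: $N_G(G(1))=G(1)$. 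Hence it suffices to prove $N_G(G(1))=G(1)$.

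Now I would bring in primitivity. By \eql{primdt}{primdta}, no subgroup lies properly between $G(1)$ and $G$, i.e. $G(1)$ is a maximal subgroup of $G$. Since $G(1)\le N_G(G(1))\le G$, either $N_G(G(1))=G(1)$ — the desired conclusion — or $N_G(G(1))=G$, i.e. $G(1)\trianglelefteq G$. To eliminate the latter: a normal subgroup contained in one point stabilizer of a transitive group is conjugate into every point stabilizer, hence contained in the kernel of the action, which is trivial by faithfulness; so $G(1)=1$, contradicting the hypothesis that $G(1)$ is nontrivial. The $n$-cycle $\sigma_\infty$ makes this concrete and is the natural tool in the present setting — conjugating $G(1)$ by the powers $\sigma_\infty^i$ sweeps out all $n$ point stabilizers — and it also rules out at the outset the degenerate prime-degree regular case in which $G(1)$ could be trivial.

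Therefore the only surviving possibility is $N_G(G(1))=G(1)$, so the normalizer (equivalently centralizer) condition holds, and $\phi$ has no nontrivial automorphisms. I expect the main obstacle to be not computational but definitional: making precise that a ``deck'' automorphism of the topological cover corresponds to a permutation centralizing $G$ (so that \cite[Lem.~2.1]{Fr77} genuinely applies), and keeping straight that ``automorphism of $\phi$'' always means one commuting with the projection to $\prP^1_z$, not an arbitrary self-map of $X$. Once that translation is secured, the group theory is a one-line consequence of maximality of $G(1)$.
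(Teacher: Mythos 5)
Your proof is correct but takes a genuinely different route from the paper's. The paper works directly with the $n$-cycle: since $\tau$ centralizes $G$ it centralizes $\sigma_\infty$, and the centralizer of an $n$-cycle in $S_n$ is $\langle\sigma_\infty\rangle$, so $\tau$ is a power of $\sigma_\infty$ lying in $G$ but not in $G(1)$; primitivity then forces $\langle\tau\rangle$ to be a transitive normal subgroup, hence $\tau$ an $n$-cycle, whose centralizer being regular forces $G(1)=1$. You instead pass through the equivalence of the centralizer and normalizer conditions (\cite[Lem.~2.1]{Fr77}) and observe that maximality of $G(1)$ (primitivity) plus $G(1)\neq 1$ already forces $N_G(G(1))=G(1)$: if $N_G(G(1))=G$ then $G(1)$ would be normal, hence equal to every point stabilizer, hence trivial. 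Your argument is cleaner and actually shows the $n$-cycle hypothesis is superfluous for this lemma — primitive plus nonregular already gives trivial centralizer — whereas the paper's proof genuinely exploits $\sigma_\infty$ to make the conclusion concrete (and because the $n$-cycle is already present for other reasons in the surrounding discussion). One small wobble: your final remark that the $n$-cycle ``rules out \dots the degenerate prime-degree regular case in which $G(1)$ could be trivial'' isn't quite right — that case is excluded by the explicit hypothesis $G(1)\neq 1$, not by the $n$-cycle — but this is an aside and doesn't affect the validity of the proof.
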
 

\begin{proof} From the above, if $\phi$ has an automorphism, then some $\tau\in S_n$ centralizes $G$. Compute easily: $\tau\in S_n$ centralizing $\sigma_\infty$ is a power of $\sigma_\infty$ (as in \cite[p.~47]{Fr70}). So, $\tau\in G$, but  $\tau\not\in G(1)$. As $G$ is primitive, $\lrang{G(1),\tau}=G$:  $\tau$ is transitive on $\{1,\dots,n\}$. So, it is an $n$-cycle itself that centralizes $G(1)$ and $G(1)$ is trivial.  \end{proof} 

\subsubsection{Using fine moduli} \label{fineModuli}  For each projective variety, including $\sH$, each point has a field generated by its coordinates. When, as in Prop.~\ref{absFamCovers}, points represent solutions to a problem, that may allow precisely finding over what fields such solutions occur. This holds, as in Thm.~\ref{vectorBundle}, applied to existence of Davenport pairs. 

 \begin{prop} \label{absFamCovers}  Assume  $K\subset \bC$. Then, a $K$ point of $\sH$ corresponds to an equivalence class of covers with the whole set defined over $K$. Assume any of the equivalent conditions of Def.~\ref{centralizer}. Then, there is a unique total family $$\Phi: \sT\to \sH\times \prP^1_z$$ of covers over $\sH$  \cite[p.~62]{Fr77}. Also, a  $K$ point $\bp \in \sH$ gives a well-defined $K$ cover in the class of $\bp$: $\Phi_\bp: \sT_\bp \to  \bp\times \prP^1_z$;  interpret this as a $K$ cover of $\prP^1_z$. \end{prop}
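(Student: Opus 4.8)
The statement is the \textbf{fine moduli} property of the Hurwitz space $\sH=\sH(G,\bfC)^\abs$, and the plan is to follow \cite[\S4--5]{Fr77}. Recall $\sH\to U_r$ is built from the $\pi_1(U_r,\bz')$-action on the Nielsen class $\ni(G,\bfC)^\abs$ (\S\ref{prodBCs}); with the Branch Cycle Lemma \eqref{*9}, that action -- and hence $\sH$ -- is defined over a number field $k$ (cyclotomic in general, $\bQ$ when $\bfC$ is a rational union), and $G_k$ acts on the points of $\sH$. For the first assertion: a point $\bp\in\sH$ \emph{is}, by construction, an absolute equivalence class of covers $\phi\colon X\to\prP^1_z$, and for $\gamma$ fixing $k$ the conjugate cover ${}^\gamma\phi$ lies in the class indexed by ${}^\gamma\bp$. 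Hence, for $K\supseteq k$, the point $\bp$ is $K$-rational exactly when $G_K$ permutes the covers of that class among themselves, i.e.\ the class is defined over $K$ \emph{as a set}.

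For the total family, pull the tautological family of $r$-punctured spheres, $\{(\bz,z):z\in\prP^1_z\setminus\bz\}\to U_r$, back along $\sH\to U_r$; the resulting total space $Y\subset\sH\times\prP^1_z$ is open and maps to $\sH$ with fiber $U_{\bz'}$ over a point $\bp$. Over $\sH$ the Nielsen datum is tautologically defined, so the fiberwise permutation representations of $\pi_1(U_{\bz'})$ -- coming from the branch cycles, conditions \eqref{cycleconds} -- assemble into one permutation representation of $\pi_1(Y)$, producing an unramified cover $\sT^\circ\to Y$. Let $\sT$ be the normalization of $\sH\times\prP^1_z$ in the function field of $\sT^\circ$; this fills in the universal branch locus and gives $\Phi\colon\sT\to\sH\times\prP^1_z$. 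The R(iemann)-R(och) Theorem (cf.\ \S\ref{App.3}) together with GAGA makes $\Phi$ algebraic and projective over $\sH$, with every fiber a cover of $\prP^1_z$ in the Nielsen class $\ni(G,\bfC)$.

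Now invoke Def.~\ref{centralizer}: by \cite[Lem.~2.1]{Fr77} the centralizer condition means no nontrivial automorphism of a cover commutes with its map to $\prP^1_z$. This rigidity yields two facts. \emph{Uniqueness}: two total families over $\sH$ with the prescribed fibers are isomorphic over $\sH\times\prP^1_z$, since otherwise an automorphism of a single fiber would appear. \emph{Descent}: for $\gamma$ fixing $k$, the conjugate family ${}^\gamma\Phi$ is again a total family over ${}^\gamma\sH=\sH$, hence $\cong\Phi$ by uniqueness, and the absence of automorphisms forces these isomorphisms to satisfy the cocycle identity, so Weil's descent criterion puts $\Phi\colon\sT\to\sH\times\prP^1_z$ over $k$. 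Finally, for $\bp\in\sH(K)$ with $K\supseteq k$, restrict (base-change) the $k$-model of $\Phi$ to the $K$-point $\bp$ to obtain $\Phi_\bp\colon\sT_\bp\to\prP^1_z$ over $K$ -- a $K$-cover in the class of $\bp$, since $\sT_\bp$ is a smooth projective $K$-curve. (For the Davenport families of degrees $7$, $13$, $15$, Lem.~\ref{commuteCyc} supplies the hypothesis of Def.~\ref{centralizer}, so the construction applies.)

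\textbf{Main obstacle.} The delicate step is descending the \emph{total family} -- not merely $\sH$ -- to $k$: one must check that the isomorphisms $\Phi\cong{}^\gamma\Phi$ produced by uniqueness are canonical enough to form an honest Galois $1$-cocycle. This is exactly where "no automorphisms" is indispensable; without it $\sT$ is defined over $k$ only up to a gerbe banded by the automorphism group, giving a field-of-definition versus field-of-moduli discrepancy. A lesser technical point, worth a sentence in a complete proof, is the analytic-to-algebraic passage for $\sT^\circ\to Y$ and its normalization (Grauert--Remmert, GAGA), routine for curves.
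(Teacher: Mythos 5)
Your proof is a correct reconstruction of the standard fine-moduli argument for Hurwitz spaces; the paper itself does not prove this proposition but cites \cite[p.~62]{Fr77}, and your route -- pull back the configuration-space family over $U_r$, assemble the fiberwise monodromy into a cover $\sT^\circ\to Y$, normalize, invoke the centralizer condition for uniqueness and then Weil descent -- matches the intended argument.

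One place you glide a bit too quickly: the centralizer condition of Def.~\ref{centralizer} is needed for the \emph{existence} of the total family over $\sH$ itself, not only for uniqueness and descent. Your phrase \lq\lq Over $\sH$ the Nielsen datum is tautologically defined\rq\rq\ is exactly what fails without that hypothesis: the monodromy of a loop in $\pi_1(\sH)$ may carry a representative branch-cycle tuple to a conjugate one, and if the cover in that class has nontrivial automorphisms the resulting lift to $\pi_1(Y)$ is only defined up to a genuine ambiguity. Absent the centralizer condition one only gets a representing family after passing to an \'etale cover of $\sH$ -- this is precisely Prop.~\ref{repFam}, and the paper flags the issue explicitly (\lq\lq even without fine moduli we can choose $\sP = \sH(G,\bfC)^*$\rq\rq\ only under an extra condition from \cite[Prop.~3]{Fr77}). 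So the clean statement is: the centralizer condition makes $\ni(G,\bfC)^\abs$ into a \emph{rigid} datum, so the fiberwise permutation representations of $\pi_1(U_{\bz(\bp)})$ extend uniquely to $\pi_1(Y)$; existence, uniqueness, and the cocycle condition for descent then all flow from the same rigidity. With that adjustment your proof is complete; your closing remark about the gerbe obstruction and the field-of-moduli versus field-of-definition discrepancy is exactly the right thing to say and shows you understand what the centralizer hypothesis is buying.

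A minor typographical slip: in the pullback construction the fiber of $Y\to\sH$ over $\bp$ is $U_{\bz(\bp)}$, the complement of the branch locus of the cover represented by $\bp$, not the fixed reference set $U_{\bz'}$.
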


This abstract result says that we can recover any given family of absolute covers in a given Nielsen class, assuming the conditions of Def.~\ref{centralizer}. That is, these guarantee {\sl  fine moduli\/} for covers in the corresponding Nielsen class.  The word \lq\lq unique\rq\rq\ means that for any other such representing family $\Phi': \sT'\to \sH\times \prP^1_z$, over $\sH$, there is a unique analytic map from $\sT$ to $\sT'$ that commutes with  $\Phi$ and $\Phi'$. Such a family being algebraic -- giving meaning to the definition field statements -- implies there is an $m$ (not unique), so that $\sT$ embeds in $\sH\times \prP^m$ with $\Phi$ compatible with the natural projection $\sH\times \prP^m\to \sH$. (Furthermore, $\sH$ is quasi-projective.) 

\subsubsection{Finding definition fields}  \label{famdeffield} 
We indicate an essential step: How we  find  the definition field of the family $\Phi$ in Prop.~\ref{absFamCovers} from information on the Nielsen class. 

Recall the integer $N_\bfC$ from \S \ref{secVI.1}.  Prop.~\ref{DS5} introduces a multiplier group, and  \cite[\S5]{Fr77} generalizes it -- based on the BCL -- to define a cyclotomic field (generalizing $\bQ_{M_f}$ in \S\ref{applyBCL}) related to any absolute Nielsen class. Recall the elements, $N_{S_n}(G,\bfC)$,  of $S_n$ that normalize $G$ and permute the classes of $\bfC$ ( \S\ref{BCLtreat}). 

Simultaneously conjugating all entries of $\bg\in \ni(G,\bfC)$ by $N_{S_n}(G,\bfC)$ (\S\ref{BCLtreat}) gives $h\bg h^{-1}\in \ni(G,\bfC)$. \cite[p.~60]{Fr77} generalizes the multiplier group: 
\begin{equation} \label{multgps} \begin{array}{rl}
M_{\bfC}=\{c\in (\bZ/N_\bfC)^* &\mid \exists \beta \in S_r, h\in N_{S_n} (G,\bfC),  \\ 
&h^{-1}\C_i^c h=\C_{(i)\beta}, i=1,\dots,r\}.\end{array} \end{equation} 

\begin{defn} Denote the fixed field of $M_\bfC$ in $\bQ(e^{2\pi i/N_\bfC})$ by $\bQ_{M_\bfC}$. \end{defn} 

\begin{prop} \label{absHurwitzSpaceDef} If the Def.~\ref{centralizer} conditions  hold, the total family of Prop.~\ref{absFamCovers}  over $\sH$, with its map to $U_r$, has precise definition field $\bQ_{M_\bfC}$. Also, the definition field of each connected component of the family contains $\bQ_{M_\bfC}$.

Even if the conditions of Def.~\ref{centralizer} don't hold,  the definition field statement holds by  regarding $\sH$ as the moduli of covers in the Nielsen class. Orbits of $H_r$ on $\ni(G,\bfC)^\abs$ correspond one-one with  connected components of $\sH$. \end{prop}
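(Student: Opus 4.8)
The plan is to peel off the purely topological assertion first, and then to derive the definition-field statement from the Branch Cycle Lemma \eqref{*9} together with Galois descent, following the template of \cite[\S5]{Fr77}. For the topology: by the construction of \S\ref{spacesCovers}, $\sH=\sH(G,\bfC)^\abs\to U_r$ is the finite unramified covering space attached to the permutation action of $\pi_1(U_r,\bz')$ on the fibre $\ni(G,\bfC)^\abs$, and that action factors through the Hurwitz monodromy group $H_r$ of \S\ref{HurMonUr}. Since connected components of a covering space correspond bijectively to the orbits of its monodromy group on one fibre, the $H_r$-orbits on $\ni(G,\bfC)^\abs$ match the connected components of $\sH$; this uses nothing beyond covering-space theory.

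For the arithmetic, since $U_r$ is defined over $\bQ$ the finite \'etale cover $\sH\to U_r$ acquires a unique structure of quasi-projective $\bar\bQ$-variety mapping finite-\'etale to $U_{r,\bar\bQ}$ (algebraization of the analytic covering), and when the conditions of Def.~\ref{centralizer} hold, Prop.~\ref{absFamCovers} moreover equips it with a \emph{unique} total family $\Phi\colon\sT\to\sH\times\prP^1_z$ over $\bar\bQ$. Fix $\gamma\in G_\bQ$, with cyclotomic character $c_\gamma\in(\bZ/N_\bfC)^*$ on $e^{2\pi i/N_\bfC}$. A point of $\sH$ is a cover of $\prP^1_z$ branched over some $\bz$ with branch cycles in the classes $\bfC$; by \eqref{*9}, $\gamma$ sends it to the cover branched over ${}^\gamma\bz$ whose branch cycles --- after re-indexing by some $\beta\in S_r$ and conjugating by some $h\in N_{S_n}(G)$ --- lie in the classes $\bfC^{c_\gamma}$. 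Hence $\gamma$, compatibly with the projection to $U_r$, carries $\sH(G,\bfC)^\abs$ to $\sH(G,\bfC^{c_\gamma})^\abs$; these coincide exactly when some $h\in N_{S_n}(G,\bfC)$ and relabelling $\beta\in S_r$ return $\C_i^{c_\gamma}$ to $\C_{(i)\beta}$, which by the defining formula \eqref{multgps} for $M_\bfC$ is precisely the condition $c_\gamma\in M_\bfC$, i.e.\ $\gamma$ fixes $\bQ_{M_\bfC}$.

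Therefore $G_{\bQ_{M_\bfC}}$ stabilises the pair $(\sH\to U_r)$ over $\bar\bQ$, so Weil's descent criterion (applicable since $U_r$ is over $\bQ$ and $\sH$ is quasi-projective) produces a model over $\bQ_{M_\bfC}$; and when Def.~\ref{centralizer} holds the uniqueness in Prop.~\ref{absFamCovers} forces $\Phi$ to descend too, since any $\bQ_{M_\bfC}$-conjugate of $\Phi$ is again a universal family and hence equals $\Phi$. If the conditions of Def.~\ref{centralizer} fail one keeps $\sH$ as the coarse moduli space of covers in the Nielsen class and runs the same argument for $\sH\to U_r$. The field $\bQ_{M_\bfC}$ is \emph{precise}: were the object defined over $L\subsetneq\bQ_{M_\bfC}$, some $\gamma\in G_L$ would have $c_\gamma\notin M_\bfC$, and by the step above such $\gamma$ moves $\sH(G,\bfC)^\abs$ off itself --- a contradiction. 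Finally, any connected component of the $\bQ_{M_\bfC}$-variety $\sH$ has field of definition a finite extension of $\bQ_{M_\bfC}$, which is the remaining assertion.

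The main obstacle is the second step: making the passage between the geometric Galois action on covers and the combinatorial cyclotomic action on conjugacy-class $r$-tuples fully rigorous. The classical generators mediating between a cover and its Nielsen representative are not themselves algebraic (Prob.~\ref{CGtop}), so \eqref{*9} controls only conjugacy classes, and only up to the normaliser $N_{S_n}(G,\bfC)$ and up to the branch-point permutation $\tau_\gamma$ --- which is exactly why $M_\bfC$ in \eqref{multgps} is built by quotienting out those very freedoms, and why the equivalence ``$\gamma$ stabilises $\sH$'' $\Longleftrightarrow$ ``$c_\gamma\in M_\bfC$'' comes out cleanly.
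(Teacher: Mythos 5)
Your proof is correct and follows essentially the same route the paper takes: identify the $G_\bQ$-action on a (universal or coarse) moduli object via the Branch Cycle Lemma, observe that $\gamma$ sends $\sH(G,\bfC)^\abs$ to $\sH(G,\bfC^{c_\gamma})^\abs$, note these coincide as moduli objects precisely when $c_\gamma\in M_\bfC$ by the defining formula \eqref{multgps}, and descend. The topological half (connected components $\leftrightarrow$ $H_r$-orbits) is the standard covering-space dictionary, as in the paper.

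Two small points of comparison. First, you invoke Weil descent explicitly where the paper argues more informally in App.~\ref{abs-innTie}, relying on the uniqueness clause of Prop.~\ref{absFamCovers} and on the ``collection of families'' interpretation. This is a genuine tightening, not a different route; the uniqueness of the total family is exactly the cocycle rigidity Weil descent needs, and you were right to stress that when Def.~\ref{centralizer} fails one must carry the coarse moduli interpretation through the same steps. Second, for the claim that each component's definition field contains $\bQ_{M_\bfC}$ you give the direct descent observation (stabilizers of geometric components in $G_{\bQ_{M_\bfC}}$ are open, so components are defined over finite extensions of $\bQ_{M_\bfC}$), whereas the paper's displayed text gives the ``manifold'' argument, which actually proves something stronger -- that no \emph{point} of a component can have coordinates in a field smaller than that component's definition field. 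Both establish the stated assertion; the paper's version feeds Rem.~\ref{normalvssing}, which you did not need. Your closing caveat about Prob.~\ref{CGtop} -- that the BCL only controls conjugacy classes up to $N_{S_n}(G,\bfC)$ and $S_r$-reindexing, and that \eqref{multgps} is built to quotient out exactly those ambiguities -- is the right place to flag where the delicacy lies.
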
 

The 1st paragraph of Prop.~\ref{absHurwitzSpaceDef} suffices for Davenport pairs. The proposition is a corollary of \cite[Prop.~5.1]{Fr77}. App.~\ref{abs-innTie} reviews this -- including explaining the 2nd paragraph -- and ties it to \cite[Main Thm.]{FrV91}. The Hurwitz space interpretation  shows Prop.~\ref{absHurwitzSpaceDef}  is the essential ingredient to the latter. 

Let $\sH'$ be a (complex analytically) connected component of $\sH$.  If there is only one component, then it has definition field $\bQ_{M_\bfC}$. Now assume there is more than one. Regarding $\sH'$ as a space of covers, some number field $K$ is a  minimal definition field for that structure. Since $\sH$, As an unramified cover of a manifold, $\sH$ is a manifold. So, an argument so simple, I give it here, says that no $\bp\in \sH'$ can have coordinates in a field smaller than $K$ \cite[\S5]{Fr77}. 

For simplicity assume $\bp$ has coordinates in $\bQ$, and $[K:\bQ]> 1$. Choose $\gamma\in G_\bQ$ nontrivial on $K$: $^\gamma\sH'$ is a component of the moduli space for  a new space of covers of $\prP^1_z$; either another Nielsen class or a different component of $\sH$.  You may compatibly apply $\gamma$ to any subspace $\sH^*$ of $\sH'$, extending it to the corresponding spaces of covers over $\sH^*$. Now apply it to the point $\bp$. Since $\bp$ has coordinates in $\bQ$, $\gamma$ extended to a representing cover will be in the same Nielsen class, contrary to our assumption about $\gamma$. So, $^\gamma\sH'$ is a further, distinct, component of $\sH$, which also contains $\bp$. That gives two components of $\sH$ through $\bp$, contrary to $\sH$ being a manifold.  

\begin{rem} \label{normalvssing} The argument above that $\bp$ can have coordinates in no field smaller than $K$ requires only that $\sH$ is a normal variety. \end{rem} 

\subsubsection{Spaces of polynomials} \label{polySpaces} Consider a family of covers, with the notation below Prop.~\ref{absFamCovers}. 
Since the  fibers of the map $\Phi$ are curves, it may happen that we could choose $m=2$. This would  be representing the fibers $\Phi_\bp: \sT_\bp \to  \bp\times \prP^1_z$ as the zero set in projective 2-space with coordinates $(x_0,x_1,x_2)$ of a homogenous polynomial, $f(x_0,x_1,x_2)$,  and   the $z$ variable identified to $x_1/x_0$.  For families of genus 0 curves, we might even hope for $m=1$. 

Problems about polynomial covers (and others) often call for restricting to closed paths in $U_r$ that keep a branch point, say   $z_r=\infty$, fixed. Appropriate to Davenport pairs is  the following situation. 

Suppose $\phi: X \to \prP^1_z$ is a cover over $K$. Assume there is a {\sl unique\/} totally ramified place $x_\infty$;  we assume it is over $z_r$. Then, $z_r$ has definition field $K$. By applying a linear fractional transformation we may assume $z_r=\infty$. Further, in the expansion of the most negative term of $\phi$ around $x_\infty$, by changing $\phi$ to $a\phi$ we may assume that term has coefficient 1. 

If, in addition, we assume $X$ has genus 0,  then some isomorphism of $X$ with $\prP^1_w$ over $K$ sends $x_\infty$ to $w=\infty$.  That $K$ rational point $x_\infty$ is essential for this. With $\deg(\phi)=n$, rename $\phi$  as a monic polynomial in $w$: $P : \prP^1_w \to \prP^1_z$ over $K$. Still,  the isomorphism isn't yet unique.  

There is still a polynomial {\sl collection\/}, all {\sl affine} equivalent to $\phi$ and subject to choices we've already made: \begin{equation} \label{affinePolClass} \{P(e^{2\pi i j/n}w+b')+b\}=\tilde P_\phi, \text{  $j$ an integer, $b',b$ any constants}.\end{equation} 
Given $P$ over $K$, setting the penultimate coefficient to 0 determines $b'$ (still in $K$). 

Now we get to subtle normalizations when applied to Davenport pairs. 
Suppose $K\le \bR$. Then, if we name the zeros of $P(w)=z$ as $\row w n$, given as expansions in $1/z^{\frac 1 n}$, we can also normalize the connection between $w_1$ and $w$, by associating that expansion with a {\sl tangential base\/} point (as, say, in \cite[opening of \S15]{Del89}). That is, restrict values of $z$ to a sector 
$$ \{re^{i\theta}\mid r<\epsilon, -\pi< \theta<+\pi\} $$ 
\begin{triv} \label{tangbasept} and choose $j$ so that by renaming  $\zeta_n^jw$ to be $w$, it has  its values  lying in a sector around the positive real axis near $\infty$.\end{triv}  Yet, none of the Davenport pairs has definition field  $K\le \bR$.

Here is another normalization that doesn't work for Davenport pairs. 
\begin{triv} \label{const0} We can choose $b$ so the constant term of $P$ is 0.\end{triv}  But this would violate the condition of conjugacy between Davenport pairs  $f$ and $g$ in \eql{davpairs}{davpairs3}. 
So, the topic of polynomial normalization continues in  \S\ref{writeEquats}. 

Consider a family $\Phi: \sT\to \sF\times \prP^1_z$ of $r$-branch point covers. Assume each fiber $\Psi_\bp: \sT_\bp \to  \bp\times \prP^1_z$ has genus 0, with exactly one totally ramified place over $z=\infty$. 
\begin{defn} \label{polyfam} Call $\Phi$ a {\sl family of  polynomial covers\/} if for some polynomial  $P(\bp,w)$ in $w$ with coefficients in the coordinates $\bp\in \sF$,  each fiber of  $$P: \sF\times  \prP^1_w \to  \sF\times  \prP^1_z \text{  by }(\bp,w)\mapsto P(\bp,w)$$ represents the corresponding fiber of $\Phi$. \end{defn} 

\subsubsection{Branch cycles for $j$-line covers} \label{bcUr} 
Consider $ U^r$, the set of {\sl ordered\/} (unlike $U_r$ in \S\ref{HurMonUr}) distinct points on $\prP^1_z$. Two groups act on $U_r$: $\PGL_2(\bC)$ acting the same on each slot; and $S_r$ permuting the coordinates.  
For general $r$ the {\sl configuration space\/} $J_r$ for {\sl reduced\/} absolute equivalence is the quotient of $U^r$ by these  commuting actions.  That is, $$\PGL_2(\bC)\backslash U^r/S_r=\PGL_2(\bC)\slash U_r\eqdef J_r.$$ The parameter space for this equivalence, $$\sH(G,\bfC)^{\abs,\red}=\sH(G,\bfC)^{\abs}/\PGL_2(\bC),$$ is  the normal variety  given by extending the action of $\PGL_2(\bC)$ on $U_r$ to  $\sH(G,\bfC)^\abs$  (as in \S\ref{cont7}).  The result  has a  natural map to $J_r$. 

The classical $j$-line minus the point at $\infty$  is $J_4$ ($r=4$). The cases of Davenport families where $r=4$ are included. They have reduced parameter spaces  $\sH(G,\bfC)^{\abs,\red}$ whose components are  each upper half-plane quotients by a finite index subgroup of $\PSL_2(\bZ)$. Each has a natural normal (since $r=4$, nonsingular) compactification, $\bar \sH(G,\bfC)^{\red}$, as a cover of the $j$-line (references below). 

Designate the whole $j$-line by $\prP^1_j$, with the variable $j$ normalized to have $j=0$ and $j=1$ as the two possible finite branch points of upper half-plane quotients. 
We can compute explicitly the components, their ramification (so their genuses), and geometric monodromy  as $\prP^1_j$ covers.  For that we use \eqref{gammas}  for its branch cycles. Define $\sQ''_4$ to be the (normal) subgroup of $H_4$ generated by $\sh^2$ and $q_1q_3^{-1}$. 

\begin{defn} The reduced (absolute) Nielsen  class of $(G,\bfC)$ is  $$\ni(G,\bfC)^{\abs}/\sQ''_4=\ni(G,\bfC)^{\abs,\red}.$$ For completeness, there is a definition when $r\ge 5$, but then $\sQ''_r$ is trivial, and reduced classes are the same as Nielsen classes. \end{defn}

The action of $H_4$ on reduced Nielsen classes factors through the {\sl mapping class group}: $\bar M_4\eqdef H_4/\sQ''\equiv
\PSL_2(\bZ)$ \cite[Prop.~4.4]{BFr02}. \cite[\S2.7]{BFr02}  makes this identification by expressing certain generators from the images of words in the $q_i\,$s: 
\begin{equation} \label{gammas} \begin{array}{rl} & \lrang{\gamma_0,\gamma_1,\gamma_\infty},  
\gamma_0=
q_1q_2,\gamma_1=\sh= q_1q_2q_3=q_1q_2q_1\!\! \mod \sQ'',\gamma_\infty = q_2, \\ &\text{satisfying the product-one relation: }\gamma_0\gamma_1\gamma_\infty=1.
\end{array}\end{equation}  
Note:  \eqref{hurrelations} appears dramatically in these identifications.     
For example,  see that $\gamma_0$ (resp.~$\gamma_1$) has order 3 (resp.~2) by successively applying   \eql{hurrelations}{hurrelations2} and \eql{hurrelations}{hurrelations3} \!\!$\!\!\mod \sQ''$: 
\begin{equation} \begin{array}{rl} &q_1q_2q_1q_2q_1q_2=q_1q_2q_1q_1q_2q_1=q_1q_2q_3q_3q_2q_1=1;  \\
\text{ (resp. } & q_1q_2q_3q_1q_2q_3=q_1q_2q_1q_1q_2q_1=\dots=1).\end{array}\end{equation} 

\subsection{Applying Riemann-Hurwitz} \label{j-lineRH} Let $O$ be an orbit of $\bar M_4$ on $\ni(G,\bfC)^{\abs,\rd}$. Then, $O$ corresponds to a reduced Hurwitz space component $\sH_O$.  There is a unique non-singular completion, $\bar \sH_O$, that is a $j$-line cover. 
Now we interpret R-H \eqref{*10}: $(\gamma_0,\gamma_1,\gamma_\infty)$ acting on $O$ 
$\Leftrightarrow$  branch cycles for this cover \cite[Prop.~4.4]{BFr02}. 

\begin{edesc} \label{cuspct} \item  \label{cuspcta}  Ramified points over 0 $\Leftrightarrow$ orbits of
$\gamma_0$.
 \item  \label{cuspctb}  Ramified points over 1 $\Leftrightarrow$ orbits of $\gamma_1$.
\item  \label{cuspctc}  Use one representative $\bg\in \ni(G,\bfC)^{\inn,\rd}$ for each $\text{Cu}_4=\lrang{q_2,\sQ''}$ orbit.  Then,  $\ind(\gamma_\infty)$ is the sum 
$|(\bg)\text{Cu}_4/\sQ''|-1$ over those orbits. 
\end{edesc}

The points of $\bar \sH_O$ lying over $j=\infty$ are the {\sl cusps\/} of $\sH_O$ and these correspond to the $\text{Cu}_4$ orbits on $O$ \cite[Prop.~2.3]{BFr02}. The meaning of an absolute reduced family of covers in a given Nielsen class $\ni=\ni(G,\bfC)^{\abs,\red}$, with parameter space $\sF$ is analogous to the inner reduced family case of \cite[\S 4.3]{BFr02}. It is a sequence of morphisms of normal spaces $\Phi: \sT\to \sB \mapright{\Gamma}  \sF$, with these properties: 

\begin{edesc} \label{redfam} \item for each $\bp\in \sF$, $\sB_\bp$ is isomorphic to $\prP^1_z$ (over $\bC$); and 
\item  \label{redfam2}  the fiber $\Phi_\bp: \sT_\bp\to \sB_\bp$ is a cover in  the Nielsen class.  \end{edesc} Then, \eql{redfam}{redfam2} gives a natural morphism  $\Psi: \sF\to J_r$ by $\bp\mapsto \Psi(\bp)$,  the $\PGL_2(\bC)$ class of the $\Phi_\bp$ branch locus. We call $(\Phi,\Gamma)$ a family in the reduced Nielsen class. 

The goal is to compare this with the natural map $\Psi_{G,\bfC}: \sH(G,\bfC)^{\abs,\red}\to J_r$ in the following style. Suppose there is a family satisfying \eqref{redfam}, $$\Phi_{G,\bfC}: \sT_{G,\bfC} \to  \sB_{G,\bfC}  \mapright{\Gamma_{G,\bfC}} \sH(G,\bfC)^{\abs,\red} \text{with  $\sH(G,\bfC)^{\abs,\red}$ replacing $\sF$.}$$  (Say, if reduced fine moduli holds, as below.) Then, we can compare the pull back -- fiber product  -- of  this family over $\Psi$ with the family over $\sF$.

Assume $\sH$ is a component of $\sH(G,\bfC)^{\abs}$ corresponding to an $H_4$ orbit $O$ on $\ni(G,\bfC)$, and $\sH^\red$ is its corresponding reduced space. 
Here is the two-parted fine-moduli result -- analog of Prop.~\ref{absFamCovers} for reduced Hurwitz spaces -- for $r=4$  \cite[Prop.~4.7]{BFr02}. For the map $\Psi$, denote the locus over $J_4\setminus \{0,1\}$ by   $\sF'$, with  $(\sH^{\red})'$ the the pullback of $\sH^\red$ over $\sF'$. 
\begin{edesc} \label{finem} \item  \label{finem1} {\sl b(irational)-fine}: $(\sH^{\red})'$ parametrizes a {\sl unique\/} family (up to equivalence) if and only if  restricting $\sQ''$ (\S\ref{bcUr}) to $O$ has length 4 orbits. 
 \item \label{finem2} {\sl e(lliptic)-fine}: Same conclusion with $\sH^{\red}$ replacing $(\sH^{\red})'$, if, in addition to \eql{finem}{finem1} ,  $\gamma_0'$ and $\gamma_1'$ have no fixed points.  \end{edesc} 

\S\ref{App.5} computes  the data in \eqref{cuspct}  for the families of Davenport polynomials when degree $n=7$ based on \eqref{deg7BCs}. For each Nielsen class, there is   just one component.  There are two Nielsen classes corresponding to the two conjugacy classes of 7-cycles in $\PGL_3(\bZ/2)$.  We find that $\bar \sH(G,\bfC)^{\red}$ has genus 0. Using the fine moduli statements of \eqref{finem}, we then know over which fields there are Davenport pairs of  degree 7 (as in \eql{genus0mon}{genus0mon1}). Note:  \eql{finem}{finem1} holds, but \eql{finem}{finem2}  does not. 

\subsection{Three genus 0 families of Davenport Pairs} \label{App.5} 
Applied to polynomial covers with monodromy given in the $\PGL$ groups over finite fields, \S\ref{secVI.4} shows that only for $n=7$, 13 and 15,  could we have $r=4$ for Davenport pairs. (In all other cases $r=3$.) To illustrate what happened in these three cases we do just $n=7$. 

\subsubsection{Davenport pairs of degree 7} Let $\sD$ denote the difference set  $\{1,2,4\} \mod 7$ for  $n=7$ of \S\ref{secVI.5}. There are two conjugacy classes of 7-cycles, ${}_1\C_{\infty}$ and ${}_2\C_{\infty}$ in $\PGL_3(\bZ/2)$. That gives two sets of conjugacy classes ${}_i\bfC$, $i=1,2$,  determined by 3 involutions and a 7-cycle. Each  defines a Nielsen class. The computation for each is the same since an outer automorphism takes ${}_1\bfC$ to ${}_2\bfC$. 

For reduced classes mod out by $\sQ''$. Here is how the b-fine moduli property \eql{finem}{finem1} follows.  Given $\psigma\in  \ni(\PGL_3(\bZ/2), \bfC)$, a unique element of $\sQ''$ changes it to have the 7-cycle in the 4th position. Take it as  $\sigma_\infty^{-1}=(1\,2\,\dots\,7)^{-1}$, compatible with \S\ref{questionsDPs}.  Our permutations  act on the right of integers. We use $T_1$ (resp.~$T_2$) for the representation of $\PGL_3(\bZ/2)$ on points (resp.~lines). 

Expression \cite[(4.14)]{Fr95a} lists the reduced absolute Nielsen classes and \eqref{deg7BCs}  lists  their first three entries, the three finite branch cycles $(\sigma_1,\sigma_2,\sigma_3)$
for a polynomial $h$.  There are exactly 7, denoted $Y_1,\dots,Y_7$,  
up to conjugation by $S_7$: 
\begin{equation} \label{deg7BCs} \begin{array}{ll} Y_1: ((3\,5)(6\,7),((4\,5)(6\,2),(3\,6)(1\,2));&  
\!\!\!\!\!\!Y_2:((3\, 5)(6\, 7),(3\, 6)(1\, 2),(3\, 1)(4\, 5)); \\
Y_3: ((3\, 5)(6\, 7),(1\, 6)(2\, 3), (4\, 5)(6\, 2)); &    
\!\!\!\!\!\!Y_4:((3\, 5)(6\, 7),(1\, 3)(4\, 5),(2\, 3)(1\, 6)); \\
Y_5:((3\, 7)(5\, 6),(1\, 3)(4\, 5),(2\, 3)(4\, 7));& 
\!\!\!\!\!\!Y_6:((3\, 7)(5\, 6),(2\, 3)(4\, 7),(1\, 2)(7\, 5)); \\
Y_7: ((3\, 7)(5\, 6),((1\, 2)(7\, 5),(1\, 3)(4\, 5)). &\end{array} \end{equation} 
  
To simplify the notation relabel $Y_i$ as $i'$ and have the $q_i\,$s 
act on $1',\dots,7'$. (This isn't the action through
the representations $T_1$ and $T_2$.)   Denote the action of the $\gamma\,$s in \eqref{gammas} on $1',\dots,7'$ by $\gamma'\,$s. We get (see \cite[\S5]{Fr05a}) for $n=13$): 
$$q_1= (3'\, 5'\, 1')(4'\, 7'\, 6'\, 2'), \text{ and } 
q_2= (1'\, 3'\, 4'\, 2')(5'\, 7'\, 6').$$ Our action of the $q_i\,s$ is on the right. Therefore, $$\gamma_0'=(1'\,4'\,6')(3'\,7'\,5'), \gamma_1'=(1'\,7')(2'\,4')(3'\,6') \text{ and }\gamma_\infty=(1'\,2'\,4'\,3')(5'\,6'\,7')^{-1}.$$ Now we give the main results about $\sH(\PGL_3(\bZ/2),{}_j\bfC)^{\abs,\red}\eqdef \sH_{{}_j\bfC}$, $j=1,2$. As previously,  these  are upper half plane quotients, with their compactifications, $\bar \sH_{{}_j\bfC}$,  $j$-line covers. So,  it is appropriate to ask if they are modular curves.  

\begin{thm} \label{vectorBundle} The curves $ \bar \sH_{{}_j\bfC}$, $j=1,2$, have genus 0. The geometric (or arithmetic) monodromy group of each over $\prP^1_j$  is $S_7$. As reduced Hurwitz spaces they have b-fine, but not fine moduli. These are not modular curves. 

As moduli of Davenport pairs, $ \sH_{{}_1\bfC}$ is conjugate over $\bQ(\sqrt{-7})$ to $ \sH_{{}_2\bfC}$.   Each field containing $\bQ(\sqrt{-7})$ has infinitely many reduced inequivalent Davenport pairs.  Also, these reduced Hurwitz spaces support an explicit family of polynomial covers. 

Finally, $\sH(\PGL_3(\bZ/2),{}_j\bfC)^{\abs,\red}$ also identifies as an inner Hurwitz space.  So,  the two spaces for $j=1$ and $2$ are the same, and isomorphic to $\prP^1_{t}$ ($t=t_7$ in the statement of Prop.~\ref{DS6}) over $\bQ$. \end{thm}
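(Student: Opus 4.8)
The plan is to build everything from the explicit branch-cycle data \eqref{deg7BCs} and the reduced mapping-class action $(\gamma_0',\gamma_1',\gamma_\infty')$ computed just above, so that genus, monodromy and cusp structure all drop out of one finite bookkeeping. Since $Y_1,\dots,Y_7$ form a single $H_4$-orbit, $\bar\sH_{{}_j\bfC}\to\prP^1_j$ has degree $7$, and by \cite[Prop.~4.4]{BFr02} its branch cycles are $(\gamma_0',\gamma_1',\gamma_\infty')$. From the displayed cycle types $\gamma_0'$ is a product of two $3$-cycles ($\ind 4$), $\gamma_1'$ a product of three $2$-cycles ($\ind 3$), and $\gamma_\infty'$ has type $(4)(3)$ ($\ind 5$); plugging $n=7$ and $\sum\ind=12$ into Riemann--Hurwitz \eqref{*10} gives $2(7+\geng-1)=12$, hence $\geng=0$. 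For the monodromy group $G^\ast=\langle\gamma_0',\gamma_1',\gamma_\infty'\rangle\le S_7$: it is transitive (one component), so primitive since $7$ is prime; $\gamma_\infty'^{\,4}$ is a single $3$-cycle, so by Jordan's theorem $G^\ast$ is $A_7$ or $S_7$; and $\gamma_1'$ is odd, forcing $G^\ast=S_7$. The arithmetic monodromy lies between $G^\ast$ and $N_{S_7}(S_7)=S_7$, so it too is $S_7$.

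For the moduli quality I invoke \eqref{finem}. The orbits of $\sQ''=\langle\sh^2,q_1q_3^{-1}\rangle$ on the un-reduced $H_4$-orbit all have length $4$ (the computation underlying \cite[(4.14)]{Fr95a}), so \eql{finem}{finem1} gives b-fine moduli; but $\gamma_0'$ fixes the letter $2'$ and $\gamma_1'$ fixes $5'$, so \eql{finem}{finem2} fails and the moduli are not fine -- precisely the parenthetical remark closing \S\ref{j-lineRH}. That these genus-$0$ $j$-line covers are not modular curves I would settle by a finite comparison: the invariants just extracted -- degree $7$, genus $0$, two cusps of widths $3$ and $4$, the elliptic-ramification pattern over $j=0,1$, and $S_7$ as monodromy -- are matched against the finitely many conjugacy classes of index-$7$ subgroups of $\PSL_2(\bZ)$, and none (in particular no $X_0(N)$ or $X_1(N)$ of index $7$) produces that combination.

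The arithmetic statements come from the Branch Cycle Lemma machinery of \S\ref{famdeffield}. The difference set $\sD=\{1,2,4\}\bmod 7$ has multiplier group $M_\sD=\{1,2,4\}$, the squares mod $7$, of index $2$; its fixed field in $\bQ(\zeta_7)$ is $\bQ(\sqrt{-7})$ since $7\equiv 3\bmod 4$. By Multiplier Lem.~\ref{multiplierlem} the two $\PGL_3(\bZ/2)$-classes ${}_1\C_\infty,{}_2\C_\infty$ of $7$-cycles are interchanged by any $\gamma\in G_\bQ$ with $c_\gamma\notin M_\sD$; hence via \eqref{multgps} and Prop.~\ref{absHurwitzSpaceDef}, $\sH_{{}_1\bfC}$ and $\sH_{{}_2\bfC}$ are $G_\bQ$-conjugate with common definition field $\bQ_{M_\bfC}=\bQ(\sqrt{-7})$ as moduli of Davenport pairs. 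Since $\bar\sH_{{}_j\bfC}$ has genus $0$ and the width-$3$ cusp is a Galois-stable (hence $\bQ(\sqrt{-7})$-rational) point, $\bar\sH_{{}_j\bfC}\cong\prP^1_{\bQ(\sqrt{-7})}$; so for any $K\supseteq\bQ(\sqrt{-7})$ there are infinitely many $K$-points, and by b-fine moduli together with Prop.~\ref{absFamCovers} all but finitely many give genuine, pairwise reduced-inequivalent Davenport pairs over $K$. Finally each fiber is a genus-$0$ cover of $\prP^1_z$ with a unique totally ramified place (over $z=\infty$; the other branch cycles are involutions), so by \S\ref{polySpaces} that rational point trivializes the fiber and the family is presented by a monic degree-$7$ polynomial $P(t_7,w)$ -- the promised explicit polynomial family, with $T_1$ and $T_2$ producing the two halves $f,g$.

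For the inner-Hurwitz identification: $\mathrm{Out}(\PGL_3(\bZ/2))=\bZ/2$, generated by the inverse-transpose automorphism, which swaps the inequivalent $7$-point actions on points and on hyperplanes; an element of $N_{S_7}(\PGL_3(\bZ/2))$ outside $\PGL_3(\bZ/2)$ would induce that outer automorphism and so carry the point-representation to the non-conjugate hyperplane-representation, which is impossible, whence $N_{S_7}(\PGL_3(\bZ/2))=\PGL_3(\bZ/2)$ and absolute equivalence coincides with inner equivalence for this Nielsen class. The same outer automorphism identifies $\ni(\PGL_3(\bZ/2),{}_1\bfC)^{\inn}$ with $\ni(\PGL_3(\bZ/2),{}_2\bfC)^{\inn}$, so the $j=1$ and $j=2$ spaces are a single curve; it is defined over $\bQ$ as a $j$-line cover by \eql{redcomps7}{redcomps7b}, of genus $0$ with the $\bQ$-rational width-$3$ cusp, hence $\cong\prP^1_\bQ$ with coordinate $t_7$ as in Prop.~\ref{DS6}. \textbf{Main obstacle.} The delicate part is not the numerology but holding the three equivalences (absolute, inner, reduced) and their fields of definition straight at once: the b-fine-but-not-e-fine claim needs the explicit $\sQ''$-orbit computation, and reconciling ``$\bQ(\sqrt{-7})$ as Davenport-pair moduli field'' with ``$\bQ$ as $j$-line-cover and inner-Hurwitz field'' requires the functoriality of the graph automorphism of $\PGL_3(\bZ/2)$ to be handled carefully; the ``not a modular curve'' assertion is a separate, purely combinatorial check.
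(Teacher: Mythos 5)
Your genus computation, the $S_7$ monodromy via the $3$-cycle $\gamma_\infty'^4$ and odd element $\gamma_1'$, the b-fine-but-not-e-fine dichotomy through \eqref{finem}, the $\bQ(\sqrt{-7})$ moduli field via the multiplier group of $\{1,2,4\}$, and the triviality of $N_{S_7}(\PGL_3(\bZ/2))$ all match the paper's route, and your derivation of the inner-absolute identification through the graph automorphism of $\PGL_3(\bZ/2)$ is an acceptable rephrasing of the paper's appeal to Thm.~\ref{DS2}. Two steps, however, do not go through as stated.

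The ``not a modular curve'' assertion is under-proved. You propose ``a finite comparison \ldots against the finitely many conjugacy classes of index-$7$ subgroups of $\PSL_2(\bZ)$, and none (in particular no $X_0(N)$ or $X_1(N)$ of index $7$)\ldots''. But congruence subgroups of index $7$ are not confined to the $\Gamma_0$/$\Gamma_1$ families, and in any case this is a promissory note, not an argument. The paper's proof is short and decisive: if $\bar\sH_{{}_j\bfC}$ were a modular curve, the corresponding subgroup of $\PSL_2(\bZ)$ would be congruence, and by Wohlfahrt's theorem its level divides the lcm of the cusp widths $3$ and $4$, namely $12$; hence the monodromy of $\bar\sH_{{}_j\bfC}\to\prP^1_j$ would be a quotient of $\PSL_2(\bZ/12)$, whose order $288$ is prime to $7$ --- contradicting your own (correct) computation that the monodromy is $S_7$. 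You already extracted the exact invariants needed (cusp widths and monodromy group); you just did not draw the conclusion from them.

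The $\bQ$-definition-field step is circular as written. You cite \eql{redcomps7}{redcomps7b} to conclude the $j$-line cover is defined over $\bQ$, but \eqref{redcomps7} is a statement the paper asserts will be established by ``braid computations for $n=7$'' --- i.e.\ by this very theorem. The paper's actual argument runs through the inner Hurwitz space: once $\sH(\PGL_3(\bZ/2),{}_1\bfC)^\inn$ and $\sH(\PGL_3(\bZ/2),{}_2\bfC)^\inn$ are identified (you have this, via the graph automorphism), the inner multiplier group $\hat M_\bfC$ of \eqref{innermultgps} for the \emph{combined} Nielsen-class picture is all of $(\bZ/7)^*$, so Prop.~\ref{absHurwitzSpaceDef} gives $\bQ$ as the field of definition of the inner space as a $j$-line cover, even though the field of definition of the \emph{family of Davenport pairs} (absolute equivalence, keeping ${}_1\bfC$ and ${}_2\bfC$ apart) is $\bQ(\sqrt{-7})$. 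Only after that does the cusp-width argument (distinct widths $3$ and $4$ forcing $\bQ$-rational points over $j=\infty$) close the loop to $\cong\prP^1_\bQ$. Your cusp observation is correct; you just need to replace the citation of \eql{redcomps7}{redcomps7b} with the Prop.~\ref{absHurwitzSpaceDef} computation for the inner space.
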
 

\begin{proof} Compute the genus $g_{{}_1\bfC}$ of $ \bar \sH_{{}_1\bfC}$ by applying R-H to its branch cycles, $\gamma_0,\gamma_1,\gamma_\infty$ as a $j$-line cover: 
$$ 2(7+g_{{}_1\bfC}-1)=\ind(\gamma_0')+\ind(\gamma_1') + \ind(\gamma_\infty')=4 + 3 +(2+3)=12.$$ So, $g_{{}_1\bfC}=0$. That the monodromy group is $S_7$ is also quick: It is a degree 7 group containing a 3-cycle, $\gamma_\infty^4$, and a 4-cycle, $\gamma_\infty^3$. 

We have already noted above that \eql{finem}{finem1} -- b-fine moduli -- holds. The condition for fine moduli is that neither 
$\gamma_0'$ nor $\gamma_1'$ have fixed points. In our case, however, both do, so fine moduli doesn't hold.  If  $\sH(\PGL_3(\bZ/2),{}_j\bfC)^{\abs,\red}$ were a modular curve, its monodromy group would be a quotient of $\PSL_2(\bZ/N)$ for some integer $N$. Indeed,  $N=12$ would work, according to Wohlfahrt's Theorem \cite{Wo64}. Just the order of $\PSL_2(\bZ/12)$  shows it is not divisible by 7, so this is impossible. 

The normalizing group of $ \PGL_3(\bZ/2)$ in its action on the points of projective space is just $ \PGL_3(\bZ/2)$. Apply \eqref{FrV}. Then, $$\sH(\PGL_3(\bZ/2),\bfC_j)^\inn\to \sH(\PGL_3(\bZ/2)\bfC_j)^\abs$$  has degree the order of that normalizer modulo $ \PGL_3(\bZ/2)$. So, the degree is 1, identifying $\sH(\PGL_3(\bZ/2),\bfC_j)^\inn$ and $ \sH(\PGL_3(\bZ/2)\bfC_j)^\abs$. The former, however, is the space of Galois closures of the covers in the latter, according to \eql{FrV}{FrVa}. 

As noted in \S\ref{polySpaces}, we handle the normalizations to produce a family of polynomials in \S\ref{writeEquats}. Apply Thm.~\ref{DS2} to identify the Galois closures of the covers parametrized by $\sH(\PGL_3(\bZ/2)\bfC_j)^\abs$ for $j=1,2$. That is, $\sH(\PGL_3(\bZ/2),\bfC_j)^\inn$, $j=1,2$, are exactly the same Hurwitz spaces, which now identify with the absolute versions of those spaces.  

Prop.~\ref{absHurwitzSpaceDef} gives the precise definition field of the families of Davenport polynomials as $\bQ(\sqrt{-7})$, but it gives the definition field of the inner Hurwitz space as $\bQ$. Therefore, as a cover of $\prP^1_j$, the inner space has definition field $\bQ$.  

Further, we can identify rational points on this genus 0 space. For example,  $\gamma_\infty'$ has a 3-cycle and a 4-cycle. This indicates points of ramification index 3 and 4 over $j=\infty$ by applying the general idea of \S\ref{secVI.4} to these $j$-line covers as given in \S\ref{bcUr}). Any element  $\alpha \in G_\bQ$ keeps $\infty$ fixed. So, it must permute the points of the fiber over $\infty$ moving them to points having the same ramification indices over $\infty$. The uniqueness of such ramification indices  means both points have definition field $\bQ$. A genus 0 curve over a (characteristic 0) field $K$ with a $K$ point is well-known to have definition field $K$. This concludes our proof. \end{proof} 

\subsubsection{Identification of a space of bundles} The inner Hurwitz space of Thm.~\ref{vectorBundle} (through Thm.~\ref{DS2})  turns  Davenport pairs into bundles for a degree $n$ representation of their  geometric monodromy groups.  This interpretation supports Conj.~\ref{genus0Conj}. 

Any degree $n$ (complex analytic) cover $\phi: X\to Z$ (of nonsingular varieties) defines a rank $n$ bundle, as its corresponding {\sl direct image sheaf}. Briefly: Over a (simply-connected) coordinate patch $U$ on $Z$, form the local structure sheaf $\sO_U$, and similarly form the structure sheaf $\sO_{\phi^{-1}(U)}$ over $U$. Then, from flatness (\S\ref{roleFlatness}), $\sO_{\phi^{-1}(U)}$  is a free, rank $n$, module over $\sO_U$. That means, the structure sheaf $\sO_\phi$ is a locally free, rank $n$ bundle over $\sO_Z$. 

Apply this to a Davenport pair $(f,g)$, so there are two such rank $n$ bundles $\sO_f$ and $\sO_g$ over $\sO_{\prP^1_z}$. Actually, these spaces identify as the quotient of the regular representation of the Galois group of the covers that gives the permutation representation of the degree $n$ covers. These representation spaces identify in the case of Davenport pairs from the transition matrix of Thm.~\ref{DS2}. 

\subsubsection{$n=7, 13$ and $15$} 
With slight variation from their having more than two conjugacy class collections $\bfC$, Thm.~\ref{vectorBundle} applies also to $n=13$ and $15$.  
\cite[Thm. 8.1 and 8.2]{Fr99} shows $n=13$ works similarly, and as easily. Here the Hurwitz space is a degree 13 -- again the same as $n$ -- cover of $J_4$. The significant difference is that the multiplier of the difference set $\sD=\{1, 2, 4, 10\}$ has order 3. So, the definition field $K$ for these spaces is the degree 4 extension of $\bQ$ inside $\bQ(e^{2\pi i/13})$. Thus, there are two {\sl pairs\/} of conjugate Davenport pairs in this case [Fr05a, ¤3.4]. 

Consider the collection $\sC_{\PGL_\infty,r}$ of  reduced Hurwitz spaces of $r$-branch point covers with projective linear monodromy groups. We do not assume the covers in the Nielsen classes have genus 0.  

\begin{guess} \label{genus0Conj} Do  only finitely many of the spaces in $\sC_{\PGL_\infty,4}$ ($r=4$)  have genus 0? \end{guess} 

Finally, notice that there are a great many other Nielsen classes on which there is only one possible difference in the final conclusions that occurred for Davenport pairs.  Assume, in addition to the conditions for $\sC_{\PGL_\infty,4}$, that 
\begin{triv} \label{DavExtra} exactly one class of  $\bfC$ is an $n$-cycle, in the notation previously. \end{triv} Denote the elements of $\sC_{\PGL_\infty,4}$ satisfying \eqref{DavExtra} by $\sC_{\PGL_\infty,4,\C_\infty}$. Then, you can apply the BCL and find that covers won't be defined over $\bQ$. Just as in the Davenport cases, you can compute the genus of absolute components of elements in $\sC_{\PGL_\infty,4,\C_\infty}$. Yet, it is likely the components won't have genus 0.  Further, there may be more than one component. One point of \S\ref{missmultiplier} is to tell you something about our knowledge of such computations. 

\begin{rem}[Infinitude of $\sC_{\PGL_\infty,4,\C_\infty}$] \label{infPGL} We make use of this exercise in \S\ref{3etale}. Go through the production of the Nielsen classes of genus 0 covers in \S\ref{cont7}, but drop the condition of genus 0. Show there are infinitely many possible Nielsen classes. 
\end{rem} 

\section{The significance of Davenport's Problem} \label{secVII} We use what came from Davenport's Problem, and others solved by the monodromy method, to reconsider truly  general problems that arose around them. Of necessity I review the work of many others, by efficiently  using the previous sections. \S\ref{secVII.1} gives conclusions on the genus 0 problem, while \S\ref{writeEquats} considers the biggest bug-a-boo from RET, that it's not done with algebraic equations.  

Then, \S\ref{secVII.2} looks at  the relation between {\sl Chow motives\/} and {\sl Galois stratification\/} using Monodromy Precision \S\ref{MPres}.  \S\ref{secVII.3}  motivates why going beyond the simple group classification  will require new techniques.  For this we return to the comment from \cite{So01} on the groups that occur \lq in nature\rq\ being close to simple groups. Finally, \S\ref{secVII.4} considers a different overview of RET, though still based on what came from Davenport's problem. 

\subsection{The Genus 0 Problem} \label{secVII.1} Solving Davenport's problem produced some lucky lessons.  Most propitious was my interaction with John Thompson, walking to lunch early in Fall 1986 after I arrived at the U.~of Florida. 

\subsubsection{Evidence for the genus 0 problem} I gave Thompson my conviction of the specialness of genus 0 monodromy groups. My support came much from \cite{Fr80}. 

\begin{edesc}  \label{genus0} \item  \label{genus01} The product-one condition (\eql{cycleconds}{cycleconds2} of \S\ref{prodBCs})  together with genus 0 limited -- but didn't annihilate -- the groups arising in Davenport's Problem, and the Hilbert-Siegel problem (as in \cite{Fr74a}). 
 
\item  \label{genus02}  As geometric monodromy,  cyclic,  dihedral,  $S_n$ and $A_n$,  and closely related, groups all appeared often  when the problems had no further constraints on conjugacy classes.
 \end{edesc} 

{\sl Comments on\/} \eql{genus0}{genus01}: My main question to John was whether he thought that genus 0, product-one and primitivity would be sufficient to limit exceptional arisings of monodromy groups, and what exactly exceptional would be.

\subsubsection{Comments on  \eql{genus0}{genus02}--Cyclic composition} \label{limitDih} I document the surprising complication of groups close to dihedral.  Tchebychev polynomials have dihedral geometric monodromy and their Galois closures are defined over the maximal real field in $\bQ(e^{2\pi i/n}$). Capturing how exceptional this was proved Schur's conjecture (\S\ref{secI.0}). It and Serre's OIT still are the main producers of exceptional covers (\S\ref{MPres}). 

The OIT also gives dramatic distinctions between arithmetic and geometric monodromy. It is convenient to quote \cite{Se68}, though Serre's program wasn't quite complete there. \cite[\S6.2, esp.~Prop.~6.6]{Fr05b} explains all of the following.   This was especially dramatic because in Serre's $\GL_2$ case the degree $p^2$ covers, with $p$ prime, have tiny (resp.~large) geometric (resp.~arithmetic) monodromy $(\bZ/p)^2\xs \bZ/2$ (resp.~an extension of the geometric group by $\GL_2(\bZ/p)/2$).  

Further, the degree $p^2$ covers are given by rational functions. These reveal one profound distinction between compositions of rational functions and polynomials.  In Lem.~\ref{decompPoly} we saw that $f\in K[x]$ that decomposes over $\bar \bQ$ already decomposes in $K[x]$. Myriad examples, however, from the OIT give rational functions of degree $p^2$ indecomposable over $K$ (even over $\bQ$), but decomposable over $\bar \bQ$. 

\begin{triv} \label{decompExc} Excluding finitely many degrees of rational functions, but allowing any number field $K$,  the OIT produces all such examples.  \end{triv}  

The groups that appear in \eqref{decompExc} are not related to those in  \eql{genus0}{genus01}. There are many primitive exceptional genus 0 groups. It is a finite number. Yet, consider what went into showing the finiteness part of \eqref{decompExc} in  \cite[Chap.~3]{GMS03}.  For a particular problem,  apropos \S\ref{secVII.3}, even those who know the classification well will drown trying to navigate the documentation without finding some geometry and/or function theory  like that we used in handling Davenport's problem. 

\subsubsection{Comments on  \eql{genus0}{genus02}--Alternating composition} \label{limitAlt} Almost any graduate book in algebra has regular realizations (over $\bQ$, \S\ref{BCLtreat}) of dihedral groups. Though,  as \S\ref{limitDih} shows, in trying to realize them with genus 0 covers over $\bQ$ you  might have dihedral geometric monodromy, but much larger arithmetic monodromy. 

Similar occurs with (near) alternating groups as following Res.~\ref{exBCL} for $(A_n,S_n)$-realizations. The alternative, is $(A_n,A_n)$-realizations (regular $A_n$ realizations). Hilbert's first application of his Irreducibility Theorem was to finding regular $A_n$ realizations \cite{Hi1892}. For example, as \cite{Mes90}, and \cite[Chap.~9]{Se92} show there is an abundance of such retional function $f$, even extending to Spin$_n$ regular realizations. 

One take on the Irreducibility Theorem is that it must be obvious. Yes, there are easy proofs, say \cite[Thm.~12.7]{FrJ86}$_1$, of its first incarnation. 

\begin{prop}[HIT] Suppose  $m(z,w)\in \bQ[z,w]$ is irreducible. Then, for infinitely many $z_0\in \bZ$, $m(z_0,w)$ is irreducible as a polynomial in one variable. \end{prop}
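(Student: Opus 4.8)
The plan is to reduce HIT to a monodromy statement about the algebraic function defined by $m$, and then to settle that statement by an elementary count of integer values — essentially the route of \cite[Thm.~12.7]{FrJ86}$_1$ cited above, packaged in the language of this paper.

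First I would make the standard reductions. Clearing denominators, assume $m(z,w)\in\bZ[z,w]$, and by the substitution $w\mapsto w/a(z)$ (with $a(z)$ the leading $w$-coefficient) followed by multiplying through, reduce to $m$ \emph{monic} of degree $n\ge 2$ in $w$; the case $n=1$ is trivial. Discarding finitely many $z_0$, assume the $w$-discriminant of $m$, a nonzero element of $\bZ[z]$, does not vanish at $z_0$, so $m(z_0,w)$ has $n$ distinct roots, all algebraic integers. The roots $w_1(z),\dots,w_n(z)$ of $m(z,w)=0$ are branches of an algebraic function; near $z=\infty$ they have Puiseux expansions in $z^{-1/e}$ for a suitable ramification index $e$, and since $m$ is irreducible over $\bQ(z)$ the geometric monodromy group $G\le S_n$ of the cover defined by $m$ acts transitively on $\{w_1,\dots,w_n\}$. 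A factorization of $m(z_0,w)$ over $\bQ$ with a factor of degree $k$ comes from a block $I\subsetneq\{1,\dots,n\}$ of size $k$ whose block elementary symmetric functions $\sigma_j\bigl(\{w_i(z_0):i\in I\}\bigr)$ all lie in $\bQ$ — hence, being algebraic integers, in $\bZ$.

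Now suppose, for contradiction, that $m(z_0,w)$ is reducible for all but finitely many $z_0\in\bZ$. Each such $z_0$ produces a block $I_{z_0}$ with $1\le|I_{z_0}|\le n-1$ and $\sigma_j\bigl(\{w_i(z_0):i\in I_{z_0}\}\bigr)\in\bZ$ for all $j$. There are only finitely many candidate blocks $I$ — labeling the branches $w_i$ on a fixed sector at $\infty$ — so by pigeonhole one block $I$ (of size $k$, say) occurs for infinitely many $z_0$. For that fixed $I$, each $\sigma_j:=\sigma_j(\{w_i:i\in I\})$ is an algebraic function integral over $\bZ[z]$, with a Puiseux expansion $\sum_{\ell\ge -L}a_\ell z^{-\ell/e}$ having only finitely many terms of nonnegative exponent; write $Q_j(z)=\sum_{\ell\le 0}a_\ell z^{-\ell/e}$ for its "principal (polynomial) part." The crux is the estimate $\sigma_j(z_0)-Q_j(z_0)\to 0$ as $z_0\to\infty$ through integers: since $\sigma_j(z_0)\in\bZ$ and the coefficients $a_\ell$ (symmetric functions of algebraic integers) lie in the ring of integers of a fixed number field with controlled denominators, this forces $Q_j$ to be an honest polynomial in $z$ with rational coefficients and $\sigma_j$ to coincide with it on infinitely many integers — hence $\sigma_j=Q_j\in\bQ[z]$ identically. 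Running this over all $j$, and over the $\langle\text{monodromy at }\infty\rangle$-orbit of the block $I$ so as to get a set of blocks stable under analytic continuation, one concludes $\prod_{i\in I}(w-w_i(z))$ has coefficients in $\bQ(z)$, i.e. $m(z,w)$ factors nontrivially over $\bQ(z)$. This contradicts the irreducibility of $m$, so $m(z_0,w)$ is irreducible for infinitely many $z_0\in\bZ$.

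The main obstacle is making the "integer value close to $Q_j(z_0)$ $\Rightarrow$ $\sigma_j$ polynomial" step fully rigorous: one must control the branching index $e$ so that along integer $z_0$ the fractional-power terms cannot cancel the gap, and bound the denominators of the Puiseux coefficients — cleanly handled by the ramified base change $z=t^{e}$, which reduces to the unramified case at the cost of tracking which $t_0$ are needed and then descending back to $\bZ$. (One also has to be careful that the block $I$ is only intrinsic up to the monodromy at $\infty$, which is why the labeling is fixed on a sector and the orbit of $I$ is taken.) An alternative packaging replaces this estimate entirely: realize the exceptional loci as the integral points of the intermediate covers $\hat X/H\to\prP^1_z$ for $H$ ranging over maximal subgroups of $G$, each of degree $[G:H]\ge 2$, and invoke Siegel's theorem on integral points — conceptually cleaner but using a far deeper input than the elementary count actually requires.
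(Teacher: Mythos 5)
The paper gives no proof here; it cites \cite[Thm.~12.7]{FrJ86}$_1$ and moves on. Judged on its own merits, your argument has a genuine gap at exactly the step you flag as \lq\lq the crux,\rq\rq\ and a counterexample to that step is as small as could be.

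Take $m(z,w)=w^2-z$, irreducible over $\bQ(z)$. The proper blocks are the two singletons. For $I=\{1\}$ the block elementary symmetric function is $\sigma_1(z)=z^{1/2}$; its Puiseux expansion at $\infty$ is the single term $z^{1/2}$, so $e=2$, $Q_1(z)=z^{1/2}$, the error term is identically zero, and every Puiseux coefficient lies in $\bZ$. For $z_0$ ranging over the perfect squares --- infinitely many integers --- we have $\sigma_1(z_0)\in\bZ$, yet $\sigma_1\notin\bQ[z]$. So the implication you invoke --- \lq\lq$\sigma_j(z_0)\in\bZ$ for infinitely many $z_0\in\bZ$, plus integrality and controlled denominators of the Puiseux coefficients, forces $\sigma_j\in\bQ[z]$\rq\rq\ --- is false. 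Averaging over the monodromy-at-$\infty$ orbit of the block does not rescue it: in this example the orbit is $\{\{1\},\{2\}\}$, so the stable product is all of $m(z,w)$, not a proper factor, and no contradiction arises.

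The failure is that the pigeonhole threw away too much. Your reductio hypothesis says $m(z_0,w)$ is reducible for a density-one set of $z_0\in\bZ$; pigeonhole over the finitely many candidate blocks then yields a fixed block working on a \emph{positive-density} set of $z_0$, not merely an infinite one, and it is this density you must use. The lemma actually needed is quantitative: if $\sigma$ is algebraic over $\bQ(z)$, integral over $\bZ[z]$, and $\sigma\notin\bQ(z)$, then $\#\{z_0\in[1,N]\cap\bZ : \sigma(z_0)\in\bZ\}=o(N)$ (the classical estimate is $O(N^{1-\delta})$ with $\delta>0$ controlled by the branching at $\infty$). That statement is true and is the content of the elementary Hilbert--D\"orge argument behind \cite[Thm.~12.7]{FrJ86}$_1$; but it is a counting estimate, not an identity-theorem consequence of mere infinitude, and it is exactly what your write-up elides. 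The Siegel repackaging you offer at the end has the same hole: $w^2=z$ is a genus-$0$ curve with a single point at infinity carrying infinitely many integral points, so \lq\lq invoke Siegel\rq\rq\ does not by itself show an intermediate cover cannot contribute a positive-density exceptional set --- you would again need the density accounting to see that the union of the exceptional loci over the finitely many covers still misses a density-one set of $z_0$.
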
 

The first Hilbert-Siegel Problem puts a constraint on $m(z,w)$.  It has the form  $m(z,w)=f(w)-z$, $f\in \bQ[w]$. Yet, the monodromy method enters because the conclusion is independent of the degree of $f$. Denote by $\sV_f$ (resp. $\sR_f$) the values assumed by $f$ on $\bZ$ (resp.~ the $z_0\in \bZ$ such that $f(w)-z_0$ factors over $\bQ$). 

\begin{prop}[1st Hilbert-Siegel \cite{Fr74a} Prob.] \label{1stHS} Suppose $f\in \bQ[w]$ is indecomposable and   $\sR_f\setminus \sV_f$ is infinite. Then, all but finitely many of elements in $\sR_f\setminus \sV_f$ fall in the values of  $g\in \bQ(x)$ where $f(x)-g(y)$ factors as one of two types. 
\begin{edesc} \label{HS} \item \label{HSa}  Either $g\in \bQ[x]$; or
\item \label{HSb}  with $\deg(f)=n$, $\deg(g)=2n$ and a branch cycle $\sigma_\infty$ for $g$ over $\infty$ has the shape $(n)(n)$. \end{edesc} 
\end{prop}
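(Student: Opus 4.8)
The plan is to convert reducibility of $f(w)-z_0$ into rational points on auxiliary curves, apply Siegel's theorem on $S$-integral points to pin those curves down to genus $0$ with at most two points over $z=\infty$, and then read off the shape of the ``explaining'' map $g$ from the ramification over $\infty$, using throughout that $f$ indecomposable makes $G_f$ primitive (Lemma~\ref{decompPoly}) with the $n$-cycle $\sigma_\infty$ over $\infty$. First I would normalize: after clearing denominators there is a finite set $S$ of primes so that every root of every $f(w)-z_0$ ($z_0\in\bZ$) is an $S$-integer and, by Gauss' Lemma (with \eqref{coefficRule} and Lemma~\ref{affineChange} controlling coefficients), the monic $\bQ$-factors of $f(w)-z_0$ are $S$-integral. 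For $z_0$ outside the finitely many bad primes, $f(w)-z_0$ has a $\bQ$-irreducible factor of degree $d$ exactly when the fibre $f^{-1}(z_0)$ contains a $G_\bQ$-invariant $d$-subset, i.e.\ when $z_0$ lies under a $\bQ$-point of some $\bQ$-component of the degree-$\binom nd$ ``$d$-subset cover'' $\psi_d\colon\tilde Y_d\to\prP^1_z$, whose monodromy is $G_f$ acting on the $d$-subsets of the $n$ letters of $T_f$. If $z_0\notin\sV_f$ the only degree-$1$ possibility is the trivial $g=f$, with $x-y\mid f(x)-f(y)$, already of type \eql{HS}{HSa}. The covers $\tilde Y_d$, $2\le d\le n-1$, have finitely many $\bQ$-components between them; since $\sR_f\setminus\sV_f$ is infinite, pigeonholing gives one $\bQ$-component $Y$ of one $\tilde Y_d$ lying under infinitely many such $z_0$, and by the normalization the attached points of $Y$ are $S$-integral relative to $\psi_d^{-1}(\infty)$.

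\emph{Siegel and extraction of $g$.} The affine curve $Y\setminus\psi_d^{-1}(\infty)$ has infinitely many $S$-integral points, so $Y$ has genus $0$ and at most two points over $\infty$; carrying a $\bQ$-point it is $G_\bQ$-stable, hence defined over $\bQ$, and having a $\bQ$-point over the $\bQ$-point $\infty$ it is $\bQ$-isomorphic to $\prP^1_x$. Put $g\eqdef\psi_d|_Y\in\bQ(x)$. In either sub-case the fibre product $\prP^1_w\times_{\prP^1_z}Y$ splits into its ``$\beta$ lies in the chosen $d$-subset'' part (degree $d$ over $Y$) and the complementary part (degree $n-d$), so by the factor/component dictionary (\S\ref{secII.1}, \eql{algGeom}{algGeomb}) $f(x)-g(y)$ factors nontrivially over $\bQ$ and every $z_0$ attached to $Y$ is a value of $g$. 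If $Y$ has exactly one point over $\infty$ it is totally ramified there and, that point being $\bQ$-rational, $g\in\bQ[x]$: case \eql{HS}{HSa}. If $Y$ has exactly two points over $\infty$, with ramification indices $e_1,e_2$ (so $\deg g=e_1+e_2$), then — because the monodromy of $\psi_d$ over $\infty$ is the $n$-cycle $\sigma_\infty$ acting on $d$-subsets — each $e_i$ divides $n$, and what remains is to show $e_1=e_2=n$, i.e.\ $\deg g=2n$ with branch cycle $(n)(n)$ over $\infty$: case \eql{HS}{HSb}.

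\emph{Main obstacle: pinning down $(n)(n)$.} That last equality is not formal; it is a Riemann--Hurwitz argument on $\tilde Y_d\to\prP^1_z$. I would feed \eqref{*10} the branch data of $f$ — all of it governed by $G_f$ being \emph{primitive} with the $n$-cycle $\sigma_\infty$, which (as in the Feit-type estimates recalled in \S\ref{FeitInteractions}) forces each branch cycle of $f$ to move a large fraction of the $n$ letters — to show that, polynomial components aside, the only genus-$0$ component of any $\tilde Y_d$ with at most two points over $\infty$ is the one built from two length-$n$ orbits of $\sigma_\infty$ on $d$-subsets, which is precisely the degree-$2n$ cover with ramification $(n)(n)$ over $\infty$; this is the computation of \cite{Fr74a}. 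Once $\deg g=2n$ is known, $e_1+e_2=2n$ with $e_i\le n$ forces $e_1=e_2=n$ and the shape $(n)(n)$.

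\emph{From one curve to ``all but finitely many''.} Every $z_0\in\sR_f\setminus\sV_f$ lies under a $\bQ$-point of some $\bQ$-component $Y$ of some $\tilde Y_d$, $2\le d\le n-1$; there are finitely many such pairs $(d,Y)$. If $Y$ has genus $\ge 1$ or at least three points over $\infty$ it has only finitely many $S$-integral points (Siegel), hence sits under only finitely many such $z_0$; otherwise, by the previous two paragraphs, $Y$ yields one of the finitely many maps $g$ of type \eql{HS}{HSa} or \eql{HS}{HSb}, and its $z_0$'s are values of that $g$. Hence, outside a finite set, every element of $\sR_f\setminus\sV_f$ is a value of such a $g$, as claimed.
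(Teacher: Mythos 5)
Your proposal reconstructs the same two-stage argument the paper references from \cite{Fr74a}: translate membership of $z_0$ in $\sR_f\setminus\sV_f$ into $S$-integral points on a finite list of auxiliary $\bQ$-curves, invoke Siegel to force each relevant curve to have genus $0$ with at most two places over $\infty$, then read off $g$ and its ramification over $\infty$. Packaging the auxiliary curves as $\bQ$-components of the $d$-subset covers $\tilde Y_d$ is a clean way to make the translation precise; it is the fibre product $\prP^1_x\times_{\prP^1_z}\prP^1_y$ seen from the $g$-side, and your remark that the pullback over $Y$ splits into the ``$\beta\in W$'' and complementary parts is exactly the monodromy criterion for $f(x)-g(y)$ to factor, i.e.\ the condition ``$T_g$ restricted to $G(1,T_f)$ is intransitive'' in the list that follows the proposition. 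Case \eql{HS}{HSa} (one place over $\infty$, so $g$ a polynomial) drops out immediately, matching what the paper attributes to the solution of Schinzel's problem.

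The genuine gap — which you flag yourself — is the Riemann--Hurwitz step in case \eql{HS}{HSb}. Knowing that $\sigma_\infty$ is an $n$-cycle gives each $e_i\mid n$; it does not by itself give $e_1=e_2=n$, hence $\deg g=2n$ and the shape $(n)(n)$. Closing this requires feeding the full finite branch data of $f$ (constrained by $T_f$ doubly transitive and Feit's bound that each $\sigma_i$ moves $\ge n/2$ letters) into \eqref{*10} for the component $Y$, and this is precisely the content of \cite[Cor.~2]{Fr74a} that the paper also cites without reproducing. Your proposal is at the paper's own level of detail here, but as a stand-alone proof the computation is missing. One small slip to correct: in case \eql{HS}{HSb} the two places over $\infty$ are in general \emph{conjugate}, not individually $\bQ$-rational (compare the discussion near Prop.~\ref{DeFrMC}, where they are defined over $\bQ(\sqrt5)$); so the clause ``having a $\bQ$-point over the $\bQ$-point $\infty$'' should not be used to identify $Y$ with $\prP^1$. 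The preceding observation that $Y$ already carries infinitely many $\bQ$-points over various $z_0$ suffices for that identification and is what you should lean on.
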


The arithmetic reduction came through Siegel's famous description of curves with $\infty$-ly many quasi-integral points \cite{Si29}. You  can change all formulations referring to $z_0\in \bZ$ to be about quasi-integral points (only finitely many primes allowed as divisors of denominators). We previously handled \eql{HS}{HSa} under the solution of Schinzel's problem. We conclude this subsection with the upshot of the story for the new cases, \eql{HS}{HSb}. In the style of Thm.~\ref{DS2}, \cite[Cor.~2]{Fr74a} gives the exact branch cycle conditions. These come as Nielsen class conditions for covers $f: \prP^1_x\to \prP^1_z$ and $g: \prP^1_y\to \prP^1_z$ having the same Galois closure groups $G$, and respective permutation representations $T_f$ and $T_g$. Use the notation of \S\ref{secIV.1}. 

\begin{edesc} \item $T_f$ is doubly transitive;  $T_g$ is primitive,  but not doubly transitive. 
\item $T_g$ restricted to the stabilizer, $G(1,T_f)$, in $T_f$ is intransitive. 
\item The absolute Nielsen classes for both permutation representations have genus 0 (as in Riemann-Hurwitz, a la \eqref{*10}). 
\item  The class for ramification over $\infty$ in the cover for $T_f$ (resp.~$T_g$)  has cycle type $(n)$ (resp.~$(n)(n)$). \end{edesc}

  \begin{prop} \label{DeFrMC}  \cite[Prop.~1.3]{DeFr99}: The only possible degree for $f$ satisfying \eql{HS}{HSb} is 5. All possible $f\,$s derive from one Nielsen class (below) with $r=4$. Among the $f\,$s over $\bQ$ in this Nielsen class,  infinitely many have $\sR_f\setminus \sV_f$ is infinite. \end{prop}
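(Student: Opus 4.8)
The plan is to read the possibilities directly off the Nielsen‑class conditions listed just before the statement --- $T_f$ doubly transitive of degree $n$ with the $n$‑cycle $\sigma_\infty$; $T_g$ primitive but not doubly transitive of degree $2n$ with $\sigma_\infty$ of shape $(n)(n)$; $T_g$ restricted to $G(1,T_f)$ intransitive; both covers of genus $0$ --- then to do a Riemann--Hurwitz count that pins down the branch data, then to settle existence over $\bQ$ by a Hurwitz‑space computation, and finally to invoke Siegel's theorem for the infinitude claim. First I would run the group theory: the classification of doubly transitive groups containing an $n$‑cycle (alternating and symmetric groups, the groups between $\text{P}\Gamma\text{L}_d$ and its simple part, and $\mathrm{PSL}_2(11)$, $M_{11}$, $M_{23}$), intersected with the requirement of a \emph{second} primitive permutation representation of degree exactly $2n$ on which $\sigma_\infty$ acts as $(n)(n)$ and on which $G(1,T_f)$ is intransitive, should leave only $S_5$ (or $A_5$) acting on unordered pairs; there $2n=\binom 52=10$, so $n=5$, $T_f$ is the natural degree‑$5$ action and $T_g$ the rank‑$3$ action on the $10$ pairs --- this is the degree claim.

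Next, applying Riemann--Hurwitz \eqref{*10} simultaneously to $T_f$ and $T_g$: with $r$ branch points, genus $0$ for $T_f$ gives $\sum_{i<r}\ind_{T_f}(\sigma_i)=2(n-1)-(n-1)=4$, and genus $0$ for $T_g$ gives $\sum_{i<r}\ind_{T_g}(\sigma_i)=2(2n-1)-8=10$. Tabulating the $T_f$‑ and $T_g$‑indices of the relevant elements of $S_5$ (transposition $=(1,3)$, double transposition $=(2,4)$, $3$‑cycle $=(2,6)$, $4$‑cycle $=(3,7)$) and imposing product‑one --- so the finite product is the even permutation $\sigma_\infty^{-1}$, which already forbids three transpositions on parity grounds --- together with generation of $S_5$, I expect exactly the Nielsen class $\bfC=(\text{transposition},\ \text{transposition},\ \text{double transposition},\ 5\text{-cycle})$ with $r=4$, the competing $r=3$ configurations being eliminated because for Proposition~\ref{1stHS}'s hypothesis $\sR_f\setminus\sV_f$ infinite to hold the resolvent curve $g(y)=z$ must carry infinitely many quasi‑integral $z$‑values, which forces genus $0$ and a specific splitting behaviour of the two places of $g$ over $z=\infty$ that the $r=3$ possibilities fail. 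For existence and infinitude I would then compute $\sH(S_5,\bfC)^{\abs,\red}$ as a $j$‑line cover from the $\gamma_0=q_1q_2$, $\gamma_1=\sh$, $\gamma_\infty=q_2$ action of \eqref{gammas}, just as for degree $7$ in \S\ref{App.5}: check irreducibility, check via \eqref{*10} that the compactified space has genus $0$, and check that the multiplier group $M_\bfC$ of \eqref{multgps} is all of $(\bZ/N_\bfC)^*$ so that Proposition~\ref{absHurwitzSpaceDef} gives definition field $\bQ$; uniqueness of the ramification indices over $j=\infty$ then supplies a $\bQ$‑rational point, hence infinitely many, hence infinitely many $f\in\bQ[w]$ in the class after the normalizations of \S\ref{polySpaces}, and for these $f$ the associated $g$ is the genus‑$0$ pair‑cover with its two places over $\infty$ in a real quadratic field, so Siegel's theorem \cite{Si29} delivers $\sR_f\setminus\sV_f$ infinite for infinitely many of them; this is \cite[Prop.~1.3]{DeFr99}.

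The hard part will not be the group theory or the Riemann--Hurwitz bookkeeping but the step linking the branch‑cycle data to the arithmetic of $\sR_f\setminus\sV_f$: one must use the Branch Cycle Lemma to track precisely how $G_\bQ$ permutes the two places of $g$ lying over $z=\infty$ and identify their splitting field, since it is the real‑quadratic‑versus‑imaginary‑quadratic dichotomy there --- a Pell equation versus a sum of two squares --- that both rules out the $r=3$ candidates and guarantees that the $r=4$ family genuinely produces infinitely many $f$ over $\bQ$ with $\sR_f\setminus\sV_f$ infinite.
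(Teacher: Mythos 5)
Your group-theoretic and Riemann--Hurwitz bookkeeping is the right route and matches what goes into the cited result: the classification of doubly transitive groups with an $n$-cycle, the constraint that a rank-$\ge 3$ primitive representation of degree exactly $2n$ carry $\sigma_\infty$ as $(n)(n)$, and the index table (transposition $(1,3)$, double transposition $(2,4)$, 3-cycle $(2,6)$, 4-cycle $(3,7)$) together with the parity argument do pin down $n=5$ and the Nielsen class $\bfC_{5\,2^2\,2_d}$ with $r=4$. That part is fine. But there are two genuine gaps, one of which is exactly the point the paper flags as the crux.

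The serious gap is your last step. Siegel's theorem runs the wrong way for what you want: it tells you a genus-$0$ affine curve with two places at infinity \emph{can} have infinitely many quasi-integral points only if those places are defined over $\bQ$ or form a real-conjugate pair, but it does not tell you that any particular such curve (let alone infinitely many members of a family) \emph{does} have infinitely many. Your sentence ``Siegel's theorem delivers $\sR_f\setminus\sV_f$ infinite for infinitely many of them'' asserts the converse implication, which is false. The paper singles this out explicitly: ``the issue not addressed until \cite{DeFr99}, was to show among the $g_\bp$ were some with $\sR_f\setminus \sV_f$ infinite,'' and says this is ``the only place I know where explicit coordinates accomplished something not done without them.'' The positive direction over the Hurwitz family is the Lewis--Schinzel problem of showing a parametrized family of Pell-type conics has a dense set of fibers with infinitely many integral solutions (\cite{LSc80}, \cite[Th.~3.14]{DeFr90b}); \cite{DeFr99} does this by writing explicit coordinates for the degree-$10$ rational functions. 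Nothing in your proposal replaces that computation.

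Secondarily, the two $r=3$ options --- $(5,2,4)$ with $G=S_5$ and $(5,(2)(2),3)$ with $G=A_5$ --- both satisfy generation, product-one, and genus $0$ in \emph{both} $T_f$ and $T_g$ (you can check: $1+3=4$ and $3+7=10$; $2+2=4$ and $4+6=10$), and their conjugacy-class collections are rational unions, so the BCL does not kill them outright. You wave at ``a specific splitting behaviour of the two places of $g$ over $z=\infty$'' but give no mechanism. What one actually has to do is determine, for the rigid $r=3$ covers, the quadratic field over which the two places of $g$ over $\infty$ are defined and show it is imaginary (so that the affine curve has only finitely many integral points, hence $\sR_f\setminus\sV_f$ finite), or show the representing cover cannot be arranged to be a polynomial over $\bQ$ at all; the BCL gives the cyclotomic field of the Nielsen class, but deciding real versus imaginary for the place-at-infinity field requires the kind of real-embedding analysis of \cite[\S2.4]{DeFr90}. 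Until you pin that down, the reduction to ``one Nielsen class with $r=4$'' is asserted rather than proved.
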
 
 
 The Nielsen class comes from the standard representation, $T_f$, of $G=S_5$. The conjugacy classes are  $\bfC=\bfC_{52^22_d}$: 5-cycle, 2-cycles repeated twice, and the class, $\C_{2_d}$, of (2)(2) type.  Denote this Nielsen class as $\ni(S_5,\bfC)$. 
 
The representation $T_g$ is from the action on the 10 unordered pairs of integers from $\{1,2,3,4,5\}$. Then, the $g$ cover Nielsen class comes from  applying $T_g$ to $\bfC$, giving $T_g(\bfC)=\bfC_{5_d,2_t^2,2_q}$: respective classes of type $(5)(5)$, $(2)(2)(2)$ repeated twice and $(2)(2)(2)(2)$. Denote the Nielsen class as $\ni(T_g(S_5),T_g(\bfC))$. 
 
\S\ref{bcUr} discusses the space $U^r$ of ordered branch points on $\prP^1_z$. You can order some attached to certain conjugacy classes, and not others, to consider spaces between $U^r$ and $U_r$. Order the two branch points attached to the 2-cycle conjugacy classes. Denote the Hurwitz space by $\sH$, and the pullback with that ordering as $\sH^*$.  The corresponding spaces for $T_g$,  $\sH_g$ and $\sH_g^*$, actually identify with $\sH$ and $\sH^*$.  
 \begin{edesc}  \label{deg10} 
 \item  \label{deg10a} All three branch point covers have branch cycle descriptions from coalescing those in the Nielsen class $\ni(S_5,\bfC)$. 
 \item \label{deg10b} Both of the spaces $\sH_g$ and $\sH_g^*$ have a dense set of $\bQ$ points. 
 \item \label{deg10c} For a dense set of  $\bp\in \sH_g(\bQ)$, the total space $\sT_g\to \sH_g\times \prP^1_z$ has fibers  $\sT_{g,\bp}$ that are conics in $\prP^2$ without any $\bQ$ points. 
 \item \label{deg10d} For all points $\bp\in \sH^*_g(\bQ)$, the pullback fibers of \eql{deg10}{deg10c}  represent degree 10 rational functions over $\bQ$. 
 \end{edesc} 

Two out of three of the delicate diophantine issues are handled on purely Nielsen class terms, without explicit coordinates. The 1st: The rational function $g_\bp$ corresponding to $\bp\in \sH_g^*$ comes by taking one of the two branch points, $z_{1,\bp},z_{2,\bp}$, in the cover for $\bp$ corresponding to the classes $T_g(\C_2)$. The three 2-cycles above, say, $z_{1,\bp}$ correspond to three points (as in \S\ref{secVI.4}) -- of ramification index 2 over $z_{1,\bp}$. Those three points sum to  an odd degree divisor on the genus 0 cover $\phi_\bp:X_\bp \to \prP^1_z$. An odd degree divisor on a genus 0 curve is well-known to produce an isomosphism of it with $\prP^1_y$ over its field of definition. 

The 2nd diophantine issue meets the requirement, for applying Siegel's Theorem, that the two points over $z=\infty$ are {\sl real\/} conjugate (defined over $\bQ(\sqrt{5})$ \cite[Cor.~2.2]{DeFr99}). Many examples in \cite[\S4]{DeFr99} illustrate the well-developed theory of real points on covers in \cite[\S2]{DeFr90}; what we called Siegel-N\'eron problems. 

Finally, the issue not addressed until \cite{DeFr99},  was to show among the $g_\bp$ were some with $\sR_f\setminus \sV_f$ infinite.  \cite[\S4.2]{De99} has an exposition concentrating on this arithmetic point phrased thus: Find when a Siegel family has a dense set of fibers whose value sets intersect a fractional ideal infinitely often. 

This is the only place I know where explicit coordinates accomplished something not done without them. The issue is whether it is possible to answer such a question based only on calculating with Nielsen classes defining the Siegel family. We include using the BCL,  braid group action, {\sl lifting invariants\/} (as in \cite[\S5.4]{BFr02}). 

\cite[\S4.4]{De99}   shows we often can expect affirmative results, like \cite[Th.~3.14]{DeFr90b} and the many examples of  \cite[\S3.6-\S3.7, and \S 4]{DeFr90b}, when covers in the family have genus 0. An ingredient for this is  \cite{LSc80} (over $\bQ$; over a general number field in \cite[p.~211]{Sc82}), comparing specializations at $\bQ$ fibers with what happens at the generic point. As in \S\ref{secVII.4}, I knew of this from my UM education. 

\subsubsection{Thompson's response and the program} \label{Thompsonresp}  Immediately John confessed to being \lq\lq seized\rq\rq\ by the problem. His response was that we shouldn't limit it to polynomial covers. Rather, include  indecomposable rational functions (genus 0 covers). In place, however, of considering constraints and guessing what precisely the exceptional permutation representations might be, he suggested showing that all composition factors of the geometric monodromy groups would be cyclic or alternating. Then, the exceptions would come from just finitely many simple groups -- outside $A_n\,$s and $\bZ/p\,$s -- appearing among these composition factors. 

All statements related to exceptional covers (\S\ref{secI.0}; like the interpretation of dihedral groups as the essence of Serre's OIT in \S\ref{limitDih}), suggested aiming at  actual monodromy groups rather than composition factors. Still, what John proposed generated data to guide finding which actual monodromy groups (and corresponding permutation representations) were not exceptional. Especially since we were certain to get some close to, but not quite, alternating group surprises. 

He proposed we work on the problem together. My heart was in algebraic equations. I suggested  Bob Guralnick as far more appropriate. Here was the upshot. 

Peter M\"uller produced a definitive classification of the polynomial monodromy, including -- a la what happened in Davenport's Problem -- a list of the polynomial monodromy that arose over $\bQ$ \cite{Mu95}. Davenport's Problem had captured the harder \lq\lq exceptional cases\rq\rq\ of that classification. M\"uller says \cite{Fe73} was what he first saw of the details of Davenport pairs, and he corrected an error in that. Thm.~\ref{DS4} and \ref{secV.4}, especially \S\ref{FeitInteractions}, give traces in the literature of how Feit handled his interactions with me, with the comment in \cite{Fr73a} relevant here. 

The more optimistic conjecture I made for polynomials turned out true even for indecomposable rational functions. That is, it was possible to consider the precise permutation representations that arose in series of groups related to alternating and dihedral groups. This addition to Guralnick-Thompson was Guralnick's work (and formulation) with many co-authors and independent papers by others.

Guralnick visited Florida while I was there, and he and Thompson generated series of \lq\lq genus 0 groups.\rq\rq\ They based this on running through the classification of primitive groups using \cite{AOS85} (\S \ref{secVII.3} and \S \ref{App.2}). \cite{AOS85} constructs a template of five patterns of primitive groups. Into four of those you insert almost simple groups. Affine groups comprise the fifth (\S\ref{App.1a}). 

Leaving aside affine groups -- on some problems they cause grave difficulties --  this then naturally divided the task into running through the simple groups inserted into these templates. This was a special expertise of Guralnick (see \S \ref{secVII.3}). So, the Genus 0 Problem ran through two filters: \cite{AOS85}; and the distinct series of finite simple groups, together with affine. This {\sl lexigraphic\/}  procedure accounts for the number and length of contributions to the genus 0  resolution (for covers over $\bC$). 

 \cite{CKS76} sufficed for the group theory in Davenport's problem and the solution of the 1st Hilbert Siegel Problem \ref{DeFrMC}.  \cite{GT90} is the first paper proving that there are infinitely many simple groups that were not composition factors of genus zero groups. \cite{GFM99} classified all genus 0 rank 1 Lie group actions, and it gave all the branch cycles for the exceptional genus 0 groups in this case. 
 
I could look at early Guralnick-Thompson results on exceptional genus 0 groups from this list, and just from the BCL (\S\ref{secVI.2})  see that a small number provided rational functions outside Serre's OIT that gave {\sl Schur\/} ({\sl exceptional\/} as referred to in \S\ref{secI.0}) covers over $\bQ$: one-one maps on $\bZ/p \cup \infty$, for infinitely many $p$. We didn't know such existed previously. (We apologize for the two uses of exceptional -- covers, versus groups -- but it is historial.) It was unlikely that the whole genus 0 problem would have been solved without having been so precise. 

\cite[Exp. 6.3]{Fr99} has Guralnick's conjecture for what would be the exceptional genus 0 monodromy (over $\bar \bQ$) and now it is a theorem. In these lists you see several related to $A_n$. For this discussion, especially,  notice the permutation representation of the cover acts on distinct, unordered pairs of integers. 

Yet, in the Hilbert-Siegel problems,  a Siegel Thm.~constraint over $\infty$ leaves  but  finitely many: Just the degree 10 rep.~in \eqref{deg10}. \cite{GMS03} shows  the Schur problems about exceptional covers motivating  the whole topic (as in \S\ref{limitDih} and \S\ref{coGS}). 

By distinguishing covers with genus {\sl slightly\/} larger than 0, distinctions between genuses 0, 1 and higher came clear. The final formulation includes a genus $\geng$ version, with the cases with $\geng > 1$ differing only in the list of finitely many exceptional pairs:  (groups, primitive permutation representations). 

Yet, the  precision for the exceptional groups we saw for polynomials wasn't possible on all the exceptional \lq\lq genus $\geng$ groups\rq\rq\ (not even $\geng=0$). Especially, when it came to eliminating most of the  \lq\lq exceptional simple Lie-type groups.\rq\rq\ I searched for a way to document that, and found likely its relation to the story of finding reasonable presentations for $G_2$ (over $\bC$) in \cite[pp.~924--25]{Ag08}. Problems related to Davenport's Problem, that arose early in these developments, remain the unequaled archetype for being precise. 

Guralnick also led the study classifying  genus 0 groups, and their representations, that could occur -- his name --  \lq\lq generically.\rq\rq\  An algebraic geometer would mean the generic curve of a given genus has a cover of $\prP^1_z$. Guralnick's meaning, however, is that the curves realizing such covers occur densely in the moduli  of genus $\geng$ curves. Here the classification of the exceptional groups is  precise. 

\cite{GM98} includes showing, for $\geng>0$,  unless a cover alternating or symmetric or symmetric monodromy with a limited set of permutation representations, it cannot occur densely.  \cite{GSh07} settles the generic curve problem in characteristic 0. 

\subsection{Writing equations}  \label{writeEquats}  \S\ref{depSch} explains attempts to produce coordinates for Davenport pairs.  Generalizing Ritt's Thm. -- \cite{Ri22}, on the ways in which a rational function can have multiple decompositions -- is related to Davenport's and Schinzel's problems. \S\ref{genRitt} reminds how that generalization brought more attention to using \lq\lq explicit\rq\rq\  equations than any other topic. 

\subsubsection{Branch cycles versus equations preliminary} \label{bcPrelim} 
Here is an example contrasting using branch cycles on Schinzel's problem with the explicit equation approach. Assume $g(y)=a f(y)+b$ for some $a,b\in \bC$.  Lem.~\ref{ChebChar} uses branch cycles  to show that $f(x)- g(y)$ factors into degree 1 or 2 factors over $\bC$ if $f$ is affine equivalent to a (degree $n$) Chebychev polynomial, and $ax+b$ permutes its finite branch points. If $ax+b$ doesn't permute the branch points, then \eqref{schcond} says $f(x)- g(y)$ is irreducible. 

\begin{lem} \label{ChebChar} Use the assumptions above. With  $n$ odd, $f(x)- g(y)$ has one degree 1 factor; all others of degree 2. With $n$ even, the result is the same if $L: x\mapsto ax+b$ fixes each branch point of $f$; all factors have degree 2 if $L$ nontrivially permutes the branch points. With $f\in \bQ[x]$ and $a,b\in \bQ$, for all $n$, each degree 2 factor has definition field generated by the symmetric functions in $\{e^{2\pi ij/n}, e^{-2\pi ij/n}\}$ (or, functions in $\cos(2\pi j/n)$) for some integer $j$. For a given value of $\gcd(j,n)$, the collection of factors corresponding to $j$ with that value are conjugate over $\bQ$. \end{lem}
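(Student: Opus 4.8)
The plan is to push the pair $(f,g)$ through the fiber-product/Galois-closure dictionary (Lem.~\ref{pullbackcomps}, Rem.~\ref{dropIndec}) in its branch-cycle form, and then read off definition fields from the cyclotomic bookkeeping of Result~\ref{exBCL}.

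\emph{Reduction.} After an affine change of variables in $x$, $y$ and $z$ --- which changes neither the factorization type of $f(x)-g(y)$ nor the definition fields of its factors beyond the corresponding affine conjugation --- we may assume $f=T_n$, the Chebychev polynomial of \eqref{Chebychev}, with finite branch point set $\{1,-1\}$. Since an affine map fixing two distinct points is the identity, the hypothesis that $L\colon x\mapsto ax+b$ permutes $\{1,-1\}$ forces $L\in\{\mathrm{id},-\mathrm{id}\}$, hence $g\in\{T_n,-T_n\}$; here $L=\mathrm{id}$ (resp.\ $L=-\mathrm{id}$) is the case ``$L$ fixes each branch point'' (resp.\ ``$L$ nontrivially permutes them''), and for $n$ odd $T_n$ is an odd function, so $y\mapsto-y$ identifies the two cases. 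In every case the common Galois closure of $f$ and $g$ over $\bar\bQ$ is the standard dihedral cover $\hat W\to\prP^1_z$, $w\mapsto(w^n+w^{-n})/2$ --- for $g=-T_n$ with $n$ even one realizes $-T_n$ inside $\hat W$ via the isomorphism $w\mapsto\zeta_{2n}w$ of $\hat W$ --- so ${}^aG_{f,g}=G_{f,g}=D_n=\langle\rho,\tau\rangle$ with $\rho\colon w\mapsto\zeta_n w$ and $\tau\colon w\mapsto w^{-1}$, and both $T_f,T_g$ are the standard degree-$n$ representation of $D_n$: $T_f$ on $D_n/\langle\tau\rangle$ and $T_g$ on $D_n/\langle\tau'\rangle$, where $\tau'=\tau$ when $g=f$, and $\tau'=\rho^{-1}\tau$ (a reflection in the \emph{other} conjugacy class when $n$ is even) when $g=-T_n$.

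\emph{Counting the factors.} The irreducible factors of $f(x)-g(y)$ over $\bar\bQ$ correspond to the connected components of $\prP^1_x\times_{\prP^1_z}\prP^1_y$, i.e.\ to the $\langle\tau\rangle$-orbits on the letters of $T_g$, equivalently to the double cosets $\langle\tau\rangle\backslash D_n/\langle\tau'\rangle$; as $|\langle\tau\rangle|=2$ an orbit has size $1$ (a degree-$1$ factor) or $2$ (a degree-$2$ factor), and a coset $g\langle\tau'\rangle$ is $\tau$-fixed exactly when $g^{-1}\tau g=\tau'$, of which there are $|C_{D_n}(\tau)|/2$ when $\tau$ is $D_n$-conjugate to $\tau'$ and none otherwise. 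Feeding in $|C_{D_n}(\tau)|=2$ for $n$ odd, $|C_{D_n}(\tau)|=4$ (the extra element being the central half-turn $\rho^{n/2}$) for $n$ even with $\tau'=\tau$, and $\tau\not\sim\tau'$ for $n$ even with $g=-T_n$, yields the tally in the statement: a single degree-$1$ factor (the diagonal $x-y$) and $(n-1)/2$ quadratics for $n$ odd; the degree-$1$ factors $x\mp y$ and $(n-2)/2$ quadratics for $n$ even with $L$ fixing the branch points; $n/2$ quadratics and no linear factor for $n$ even with $L$ swapping them. The same is visible directly: writing $x=(w+w^{-1})/2$, the partners $y$ of $x$ under $f(x)=\pm g(y)$ are $y_\mu=(\mu w+\mu^{-1}w^{-1})/2$ with $\mu$ running over the $n$-th roots of $1$ (case $g=f$) or over the solutions of $\mu^n=-1$ (case $g=-T_n$), falling into pairs $\{y_\mu,y_{\mu^{-1}}\}$ which collapse to one value precisely when $\mu=\mu^{-1}$, accounting for the linear factors.

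\emph{Definition fields.} The quadratic attached to a pair $\{y_\mu,y_{\mu^{-1}}\}$ is $Y^2-(\mu+\mu^{-1})x\,Y+y_\mu y_{\mu^{-1}}$ with $4y_\mu y_{\mu^{-1}}=4x^2-2+(\mu^2+\mu^{-2})$ (using $w^2+w^{-2}=(2x)^2-2$), and its discriminant $(\mu-\mu^{-1})^2(x^2-1)$ is a nonzero polynomial in $x$, so it is irreducible over $\bar\bQ(x)$. Hence for $f\in\bQ[x]$ and $a,b\in\bQ$ each degree-$2$ factor has definition field the field of symmetric functions in $\{\mu,\mu^{-1}\}$, i.e.\ $\bQ(\mu+\mu^{-1})=\bQ(\cos(2\pi j/n))$ when $\mu=e^{2\pi ij/n}$ (the principal case $g=f$; the case $g=-T_n$ is identical with $2n$ in place of $n$). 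Finally $G_\bQ$ permutes these factors through the cyclotomic character --- this is exactly Result~\ref{exBCL}, or simply the Galois action on the displayed coefficients --- taking the factor with parameter $\mu$ to the one with parameter $\mu^{c}$, $c$ ranging over the units mod the order $N_\mu$ of $\mu$; since the orbit of $\mu$ is then the full set of primitive $N_\mu$-th roots of unity --- equivalently, reindexing by $j$ mod $n$, the orbit of $j$ is the set of residues with the same $\gcd$ with $n$ --- the factors with a prescribed value of $\gcd(j,n)$ form a single $G_\bQ$-orbit, hence are $\bQ$-conjugate.

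\emph{Where the work is.} The only care-point is the second step: correctly identifying $T_g$'s point-stabilizer $\langle\tau'\rangle$ in each case and counting $\tau$-fixed double cosets. This is where the $n$-even features --- the two conjugacy classes of reflections in $D_n$, the central half-turn $\rho^{n/2}$, and the $\zeta_{2n}$-twist needed to place $-T_n$ inside $\hat W$ --- enter and must be tracked consistently through both the factor count and the ensuing definition-field bookkeeping; the irreducibility of the quadratics and the $\gcd$/Galois orbit computation are then routine.
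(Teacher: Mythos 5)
Your proof is correct, and it takes a more explicit route than the paper's. The paper works entirely with branch cycles: the factor count is read off as orbits of $\bZ/2$ (a point-stabilizer) on $\bZ/n$ under multiplication by $-1$, and the definition-field statements come from the BCL --- only $\sigma_\infty^{\pm 1}$ are self-conjugate powers, so the constant field of the Galois closure is the index-$2$ subfield of $\bQ(\zeta_n)$ --- with the factors themselves never written out. You instead take the uniformization $w\mapsto(w^n+w^{-n})/2$ as primary, count fixed double cosets $\langle\tau\rangle\backslash D_n/\langle\tau'\rangle$ via centralizers (correctly noting that the $\zeta_{2n}$-twist puts $\tau'$ in the \emph{other} reflection class when $g=-T_n$ with $n$ even), and display the quadratic $Y^2-(\mu+\mu^{-1})xY+\cdots$ so that $\bQ(\mu+\mu^{-1})=\bQ(\cos 2\pi j/n)$ and the $\gcd(j,n)$-orbit statement fall directly out of the coefficients. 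This buys precision the lemma's wording lacks: you correctly obtain \emph{two} linear factors $x\mp y$ when $n$ is even and $L=\mathrm{id}$, where the phrase ``the result is the same'' is ambiguous; and the $G_\bQ$-conjugacy of factors with fixed $\gcd(j,n)$ becomes a one-line cyclotomic-character computation rather than a gesture at ``the coefficients of the factorizations above.''

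One care-point in your reduction step: you normalize $f$ to $T_n$ and then compute definition fields over $\bQ$, but a general $f\in\bQ[x]$ affine-equivalent to $T_n$ over $\bC$ is a Dickson conjugate $T_{n,a}$, and the normalizing affine map is only over $\bQ(\sqrt a)$ when $a$ is a non-square. The conclusion does survive --- with $x=y+a/y$ the quadratic factor of $D_n(X,a)-D_n(x,a)$ is $Y^2-(\zeta+\zeta^{-1})xY+\bigl(x^2-2a+a(\zeta^2+\zeta^{-2})\bigr)$, whose coefficients still generate exactly $\bQ(\zeta+\zeta^{-1})$ --- but your phrase ``changes\ldots the definition fields\ldots beyond the corresponding affine conjugation'' sweeps this under the rug, since that conjugation is not $\bQ$-rational. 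Either carry out the Dickson-form computation as above, or note explicitly why the $\sqrt a$-conjugation cannot enlarge the constant field of the individual factors.
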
 

\begin{proof} First take $n$ odd. \cite[p.~47]{Fr70} has this  Chebychev characterization: $f$ has two finite branch points and a branch cycle description $(\sigma_1,\sigma_2,\sigma_\infty)$ with $\sigma_i$, $i=1,2$, in the unique involution class $\C_2$ in the dihedral group $D_n$. The condition on $ax+b$ says the cover for $g(y)$ has the same branch cycle description at the same branch points.  So, $f$ and $g$ give equivalent covers of $\prP^1_z$. Irreducible factors of $f(x)- g(y)$ correspond to orbits of $D_n(1)=\bZ/2$, which correspond to orbits of multiplication by -1  on  $\{0,1,2,\dots,n\nm 1\} \mod n$: 1 length 1 orbit, the rest length  2. 

For $n$ even, there are two classes of involutions in $D_n$: $\C_2$ (resp.~$\C_2^*$) with shape the product of $\frac{n}2$ (resp.~$\frac{n\nm 2}2$) disjoint  2-cycles. If $L$ leaves $f\,$'s  branch points fixed, then, again, the covers are equivalent, and the result is the same. If $L$ permutes the branch points, then the covers can't be equivalent (they have different branch cycles), but their Galois closures have the same branch cycles. 

The permutation representations for the covers of $f$ and $g$  correspond to the respective cosets of the two conjugacy classes of copies of $\bZ/2$ in $\bZ/n\xs \{\pm 1\}$. One is generated by $\alpha_1=(0,-1)$; the other by $\alpha_2=(1,-1)$. As above, irreducible factors of $f(x)- g(y)$ correspond to orbits of $\alpha_1$ on cosets of the group $\lrang{\alpha_2}$. 

Apply the BCL (\S\ref{BCLtreat}) to any $\bQ$ cover with the branch cycles above. The only non-trivial power of $\sigma_\infty$ conjugate to $\sigma_\infty$ in $D_n$ is  $\sigma_\infty^{-1}$. So, the cover given by $f$ must have Galois closure $\bZ/n\xs (\bZ/n)^*$. Thus,  $|{}^a G_f/G_f|$ (as in  \S\ref{secII.3}) has degree $|(\bZ/n)^*|/2$, and  $\bQ(\zeta_n)$ contains the definition field of the Galois closure.  That characterizes constants as the subextension of $\bQ(\zeta_n)$ of index 2. Those constants  come from the coefficients of the factorizations above.  We are done. 
\end{proof} 

For the two cases in Lem.~\ref{ChebChar} where $a=\pm 1$, $b=0$, \cite[Prop.~2.2]{AZ03} lists \cite{DLSc61} and  \cite{Tv68} as explicitly writing equations for these formulas. The two step solvable group $D_n$ has easy explicit equations. It is the first grad course regular realization of centerless groups as Galois groups. Yet, even for dihedral groups, there are Nielsen classes that arise in applications where explicit equations are -- understatement -- a deeper story, as in Ex.~\ref{modcurves}.  

\S\ref{depSch}  considers the story of writing equations for branch cycles  for a Davenport case,  where $G$ is almost simple, but not an alternating group. 

\begin{exmp}[Modular curves] \label{modcurves} The group theory of another Nielsen class is almost identical to Lem.~\ref{ChebChar}. Again, $G=D_n$, and $\C_2$ is the unique involution class when $n$ is odd ($n$ even is similar). The Nielsen class $\ni(G,\bfC_{2^4})$ -- repeating $\C_2$ four times -- contains branch cycles for genus zero covers. For some  $f:\prP^1_x\to \prP^1_z$ representing one of these covers, normalize (as always)  the 2-fold fiber product $\prP^1_x\times_{\prP^1_z} \prP^1_x$.  There is a (degree 1 over $\prP^1_x$) diagonal component. The other $\frac{n\nm 1}2$ components over $\bar \bQ$ have degree 2. For odd $n> 1$, each elliptic curve appears as a component. The reduced Hurwitz space is the modular curve $X_0(n)$ minus its cusps. That observation, \cite[\S2]{Fr78}, seeded \cite[\S5.1-5.2]{DeFr94} and \cite{FrV92} that developed into the {\sl Modular Tower\/} generalization of modular curves \cite{Fr95b}. \end{exmp} 

\subsubsection{Dependence on Schinzel's problem} \label{depSch} \cite[Def.~3]{CoCa99} applies  polynomial normalizations of \S\ref{polySpaces} to a pair $(f,g)$. This isn't, however, a Davenport pair. We might -- considering the relation from \eql{DavRes}{DavResc}   in Thm.~\ref{DS2} -- subtly call it a {\sl Schinzel pair}. The authors, though number theorists, work over $\bC$. You can do inner affine adjustments of $f$ and $g$ separately. To, however, retain the Davenport property, you must  apply outer composition of $z \mapsto az +b$ simultaneously to both. 

With subscripts indicating the homogenous term degrees: 
\begin{triv} $f(x)-g(y)$ factors as $A(x,y)B(x,y)$, with $$A=A_k(x,y)+A_{k\nm1}(x,y)+\dots\text{ and }B=B_{n\nm k}+B_{n\nm k\nm 1}+\dots.$$  \end{triv}  

For either Davenport's or Schinzel's problem, you could assure the conditions they (or \S\ref{polySpaces}) list for one polynomial, say $f$, without loss. Example: To know about equality of values of $f$ and $g$ over residue class fields, by choosing $a$ any nonzero constant, you can assure $f$ is monic. Over $\bar K$ (but not necessarily over $K$), you can make an affine change to $y$, to assure $g$ is also monic \cite[\S3]{CoCa99}. 

So, to assume Davenport pairs are simultaneously monic, requires consequence Prop.~\ref{DS5}, \eql{davpairs}{davpairs3} relating the normalized polynomials as conjugate over a large locus of the parameter space.  Do that, however, and the assumption of  \cite[Def.~3]{CoCa99} that $f$ has 0 constant term would, incorrectly, also have $g$ with 0 constant term. 

Instead, we need -- as in Prop.~\ref{DS6} or Thm.~\ref{vectorBundle} -- to consider the constant term $c_n$ of a generic $f$ as a function in $t_n$ with coefficients in $\bQ_n$. Denote by $\bar c_n$ its complex conjugate, so the constant term in  $f(x)-g(y)$ is $c_n-\bar c_n$.  Simultaneously adding the same  $b\in \bQ(t_n)$ to $f$ and $g$ leaves $(f,g)$ a Davenport pair.  

I now summarize \cite[\S3]{CoCa99} to highlight how they pop up a parameter identifiable with $t_n$ for the degrees $n=7, 13$ and $15$ in Thm.~\ref{DS4} and Prop.~\ref{DS6}. I assume they took  inspiration from Birch's degree 7 example \cite[p.~593]{Fr80}. 

Their calculations start from the existence of a difference set $\sD_n=\{1, \alpha_2,\dots, \alpha_k\} \mod n$ from Prop.~\ref{DS3}. Especially that the highest homogenous terms for the factors for the values of $n$ listed can be taken with no loss as
$$A_k=(x-\zeta_n)\prod_{i=2}^k (x-\zeta_n^{\alpha_i}y)\text{ and } B_{n-k}=\prod_{j\in \{0,1,\dots,n\nm 1\} \setminus \sD_n}(x-\zeta_n^jy).$$
From this point we work in the principal ideal domain $\bC(y)[x]$: the ring in $x$ over the field $\bC(y)$. Example: Since the $A_k\,$s have no common factors in $x$, 
\begin{triv} \label{euclid1}  there are $A',B'\in \bC(y)[x]$ so that $A_kB'+B_{n\nm k}A'=1$. \end{triv} 
\noindent Write   $f(x)=x^n+c_2x^{n\nm2} + \dots + c_{n\nm1} x + c_n$ and $g(y)=x^n+d_2y^{n\nm2} + \dots + d_{n\nm1} x + d_n$. \begin{equation}   \label{euclid2} \text{Then, }c_{\ell}x^{n-\ell} - d_{\ell}y^{n-\ell}=\sum_{0\le u\le \ell}A_{k-u} B_{n\nm k \nm \ell\np u}. \end{equation} 
 
Plug $\ell=1$ into \eqref{euclid2}. From \eqref{euclid1}, $A_{k\nm1}\equiv B_{n\nm k\nm1} \equiv 0$.  For $\ell=2$, multiply  \eqref{euclid2} by \eqref{euclid1} to deduce 
$$\begin{array}{rl} A_{k\nm 2} \equiv (c_2x^{n\nm 2} - d_2y^{n\nm2})A'  &\mod A_k \text{ and } \\ 
B_{n\nm k\nm 2} \equiv (c_2x^{n\nm 2} - d_2y^{n\nm2})B' &\mod B_{n\nm k}.\end{array}$$ 
Put $y=1$, in the 1st of these. The coefficient of $x^{k\nm 1}$ on the left is 0, so it is on the right,  giving  $d_2$ as a function of $c_2$. The same happens for the 2nd of these, 
\begin{triv} \label{twoResults} giving a second expression for $d_2$ in $c_2$, that must be the same.\end{triv}  
Proceed inductively in $\ell$, remembering this is on examples for $n=7, 13, 15$. Using PARI  you find you can express all the $c_i\,$s, $i\ge 3$,  and all the $d_j\,$s, $j\ge 2$ as functions of $c_2$. This empirical induction isn't in detail; more illustrated -- as we have done -- by the case $n=7$. Yet,  even there it is unclear where they use $c_n=0$; once they have enough coefficients to determine $A$, they quit. 

The upshot:  at the end of  \cite[\S3]{CoCa99}, $c_2$ is a replacement for $t_n$. Yet,  certainly not the canonical kind of replacement called for in Prob.~\ref{CouvPoss}.  Indeed, without explanation,  \cite[\S5]{CoCa99} has dropped several of the original normalizations (even including that $f$ and $g$ are monic?). It seems they found it is better to take $f$ as conjugate to $g$ because of the natural symmetry. This is done by taking the replacement for $t_n$ a constant time $g_2$, and dropping $c_n=0$.  Finally, they illustrate Prop.~\ref{DS5}, with particular choices produced by machine as above and dependent on the theory from our previous sections that went into it.  

\begin{prob} \label{CouvPoss} Could some refined version of the procedure of \cite{CoCa99}  eliminate using the simple group classification in Davenport's/Schinzel's problem, just as the BCL avoided it in the restriction to the version over $\bQ$? 

Since $t_n$ is an automorphic function on the upper half plane, can we find  a $q$-expansion with coefficients based on the representation theory of the groups $\PGL_n$? \end{prob} 

For the 1st statement  in Prob.~\ref{CouvPoss}, my opinion is that this is unlikely. For the second, my reaction is to ask: How could it not be so?  

\subsubsection{Ritt I} \label{genRitt}  Denote the greatest common divisor (resp.~least common multiple) of $(m,n$) by $\gcd(m,n)$ (resp.~$\lcm(m,n)$). Suppose $f(x)\in \bC[x]$ has a maximal decomposition in the form \begin{equation} \label{maxchain} f_v\circ f_{v\nm1}\circ \cdots \circ f_1.\end{equation} Ritt described all maximal decompositions of $f$ by starting from $f$ using (decomposition) substitutions for some $1\le i\le v\nm 1$,  $f_{i\np1} \mapsto  f_{i\np1}^*$ and $f_{i} \mapsto  f_{i}^*$,  whenever $$f_{i+1}\circ f_i=f_{i\np1}^*\circ f_i^*.$$  Ritt's 1st Thm.~says all maximal decompositions of $f$ come from chains of substitution in these two cases: 
\begin{edesc} \label{rittsub} \item \label{rittsuba} M\"obius insert: For some $\mu\in \PGL_2(\bC)$: $f_{i\np1}^*=f_{i\np1}\circ \mu$, $f_i^*= \mu^{-1}\circ f_i$. 
\item \label{rittsubb}  Ritt substitution: $(\deg(f_{i+1}),\deg( f_i))=1$ and $\deg(f_{i\np1})=\deg(f_i^*)$. 
\end{edesc} 

\cite{FrM69} generalizes Ritt's 1st Thm. to any field extension with a totally ramified discrete valuation whose ramification index is prime to the characteristic. This situation includes the case \cite[\S2.3]{Pa09} calls a generalized polynomial cover. 

Ritt's Thm.~2 in \cite{Ri22}, describing exactly when you can have \eql{rittsub}{rittsubb} is harder. These Ritt substitutions suffice in \eql{rittsub}{rittsubb} with $n,m$ distinct primes. 
\begin{edesc} \item Chebychev -- \eqref{Chebychev}:  $f_{i+1}=T_n\mapsto T_m$ and $f_i=T_m\mapsto T_n$ with $(n,m)=1$ 
\item Cyclic: $f_{i\np1}=x^n \mapsto x^mh^n(x)$ and $f_i=x^mh(x^n)\mapsto x^n$, $h$ nonconstant  \end{edesc} 
\cite[p.~2]{Pa09} has a typo -- equivalent to  $f_i\mapsto x^m$ -- where I have the cyclic case. 
We turn to how  
\cite[Cor.~p.~47]{Fr73b} classifies  variables separated equations $f(x)-g(y)=0$ over $\bQ$  that have infinitely many quasi-integral points,  so generalizing Ritt's Thm.~2. As in  \eqref{*}: $\deg(f)=m,\deg(g)=n$. Siegel's Thm.~(\S\ref{limitAlt}) gave branch cycle conditions, exactly as in Prop.~\ref{1stHS} on the  factors of $f(x)-g(y)$ as $\prP^1_z$ covers; starting with each defining a genus 0 curve. 

Suppose \eqref{*} is irreducible. Then,  apply the so-called {\sl Abhyankar's lemma}.  It was used often by, say, Hilbert, Hurwitz, Minkowski, Siegel, \dots, but a super-use, and its naming, came from Grothendieck's application \cite{Gr59} (see \eql{grothEx}{grothExa}). The form of the lemma in our case says: If, over a branch point $z_i$ of  $f$ (resp.~$g$), $x_{j,i}$ (resp.~$y_{k,i}$) ramifies to order $m_{j,i}$  (resp.~$n_{k,i}$), then corresponding to this pair on (the projective, normalization of) \eqref{*}, 

\begin{equation}  \label{abhyLem} m_{j,i}\cdot n_{k,i}/\gcd(m_{j,i},n_{k,i}) \text{ points ramify of order }\lcm(m_{j,i},n_{k,i})\text{ over }z_i.\end{equation}  
In contrast to applying Riemann-Hurwitz to $j$-line covers (\S\ref{j-lineRH}), we now easily compute the genus of \eqref{*} from \eqref{abhyLem} and this data:   
\begin{triv}  \label{bcconds} the cycle-type of the branch cycles for $f$ and $g$, especially noting those attached to a common branch point for $f$ and $g$. \end{triv}

\noindent \cite[Thm.~3]{Fr73b} then produces the equations \eqref{*} satisfying these conditions: 
\begin{edesc}  \label{ritt} \item  \label{rittb} $\gcd(m,n)=1$ or 2; (nonsingular completion of) \eqref{*} has genus 0;   
\item  \label{ritta} and $f(x)-g(y)$ is irreducible. 
\end{edesc}  It is immediate from \eqref{schcond} that if $\gcd(m,n)=1$, then \eql{ritt}{ritta} holds. \cite{Tv64} indicates the history of that case.  If \eql{ritt}{ritta} doesn't hold with $\gcd(m,n)=2$, then both $f$ and $g$ are composite up to inner equivalence with the same degree 2 polynomial. 

\cite[Cor.~p.~47]{Fr73b}  needed to separate the possibility \eqref{*} is reducible from the basic genus calculation.  That used  \cite[Prop.~2]{Fr73a} as stated in \eqref{schcond}.   

The case \eql{ritt}{rittb} is  Ritt's Theorem in disguise. About that, after the proof of Ritt's Thm. in \cite[pp.~15--39]{Sc82} says: \lq\lq More general but less precise results are found in \cite{Fr73b}.\rq\rq\ For Ritt's Theorem the only difference is that I've left out explicit equations for affine equivalence. 

 \cite[p.~50]{Fr73b} reproduced \cite{DLSc61} and \cite{Le64} as special cases showing, at times, that checking  \eqref{bcconds} is easy. In the former case: 
$$f(x)=f_n(x)=\frac{x^{n\np1}-1}{x-1}-1 \text{ and  }g_(y)=f_m(y), n\ne m.$$ 
A long history of diophantine equations motivates  this additive expression. 

Here are two more modern sets of polynomials which took on the same issues: when does  \eqref{*}  have infinitely many integral, {\sl or rational\/}, solutions. 
\begin{edesc} \label{checkSeries} \item \label{checkSeriesa} With  $f_{m,d}=\prod_{i=1}^{m\nm1} (x+id)$, $1<m, d\in \bQ$ positive, $$f=f_{m_1,d_1}, g= f_{m_2,d_2}, (\text{ if }  m_1=m_2, \text{ then } d_1\ne d_2).$$
\item \label{checkSeriesb} With $f\in \bQ[x]$ and $g(y)=cf(y)$, $c\ne 0,1$ (as in Prop.~\ref{Gusic}). \end{edesc} 

We will contrast the approaches of  \cite{BeShTi99} and \cite{AZ03}  in the problems proposed respectively by \eql{checkSeries}{checkSeriesa}  and \eql{checkSeries}{checkSeriesb}. In both, the main job was finding genus 0 (or 1) curves defined by factors of variables separated expressions. The addition to \cite{Fr73b}: This used the quasi-integral solutions condition limiting some possible branch cycles. In practice, they used the same rigamarole up to a concluding identification problem that brought up new issues. 

The definitions \eql{checkSeries}{checkSeriesa}  occur in \cite[Thm.~1.1]{BeShTi99} which chose to  consider results on   equality of multiplicative expressions.  \cite[Thm.~2.2]{BeShTi99}  is  similar in  replacing $g= f_{m_2,d_2}$ by a constant times this $g$, but set $d_1=d_2=1$. 

Indeed, $f_{m,d}$ is  just a scaling of the variable for $f_{m,1}$. If $m$ is odd, from Descartes' rule of signs the finite ramified points -- zeros  of $\frac{df}{dx}$ -- fall neatly between the zeros of $f_{m,1}$.  Plug  them in to see that these local maxima of $f$ evaluated at $f$ decrease in value. So, the corresponding finite branch points are distinct, and the finite branch cycles $\row \sigma {m\nm1}$ are all 2-cycles. According to \eql{cycleconds}{cycleconds1} -- generation -- the monodromy group of the cover is $S_m$, the only group generated by 2-cycles. 

For $m$ even, the involution $x \mapsto -x+(m\nm 1)$ maps the zeros into themselves. This symmetry means $f$ is a composite of some $f_1$ with $(x-\frac{m\nm1}2)^2$. Written explicitly, almost the same argument as above shows the finite branch cycles of $f_1$ are also 2-cycles. So, its monodromy group is $S_{m/2}$ and the monodromy group of $f$ is the wreath product (see Rem.~\ref{wreath}) of $S_{m/2}$ and $\bZ/2$.  In this simple case, irreducibility is  easy to check: $S_m$ is doubly transitive and has one degree $m$ permutation representation. \cite{BeShTi99} didn't indicate these monodromy groups. 

The genus calculation shows its swift growth based on those 2-cycles, with a small set of low degree $(m,n)$ pairs where the genus might be 0 or 1, depending on possible overlapping branch points. The proof of \cite[Thm.~2.2]{BeShTi99} for example, includes specific checks for that. Also, the proof of  \cite[Thm.~1.1]{BeShTi99} runs into genus 1 curves in classical forms. To finish the arithmetic result uses  \cite{Ma77} (Mazur's explicit Thm.~on elliptic curve torsion points over $\bQ$), to conclude that one of the obvious rational points on the equation is not torsion. \S\ref{modgenRitt} gives a general context for the problems considered by \cite{BeShTi99}.  

In \cite[Thm.~2]{AZ03} the authors follow the actual description of genus 0 cases in \cite[p.~42 Cor.]{Fr73b}. They recognize the cyclic and Chebychev cases 1st,basically using the quote following Lem.~\ref{ChebChar}. Then, they show how to reduce to where $f$ is indecomposable. \cite[Lem.~3.1]{AZ03} reproves the special case of \eqref{schcond} where $f$ is indecomposable, to consider possibilities that $f(x)-g(y)$ is reducible, and therefore the Galois closure covers of $f$ and $g$ are the same. 

\cite[p.~274--276]{AZ03} proves a version of Prop.~\ref{Gusic} -- we use its notation -- to show there are no reducible cases beyond where $f$ is Chebychev or cyclic. The gist of its application, is that the equating of the Galois closures of the covers for $f$ and $g$ comes with an automorphism $c_{\text{\rm AZ}}$ of $G_f$, not in $N_{S_n}(G_f)$ (\S\ref{BCLtreat}),  leaving the conjugacy class of $\sigma_\infty$ invariant. 

They could have completed the impossibility of a reducible case using Prop.~\ref{DS5}, under their $f$ indecomposable assumption. They didn't,  so I explain how it works here. The $c_{\text{\rm AZ}}$, up to conjugation by $G_f$, takes $\sigma_\infty$ to $\sigma_\infty^{-1}$,  the -1 being a non-multiplier of the design attached to the pair of doubly transitive representations in  \eql{davpairs}{davpairs1}. That is, it would change the class of $\sigma_\infty$ to a new conjugacy class. More directly, there is no such automorphism as $c_{\text{\rm AZ}}$ extending $G_{\mu \circ \hat f}$ in Prop.~\ref{Gusic} in this case. The linear transformation of \eqref{*7} relating zeros of $f(x)-z$ to those of $g(y)-z$ won't extend to an automorphism of the Galois closure function field. 

Above \cite[p.~270-274]{AZ03} they say they want to avoid the classification. Yet,  \cite{Fr73a} doesn't use the classification -- there was none, then. They use what we  reviewed prior to Prop.~\ref{DS5}. (The  classification use in \eql{davpairs}{davpairs2} and \eql{davpairs}{davpairs3}  gives the precise Davenport pairs $(f,g)$ with $f$ indecomposable occuring over some number field.) We see  $c_{\text{\rm AZ}}$ again in \S\ref{SchinzG} to consider  \eql{checkSeries}{checkSeriesb} when $f$ is decomposable. 

In the irreducible case of \eql{checkSeries}{checkSeriesb}, \cite[Prop.~2.6]{AZ03}   quotes \cite[Prop.~1]{Fr73b} on the formula for the variable separated -- fiber product -- curve genus  from Abhyankar's lemma. As usual the demanding cases have $f$ and $g$ with overlapping finite branch points. Especially interesting is a list of explicit polynomials  $P_1,\dots, P_6$ \cite[Def.~2.1]{AZ03} where the last 3 are particular  $f\,$s in  \eql{checkSeries}{checkSeriesb} \cite[Thm.~2]{AZ03}. 

\cite{AZ01} consider $f(x)-g(y)=0$ \eqref{*}, when $(\deg(f),\deg(g)=1$, where we have already remarked (after \eqref{ritt}) irreducibility is automatic.  When the genus is now 1, they give many interesting examples, some not over $\bQ$ and involving the Mazur-Merel result (\cite{Ma77} and  \cite{Me96}). I mention it here, to note that we haven't considered what would limit any curve from being a component of a variables separated equation. For example,  Ex.~\ref{modcurves} says every genus 1 curve occurs in many different ways as a component of a variables separated equation. 

\begin{rem}[Wreath product exercise] \label{wreath} \cite[\S2]{Fr70} introduces wreath products to write branch cycles for the composite, $f_1\circ f_2$, of rational functions from branch cycles for $f_1$ and $f_2$.  Assume  $h^*=h((x-b)^2)$ where the finite branch cycles -- relative to some classical generators, (\S\ref{classgens}) -- of the degree $n$ polynomial $h$ are 2-cycles, and $h(0)$ is not a branch point of $h$.  
Then, we can choose $\psigma=((1\,2),(1\,3),\dots, (1\,n),(1\,2\,\,\dots\,n)^{-1})$ as branch cycles for $h$.  Now use $$\{\{1',1''\},\dots,\{n',n''\}\}$$ for the letters on which branch cycles for $h^*$ act. Branch cycles for $h^*$ will give branch cycles for $h$, in a natural way, by mapping both $i'$ and $i''$ to $i$, $i=1,\dots,n$.  Here are branch cycles for $h^*$:
$$\begin{array}{rl} \psigma^*=&((1'\,2')(1''\,2''),(1'\,3')(1''\,3''),\dots,(1'\,n')(1''\,n''), \\
&\ (1'\,1''),(1'\,2'\,\dots\,n'\,1''\,2''\,\dots\,n'')^{-1}).\end{array} $$ 

This special case of \cite[Lem.~15]{Fr70} shows why I know  the monodromy group of  $h^*$ is $S_{n}$ semidirect product with $(\bZ/2)^{n}$,  the wreath product named above. \end{rem} 

\subsubsection{Wreath products and Ritt II} \label{modgenRitt}   In Rem.~\ref{wreath}  the monodromy, $H$, of (the cover from) a composite $f_1\circ f_2$  of rational functions is the entire wreath product. Let $H_i$ be the monodromy of $f_i$, $i=1,2$. If  the conditions of \cite[Lem.~15]{Fr70} don't hold, then $H$ may be a {\sl proper\/} subgroup of $H_2^{\deg(f_1)}\xs H_1$ satisfying these conditions: 
\begin{triv} \label{wreathconds} $H$ maps surjectively onto  $H_1$, and its intersection with $H_2^{\deg(f_1)}$  maps surjectively onto each fiber. \end{triv} 

These wreath product  ideas, especially using branch cycles, apply for any composite covers of $\prP^1_z$. For example, in a composite of covers $X_2\to X_1\to \prP^1_j$ where $X_2\to X_1$ has degree 2 (but neither necessarily of genus 0), then 
the intersection of $H$ in \eqref{wreathconds}  might only be the subgroup of $H_2^{\deg(f_1)}$ whose entries sum to $0 \mod 2$. 

That's the case in  the Main result of \cite{BiFr86} generalizing the cyclic covers of genus $\geng$ curves result of  \cite{DelMu67}. It is a connectedness of moduli  result, like that giving the computations of Thm.~\ref{vectorBundle},  from transitivity of the braid group on certain Nielsen classes. 

Both the monodromy group above, and of Rem.~\ref{wreath} are {\sl Weyl groups}. Vasil Kanev was  inspired to extend, say, \cite{BiFr86} to consider all Weyl groups: subgroups of wreath products of $S_n$ (in its standard representation) and $\bZ/2$ satisfying \eqref{wreathconds}.  Not just  to classify, but rather, to provide a limited context for useful connected Hurwitz space results. Many corollaries follow from deciphering orbits of braid groups on Nielsen classes, such as  \cite{Ka89}, \cite{FVe08} and \cite{FVe09}. 

These results  model generalizing  \cite{BeShTi99} sufficiently, so their formulation is akin to the Hilbert-Siegel problems of \S\ref{limitAlt}. That is, using similar Nielsen classes we ask if conclusions might depend only on natural related data. This would extend the problems of \cite[\S4]{DeFr99} and also put Mazur's Thm.~ in a new context.  

Wreath products  are a tool for describing monodromy groups (over $\bC$) of composites of rational functions. The situation of \eqref{wreathconds} requires deciding from two primitive genus 0 groups, what subgroups of the full wreath product could possibly occur. We easily concoct the full product from  \cite[\S2]{Fr70}. Yet, divining subgroups of the full product that occur  takes us beyond the genus 0 problem (\S\ref{Thompsonresp}). 

\S\ref{DavM} reminds of \cite{Mu98}  on extending Davenport to polynomial composites and \S\ref{SchinzG} notes  \cite{Fr87} on the $(m,n)$-problem (related to Schinzel). Both require subgroups of genus 0 wreath products. This subsection concludes by distinguishing using equations from using branch cycles to calculate composition factors. 

Given $f\in \bC(x)$, or branch cycles, $\psigma_f=(\row \sigma r)$, for $f$, how efficient is it to  find degrees of the indecomposable constituents of $f$?
All rational functions in a given Nielsen class, $\ni(G,\bfC)^\abs$ (with a representation $T_n: G \to S_n$) have the same composition factor degrees (dividing $n$).  Any subset $$W=\{i_1,\dots, i_d\mid 1< d < n, d|n\}$$ of distinct elements from $\{1,\dots, n\}$ has a $G$ orbit. Denote the collection of such $G$-orbits by $\sI_{G,\bfC}$.
I will now assume that computing the action of any given $\sigma\in G$ on such a $W$ requires just one immediate operation. When $f\in \bC[x]$, with no loss, we can assume branch cycles with, as in \S\ref{secIII.3}, $\sigma_r=\sigma_\infty=(1\,2\,\dots\,n)$. 

\begin{lem} \label{PolyComp} An $I\in \sI_{G,\bfC}$ represents a composition factor, up to affine equivalence, if and only if for any two subsets $W,W'\in I$, either $W=W'$ or $W\cap W'=\emptyset$. 

Suppose $f\in \bC[x]$, with $\sigma_\infty$ as above. Then, $I\in \sI_{G,\bfC}$ represents a composition factor, if and only if for some $d|n$, $I$ contains $W_d=\{0,d,\dots,n/d\}$. To compute the composition factors from $\psigma_f$ in this case, requires only checking for each $1< d|n\le \sqrt n$ if each $\row \sigma {r\nm1}$ permutes the collection $W_d^{\sigma_\infty^j}\eqdef W_d+j$, $j=0,\dots,\frac n d\nm1$. Listing decomposition factors therefore requires no more than $\sum_{d} (r\nm1)\cdot \frac n d$ operations, clearly bounded by a polynomial in $n$. 
\end{lem}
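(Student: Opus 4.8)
The plan is to unwind the two stated equivalences purely group-theoretically, using the fact that the composition tower \eqref{maxchain} of a polynomial $f$ corresponds, through the dictionary of \S\ref{secIV.1}, to a chain of groups between $G=G_f$ and the point stabilizer $G(1)$, and that \emph{blocks} of imprimitivity for $T_n$ are exactly the $G$-translates of such intermediate structures. First I would recall the standard translation: a decomposition $f=f_1\circ f_2$ (over $\bC$) with $\deg(f_2)=d$ corresponds to a block system on $\{1,\dots,n\}$ with blocks of size $d$, and the block containing the letter $1$ is the orbit of $G(1)$ inside the intermediate stabilizer. Conversely, a subset $W\ni 1$ of size $d\mid n$ whose $G$-orbit $I$ consists of pairwise equal-or-disjoint sets is precisely a block system, hence gives an intermediate field and a decomposition $f_1\circ f_2$ up to M\"obius insertion (the affine/$\PGL_2$ ambiguity of \eql{rittsub}{rittsuba}). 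This proves the first sentence: the ``pairwise equal or disjoint'' condition on $I\in\sI_{G,\bfC}$ is just the block-system condition, and block systems biject with composition factors up to affine equivalence. Since all covers in a fixed Nielsen class $\ni(G,\bfC)^\abs$ share the group $G$ and representation $T_n$, they share the lattice of block systems, hence the multiset of composition-factor degrees.

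Next, for the polynomial case with $\sigma_\infty=(1\,2\,\dots\,n)$ I would use that $\sigma_\infty$ acts on $\{0,1,\dots,n-1\}$ (reindexed mod $n$) as translation by $1$, so the only block systems invariant under $\sigma_\infty$ — and every block system of the \emph{whole} group must be $\sigma_\infty$-invariant since $\sigma_\infty\in G$ — are the ``arithmetic progression'' systems: for each $d\mid n$ the blocks $W_d+j=\{0,d,2d,\dots,(n/d-1)d\}+j$, $j=0,\dots,n/d-1$. Hence an orbit $I\in\sI_{G,\bfC}$ is a genuine block system exactly when it contains some $W_d$; this is the second claim. Concretely one tests a candidate $d$ by checking that each of the remaining branch cycles $\sigma_1,\dots,\sigma_{r-1}$ permutes the $n/d$ translates $W_d,W_d+1,\dots,W_d+\tfrac nd-1$ among themselves (invariance under $\sigma_\infty$ being automatic). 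Since a proper composition factor forces $1<d<n$ with $d\mid n$, and $d$ and $n/d$ play symmetric roles, it suffices to run $d$ over the divisors of $n$ that are $\le\sqrt n$. Checking that $\sigma_i$ permutes the $\tfrac nd$ sets $W_d+j$ costs $O\!\bigl(\tfrac nd\bigr)$ of the assumed unit operations (apply $\sigma_i$ to a generating element of each block and see which translate it lands in), so the whole test for a fixed $d$ costs $(r-1)\cdot\tfrac nd$ operations and the grand total is $\sum_{d\mid n,\,d\le\sqrt n}(r-1)\tfrac nd$, visibly bounded by a polynomial in $n$ (indeed by $(r-1)n\cdot\tau(n)$, with $\tau(n)$ the number of divisors, itself $O(n^{\epsilon})$).

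The one subtlety I would be careful about — and I expect it is the main obstacle in making the statement airtight rather than the computations — is the passage from ``block system'' to ``composition factor up to affine equivalence.'' A single intermediate group can be reached by genuinely different decompositions related by the M\"obius insertions \eql{rittsub}{rittsuba}, and Ritt's Theorem 2 (\S\ref{genRitt}) shows the decomposition need not be unique even up to that; so one must phrase the correspondence at the level of the \emph{degree} $d$ (equivalently the size of the blocks), which is an invariant of the block system, rather than pretending to pin down the polynomial $f_i^*$ itself. With that caveat the bijection in the first sentence is between block systems and composition-factor degrees-with-multiplicity, and both sentences follow. A secondary point to nail down: when $f$ is not assumed to be a polynomial the $\sigma_\infty$-normalization is unavailable, so the first sentence is stated for the general (rational-function) Nielsen class while the explicit $W_d$ description and the operation count are stated only in the polynomial case — exactly as in the lemma — and I would keep that division of labor explicit in the write-up.
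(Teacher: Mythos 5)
Your proof is correct and follows essentially the same route as the paper: the first sentence is the standard block-system/imprimitivity dictionary, and for the second you translate a block to contain $0$ and use that $\sigma_\infty$-invariance forces it to be an arithmetic progression, which is precisely the paper's ``first positive integer $h$'' argument, followed by the same straightforward operation count. The only difference is expository: you state the cyclic-block fact as known where the paper unwinds the one-step $W-h=W$ argument, and you add a (reasonable but not strictly needed) caveat about M\"obius insertion ambiguity already absorbed by the phrase ``up to affine equivalence.''
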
 

\begin{proof} The first sentence characterizes a permutation representation through which $T_f$ factors, corresponding to a cover through which $f:\prP^1_x\to \prP^1_z$ factors. Now consider the special case where $\sigma_r=\sigma_\infty$ as above. 

Suppose $W$ representing  $I\in \sI_{G,\bfC}$ gives a system of imprimitivity of size $n/d$ as above. Translate $W$ (apply a power of $\sigma_\infty$) to assume it contains 0, and that $h$ is the 1st positive integer in it. Then, $W-h$ contains 0 so equals $W$, and has the next largest integer $h$. Continue to conclude that $W$ contains all integer multiples of $h$, and so must be $W_d$. This concludes the lemma. \end{proof} 

\begin{prob}  \label{compFactors}  Use the notation above. Given branch cycles $\psigma_f$ for $f\in \bC(x)$, can you find a polynomial in $\deg(f)=n$ bounding the production of all degrees of composition factors of $f$ akin to the Lem.~\ref{PolyComp} polynomial case? Is there a polynomial time algorithm in $n$, the size of the coefficients of $f$, and the minimal distance between branch points, for computing $\psigma_f$? \end{prob}  

\cite{FrW82} has a programmable algorithm for computing branch cycles, but it doesn't answer Prob.~\ref{compFactors} precisely. A positive answer to Prob.~\ref{compFactors} would give a polynomial time algorithm in deciding the composition factors of a polynomial, or rational function, if Lem.~\ref{PolyComp} has a rational function version. 

An  intuitive theme appears -- sometimes in Schinzel's papers -- that among all rational functions $f \in  \bC(x)$,   whose numerator and denomenator  have altogether no more than $\ell$ nonzero terms, only special $f$ will have nontrivial composition factors. \cite[Main Thm]{ClZ10} has this result. 

\begin{thm} Suppose$f(x) = g(h(x))$ is a composition of two rational functions of degree exceeding 1, but  $h$ is not a composite of some  $\alpha\in \PGL_2(\bC)$  and anything of the shape  $(ax^n + bx^{-n}),  a,b\in \bC$. Then
$\deg(g)\le   2016 á 5^\ell$. \end{thm}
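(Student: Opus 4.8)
The statement to prove is a finiteness/boundedness result in the style of Bilu--Tichy and the Clark--Zannier paper: if $f=g\circ h$ with $h$ not essentially a "Chebyshev-like" function $ax^n+bx^{-n}$ up to M\"obius, then $\deg(g)$ is bounded by a function of the number of nonzero terms $\ell$ of $f$. My plan is to translate the hypothesis into monodromy/branch-cycle data and then apply the arithmetic of lacunary polynomials together with the genus 0 structure that pervades this paper.

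\textbf{Step 1: Set up the monodromy of the composition.} Write $n=\deg(h)$, and let $H_g,H_h$ be the monodromy groups of $g$ and $h$. By the wreath product formalism recalled in Rem.~\ref{wreath} and \S\ref{modgenRitt}, the monodromy $H$ of $f$ sits inside $H_g^{n}\xs H_h$ subject to \eqref{wreathconds}. The decomposition $f=g\circ h$ corresponds to a block system, i.e.\ a cover $\prP^1_x\to \prP^1_w\to \prP^1_z$ with the middle map $h$ of degree $n$. The branch cycles $\psigma_f=(\sigma_1,\dots,\sigma_r)$ then refine the branch cycles of $h$; in particular the ramification structure of $h$ over each branch point is read off from cycle lengths of the $\sigma_i$ acting on blocks. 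This is exactly the setup of Lem.~\ref{PolyComp} and its intended rational-function analogue.

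\textbf{Step 2: Count terms via ramification over $0$ and $\infty$ (the lacunarity input).} The number $\ell$ of nonzero terms of $f$ controls the ramification of $f$ over $0$ and $\infty$ sharply: a rational function with few terms has few zeros/poles counted without multiplicity, hence the fibers of $f$ over $\{0,\infty\}$ consist of few points, while the total degree $N=\deg(f)=\deg(g)\cdot n$ can be large. Concretely, over $\infty$ (and similarly over $0$) the cycle type of the corresponding branch cycle of $f$ has at most $O(\ell)$ cycles. This is the classical lacunary-polynomial phenomenon exploited in \cite{ClZ10} and earlier by Zannier. Pushing this down through $h$: the branch cycle of $g$ over the image point, and the way the $n$ blocks are permuted, are constrained -- either $h$ is ``balanced'' in a way that forces $h$ itself to be essentially $ax^n+bx^{-n}$ (totally ramified over two points, the excluded case), or the ramification of $g$ over $0$ and $\infty$ is correspondingly restricted, so $g$ also has boundedly many terms after a M\"obius adjustment, or more precisely $g$'s fibers over two points are small.

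\textbf{Step 3: Apply Riemann--Hurwitz / genus 0 to bound $\deg(g)$.} Once the ramification of $g$ (equivalently, the cycle types of $g$'s branch cycles) over $0,\infty$ is bounded in terms of $\ell$ and the excluded case is ruled out, I would invoke the genus 0 constraint on $f$ (it is a rational function, so $\geng=0$) via \eqref{*10}. The index contributions $\ind(\sigma_i)=n+\geng_X-1$ summed; combined with the few-terms bound on the part of the ramification over $\{0,\infty\}$ and the fact that the ``generic'' branch cycles of $g\circ h$ away from $0,\infty$ must still satisfy product-one and generation \eqref{cycleconds}, one gets $2(N-1)\le (\text{bounded by }\ell)\cdot N + (\text{correction})$, which is only consistent if $\deg(g)$ is bounded -- the multiplicativity $N=\deg(g)\cdot n$ is what converts a bound on the ramification-over-two-points defect into a bound on $\deg(g)$ once $h$ is not maximally ramified there. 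The explicit constant $2016\cdot 5^\ell$ comes from being careful with the combinatorics of how many cycles the restriction of each $\sigma_i$ to a block can have, and how the $n$ blocks distribute; I would not grind through the optimization, just note it follows the \cite{ClZ10} bookkeeping.

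\textbf{Main obstacle.} The delicate point is the dichotomy in Step 2: showing that \emph{if} $h$ is not M\"obius-equivalent to $ax^n+bx^{-n}$, \emph{then} $h$ fails to be totally ramified over two points in the way that would let $\deg(g)$ escape. Said group-theoretically, one must show that the only way the $n$ blocks of $h$ can be arranged so that the few-terms condition on $f$ imposes \emph{no} constraint on $\deg(g)$ is when $h$ has branch cycles of the dihedral/cyclic ``$x^n+x^{-n}$'' shape over $0$ and $\infty$ -- i.e.\ two branch cycles that are products of $2$-cycles fixing exactly two points, with an $n$-ish cycle elsewhere, up to M\"obius. This is essentially a local-at-two-points classification of degree $n$ rational functions with prescribed ramification, and it is where the ``$ax^n+bx^{-n}$'' exception is forced. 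Everything else -- the Riemann--Hurwitz accounting and the conversion to the numerical bound -- is routine once this structural step is in place.
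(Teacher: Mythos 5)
The theorem you are asked to prove is not proved in this paper at all: the paper simply quotes it as \cite[Main Thm.]{ClZ10} (Fuchs--Zannier) and comments on the exponential shape of the bound, so there is no internal proof to compare against. That said, your sketch has a genuine gap that would sink the argument regardless.

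Your Step 2 is the load-bearing claim, and it is false as stated. You assert that a rational function $f$ whose numerator and denominator have altogether $\ell$ nonzero terms has fibers over $0$ and $\infty$ consisting of $O(\ell)$ points, i.e.\ that the branch cycle of $f$ over $\{0,\infty\}$ has at most $O(\ell)$ disjoint cycles. This fails already for polynomials: $f(x)=x^n-1$ has $\ell=2$ terms but its fiber over $z=0$ consists of $n$ distinct unramified points. More generally lacunarity does not bound the number of distinct zeros or poles of $f$, nor (e.g.\ $x^n+x$) the number of its critical values, so there is no way to feed ``few terms'' into the Riemann--Hurwitz accounting \eqref{*10} the way your Step 3 requires. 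The multiplicativity $N=\deg(g)\cdot n$ you lean on cannot convert a nonexistent bound on ramification over two points into a bound on $\deg(g)$. The actual Fuchs--Zannier argument does not proceed by monodromy or genus counting; it is a height-theoretic/Diophantine argument over function fields, exploiting the $\ell$-term relation $f(t)-g(h(t))=0$ via Brownawell--Masser/Mason-type inequalities (the function-field abc machinery) to bound $\deg(g)$, with the excluded case $h\sim ax^n+bx^{-n}$ emerging as the degenerate solutions of the resulting unit-type equations. If you wanted to salvage a branch-cycle flavored proof you would first need a correct statement of what lacunarity \emph{does} constrain (multiplicative relations among the zeros/poles, or the structure of $f'/f$), and that is exactly where the real work lives.
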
  The issue I raise here is that $\ell$ appears in the exponent, not in a polynomial expression. This is common for many rational function type results. I find it unintuitive that decomposability is a complicated subject, but apparently it is.  

\subsubsection{Laurent polynomials and Ritt III} \label{laurent} A Laurent polynomial is a polynomial in $z$ and $1/z$, so it is a rational function with poles, at most, at 0 and $\infty$. With an affine change, we may assume a Laurent polynomial has its (possible) finite pole anywhere you wish. \cite{Pa10a} considers when it is possible that (nonconstant) entire functions -- analytic everywhere on the complex $u$-plane -- uniformize a component $X^0$ of a separated variable equation \eqref{*}. That is, 
\begin{triv} \label{functSepEquat} $f(h^*_f(u))=g(h^*_g(u))$, with $(f,g)$ a polynomial pair, and $(h^*_f,h^*_g)$ entire.\end{triv} 
\noindent You should equivalence $(f,g)$ and $(f(\alpha_f(u)),g(\alpha_g(u)))$ with $\alpha_f$ and $\alpha_g$ affine transformations. \cite{Pa10a} quotes \cite{Pi1887} for the following. I give its proof. I'm curious where in mathematics history it belongs;  Riemann had to know and use it. Denote  the nonsingular projective curve defined by $X^0$ by $X$.  

\begin{lem} Given  \eqref{functSepEquat}, $X$ has genus 0 or 1. \end{lem}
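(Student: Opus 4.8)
The plan is to use the hypothesis \eqref{functSepEquat} directly: we have nonconstant entire functions $h^*_f, h^*_g$ on $\mathbb{C}$ with $f(h^*_f(u)) = g(h^*_g(u))$, and this common value is itself a nonconstant entire function $\psi(u)$. The map $u \mapsto (h^*_f(u), h^*_g(u))$ then lands on the affine curve $X^0$ defined by $f(x) - g(y) = 0$, hence lifts to the normalization $X$ of its projective closure, because $h^*_f$ and $h^*_g$ have no poles and the lift is determined locally by choosing consistent branches (the only place a lift could fail is over singular points, but $X$ is precisely the normalization that resolves these, so the universal property from the curve case — the curve analogue of \eqref{UnivPropCurves} applied to the map from $\mathbb{C}$, or simply the fact that $\mathbb{C}$ is simply connected so the normalization map is trivialized — gives a holomorphic $\Phi: \mathbb{C} \to X$). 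I would first note that $\Phi$ is nonconstant, since $h^*_f$ (say) is nonconstant.

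Next I would argue that $\Phi(\mathbb{C})$ omits at most finitely many points of $X$: the points of $X$ not in the image of $\Phi$ lie over the branch locus data or over $\infty$ in the $x$- and $y$-coordinates, and in any case $\Phi$ being a nonconstant holomorphic map from $\mathbb{C}$ to a compact Riemann surface has image whose complement is contained in a finite set. More precisely, the composite $X \to \mathbb{P}^1_x$ (projection) composed with $\Phi$ is $h^*_f$, an entire function, so it omits $\infty$; hence $\Phi(\mathbb{C})$ avoids the finite fiber of $X \to \mathbb{P}^1_x$ over $\infty$. So $X$ minus a finite set receives a nonconstant holomorphic map from $\mathbb{C}$, i.e. $X$ minus a finite set is \emph{not} hyperbolic (in the Kobayashi / Picard sense).

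Now I invoke the classical uniformization trichotomy together with the little Picard theorem (or, as the excerpt suggests, essentially Picard's 1887 observation): a compact Riemann surface $X$ of genus $\geng_X \ge 2$, or of genus $1$ with at least one puncture, or of genus $0$ with at least three punctures, has the unit disk as universal cover, so admits no nonconstant holomorphic map from $\mathbb{C}$ (Liouville / Schwarz lemma). Since $\Phi$ realizes such a map into $X$ minus the finite omitted set, we are forced into one of the remaining cases: $\geng_X = 0$, or $\geng_X = 1$ with $\Phi$ actually surjective (no omitted points, so an elliptic curve can be the image of $\mathbb{C}$ via its covering map $\mathbb{C} \to \mathbb{C}/\Lambda \cong X$). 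In either case $\geng_X \in \{0,1\}$, which is the claim. I would phrase the genus-$1$ case carefully: the only way a punctured genus $1$ curve can receive a nonconstant map from $\mathbb{C}$ is if there are no punctures, matching the fact that entire (not just meromorphic) parametrizations force the curve to be an elliptic curve (or $\mathbb{P}^1$), not an affine open piece of one of higher genus.

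The main obstacle I anticipate is the bookkeeping of \emph{which} points of $X$ are genuinely omitted by $\Phi$ — one must be sure the finite omitted set is honestly finite and that in the genus $1$ subcase the entirety hypothesis (poles only absent, i.e. $h^*_f, h^*_g$ entire rather than meromorphic) really does kill all punctures. This is where the statement \eqref{functSepEquat} being about entire, not meromorphic, functions is essential: a meromorphic parametrization would only give $X$ of genus $0$, $1$, or... well, it would still need genus $\le 1$ but the argument via $X \to \mathbb{P}^1_x$ omitting $\infty$ would break. So the careful step is: entire-ness of $h^*_f$ $\Rightarrow$ $\Phi$ avoids the fiber over $x = \infty$, which is nonempty, giving a genuine omitted point; combined with any further omitted points this puts $X$ outside the hyperbolic range, forcing $\geng_X \le 1$. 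Everything else is the standard hyperbolicity/Picard input, which I would cite rather than reprove.
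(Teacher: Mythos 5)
Your proof reaches the right conclusion and uses the same core input as the paper---uniformization plus Liouville---but the route is over-engineered, and along the way it actually establishes something sharper than you noticed.

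The paper's argument is shorter and drops the detour through punctures and omitted values: after lifting the entire map $u\mapsto (h^*_f(u),h^*_g(u))$ to $\Phi:\mathbb{C}\to X$ (via Riemann's removable singularity theorem, to pass through the singular points that the normalization resolves), it lifts once more to the \emph{universal cover} $\tilde X$ of the compact surface $X$. If $\geng_X\ge 2$ then $\tilde X$ is the disk, and a nonconstant entire map into the disk is killed by Liouville. Done. Notice that this version never needs to know whether $\Phi$ is surjective, never uses the puncture trichotomy, and---contrary to your last paragraph---does not use entireness of $h^*_f,h^*_g$ beyond having a nonconstant holomorphic $\Phi:\mathbb{C}\to X$; it works verbatim for meromorphic parametrizations, which is the original Picard setting. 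The entireness hypothesis is an artifact of the particular argument you chose, not of the lemma.

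The more interesting point is a latent tension in your write-up. You correctly establish that $\Phi$ must avoid the fiber $\mathrm{pr}_x^{-1}(\infty)\subset X$, which is nonempty because $\mathrm{pr}_x:X\to\mathbb{P}^1_x$ is a nonconstant (hence surjective) morphism of projective curves. So $\Phi$ genuinely omits at least one point of $X$. But you also (correctly) observe that a nonconstant holomorphic map $\mathbb{C}\to X$ with $\geng_X=1$ must be surjective, because a once-punctured torus is hyperbolic. These two observations together rule out $\geng_X=1$ outright, forcing $\geng_X=0$---a strictly stronger conclusion than the lemma states. You didn't notice this; you wrote the genus-$1$ case as a live possibility ``with $\Phi$ actually surjective,'' which contradicts what you just showed. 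This isn't a gap (you still proved what the lemma asserts), but it is worth recognizing: your argument proves genus $0$, not merely genus $\le 1$. That genus $0$ is the truly relevant case is consistent with the paper's surrounding discussion, which immediately moves on to Laurent polynomial parametrizations and two points over $z=\infty$ on a genus $0$ curve.

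One smaller nit: the parenthetical justification ``$\mathbb{C}$ is simply connected so the normalization map is trivialized'' for lifting through the normalization is not quite right---the normalization is not a covering map, so simple-connectedness of the source doesn't apply directly. The correct tool is what the paper invokes: away from the finite singular set the lift is trivial, and at the discretely many preimages of singular points it extends by Riemann's removable singularity theorem (the lift stays bounded since the normalization is a proper, indeed finite, map).
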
 

\begin{proof} Denote the universal covering space of $X$ by $\tilde X$. The entire  $u\mapsto (h^*_f,h^*_g)$ lifts to an entire $u\mapsto h^*_X(u)\in X$ by Riemann's removable singularity theorem \cite[p.~103]{Con78}.  Then, analytic continuation gives an entire function $u\mapsto \tilde h_X\in \tilde X$. Now apply Riemann's mapping theorem \cite[Thm.~9-6]{Sp57}. Unless $X$ has genus 0 or 1, $\tilde X$ is analytically isomorphic to a disk. So, an entire (nonconstant) function has range in a disk:   impossible from Liouville's Thm.~\cite[Thm.~3.4]{Con78}. \end{proof} 

According to Picard's Little theorem \cite[p.~297]{Con78}, an entire function has range missing at most one value in $\bC$. 
An example of where an entire function $h$ would appear is if we have 
\begin{triv} \label{redLaurent} $h^*_f=h_f\circ h$ and $h^*_g=h_g\circ h$ with $(h_f(u),h_g(u))$ either Laurent or ordinary polynomials,  and \eqref{functSepEquat} holds by substitution: $(h^*_f,h^*_g) \mapsto (h_f,h_g)$. \end{triv}  \noindent 

\cite[\S2]{Pa10a}  quotes \cite{BNg06} for the converse: If  \eqref{functSepEquat}, then \eqref{redLaurent} for some entire $h$ and $(h_f,h_g)$. The serious new case is where $(h_f,h_g)$  are Laurent polynomials. The cover $u\to f\circ h_f=z$ has two points over $z=\infty$, so the most telling case for solutions to  \eqref{functSepEquat} reverts to describing the factors of $f(x)-g(y)$ that are genus 0 curves with two points over $z=\infty$. 

We conclude with the \cite{Pa09} generalization of Ritt's Thm., and its use of the explicit result in \cite{BT00}. Note: These papers always work over the complexes. Given a pair of  covers  $f: X\to Z$ and $g: Y\to Z$, their phrase \lq\lq the pair $(f,g)$ is irreducible\rq\rq\  means the fiber product $X\times_Z Y$ is irreducible (compatible with  \cite[Prop.~2.1]{Pa09}). As  in \S\ref{secII.3}, this means the combined Galois closure group $G_{f,g}$ is transitive on the pairs  $(i,j)$, $1\le i\le m,1\le j\le n$, corresponding to the tensor product of $T_f$ and $T_g$. 

\S\ref{fibproduniv} notes the Galois see-saw argument of  \cite[Prop.~2]{Fr73a}, phrased in \eqref{schcond}, is very general. It shows, with no loss, we may replace $f$ and $g$ by covers through which $f$ and $g$ factor, but with the Galois closures of the new $f$ and $g$ the same. Further,  there is a one-one correspondence between the components of the new and the old fiber products.  The use of the fundamental group of $\prP^1_z$ in \cite[Thm.~2.3]{Pa09} is unnecessary and limiting even for covers of $\prP^1_z$. 

The proof of Lem.~\ref{indecompLem} -- \cite[Thm.~2.4]{Pa09}, but using fiber product -- has nothing to do with genus 0 curves. So, in the result you can replace all the $\prP^1\,$s by general normal varieties and finite morphisms. 

\begin{lem}  \label{indecompLem} Assume $(f,g)$ is irreducible, and suppose $\phi_W: W\to Z$ is a cover of nonsingular curves that factors through both $f$ and $g$. If both $W\to X$ and  $W\to Y$ are indecomposable, then $f$ and $g$ are also both indecomposable.  \end{lem}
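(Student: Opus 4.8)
The plan is to argue entirely through the Galois correspondence, which here works exactly as for curves but — as the remark above anticipates — transplants verbatim to finite separable morphisms of normal varieties, with $\prP^1_z$ replaced by $Z$. Write $G=G_{f,g}$ for the combined Galois closure group acting on the letters $x_1,\dots,x_m$ of $T_f$ and $y_1,\dots,y_n$ of $T_g$, where $m=\deg f$, $n=\deg g$. Irreducibility of $(f,g)$ means $G$ is transitive on the $mn$ pairs $(x_i,y_j)$, equivalently $G(x_1)G(y_1)=G$ as a subset of $G$, equivalently the normalized fiber product $X\times_Z Y$ is irreducible, corresponds to the subgroup $G(x_1)\cap G(y_1)$, and has $\deg(X\times_Z Y\to Z)=mn$. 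Since $\phi_W$ factors through both $f$ and $g$, by the universal property \eqref{UnivPropCurves} it factors through $X\times_Z Y$; let $H\le G(x_1)\cap G(y_1)$ be the subgroup corresponding to $W$. The degenerate cases $m=1$ or $n=1$ are immediate (the relevant factor is vacuously indecomposable, and the other is pinned down because $W\to X$, resp.\ $W\to Y$, then has $Z$ as target with no proper intermediate cover), so from now on assume $m,n\ge 2$.

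The first real step will be to show that $W\cong X\times_Z Y$, i.e.\ $H=G(x_1)\cap G(y_1)$. Indeed $W\to X$ factors as $W\to X\times_Z Y\to X$; since $W\to X$ is indecomposable one of these stages is an isomorphism, and the second cannot be, for otherwise $mn=\deg(X\times_Z Y\to Z)=\deg(X\to Z)=m$ would force $n=1$. Hence $W\to X\times_Z Y$ is an isomorphism. (Symmetrically, indecomposability of $W\to Y$ forces the same conclusion, so the two hypotheses on $W$ are consistent.)

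The second step is the heart of the matter. Suppose, for contradiction, that $f$ decomposes properly, so there is a subgroup $K$ with $G(x_1)<K<G$ (corresponding to an intermediate cover $X\to X'\to Z$, equivalently to the component of $X'\times_Z Y$ through which $W$ passes); I will produce from it a proper intermediate cover of $W\to Y$. Using $G(x_1)G(y_1)=G$ together with $G(x_1)\le K$, one checks $K=G(x_1)\,(K\cap G(y_1))$, whence $[\,K\cap G(y_1):H\,]=[\,K\cap G(y_1):G(x_1)\cap G(y_1)\,]=[K:G(x_1)]\ge 2$; and from the double-coset identity $[\,G(y_1):G(x_1)\cap G(y_1)\,]=[G:G(x_1)]=m$ (again a consequence of $G(x_1)G(y_1)=G$) one gets $[\,G(y_1):K\cap G(y_1)\,]=m/[K:G(x_1)]=[G:K]\ge 2$. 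Thus $H<K\cap G(y_1)<G(y_1)$ is a proper intermediate subgroup — a proper factorization of $W=X\times_Z Y\to Y$ — contradicting that $W\to Y$ is indecomposable. Hence $f$ is indecomposable, and interchanging the roles of $f$ and $g$ (and of $W\to X$ with $W\to Y$) shows $g$ is indecomposable as well.

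I expect the main obstacle to be the bookkeeping in this last step: one must check that the candidate intermediate cover $K\cap G(y_1)$ is \emph{proper at both ends}, and each of the inequalities $[\,K\cap G(y_1):H\,]\ge 2$ and $[\,G(y_1):K\cap G(y_1)\,]\ge 2$ relies essentially on the irreducibility hypothesis $G(x_1)G(y_1)=G$ — drop it and the fiber product can split, and these degree counts collapse. The only other points requiring care are the reduction $W\cong X\times_Z Y$ and the trivial handling of the degree-one cases; everything else is formal Galois theory, so the argument carries over unchanged with all the $\prP^1\,$s replaced by normal varieties and finite separable morphisms.
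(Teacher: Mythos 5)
Your proof is correct and takes essentially the same route as the paper's: identify $W$ with $X\times_Z Y$ via the universal property and indecomposability, then transfer (in)decomposability between a projection of the fiber product and the corresponding factor cover through the Galois correspondence. The paper packages that second step as the standard fact that a transitive overgroup of a primitive group is primitive, whereas you carry out the underlying index arithmetic explicitly from $G(x_1)G(y_1)=G$; the content is the same.
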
 

\begin{proof} Use  the universal property of fiber product \eqref{UnivPropCurves} (or its generalization \eqref{UnivPropGen}). The irreducibility assumption says $\phi_W$ factors surjectively through $X\times_Z Y$. Since the factorization through $f$ and $g$ are indecomposable, $W$ actually equals $X\times_Z Y$. 

From the construction of the Galois closure (\S\ref{secII.3}), the group of the Galois closure of the projection  $W=X\times_Z Y \to X$ is a subgroup $G_{W/X}$ of the Galois closure group, $G_g$ of $g$, by its action on the same letters. Indecomposability of $W\to X$ is equivalent to this action of $G_{W/X}$ being primitive (\S\ref{secIV}). Therefore, the (possibly) larger group $G_f$ acts primitively on the same letters: $f$ is indecomposable. The same argument gives $g$ indecomposable. \end{proof} 

Suppose we start with two maximal decompositions of $f\in \bC(x)$ (as in \eqref{maxchain}): 
 \begin{equation} \label{maxnonritt} f_v\circ f_{v\nm1}\circ \cdots \circ f_1=g_u\circ g_{v\nm1}\circ \cdots \circ g_1.\end{equation} If you drop the degree conditions in \eql{rittsub}{rittsubb}, the substitution of \eql{rittsub}{rittsuba} is  included in \eql{rittsub}{rittsubb}. We'll refer to that as a {\sl weak Ritt substitution}. Use the symbol $\sim^w$ to indicate one decomposition is obtained from another through weak Ritt substitutions. Let $f=f_v\circ f_{v\nm1}\circ \cdots \circ f_2$ and $g=g_u\circ g_{v\nm1}\circ \cdots \circ g_2$. From $f\circ f_1=g\circ g_1$, Lem.~\ref{indecompLem} implies either $f(x)-g(y)$ is reducible or one of $u$ or $v$ exceeds 1.

The main idea in \cite{Pa09} in generalizing Ritt's Theorem is to consider the collection, $\sR_k$, of rational functions $f$,  for which $f:\prP^1_w \to \prP^1_z$ has at least one place $z_0$ over which it has at most $k$ points. Then, $\sR_k$ is closed with respect to decomposition in that  $f_1\circ f_2\in \sR_k$ implies $f_i\in \sR_k$, $i=1,2$. The latter property is a stand-in for the more general idea of what me might call a {\sl closed Ritt class}. For any element $f$ in any closed Ritt class $\sR$, we can map $f$ to its collection $\sD_f$ of maximal decompositions.  Consider the set  $\sD_\sR =\{\sD_f\mid f\in \sR\}$. 
By replacing an explicit ordered list of composition factors by the composition we get a map back 
$$ \sD_\sR =\{\sD_f\mid f\in \sR\} \to \sR \text{ by } \sD_f\mapsto f.$$ then, modding out by the action of  $\sim^w$ induces a {\sl Ritt map}: 
$R_\sR: \sD_\sR/\sim^w \to \sR$.

For example, Ritt's Theorem is that $R_{\sR_1}$ is one-one.  One conclusion of  \cite[\S3]{Pa09} is that $R_{\sR_2}$ is also one-one. Pakovich notes that this is closely connected to the {\sl Poincar\'e center-focus\/} problem, but that is another topic. 

\subsection{Attaching a zeta function to a diophantine problem} \label{secVII.2} \S\ref{60sProblems} reviews the problems that motivated subsequent developments. Like Davenport/Schinzel problems, their nitty-gritty particulars contrast to the general techniques they motivated in \S\ref{galStrat} and \S\ref{chowMotives}. We see Davenport motivations for considering zeta functions in Prob.~\ref{excCharext}. We simplify notation by assuming diophantine statements are over $\bZ$; adjustment to the ring of integers of a number field is easy. 

\subsubsection{Problems from the '60s} \label{60sProblems}  Let $\afA_d$ denote the space of coefficients of hypersurfaces of degree $d$ in $\prP^d$ (projective $d$-space). For $\by\in \afA_d$ denote the corresponding hypersurface in $\prP^d$ by $h_{d,\by}(\bx)$. We regard it as the fiber of a subspace $\sH_d\subset \afA_d\times \prP^d$ after projection on the first coordinate of $\afA_d\times \prP^d$. 

Recall {\sl Chevalley's Theorem\/} \cite[p.~6]{BoSh66}: A hypersurface over $\bQ$ in $\prP^{d}$ of degree $d$  has a $\bZ/p$ point for every prime $p$. The problem is diophantine, but not {\sl existential}. It has the shape \begin{equation} \label{2quantifiers} D_{\text{Ch}}: \forall \by\in \afA_d, \exists \bx \in \prP^d [ (\by,\bx)\in \sH_d ].\end{equation} You interpret the problem  at each prime as $D_{\text{Ch},p}$ by restricting the coordinates of $(\by,\bx)$ to lie in $\bZ/p$. The conclusion is that $D_{\text{Ch},p}$ is true for all primes $p$: Each degree $d$ hypersurface over $\bZ/p$ has a $\bZ/p$ point. 

Take $\bZ_p$ to be the $p$-adic integers. 
{\sl Artin's Conjecture\/} was similar: For degree $d$, $h(\bx)$ is a hypersurface in $\prP^{d^2}$. Interpret  $D_{\text{Ar},p}$ to mean that each degree $d$ hypersurface over $\bZ_p$ has  a $\bZ_p$ point. 

The Ax-Kochen solution \cite{AxKo66}, however, was a shock:  
$D_{\text{Ar},p}$ is true for all {\sl but finitely many\/} primes $p$. An alternative statement of its conclusion: Artin's Conjecture is true over all nontrivial ultra-products of all $p$-adic completions of $\bQ$. This used a result of Lang for comparison. So, the method applied to few problems, and it left a mystery on the exceptional primes. Yet it made a splash.  

The  Ax-Kochen method produced a new set of fields  by considering the algebraic numbers inside nontrivial ultra-products of all residue class fields of $\bZ$. {\sl Almost\/} (but not) all such  fields would have the P(seudo)A(lgebraically)C(losed) property: All absolutely irreducible $\bQ$ varieties over such a field would have a rational point. Applied to Chevalley's problem they suggested to Ax \cite{Ax68} the following. 

\begin{guess} \label{AxGuess1} Each degree $d$ hypersurface over $\bQ$ in $\prP^d$  should have a rational point in any PAC field $F\le \bar \bQ$. This is equivalent to each such hypersurface containing an absolutely irreducible $\bQ$ subvariety. \end{guess} 

Finally, as a special case of Igusa-like conjectures, for a single prime $p$, and fixed $\by\in \afA_d(\bZ)$ there was the problem of counting the solutions $c_{m,p}$ on $h_{d,\by}(\bx)$  in $\bZ/p^m$. The qualitative question was this. 

\begin{prob} \label{Igusa} Show the Poincar\'e series $\sum_{m=0}^\infty c_{m,p}t^m$ is in $\bQ(t)$. \end{prob} \cite[p.~47, Prob. \#9]{BoSh66} is a special case with $d=2$, of Prob.~\ref{Igusa}, I first heard about it very near the time of Ax-Kochen.  

\subsubsection{Uniform in $p$ quantifier elimination} \label{galStrat} Ax-Kochen, clearly modeled on Tarski's elimination of quantifiers, left a general problem.  Is there such an elimination of quantifiers for problems $P,$generalizing \eqref{2quantifiers}, over finite fields.  \cite{Ax68} posed this. 
(We understood this would give versions by replacing all finite fields by all $p$-adic completions, as noted in \S\ref{chowMotives}.) 

That is, suppose $\row Q m$ are quantifiers (often taken to alternate between $\exists$ and $\forall$) on blocks of variables $\row {\by} m$, with possible unquantified parameters $\bz$. Could you form a series of statements in one less (block of) quantifier(s), that for almost all primes $p$ would be equivalent to the previous statement, until you were down to an unquantified statement. For a statement  $D_{P, \bz, Q_1\by_1,\dots, Q_m\by_m}$ of the type above, denote by $D_{P, \bz, \by_1,\dots, \by_{m\nm1}, Q_m\by_m}$ the statement where you drop the first $m\nm 1$ quantifiers. Here is a statement of the elimination of quantifiers in equation form, where $N_{D_P}$ denotes an explicit finite set of primes dependent on $D_P$. 

\begin{prob} \label{stratproc} Given $D_{P, \bz, Q_1\by_1,\dots, Q_m\by_m}$ can you form $D_{P', \bz, Q_1\by_1,\dots, Q_{m\nm1}\by_{m\nm1}}$  (dependent on $P'$ and $P$) so that for all $p\not\in N_{D_P}$, for each  $(\bz, \by_1,\dots, \by_{m\nm 1}) \mod p$: $$D_{P, \bz, \by_1,\dots, \by_{m\nm1}, Q_m\by_m} \mod p \text{ if and only if }D_{P', \bz, \by_1,\dots, \by_{m\nm1}}\mod p.$$ \end{prob} 

We understand $P$ and $P'$ as above to be algebraic subspaces of the space with appropriate variables. It was seen almost immediately that the conclusion to Prob.~\ref{stratproc} was impossible. Yet, a logic statement asserted that by  G\"odel numbering all possible proofs of all possible statements there would be one in the end that would be either a proof or disproof of the starting finite field problem. 

That may have sufficed for many logicians, for whom particular problems of algebra may not have mattered. So arose surmises there would be no such useful procedure of any sort along the lines of Prob.~\ref{stratproc}. But there was, based on the following principle: With an enhancement, what worked in Davenport's problem -- without  the RET part -- worked in general. 

What allowed elimination of quantifiers was to extend the simple quantified variable statements, and replace them by generalizations of monodromy statements like that of Thm.~\ref{Chebp}. Here are some of the ingredients of the generalization; called a {\sl Galois Stratification}.  Instead of 1-variable $z$, you would have many variables -- in the induction procedure, $\bz, \by_1,\dots, \by_{m\nm1}$; and instead of the trace statement \eqref{*5}, there would be a statement about elements falling in conjugacy classes. 

We couldn't expect with such general problems that there would be an idea like Monodromy Precision (\S\ref{MPres}). For complete generality we must replace one cover of $\prP^1_z$ by a stratification of the space with variables $\bz, \by_1,\dots, \by_{m}$. Attached to each piece of the stratification $A$ there would be an attached Galois cover $\phi:_A: X_A\to A$ of the underlying space, with associated conjugacy classes $\bfC_A$. 

You also need to extend the meaning that the variables would have values in a finite field $\bZ/p$. Suppose $\bz, \by_1,\dots, \by_{m\nm1}$ is within a particular $A'$ of the stratification $\mod p$ for $P'$, and $Q_m$ is $\exists$. Then,  for some  $\by_m$ with values in $\bZ/p$: 

\begin{triv} \label{aChebValue} with $(\bz, \by_1,\dots, \by_{m})$ in a stratification piece $A$ attached to $P$ that projects to $A'$, the  Frobenius attached to that value is in  $\bfC_A$. \end{triv} 
There is a similar statement for $\forall$. Most seriously, no simple trick allowed reverting everything to existential statements, unlike Tarski's situation. Of course, the work comes in producing the stratification, covers and conjugacy classes, with stratification pieces $A'$ that are projections of stratification pieces $A$ of $P$. 

The start and end of the procedure caused some confusion for those with preconceptions. The  start had to also be a Galois Stratification. The trick  -- use trivial (degree 1) covers and the identity conjugacy class -- maybe seemed so trivial as to be inconsequential. When, however, you remove the first block of quantifiers, the replacement Galois Stratification will be as consequential as the difference between Davenport's original problem, and the Thm.~\ref{Chebp} monodromy statement. 

There was one further confounding ingredient. Ax referred to his version \cite{Ax68} of a procedure special case as {\sl one-variable}. That sounds like it included, say, problems like Davenport's. But that was not so.  The Galois Stratification procedure recognized Ax's case as the {\sl zero\/} variable case:  the base was an open subset of $\Spec$ of the ring of integers of a number field. 

The many variable Chebotarev density referred to in the comments after  \eqref{*5} allowed {\sl uniformity with $p$}. At each elimination of a block of quantifiers the procedure carried a possibly increasing exceptional set of primes: $N_{D_P}|N_{D_{P'}}$ in  the equivalence of Prob.~\ref{stratproc}. 

\subsubsection{Introducing zeta functions} \label{zetaCoefficients} \cite[Chap.~25 and 26]{FrJ86}$_1$ and  \cite[Chap.~31 and 32]{FrJ86}$_2$ have  complete details of the most elementary form of the Galois Stratification procedure along with the zeta function production -- our next topic -- based on {\sl Galois Stratification coefficients\/}.  I briefly remind what these things are, along with the value of, and problems with, Chow motives. Then I conclude with problems that tie to Schur's Conjecture and Davenport's Problem.  

\cite{DeLo01} and \cite{Ni10}  also have expositions of Galois Stratification, and they enhance the zeta function coefficients, extending them to  {\sl Chow motive coefficients}.  
A {\sl Zeta function\/}, $Z(t)$, has an attached {\sl Poincar\'e series\/} $\tP(t)$. This is given by the logarithmic derivative: $$t\frac{ d}{dt}\log(Z(t))=\tP(t).$$ Add that $Z(0)=1$, and each determines the other.  The catch: $Z(t)$ rational (as a function of $t$) implies $\tP(t)$ rational, but not always the converse. 

Given diophantine problem $D_{P, \bz, Q_1\by_1,\dots, Q_m\by_m}$ as in Prob.~\ref{stratproc},  consider the cardinality of the set of $\bz^0$ with values in $\bF_{p^k}$ for which when you set $\bz$ to $\bz^0$ the parameter free statement $D_{P, \bz^0, Q_1\by_1,\dots, Q_m\by_m}$ is true over $\bF_{p^k}$. Denote this by $\nu_p(D_{P,\row Q m},k)$. Abusing notation, the most elementary Poincar\'e series  attached to $D_{P, \bz, Q_1\by_1,\dots, Q_m\by_m}$ at the prime $p$ is 
\begin{equation} \label{WeilVectors} \tP_{D_{P,\row Q m}}(t)\eqdef \sum_{k=1}^\infty  \nu_p(D_{P,\row Q m},k)t^k. \end{equation} 

I don't know when Ax introduced such  $\nu_p(D_{P,\row Q m},k)$, but he told me the problem of meaningfully computing them at IAS in Spring '68. 
The Galois stratification procedure concludes with an integer $N_{D_P}^*$ and the following: 
\begin{edesc} \label{strat} \item \label{stratUnif} a quantifier free Galois stratification $P_\bz$ over $\afA_\bz[1/N_{D_P}^*]$, the affine space over $\bZ$ with the $p|N_{D_P}^*$ removed; and  
\item \label{stratInc} for each  $p| N_{D_P}^*$, a stratification of $\afA_{\bz}\mod p$. \end{edesc} We call \eql{strat}{stratUnif} (resp.~\eql{strat}{stratInc}) the {\sl uniform\/} -- in $p$ -- (resp.~{\sl incidental\/}) stratification. Both are important, but  Denef-Loeser deal only with the uniform stratification. 

\begin{thm} \label{nearRatZeta} For each prime $p$, $\tP_{D_{P,\row Q m}}(t)$ is a rational function $\frac{n_p(t)} {d_p(t)}$, with $n_p,d_p\in \bQ[t]$ and computable.  The corresponding $Z_{D_{P,\row Q m}}(t)$ has the form $\exp(m_p^*(t))(\frac{n_p^*(t)} {d_p^*(t)})^{\frac 1 \ell_p}$ with $m_p^*,n_p^*,d_p^*\in \bQ[t]$ and $\ell_p\in \bZ^+$ computable. Further, there are bounds independent of $p$, for all those functions of $t$. \end{thm}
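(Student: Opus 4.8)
The plan is to run the Galois stratification procedure of Prob.~\ref{stratproc} on the input $D_{P,\bz,Q_1\by_1,\dots,Q_m\by_m}$ and extract the zeta-function shape from the output of that procedure, exactly as sketched in \S\ref{galStrat}. First I would invoke the (uniform part of the) elimination-of-quantifiers procedure: it produces the integer $N_{D_P}^*$, a quantifier-free Galois stratification $P_\bz$ of $\afA_\bz[1/N_{D_P}^*]$ as in \eql{strat}{stratUnif}, and for each bad prime $p\mid N_{D_P}^*$ an incidental stratification \eql{strat}{stratInc}. The point of quantifier-freeness is that $\nu_p(D_{P,\row Q m},k)$ becomes a finite sum over strata $A$ of the count of $\bF_{p^k}$-points $\bz^0\in A$ whose attached Frobenius lies in a prescribed conjugacy class $\bfC_A$ of the Galois cover $\phi_A\colon X_A\to A$ --- this is the content of \eqref{aChebValue} once all quantifiers are gone. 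So the whole problem reduces to the generating function in $k$ of a Frobenius-constrained point count on a single Galois cover of an affine variety.

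Next I would handle one stratum at a time. For a Galois cover $\phi_A\colon X_A\to A$ with group $G_A$ and target class $\bfC_A$, the number of $\bF_{p^k}$-points of $A$ with Frobenius in $\bfC_A$ is, by the usual Chebotarev/trace bookkeeping, a $\bQ$-linear combination (coefficients being $|\bfC_A|/|G_A|$ times trace-like counts) of $\#Y(\bF_{p^k})$ as $Y$ ranges over the quotient varieties $X_A/H$ for subgroups $H\le G_A$ cut out by the class data. By the Grothendieck--Lefschetz trace formula each $\#Y(\bF_{p^k})$ is an alternating sum $\sum_i(-1)^i\operatorname{tr}(\mathrm{Frob}_p^k\mid H_c^i(Y))$ of powers of Weil numbers, so $\sum_k \#Y(\bF_{p^k})t^k$ is rational in $t$ with numerator and denominator degrees bounded by $\sum_i\dim H_c^i(Y)$ --- and that Betti-number bound is independent of $p$ (for $p$ outside the finite set where reduction misbehaves, which can be absorbed into a possibly enlarged $N_{D_P}^*$). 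Summing the finitely many strata gives $\tP_{D_{P,\row Q m}}(t)=n_p(t)/d_p(t)$ with $n_p,d_p\in\bQ[t]$, all degrees and coefficient heights bounded uniformly in $p$; since the procedure is effective, $n_p,d_p$ are computable. The bad primes $p\mid N_{D_P}^*$ are treated the same way using the incidental stratification \eql{strat}{stratInc}, which is why the conclusion really is ``for each prime $p$''.

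Finally I would pass from $\tP$ to $Z$. From $t\frac{d}{dt}\log Z_{D_{P,\row Q m}}(t)=\tP_{D_{P,\row Q m}}(t)=n_p(t)/d_p(t)$ and $Z(0)=1$, do a partial-fraction decomposition of $n_p/d_p$: the polynomial part integrates to a polynomial $m_p^*(t)$ (the $\exp(m_p^*)$ factor), each simple pole $c/(1-\alpha t)$ integrates to $-\tfrac{c}{?}\log(1-\alpha t)$ contributing a rational factor raised to a rational power, and higher-order poles contribute further polynomial-in-$t$ exponential factors that can be folded into $m_p^*$; clearing the common denominator of all the exponents produces a single $\ell_p\in\bZ^+$ and the stated form $\exp(m_p^*(t))\bigl(n_p^*(t)/d_p^*(t)\bigr)^{1/\ell_p}$ with $m_p^*,n_p^*,d_p^*\in\bQ[t]$ computable, and bounds on degrees/heights/$\ell_p$ inherited uniformly from the bounds on $n_p,d_p$. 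The main obstacle I expect is the uniformity-in-$p$ claim: one must be sure that the Betti-number (hence degree) bounds coming from the quotient varieties $X_A/H$ do not blow up with $p$, which requires that the uniform stratification and its covers are genuinely defined over $\bZ[1/N_{D_P}^*]$ so that good reduction gives constant $\ell$-adic cohomology ranks (and that the finitely many exceptional primes are swept into $N_{D_P}^*$) --- this is precisely the ``uniform in $p$'' strength of the many-variable Chebotarev input noted after \eqref{*5}, and it is where the argument has to be run with care rather than by appeal to a single-prime estimate.
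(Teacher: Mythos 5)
Your proposal runs the same Galois-stratification pipeline as the paper and correctly reduces to Frobenius-constrained point counts on strata, but your cohomological engine is genuinely different from the one the paper uses, and that difference matters for the ``computable'' part of the statement. The paper (following \cite[\S26.3]{FrJ86}$_1$) deliberately works with Dwork's $p$-adic cohomology combined with Bombieri's explicit estimates \cite{Dw66}, \cite{Bm78}: these give an a priori, explicit bound -- depending only on the degrees of the defining equations, hence uniform over the good primes once the stratification is fixed over $\bZ[1/N_{D_P}^*]$ -- on the total degree of each zeta numerator/denominator. Effective computation then consists of counting $\bF_{p^k}$-points up to that bound and interpolating. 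You instead invoke Grothendieck--Lefschetz with $\ell$-adic cohomology and argue $p$-uniformity from constructibility/smooth-proper base change. That is essentially the Denef--Loeser modernization the paper discusses in \S\ref{chowMotives}, and it is a perfectly good route to \emph{rationality} and to \emph{bounded} degrees, but you then assert computability (``since the procedure is effective, $n_p,d_p$ are computable'') without saying what replaces the Dwork--Bombieri degree bound as the thing you can write down. Betti numbers of the compactly-supported cohomology of the characteristic-zero models are what you need, and they are in principle computable, but the quotients $X_A/H$ are in general singular affine varieties; you would have to argue that $\dim H_c^i$ is both stable under reduction and effectively bounded from the stratification data, which is precisely the work the paper offloads onto Dwork--Bombieri.

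A second, smaller difference: the paper does not form quotients $X_A/H$ for arbitrary $H$ ``cut out by the class data''. It uses E.\ Artin's identity expressing the indicator function of a union of conjugacy classes as a $\bQ$-linear combination of characters induced from the trivial character on \emph{cyclic} subgroups, and then identifies the attached L-series with the zeta of the quotient of the cover by the corresponding cyclic subgroup \cite[pp.~432--33]{FrJ86}$_1$. This is not just cosmetic: the rational (non-integral) coefficients in Artin's identity are exactly what produce the $\ell_p$-th root in the statement, and restricting to cyclic subgroups is what keeps the collection of auxiliary varieties finite and explicitly describable. Your account gets the $\ell_p$-th root by clearing denominators in the partial-fraction exponents, which is the same phenomenon seen from the $\tP$-to-$Z$ side, but it obscures where the rationality of those exponents comes from. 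Your description of how $\exp(m_p^*(t))$ arises (polynomial part of $\tP/t$ integrating to a polynomial) is correct in spirit, though the remark about ``higher-order poles contributing further polynomial-in-$t$ exponential factors'' is a red herring: the poles of $\tP$ are simple here, and the only polynomial contribution to $\log Z$ comes from the case $\deg n_p \ge \deg d_p$.

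In sum: same stratification skeleton, different flesh. The $\ell$-adic version you sketch buys a cleaner conceptual explanation of $p$-uniformity, at the cost of having to re-establish effectivity by other means; the paper's Dwork--Artin--Bombieri route is cruder but delivers the explicit, $p$-uniform degree bounds the theorem actually asserts, with effectivity built in.
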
 

\begin{proof}[Comments on the proof of Thm.~\ref{nearRatZeta}] These comments are highlights from  \cite[\S26.3]{FrJ86}$_1$ or \cite[\S31.3]{FrJ86}$_2$ (which are essentially identical) titled: Near rationality of the Zeta function of a Galois formula. We point especially to the effect of stratification choices and the use of Dwork's cohomology for the result.  What we say here applies equally to the uniform and incidental stratifications. 

The conclusion of the Galois stratification procedure over the $\bz$-space gives this computation for $\nu_p(D_{P,\row Q m},k)$. It is the sum of the $\bz$ with values in $\bF_{p^k}$ for which the Frobenius falls in the conjugacy classes attached to the piece of the stratification going through $\bz$.  

The expression of that sum in {\sl Dwork cohomology\/} is what makes the effectiveness statement in the Thm. possible, and this is what suggests its direct relation to Denef-Loeser. An ingredient for that is a formula of E.~Artin.  It computes any function on a group $G$ that is constant on conjugacy classes as a $\bQ$ linear combination of characters induced from the identity on cyclic subgroups of $G$. 

A function on $G$ that is 1 on a union of conjugacy classes, 0 off those conjugacy classes, is an example. \cite[p.~432-433]{FrJ86}$_1$ recognizes the L-series attached to that function as a sum of L-series attached to those special induced characters. I learned this from  \cite[p.~222]{CaFr67} and had already used it in \cite[\S2]{Fr74a}. Kiefe -- working with Ax -- learned it, as she used it in \cite{Ki76} -- from me, as a student  during my graduate course in Algebraic Number Theory at Stony Brook in 1971. The core of the course were notes  from Brumer's Fall 1965 course at UM. 

Kiefe \cite{Ki76}, however, applied it to the list-all-G\"odel-numbered-proof procedure  in \S\ref{galStrat}; not to the Galois stratification procedure I  showed her (see my Math Review of her paper, Nov.~1977, p.~1454). Consider the  the identity representation induced from a cyclic subgroup $\lrang{\sigma}$ of $G$. Then this  L-Series is the same as  the zeta function for the quotient of the cover by $\lrang{\sigma}$ \cite[exp. 7-9, p.~433]{FrJ86}$_1$. 

Given a rational function in $t$, its  {\sl total degree\/} is the sum of the numerator and denominator degrees; assuming those two are relatively prime. \cite[Lem.~26.13]{FrJ86}$_1$ refers to combining \cite{Dw66} and \cite{Bm78} to do the affine hypersurface case for  explicit bounds -- dependent only on the degree of the hypersurface -- on the total degree  of the rational functions that give these zeta functions. Then, some devissage gets back to our case, given explicit computations dependent only on the degrees of the functions defining these algebraic sets.  

Finally, \cite[Lem.~26.14]{FrJ86}$_1$ assures the stated polynomials in $t$ have  coefficients in $\bQ$, and it explicitly bounds their degrees. The trick  is to take the logarithmic derivative of the rational function. Then, the Poincar\'e series coefficients are power sums of the zeta-numerator zeros minus those  of the zeta-denominator zeros. Using allowable normalizations, once you've gone up to the coefficients of the total degree, you have determined the appropriate numerator and denominator of $\tP(t)$. 

One observation is left to  uniformly bound in $p$ the degrees of the zeta polynomials, etc. That is, we need a uniformity in the primes whereby you are applying the uniform stratification \eql{strat}{stratUnif}.   It comes from this that the degrees of polynomials describing the affine covers, in applying Dwork-Bombieri, do not change. 
\end{proof} 

\subsubsection{Chow Motive Coefficients}  \label{chowMotives} 
The comments on Thm.~\ref{nearRatZeta} show we can express  the coefficients in the Poincar\'e series  from the trace of Frobenius iterates acting on  the $p$-adic cohomology that underlies Dwork's zeta rationality result. 
Positive: The computation is effective. Negative: The cohomology underlying Dwork's construction varies with $p$. Nothing in 0 characteristic represents it. 

Even, however, with Dwork's cohomology (in his original proof in 1960), you deal with stratifying your original variety. By \lq\lq combining\rq\rq\ the different pieces you conclude the rationality of the zeta function from information on the Frobenius action from the hypersurface case. 

Every variety is birational to a hypersurface in some projective space. Yet, reverting to hypersurfaces requires stratifying the original space in a problem. Also,  \cite{FrJ86}  stratifies the underlying space to assure covers are unramified (no branch locus).  This is to have monodromy precision (\S\ref{MPres}) along each underlying piece of the stratification.  If you adhere to avoiding branch loci, then covers of projective spaces, for example, force refined stratifications. 

Denef and Loeser in \cite{DeLo01} applied Galois stratification (see the arXiv version of \cite[App.]{Hal07}) to eliminate quantifiers in their $p$-adic   problem goals. They phrased these as $p$-adic integrations generalizing Prob.~\eqref{Igusa}. \cite[\S26.4, last subsection]{FrJ86}$_1$ discusses several  $p$-adic problems, but there is no \cite[\S31.4]{FrJ86}$_2$  corresponding? 
The main Denef-Loeser innovation replaces Dwork cohomology of affine hypersurfaces, varying with $p$, with $\ell$-adic (\'etale) cohomology of {\sl projective nonsingular varieties\/} in 0 characteristic. 

That enhanced the uniformity in $p$ in the uniform stratification \eql{strat}{stratUnif}, the part of the stratification they used. 
The effect in  \cite{DeLo01} was to compute Poincar\'e series coefficients -- it worked for similar  reasons on their $p$-adic problems -- through coefficients  in the category of {\sl Chow motives}. 

Roughly:  an element in a Grothendieck group generated by nonsingular projective varieties replaces each piece of the uniform stratification. So,  each Poincar\'e coefficient is a formal \lq\lq sum\rq\rq\ of $\ell$-adic ($\ell\not =p$) vector spaces. This stratification replacement uses resolution of singularities in 0 characteristic.  \cite{Den84}, from a one-prime-at-a-time period, was a forerunner. For that alone the primes of the  incidental stratification were untouchable.  

A {\sl Tate twist} of a cohomology group is a tensoring of the group by some power of the cyclotomic character (\S\ref{BCLtreat}). If a nonsingular projective variety has coefficients in $\bQ$, then $G_\bQ$ acts on its Tate-twisted cohomology. 

The vector spaces come from the \'etale cohomology groups of projective nonsingular varieties. The word   {\sl motivic\/} means that the weighted pieces -- rather than from, say, the $m$th cohomology of a projective nonsingular variety -- might be a summand of this, tensored by a Tate twist. A correspondence -- cohomologically idempotent --  is attached to indicate the source of the projector that detaches a summand from the full weighted cohomology. As you vary primes of the uniform stratification, you compute the Poincar\'e series or zeta function coefficients by applying iterates of the $p$ Frobenius -- followed by the trace -- to the Chow motives. 

The Denef-L\"oser approach adds canonical zetas to the pure Galois stratification procedure. Still, it requires equivalences that relegate covers to the background of the final result. 

\subsubsection{\'Etale cohomology observations}  \label{3etale} 
Let $n$ be the modulus for an arithmetic progression
$A_a = A_{a,n} = \{a + kn \mid 0 \ge  k \in  Z\}$ with $0 \le  a \in  Z$.
Call $A_a$ a {\sl full progression\/} if $a < n$. A full {\sl Frobenius\/} progression $F_a = F_{a,n}$ is the union of the full arithmetic progressions
mod $n$ defined by all residue classes $a\cdot (\bZ/n)^*\! \mod n$. Example: The
full Frobenius progression $F_{2,12}$  is $A_{2,12} \cup  A_{10,12}$.

The following, including Prop.~\ref{frobProg},  is an extension of \cite[\S 8.2.2]{Fr05b}.   We call any $\bQ$-linear combination of series $\tP_{D_{P,\row Q m}}(t)$ (as in \eqref{WeilVectors})  a {\sl Weil vector}. For a particular Weil Vector $\tP_{D_P}$, its 0-{\sl support\/} is  the collection of  $k\in \bZ$ with the coefficient of $t^k$ equal to 0. Denote that $\Sup_{D_P}(0)$.  We say two Weil vectors  have a {\sl Weil relation\/} if their difference has an infinite 0-support. 

\begin{prop} \label{frobProg} For any Weil vector, $\Sup_{D_P}(0)$ differs by a finite (accidental) set from a union of full (possibly empty) Frobenius progressions. Dependent on the equations defining a 
Galois stratification, it is possible to find the
accidental set and union of Frobenius progressions attached to it explicitly.  
\end{prop}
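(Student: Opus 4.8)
\emph{Proof plan.} The plan is to recognize a Weil vector as the generating function of a linear recurrence sequence, identify its characteristic roots with Weil numbers attached to the Galois stratification, and then combine the Skolem--Mahler--Lech theorem with the cyclotomic symmetry forced by the Branch Cycle Lemma \eqref{*9}. First fix the prime $p$ occurring in $\tP_{D_P}=\sum_k\nu_p(D,k)t^k$. By Thm.~\ref{nearRatZeta} each $\tP_{D_{P,\row Q m}}(t)$ is a rational function $n_p(t)/d_p(t)\in\bQ(t)$, hence so is any $\bQ$-linear combination $W(t)=\sum_k w_kt^k$; thus $(w_k)$ satisfies a linear recurrence with constant coefficients, and for $k\ge k_0$ (an explicit bound from $\deg n_p,\deg d_p$) one has $w_k=\sum_j Q_j(k)\gamma_j^k$ with the $\gamma_j$ the distinct inverse roots of $d_p$ and $Q_j\in\overline\bQ[k]$. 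As in the proof-comments on Thm.~\ref{nearRatZeta} -- the Lefschetz trace formula for the covers of the stratification, together with E.~Artin's writing of a class function as a $\bQ$-combination of characters induced from cyclic subgroups -- the $\nu_p(D,k)$, and hence the $w_k$, are $\bQ$-combinations of traces of $\Fr_p^k$ on $\ell$-adic cohomology of quotients of those covers; so each $\gamma_j$ is a Weil number over $\bF_p$, with every archimedean absolute value equal to $p^{w_j/2}$ for an integer weight $w_j\ge0$, and this weight structure is preserved by $\mathrm{Gal}(\overline\bQ/\bQ)$.

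Next I would extract the coarse shape of $\Sup_{D_P}(0)$. Applying Skolem--Mahler--Lech to $(w_k)$ gives an integer $N$ so that the zero set is a finite union of full residue classes modulo $N$ plus a finite set. The point is to keep this effective: $N$ need only be divisible by primes bounding the orders of those ratios $\gamma_i/\gamma_j$ that are roots of unity, and $\gamma_i/\gamma_j$ can be a root of unity only when $w_i=w_j$, in which case it lies in a cyclotomic field whose conductor is bounded in terms of the degrees of the polynomials defining the stratification; since $d_p(t)$ is itself computable by Thm.~\ref{nearRatZeta}, the modulus $N$, the list of residue classes, and the finite ``accidental'' set are algorithmically produced. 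This already yields the statement with ``union of full arithmetic progressions'' in place of ``union of full Frobenius progressions.''

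The substantive step is upgrading those arithmetic progressions to \emph{Frobenius} progressions, i.e.\ showing the infinite part of $\Sup_{D_P}(0)$ is stable under $a\mapsto au$ for $u\in(\bZ/N)^*$. I would enlarge $N$ (keeping it prime to $p$) so that it is also a common cyclotomic modulus $N_\bfC$ for all strata and $d_p$ splits over $\bQ(\zeta_N)$, then group the roots into ``cyclotomic sectors'': for a point over $\bF_{p^k}$ the constant-field part of $\Fr_p^k$ is determined by $p^k\bmod N$, and the BCL \eqref{*9} -- over $\bF_p$, where $\tau_\gamma$ is trivial because the strata are chosen unramified -- says exactly that conjugation by the arithmetic-monodromy image of $\Fr_p$ sends each stratum class $\C_i$ to $\C_i^{-p}$. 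Hence a $\sigma\in\mathrm{Gal}(\bQ(\zeta_N)/\bQ)$ with $\sigma(\zeta_N)=\zeta_N^{u}$, applied to the (defined over $\bQ$) stratification, carries the sector $p^k\equiv a$ to the sector $p^k\equiv au$; because $w_k\in\bQ$ is $\sigma$-fixed while $\sigma$ only permutes the Weil-number roots within each weight, the recurrence governing $(w_k)$ along $\{k:p^k\equiv a\}$ is taken by $\sigma$ to the one along $\{k:p^k\equiv au\}$, so one vanishes identically iff the other does. Taking the union over $u\in(\bZ/N)^*$ (and using $p^k\in(\bZ/N)^*$) makes the infinite part a union of $(\bZ/N)^*$-orbits of residues, i.e.\ a union of full Frobenius progressions $F_{a,N}$; with the previous step this gives the claim, and the effectivity is that of Step~2 plus bookkeeping of the finite stratification data.

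The main obstacle is this last step: making the BCL's cyclotomic twisting of the stratum conjugacy classes interact correctly with the sector decomposition of the point count, so that the symmetry is genuinely by $(\bZ/N)^*$ rather than by some unstructured permutation, and verifying that it survives passage through Artin's induction and the trace formula (and, at the finitely many bad primes, through the incidental stratification \eql{strat}{stratInc}, where one must keep $N$ prime to $p$). The other delicate point is ensuring Skolem--Mahler--Lech is applied effectively here, which rests entirely on the bound for orders of root-of-unity ratios among Weil numbers; everything else is formal once Thm.~\ref{nearRatZeta} is granted.
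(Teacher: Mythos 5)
Your step~3 (extracting arithmetic progressions from the recurrence) and the paper's argument via Lem.~\ref{hyperPlaneThm} are genuinely different tools aimed at the same structure. The paper uses van der Poorten's theorem on vanishing $\Gamma$-unit sums, with an induction on proper vanishing subsums, to conclude directly that the infinite part of $\Sup_{D_P}(0)$ forces all relevant root ratios $\alpha_i/\alpha_1$, $\beta_j/\alpha_1$ to be roots of unity. Your Skolem--Mahler--Lech route reaches the same conclusion by a slightly different path (vanishing on a full residue class $k\equiv c \bmod N$ forces, by linear independence of distinct exponentials, that roots coincide after raising to the $N$th power), and that substitution is fine; it buys you a perhaps more familiar black box at the cost of a weaker handle on the subsum combinatorics that the paper's induction keeps explicit.

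Where there is a genuine gap is your step~4, the upgrade from arithmetic to Frobenius progressions. You locate the $(\bZ/N)^*$-symmetry in the residue of $p^k\bmod N$ and try to derive it from the Branch Cycle Lemma acting on the constant-field Frobenius. That is the wrong variable: the Frobenius progression $F_{a,N}$ of the statement is a set of values of the index $k$, and the orbit is $a\cdot(\bZ/N)^*$ in $k\bmod N$, not in $p^k\bmod N$. (The two are not interchangeable: $p^k$ only ranges over $\langle p\rangle\le(\bZ/N)^*$, so ``$(\bZ/N)^*$-orbit of $p^k$-residues'' either collapses or is vacuous.) The correct mechanism needs no BCL at all. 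Once the ratios are $N$th roots of unity, write $\gamma_j=\gamma_1\zeta_N^{a_j}$, so $w_k/\gamma_1^k=\sum_j c_j\zeta_N^{a_jk}$ with the multiset of pairs $(c_j,a_j)$ stable under $\mathrm{Gal}(\bar\bQ/\bQ)$ simply because $w_k\in\bQ$ (this Galois-stability is already packaged in Thm.~\ref{nearRatZeta}; BCL is a red herring here). Applying $\sigma_u:\zeta_N\mapsto\zeta_N^u$ to the vanishing $\sum_jc_j\zeta_N^{a_jc}=0$ sends $\zeta_N^{a_jc}$ to $\zeta_N^{a_jcu}$ and permutes the pairs $(c_j,a_j)$, so vanishing on $k\equiv c$ implies vanishing on $k\equiv cu$; the union over $u\in(\bZ/N)^*$ is exactly $F_{c,N}$. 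This is the content of the paper's terse ``conclude this part easily'' and it is what your sector argument should reduce to once the $p^k$ confusion is removed.

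Two smaller remarks. Your claim that the roots are pure Weil numbers with $|\gamma_j|_v=p^{w_j/2}$ at every archimedean place is too strong: the Dwork cohomology behind Thm.~\ref{nearRatZeta} lives on affine, not projective-smooth, strata, so one only has upper bounds on weights, not purity; this is not needed for the argument, so drop it. And your worry about effectivity of Skolem--Mahler--Lech is legitimate, but the paper's van der Poorten route has exactly the same status: both become explicit only once the $\alpha_i$, $\beta_j$ are in hand, which is what Thm.~\ref{nearRatZeta} provides.
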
 

\begin{proof}  Consider the near rational zeta function, $Z(t)\eqdef\exp(m_p^*(t))(\frac{n_p^*(t)} {d_p^*(t)})^{\frac 1 \ell_p}$, attached to the Weil Vector by Thm.~\ref{nearRatZeta}.   
The polynomial $n_p^*$ has the form
$\prod_{i=1}^{m_1}(1-\alpha_it)$ while $d_p^*$ has the form
$\prod_{j=1}^{m_2}(1-\beta_jt)$.  The $\alpha_i\,$s and $\beta_j\,$s are complex numbers. 

Take the logarithmic derivative of  $Z(t)$. The result  
is a polynomial in $t$ plus  a constant multiple of an expression of form \begin{equation}
\sum_{k=0}^\infty \nu(D_P,k)t^k \eqdef \sum_{k=0}^\infty
\bigl(\sum_{i=1}^{m_1} \alpha_i^k-\sum_{j=1}^{m_2} \beta_j^k\bigr)t^k. \end{equation} 

The statement on Frobenius progressions follows by showing the collection
$$\Sup_{D_P}\eqdef\Bigl\{k\in \bN^+\mid \sum_{i=1}^{m_1} \alpha_i^k-\sum_{i=1}^{m_2}
\beta_j^k=0\Bigr\}$$ is a union of full Frobenius progressions. 

Lem.~\ref{hyperPlaneThm} is in 
\cite[Thm.~2.3.1]{V87} (result due to
\cite{vdP82}). The argument for curves in \cite[Median
Value Curve Statement 3.11]{Fr94} requires a modification for the general case. Take $L$ to be the field generated by all the $\alpha_i\,$s and $\beta_i\,$s.  
Then take $\Gamma$  to be the 
multiplicative subgroup generated by $\alpha_i/\alpha_1$, $i=2,\dots,m_1$, and 
$\beta_{i-m_2+1}/\alpha_1$, $i=m_1,\dots,m_1+m_2$, and -1.  

\begin{lem} \label{hyperPlaneThm} With $L$ a number field and $\Gamma$ a finitely
generated subgroup of $L^*$, all but finitely many
solutions in $\Gamma$  of
\begin{equation} \label{**} u_1+\cdots+u_n=1,\qquad u_i\in 
\Gamma \end{equation}
lie in one of the diagonal hyperplanes $H_I$ defined by the equation
$\sum_{i\in I}x_i=0$ with $I\subset  \{1,\dots,n\}$ and $2\le |I|\le n$.  
\end{lem} 

Apply this with $n=m_1+m_2-1$. So, excluding a finite subset,
elements of $\Sup_{D_P}(0)$ correspond to solutions on one of the hyperplanes
$$H_{I_1\cup I_2} (I_1\subset 
\{2,\dots,m_1\} \text{  and }I_2\subset  \{m_1+1,\dots,m_1+m_2\}).$$ For each such
$H_{I_1\cup I_2}$, denote the corresponding set of $k$ by $S(I_1, I_2)$. We show
$S(I_1,I_2)$, up to a finite set, is a union of full Frobenius progressions. Then,  running
over such 
$(I_1,I_2)$,  we get $\Sup_{D_P}(0)$ is such a union. 

Apply an induction on
$n$. Suppose for some infinite subset of $k\in S(I_1,I_2)$, there is a proper subset
$J$ of
$I_1\cup I_2$ for which 
$w_{i,t}=(\alpha_i/\alpha_1)^k\,$, $i\in I_1\cap J$ and
$w_{i,t}=-(\beta_{i-m_1}/\alpha_1)^k$,
$i\in I_2\cap J$, which   sum to 0.  That gives two proper subsets (for $J$ and
$I_1\cup I_2\setminus J$) summing to 0. Find a union of Frobenius progressions for the
first (using induction on
$n$), then we automatically get one for the second, giving such for $H_{I_1\cup I_2}$. Thus,
in heading for our conclusion, assume no infinite set of $k$ gives a proper subset of the 
$w_{i,k}\,$s summing to 0. Then, according to \cite[loc.~sit.]{V87}:  
\begin{triv} \label{condSum} For this set of $k$, the collection $w_{i,k}$ is constant in
$k$, for each $i$.  
\end{triv}
This says each of the $\alpha_i/\alpha_1$ and $\beta_i/\alpha_1$ are roots of 1.
Conclude this part of the theorem easily.  Under the hypothesis of explicit equations (given Thm.~\ref{nearRatZeta}), we
get  an explicit conclusion if  the argument above can be
made explicit. That is, we need only decide if various subsets of the $w_{i,k}\,$ sum
to 0, or are roots of 1. 
\end{proof}

\begin{rem} Prop.~\ref{frobProg} didn't attend to the cardinality
of the accidental set:  $k\in \Sup_{D_P}(0)$, yet not part of a Full Frobenius progression. 
\cite{Ev03} has the following result.  Let $K$ be a field of characteristic 0, and
$G\le K^*$ a finitely generated subgroup.  Consider linear equations $a_1x_1+\cdots
+a_nx_n=\ba\cdot \bx=1$, all $a_i\,$s nonzero,  with $\bx=(\row x n)\in G^n$.  He says 
$\ba$ and $\ba'$ are $G$-equivalent if there is $\bu\in G^n$ with $\ba=\bu\cdot \ba'$. 
Let $m(\ba,G)$ be the smallest $m$ for which the set of solutions of $\ba\cdot \bx=1$ is
contained in the union of $m$ proper linear subspaces of $K^n$. Clearly, 
$m(\ba,G)$ depends only on the $G$-equivalence class of $\ba$. It is also finite.  Gyory
and Evertse show (1988) that there is $c(n)$ so that, for all but finitely many
$G$-equivalence classes $\ba$, $ m(\ba,G)< c(n)$. \cite{Ev03} improves 
this to $c(n)=2^{n+1}$.\end{rem}

Let $X_{i,q} $, $i=1,2$,  be normal and projective over $\bF_q$ with this property:  
\begin{triv} \label{FqWeil} $|X_{1,q}(\bF_{q^k})|=|X_{2,q}(\bF_{q^k})|$ for $\infty$-ly many $k$. \end{triv} That is,  their Poincar\'e series have a Weil relation.  If we take an affirmative answer to Prob.~\ref{monpreciseext} as a working hypothesis, then our questions below extend to normal, rather than projective varieties. 
Prop.~\ref{frobProg} shows how to decide for such $X_{i,q}\,$s if they do have such a Weil relation. Now assume $X_{i,K} $  is a normal projective variety over a number field $K$, with its reduction mod $\bp$ denoted  $X_{i,K,\bp}$, $i=1,2$.  To  consider the global version of \eqref{FqWeil} assume this property:
\begin{triv} \label{KWeil} The Poincar\'e series for $X_{1,K,\bp}$ and $X_{2,K,\bp}$ have a Weil relation for infinitely many $\bp$. \end{triv}  

\begin{prob} Find a procedure like that of Prop.~\ref{frobProg} to check condition \eqref{KWeil} among the primes of the uniform stratification (in \eql{strat}{stratUnif}). \end{prob} 

A pr-exceptional cover $X\to Z$ (any cover of normal varieties) over a finite field $\bF_q$ is one for which $X(\bF_{q^k})\to Z(\bF_{q^k})$ is surjective for $\infty$-ly many $k$. Similarly for a pr-exceptional cover over a number field $K$ (see \eql{DavEx}{DavExd}). 
A pr-exceptional correspondence between $X_{1,q}$ and $X_{2,q}$ is an algebraic set $Y_q\subset X_{1,q}\times X_{2,q}$ over $\bF_q$ such that for $\infty$-ly many $k$, $Y_q$ is simultaneously -- by projection on the $i$th factor --  a pr-exceptional cover of $X_{i,q}$ over $\bF_{q^k}$, $i=1,2$.  Similarly, there is an analogous idea of a pr-exceptional correspondence $Y$ between $X_{1,K}$ and $X_{2,K}$.

Suppose, as above, $Y_q$ is a pr-exceptional correspondence with exactly {\sl  one\/}   absolutely irreducible component over $\bF_{q^k}$ for $\infty$-ly many $k$ in the support of the Weil relation \eqref{FqWeil}.  Then it is an {\sl exceptional correspondence\/} \cite[\S 3.1.2]{Fr05b}. 

Similarly, over a number field $K$, $Y$ is an exceptional correspondence if there are infinitely many $\bp$ for which reduction $\mod \bp$ is an exceptional correspondence. In the respective cases the conditions \eqref{FqWeil} and \eqref{KWeil} hold. \cite[Prop. 4.3]{Fr05a} notes that if $Y_q$ is an exceptional correspondence, then: 
\begin{triv} \label{excFrob}  the support of the Weil relation has a full Frobenius progression containing $k=1$, but it does not contain all $k$. \end{triv} When $X_2=\prP^m$ for some integer $m$ we refer to the Weil relation as having {\sl median value\/}. The case $m=1$ is significant. 

\begin{prob} \label{excCharext} Consider $X_{1,K},X_{2,K}$ satisfying \eqref{KWeil}, where  \eqref{excFrob} holds (for $X_{1,K,\bp},X_{2,K,\bp}$) for $\infty$-ly many $\bp$. Can you characterize this in Denef-Loeser cohomology components (\S\ref{chowMotives}). Give an example where there is no exceptional correspondence between $X_1$ and $X_2$. 
\end{prob} 

Recall condition \eql{DavRes}{DavResb} for Davenport pairs $X_i\to Z$ over $\bF_q$, $i=1,2$: The number of points of $X_i(\bF_q)$ having a given image $z\in Z(\bF_q)$  is independent of $i=1,2$. 
Such a Davenport pair is an i(sovalent)DP (over $\bF_q$). Then, \eqref{FqWeil} holds. Similarly, we have iDP\,s over a number field, and then  \eqref{KWeil} holds.  

For a cover $X\to Z$, denote its $u$-fold fiber product over $Z$ -- I apologize for  the overloaded notation --  by $X_Z^u$. \cite[Prop.~3.9]{Fr05a} characterizes the   iDP property by noting that there are pr-correspondences between $X_{1,Z}^u$ and $X_{2,Z}^u$, $u=1,\dots,n$, where $n$ is the common degree over $Z$ of the Davenport pair. So, it to is a monodromy precise condition (\S\ref{MPres}). Rem.~ \S\ref{infPGL} gives many dimension one iDPs. \cite[Prop.~8.2]{Fr05a}: For iDPs over $\bF_q$, the support set consists of all $k\ge 1$; and for iDPs over a number field, the support set  includes all but finitely many $\bp$. 

\begin{prob} In analogy with Prob.~\ref{excCharext}, characterize  when there is a $v$ and a system of pr-correspondences between $X_{1,Z}^u$ and $X_{2,Z}^u$, $u=1,\dots,v$ accounting for condition \eqref{FqWeil} over $\sO_K/\bp$ and all its finite extensions for almost all $\bp$. \end{prob} 

Finally, a  major problem would be to take advantage of the Denef-Loeser enhancement of Galois stratification, in the following form. 

\begin{prob} \label{unif-inc} Both quantitatively and qualitatively separate the primes of the incidental and uniform stratification. \end{prob} 

\subsubsection{Modestly motivic} 
Consider the Frobenius on the  \'etale cohomology pieces from Denef-Loeser in \S\ref{chowMotives}. Its  eigenvalues have absolute value determined by the weight of the cohomology and the Tate twist powers, from Deligne's proof of the Weil conjectures \cite{De74}. The Galois stratification procedure  produced the stratification pieces that allowed this application of \'etale cohomology. 

Still, once we have it, \cite{Fr86} aimed to distinguish  \lq\lq good\rq\rq\ and \lq\lq bad\rq\rq\ primes attached to a particular problem $D_P$. That is, to separate conceptually the uniform  from the incidental primes  in statements of, say, Prob.~\ref{unif-inc}.  
For one, the eigenvalues of the Frobenius don't have the same archimedian virtues in Dwork cohomology.  \cite{De80} has techniques for treating the Frobenius on \'etale cohomology for families of varieties, whose relevance \S\ref{App.5} hints at. 

For example, statements attached to our Davenport problems (over number fields) seem to have no bad primes -- either by \S\ref{App.4} theory or \S \ref{writeEquats} equations. This  contrasts with the primes that are exceptional for a given degree $d$ in Artin's Conjecture a la the Ax-Kochen \lq\lq solution\rq\rq\ (\S\ref{60sProblems}). 

Following Deligne's definition in \cite[p.~90]{Del89}, you might aim to attach a motivic object to a problem where it makes sense to consider various \lq\lq realizations:\rq\rq\ over the reals, $\ell$-adics and $p$-adics. So, a motivic cohomology would be cohomologically functorial  on appropriate algebraic varieties with a de Rham, \'etale  and, say, Dwork cohomology realization, when  they make sense. Deligne's treatise was about motivic integration giving  \lq\lq motivic\rq\rq\ interpretation of polylogs. 

\begin{prob} Produce objects as zeta coefficients that specialize to Chow motives at the uniform primes and to Dwork cohomology at the incidental primes. \end{prob} 

\cite{Fr86} inspected, based on flat covers,  how to avoid unnecessarily refining Galois stratifications. It also produced the definition of an L-series on a Galois stratification. That  starts from a Galois stratification on the {\sl base\/} (the space defined by no quantified variables; given by $\bz$ in, say, Prob.~\ref{stratproc}).  Flatness also appears in \cite{Be11} which talks up a relation with Thm.~\ref{nearRatZeta} considerably. I comment. 

The paper starts with a constructible equivalence relation over the base $B$ over a finite field. It considers the zeta function counting the $\bF_{q^k}$ equivalence classes and produces a zeta exactly as in Thm. \ref{nearRatZeta}, essentially by quoting it. 

A restatement: Given a constructible set $C$ in $\afA^{n+m}$ over $ \bF_q$, you form Poincar\'e series  coefficients $N_k = |\{ x\in A^n(\bF_{q^k}) |  p^{-1}(x)\cap C(\bF_{q^k})\}| $
where $p: A^{n+m} \to  A^n$  is the projection. Understatement:  The counting problem is a special case of ours, for it is pure existential,  in a 2-page Intersection-Union process section   \cite[\S2]{Fr76}. 

As in \cite[Def.~3.6]{Be11}, a good and flat stratification: \lq\lq A modicum
of care is needed to find an expression varying suitably \lq continuously\rq\  in flat families.\rq\rq Hilbert Schemes put edges on his stratification;  monodromy precision does  on ours. More applications of these zeta functions would test these stratification conditions. 

\subsection{Applied group theory and challenges occuring \lq in nature\rq} \label{secVII.3} 
The topic of what groups occur \lq in nature\rq\ started in \S \ref{secI.3} with a phrase of Solomon \cite{So01}. \S\ref{spacesP_f} reconsiders that. Some, however, might  prefer something less solemn like \cite{KSi08}  (authors based at UM) in the Scientific American as a substitute. Their article snuck in the topic of \lq what are simple groups?\rq\ through a spirited analog of Rubik's cube. They based this on a Mathieu group, $M_{12}$,  property: like all simple groups (consequence of  the classification) it requires just two generators. 

Most mathematicians, however, know that the technical -- rather than playful -- side of group theory tends to dominate. \S\ref{coGS} gets into  how you, even if you had little group theory training, could deal  with it. 

\subsubsection{Extending both RET and the genus 0 problem} \label{coGS} 
\cite{So01} wanted to document that the simple group classification -- including the so-called quasi-thin part questioned by Serre \cite[p.~79]{Se92} -- is available. That is, you may confidently apply it as we suggest below.  \cite[\S5]{Fr94b} inspects Serre's challenge in this light and concludes \lq\lq More than to complete our confidence in the classification, Gorenstein wanted it accessible to a researcher not dedicated to group theory.\rq\rq 

Experience shows  that most mathematicians who might use the monodromy method -- as in Davenport's Problem -- will require collaboration with a group theorist.
To show how that might work, I later took on one more problem in the Davenport range. That was a version of Schur's problem on polynomial covers, but restricted to finite fields of a fixed characteristic. 

Guralnick and Jan Saxl joined me in the 3rd section: Going through every step of the \cite{AOS85} classification, as in \S \ref{secVII.1} and \S \ref{App.2}. I was not a passive purveyor of Guralnick and Saxl. First, I caught the unusual new Schur covers for the primes 2 and 3 that were slipping by overly-optimistic group assumptions. Second, I showed how using \cite{AOS85} worked (\S \ref{App.2}). 

Expression \eqref{excChar} has the definition of an exceptional cover over a given finite field.  The original proof of Schur's conjecture in \cite{Fr70} easily described all exceptional (Schur) polynomial covers $f$ over a finite field $\bF_q$, when $\deg(f)$ is  prime to the characteristic. When this hypothesis does not hold, the ramification group $I_\infty$ over $\infty$ is no longer generated by a single element, $\sigma_\infty$  (from \S \ref{secIII.3}). 

Yet, a loosening of this statement works. There is a {\sl factorization\/}  $^aG_f(1)\cdot  I_\infty$ of $^aG_f$:  It is a set theoretic product of the stabilizer of a letter in the representation, and $I_\infty$.  Since $p$ divides $|I_\infty|$  any possible exceptional covers are  wildly ramified at a significant place. So, the traditional Riemann's Existence Theorem no longer applies, though we gained from experience with it. 

A composition of two polynomials over a finite field gives a  one-one map if only and if each is one-one. Conclude that a polynomial over a finite field is exceptional if and only if its composition factors over the field are. So, classifying exceptional polynomials over a finite field, reverts to assuming the arithmetic monodromy,  $^aG_f$, is primitive; $G_f$ maybe not. What I understood was that organizing \cite{AOS85} was Guralnick's job. Filling in possible factorizations of primitive groups that could arise was Saxl's -- based on his familiarity with \cite{LPS}.

We easily solved Dixon's 1897 conjecture classifying the exceptional covers of degree $p$ over a finite field of characteristic $p$ \cite[Thm.~8.1]{FrGS93}. Moreso, we extended his conjecture to describe all exceptional polynomials with geometric monodromy of the form $V\xs  C$ with $C$ cyclic and acting irreducibly
on $V = \bZ/p^a$, an especially easy affine group (\S\ref{App.1a}). These are the semi-linear polynomials of Cohen \cite{C90}. \cite[Cor.~11.2]{FrGS93} characterizes which of  these are indecomposable over $\bF_q$, but not over $\bar \bF_q$. This provides infinitely many examples showing the necessity of the hypothesis $p\not | n$ in the Polynomial Primitivity Lem.~\ref{decompPoly}. 

That described all
affine groups known then to be arithmetic monodromy groups of exceptional indecomposable polynomials. But, then an unexpected event caused the biggest stir. To understand, consider the two main results on exceptional  $f$ over finite field $\bF_q$, $q$ a power of $p$, that are not one of the examples above. \S\ref{App.1a} has the definition of $\text{P}\Gamma \text{L}_{p^a}$. 

\begin{edesc} \label{excPoly} \item  \label{excPolya} If $p\ne 2$ or 3, then $f$ has geometric monodromy an affine group  acting on $V=\bZ/p^a$ with $\deg(f)=p^a$ \cite[Thm.~13.6]{FrGS93}. 
\item  \label{excPolyb} If $p=2$ or 3 and $G_f$ is not affine as in \eql{excPoly}{excPolya}, then it is between $\PGL_2(p^a)$ and $\text{P}\Gamma \text{L}_{2^a}$ with $a\ge 3$ odd.  If $p = 2$ ,  $\deg(f) = 2^{a\nm 1}(2^a \nm1)$ and if 
$p = 3$, $\deg(f)= 3^a(3^a \nm1)/2$ (which is odd) \cite[Thm.~14.1]{FrGS93}. 
\end{edesc} 

With the group theory pointing the way in \eql{excPoly}{excPolyb},  Peter M\"uller came up with the first example. Then \cite{CM94} and \cite{LZ96} fulfilled 
the other degrees of these here-to-fore unexpected exceptional covers. 

We now use one-half (see \eql{grothEx}{grothExa}) of Grothendieck's famous RET version \cite{Gr59} that applies to tamely ramified covers in positive characteristic. It assures  that if we avoid primes dividing the orders of the groups that arise in Thm.~\ref{DS4}, or Prop.~\ref{DS6}, then the solution of Davenport's problem is essentially the same as it is in positive characteristic. That is, for such a prime, you can figure exactly the fields $\bF_q$ over which there are Davenport pairs $(f,g)$ with  $f$ and $g$ having exactly the same ranges over $\bF_{q^t}$ for {\sl every\/} integer $t\ge 1$. 

Yet, here, too, there is a surprise. If we allow wild ramification, instead of just those finitely many possible degrees 7, 11,13, 15, 21 and 31, we find a whole new infinite collection of Davenport pairs of degrees prime to the characteristic, arise over essentially every finite field. They aren't esoteric; we understand them precisely as an analog of the original Davenport pairs.  

Let $\lrang{j}_q\eqdef   1+q + q^2 + \dots + q^j$. \cite[Thm.~5.2]{Fr99} says, for each $\bF_q$ and each integer $m\ge 3$, there is a Davenport pair $(f,g)$ of degree $n=\lrang{m\nm 1}_q$ over $\bF_q$ with geometric monodromy group $\PGL_m(\bF_q)$. Also, $f(x)-g(y)$ has exactly two absolutely irreducible factors, one of degree $\lrang{m\nm 2}_q$. The result describes precisely the arithmetic monodromy group in each case. 

\cite{AbProjPol} explicitly gives the polynomials $f$. We take these as corresponding to the representation $T_f$ on points of projective space. After what works unchanged in this case from \cite{Fr73a}, the main problem is to  guarantee that the cover resulting from the representation of $\PGL_m(\bF_q)$ on hyperplanes also has genus 0. 

Since the cover for $f$ wildly ramifies,  R-H \eqref{*10} doesn't apply. We only know that its substitute depends on computing orbits of the higher inertia groups (in this case, from ramification over $z'=0$) as in \cite[Lem.~3.1]{Fr99}. As elsewhere, I didn't explicitly compute $g$ attached to $f$, but \cite{Bl04}  did. 

Thus, we see that the genus 0 problem has a different texture in positive characteristic.  In concentrating on Davenport's problem, there are immensely more covers in positive than 0 characteristic. Yet,  characteristic 0 illuminated the way. Ram Abhyankar's goals included producing all groups as Galois groups over the algebraic closure of positive  characteristic fields -- as with Grothendieck, there was no number theory objective --  from genus 0 covers. 

Though \cite{Ra94} solved the conjecture made in \cite{Abh57}, using Harbater patching -- as epitomized in \cite{H94} -- even to this day it is referred to as a conjecture.  The covers in Prop.~\ref{AbhyConj} violate both \eql{cycleconds}{cycleconds1} and product-one \eql{cycleconds}{cycleconds2}:  the RET constraints have no obvious analog in positive characteristic. 

\begin{prop}[Abhyankar's Conjecture] \label{AbhyConj}  Consider any finite group $G$ generated by its $p$-Sylows (including all simple groups of order divisible by $p$). Then, there is a Galois cover $f_G: X_G\to \prP^1_z$ with group $G$ ramified only over $z=\infty$. \end{prop}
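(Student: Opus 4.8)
This statement is Raynaud's theorem (the affine‑line case of what is universally called Abhyankar's conjecture), so the plan is to run his argument together with Harbater's formal/rigid patching. Observe first that the hypothesis on $G$ is forced: over $\bar\bF_p$ every connected étale Galois cover of $\prP^1_z\setminus\{\infty\}$ has quasi‑$p$ Galois group, since its maximal prime‑to‑$p$ quotient would be a nontrivial quotient of the tame fundamental group of the affine line, which is trivial. So the content is precisely the converse, and the whole argument is an induction on $|G|$: assuming every proper quasi‑$p$ subgroup of $G$ is realized by a cover of $\prP^1_z$ branched only over $\infty$, I would realize $G$ itself.

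First I would isolate the patching mechanism. Harbater's formal patching over a complete local ring — equivalently rigid‑analytic patching over a $p$‑adic disc, or a Katz–Gabber style thickening — shows it suffices to produce a finite family of covers $Y_i\to\prP^1_z$, each étale over $\prP^1_z\setminus\{\infty\}$ with Galois group a quasi‑$p$ subgroup $H_i\lneq G$, such that the $H_i$ generate $G$ and the $Y_i$ can be glued along formal (resp.\ rigid) annuli into a single connected $G$‑cover introducing no new branch locus. That reduces the problem to three situations: (i) $G$ a $p$‑group; (ii) $G$ possessing a nontrivial proper normal quasi‑$p$ subgroup, or more generally built from smaller quasi‑$p$ pieces along a normal series; and (iii) the genuinely new case, $G$ quasi‑$p$ with no nontrivial proper normal $p$‑subgroup and not itself a $p$‑group.

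For (i), Artin–Schreier–Witt theory supplies such covers of the affine line directly: towers of additive covers $y^q-y=\phi(z)$, $\phi$ a polynomial, are wildly ramified only over $\infty$ and realize every $p$‑group. For (ii) one uses the inductive hypothesis to realize a suitable subquotient and then a descent/base‑change step — pushing an Artin–Schreier layer forward along an already‑constructed cover, and controlling the ramification with the wild variant of Abhyankar's lemma \eqref{abhyLem} — to assemble $G$ from the $H_i$. The hard case (iii) is Raynaud's construction of \emph{auxiliary covers}: one builds a smooth projective $G$‑cover of $\prP^1$ over a $p$‑adic field $K$ whose stable reduction is a tree of curves, arranged so that the components of the special fibre carry $H_i$‑covers with the $H_i$ proper in $G$ (cyclic‑by‑$p$, or groups already handled) and so that the $G$‑action on the configuration of components generates $G$ on the generic fibre; semistable‑reduction theory then produces a $G$‑cover of $\prP^1_{\bar\bF_p}$, and one chooses the horizontal branch points so that each specializes to the point over $\infty$ (or is killed in the degeneration), leaving a cover étale over $\prP^1_z\setminus\{\infty\}$.

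\textbf{The main obstacle} is exactly step (iii): constructing that auxiliary cover with the prescribed stable model \emph{and} the prescribed residual monodromy. This requires Raynaud's dichotomy of ``new‑type'' versus ``old‑type'' components of the stable reduction, a vanishing‑cycles analysis, and control of the degeneration through rigid geometry and the theory of $p$‑adic differential equations and $\ell$‑ and $p$‑adic sheaves (Katz's local‑to‑global results, together with Abhyankar's explicit ``nice'' covers of the line). Guaranteeing simultaneously that the reduction is exactly semistable with tree‑shaped dual graph, that no extra branch point survives on $\prP^1_z\setminus\{\infty\}$ in the limit, and that the monodromy of the special fibre is all of $G$, is the technical heart; by contrast the patching, the $p$‑group and normal‑series cases, and the bookkeeping with Abhyankar's lemma are comparatively routine. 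Note that for the statement as given — a cover of the affine line — Raynaud's theorem is the endpoint: Harbater's extension to arbitrary affine curves, and the later refinements of Harbater and Pop, are not needed here.
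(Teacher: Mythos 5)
The paper does not prove this proposition at all; it states it as a known result and cites Raynaud \cite{Ra94} and Harbater \cite{H94}, remarking only that it contrasts with characteristic~$0$ because the covers violate both the generation and product-one constraints \eqref{cycleconds} that RET imposes. Your outline is a faithful summary of Raynaud's argument as found in those references -- the necessity of the quasi-$p$ hypothesis from triviality of the tame fundamental group of $\afA^1$, the reduction via formal/rigid patching to proper quasi-$p$ subgroups generating $G$, Artin--Schreier--Witt covers for the $p$-group case, the normal-series induction, and the auxiliary-cover/stable-reduction construction for groups with no nontrivial normal $p$-subgroup -- so you are in effect reconstructing the cited proof rather than supplying an alternative. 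Two small cautions worth flagging: the paper's phrasing credits Raynaud with ``using Harbater patching,'' whereas Raynaud's rigid-analytic methods and Harbater's formal patching are closely related but not identical toolkits (Harbater's full patching machinery enters mainly in his extension to arbitrary affine curves, which you correctly note is not needed here); and in your step~(iii) the claim that the residual monodromy comes out to be \emph{all} of $G$ while no extra branch point survives is indeed the technical crux, and one should not understate that Raynaud's vanishing-cycles and ``new-part'' analysis is where the genuine difficulty sits, exactly as you say.
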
 

The critical proof piece in Abhyankar's Conjecture might have you despair of  ever figuring which simple groups of order divisible by $p$ might be \lq\lq characteristic $p$ genus 0 groups\rq\rq\ (as in \S\ref{Thompsonresp}). Yet, from 
\cite{G03} it is known, for any fixed $g$,  that many simple groups are not monodromy groups of 
genus $\le g$ covers of $\prP^1_z$. This defies Abhyankar's empirical Galois group producing attempts. 

Yes, the monodromy method works. Yet, solving Davenport's problem, as in \S\ref{App.5}, gives us  spaces whose points exactly correspond to production  of Davenport pairs. \S\ref{whenceRET} concludes this paper by discussing a result -- inspired by these examples -- that extends Grothendieck's Theorem to wildly ramified covers. 

\subsubsection{Davenport and M\"uller's Conjecture} \label{DavM} This subsection and the next consider  the immense divide between Davenport's problem and Schinzel's, once you drop the indecomposability (read, primitivity) assumption of, say, Prop.~\ref{DS5} that assures their essential equivalence.  

First consider Davenport's Problem (over $\bQ$). Peter M\"uller has gone after finding exceptions from polynomials with exactly two composition factors. His list \cite[p.~25]{Mu98}  considers $f(x) = a(b(x))$, $a,b \in  K[x]$ of degree exceeding 1 and each indecomposable ($K$ a number field). His conclusion:  $g$ has the form $a(b^*(x))$. 

He assumes  $(b,b^*)$ don't form a Davenport pair over $K$:  otherwise, composing any $a$  with both $b$  and $b^*$ gives an obvious  Davenport pair. He lists the finite many resulting  monodromy groups. He  notes \cite[p.~27]{Mu98} a recurrance from Thm.~\ref{DS2} (DS$_2$ ): $T_f $ and $T_g$ are equivalent as group representations. That is, as in \eql{DavRes}{DavResb} (or below \eqref{heir}),  the values of $f$ and $g$ are achieved with the same multiplicity. Finally, he has this conjecture  \cite[Conj.~11.3]{Mu98} (augmented by \cite{Mu06}), using the degree 8 pairs $(f_d,g_d)$  from Ex.~\ref{x^8exmp} up to our usual equivalence. 

\begin{guess}[M\"uller's Conjecture] \label{MullConj} Let $f, g \in \bQ[x]$ be a Davenport pair over $\bQ$. Then, they are either linearly equivalent over $\bQ$, or $f= h(f_d)$ and $g= h(g_d)$ for some polynomial $h \in \bQ[x]$ and $(f_d(x),g_d(x))$ as given above. \end{guess}  

I start to consider that there may be vastly different conclusions to  the Davenport and Schinzel hypotheses when  $f$ is decomposable. Consider a Galois cover over a number field $K$ with group ${}^aG$ having two faithful (no kernel) permutation representations $T_f$ and $T_g$. Assume these are inequivalent as permutation representations. (The $f$ and $g$ subscripts identify with our previous topics; we don't assume polynomials yet.) We summarize a hierarchy of conditions. Again, ${}^aG(T_f,1)$ is the stabilizer in ${}^aG$ of a particular letter on which $T_f$ acts. 

\begin{edesc} \label{heir} \item \label{heira}  $T_f$ and $T_g$ are are equivalent as group representations. 
\item \label{heirb} For each $\sigma \in {}^aG$, $\tr(T_f(\sigma))> 0 \Leftrightarrow \tr(T_g(\sigma))> 0$. 

\item \label{heirc} ${}^aG(T_f,1)$ is intransitive on the letters of the representation $T_g$. 
\end{edesc} 

We have a one group, two faithful representations, hypothesis. \cite[Lem.~3]{Fr73a} says \eql{heir}{heirb} implies \eql{heir}{heirc}: You need not assume the same degree. It also says   \eql{heir}{heirc}  -- restating Schinzel's hypothesis in  \eql{equats}{equats4}, that $f(x)-g(y)$ is reducible -- group theoretically.   If  $f$ is indecomposable, condition \eql{heir}{heira} -- equivalent to $T_f(\sigma)=T_g(\sigma)$ for each $\sigma\in {}^aG$ -- comes from Thm.~\ref{DS2}, \eql{DavRes}{DavResa}.  

Without assuming $f$ is indecomposable, \eql{heir}{heira} implies a S(trong) D(avenport) hypothesis from the converse statement of  \eql{DavRes}{DavResa}: For almost all primes $\bp$, not only are the ranges of $f$ and $g$ the same over $\sO_K/\bp$, but each element in the range is assumed with the same multiplicity. Condition  \eql{heir}{heirb}  is equivalent to the ranges are the same, but drops the \lq\lq with the same multiplicity\rq\rq\ conclusion. 

\cite[Lem.~2]{Fr73a} notes  \eql{heir}{heira} and \eql{heir}{heirb}  are equivalent if both $T_f$ and $T_g$ are doubly transitive, a conclusion of $f$ being an indecomposable polynomial. 

Yet, none of the the \eqref{heir} hypotheses include that the covers attached to $f$ and $g$ have genus 0. Also, we can proceed if desired to an algebraic closure, without regard to ranges over residue class fields. So, for  reducibility of variables separated expressions, we may consider if  \eql{heir}{heirb}, or even \eql{heir}{heira}, might hold, too. 

\subsubsection{Schinzel's problem and group challenges} \label{SchinzG}  Lem.~\ref{pullbackcomps} starts by noting that if $f=f_1\circ f_2$, $g=g_1\circ g_2$ and $f_1(u)-g_1(v)$ is reducible, then so is $f(x)-g(y)$. 

\begin{defn} \label{newlyReducible} Assume $f(x)-g(y)$ is reducible. Also, for no $(f_1,g_1)$ with  either $\deg(f_1)< \deg(f)$ or $\deg(g_1)< \deg(g)$ is $f_1(u)-g_1(v)$ is reducible. Then,  we say $(f,g)$ is {\sl newly reducible}. \end{defn} 

To properly focus on unknowns in Schinzel's problem,  we restrict attention to newly reducible $(f,g)$.  Further, Lem.~\ref{pullbackcomps} lets us conclude that for a newly reducible $(f,g)$, the Galois closures of the covers for $f$ and $g$ are the same. 

Recall the discussion of \cite{So01} in \S\ref{secI.3} asking about groups that occur in nature. If you assume that Schinzel's problem occurs \lq in nature,\rq\ then there is the challenge of non-primitive groups, which aren't close to simple groups. Now  I give two problems that  distinguish Schinzel \eql{equats}{equats4} from  Davenport \eql{equats}{equats3} (as in Conj.~\ref{MullConj}): The {\sl Reduced Equivalence Problem\/} and the $(m,n)$ Problem. 

The former starts like this. Assume  $f,g\in K[x]$, $\deg(f)> 1$,  are reduced equivalent (\S\ref{cont7}; but not affine equivalent over $\bar \bQ$, as in \S\ref{secI.1}). That is, up to affine change in $x$ and $y$, $g(x)=af(x)+b$, $a,b\in \bar \bQ$. Consider two possible events:  

\begin{edesc}  \label{redClos}  \item  \label{redClosa}  No translation of $f$ is affine equivalent to a cyclic polynomial and the covers $f,g:\prP^1_x\to \prP^1_z$ have the same geometric Galois closures; or 
\item  \label{redClosb}  no translation of $f$ is composite with a non-trivial cyclic polynomial and $f(x)-g(y)$ is reducible (\eql{heir}{heirc} holds). 
\end{edesc} 

Prop.~\ref{Gusic}  includes a quick proof of \cite[Thm.~3]{Gu10} with the same condition on $g$ as \eql{checkSeries}{checkSeriesb}, but it asks only  when is the variables separated expression reducible, without concern for the genus of the projective normalization of a component. Recall the branch cycle, $\sigma_\infty$, at $\infty$ for a polynomial cover from $\S\ref{secIII.3}$. As in \S\ref{secII.3} denote the (geometric) Galois closure of the cover for $f$ by $\hat f: \hat X_f \to\prP^1_z$. 
 
\begin{prop} \label{Gusic} We may assume  $a=\zeta_v=e^{2\pi i/v}$, $v\ne 1$, and translating $f$ by a constant, also that $g=\zeta_v f$ if either  \eql{redClos}{redClosa} or   \eql{redClos}{redClosb} holds. Then, $a$ acts as a permutation $u_a$  of the finite branch points,   

If \eql{redClos}{redClosa} holds, then $z\mapsto az+b$ gives a cyclic cover $\mu: \prP^1_z\to \prP^1_{u}$ with group $\lrang{a^*}=\bZ/v$ where the following holds.  The composite cover $\mu \circ \hat f: \hat X_f \to \prP^1_u$ is Galois. If $\sigma_\infty^*\in G_{\mu \circ \hat f}$ is a branch cycle over $\infty$ for $\mu \circ \hat f$, then we can take its natural image in $\lrang{a^*}$  to be $a^*$, and $\sigma_\infty =(\sigma^*_\infty)^{v}$. Denote conjugation by $\sigma_\infty^*$ by $c_{\text{\rm AZ}}$. It has trivial action on $\sigma_\infty$, and no element of $S_n$ represents it. 

Let $Z'$ be a cycle of branch points under $u_a$. If \eql{heir}{heira} (resp.~\eql{heir}{heirb}) holds, then  $\tr(T_f(\sigma_z'))$ is constant for (resp.~ $\tr(T_f(\sigma_z'))>0$ holds, independent of) $z'\in Z$.  \end{prop}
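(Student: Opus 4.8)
The plan is to run case \eql{redClos}{redClosa} as the main line and to obtain case \eql{redClos}{redClosb} from it by reduction to a newly reducible pair. \textbf{Step 1 (normalizing $a$, $b$).} Write $B\subset\prP^1_z$ for the finite branch locus of the $f$-cover; since $f$ and $g$ are polynomials of the common degree $n$ (Prop.~\ref{useInfty} is not needed, only that both are totally ramified over $\infty$), the $g$-cover is branched over $(aB+b)\cup\{\infty\}$. A cover and its Galois closure have the same branch locus, so under \eql{redClos}{redClosa} equality of the geometric Galois closures gives $(aB+b)\cup\{\infty\}=B\cup\{\infty\}$, hence $aB+b=B$: the affine map $L\colon z\mapsto az+b$ permutes the finite set $B$. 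The hypothesis that no translate of $f$ is affine equivalent to $x^n$ rules out $|B|\le1$ (a polynomial with at most one finite branch point is totally ramified over two points, hence affine equivalent to a cyclic polynomial), so $|B|\ge2$; then $L$ has finite order $v$ as a permutation of $B$, and $L^v$ is an affine map fixing $\ge2$ points, so $L^v=\mathrm{id}$. If $v=1$ then $g=af+b=f$ after the normalization, contradicting that $f,g$ are not affine equivalent; so $v\ne1$. An affine transformation of exact order $v>1$ fixing $\infty$ is conjugate by a translation to $z\mapsto\zeta_v z$, and carrying out that translation on the $z$-line (replacing $f,g$ by $f-z_0,g-z_0$, $z_0$ the fixed point of $L$) yields $a=\zeta_v$, $b=0$, $g=\zeta_v f$; now $aB=B$ and $a$ acts on $B$ as the permutation $u_a$.

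\textbf{Step 2 (cyclic base change and $c_{\text{\rm AZ}}$, case \eql{redClos}{redClosa}).} Let $\mu\colon\prP^1_z\to\prP^1_u$, $u=z^v$, be the $\bZ/v$-cover with deck group $\lrang{a^*}$, $a^*\colon z\mapsto\zeta_v z$. The key claim is that $\mu\circ\hat f\colon\hat X_f\to\prP^1_u$ is Galois: with $M=\bar\bQ(\hat X_f)$ and $G=G_f=\mathrm{Gal}(M/\bar\bQ(z))$, it suffices that the generator $\sigma_1\colon z\mapsto\zeta_v^{-1}z$ of $\mathrm{Gal}(\bar\bQ(z)/\bar\bQ(u))$, extended to $\overline{\bar\bQ(u)}$, stabilizes $M$; but $\sigma_1$ sends the splitting field of $f(X)-z$ over $\bar\bQ(z)$ to that of $f(X)-\zeta_v^{-1}z=\zeta_v^{-1}(g(X)-z)$ over $\bar\bQ(z)$, which is $M$ by the hypothesis $\hat X_g=\hat X_f$. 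Thus $M/\bar\bQ(u)$ is Galois with $1\to G\to G_{\mu\circ\hat f}\to\bZ/v\to1$. Over $u=\infty$ the composite is totally ramified of index $nv$ ($n$ from $\hat f$ over $z=\infty$, $v$ from $\mu$), so its inertia is generated by an element $\sigma_\infty^*$ of order $nv$; choosing it with image $a^*$ in $\bZ/v$ forces $\sigma_\infty=(\sigma_\infty^*)^v$ (both generate the order-$n$ inertia of $\hat X_f$ over $z=\infty$ inside $G$). Set $c_{\text{\rm AZ}}=$ conjugation by $\sigma_\infty^*$, an automorphism of the normal subgroup $G$; since $\sigma_\infty=(\sigma_\infty^*)^v$ commutes with $\sigma_\infty^*$, $c_{\text{\rm AZ}}$ fixes $\sigma_\infty$. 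To see no element of $S_n$ represents $c_{\text{\rm AZ}}$ (in the sense of $N_{S_n}(G)$, \S\ref{BCLtreat}): a lift $\rho\in G_{\mu\circ\hat f}$ of $\sigma_1$ carries the roots of $f(X)-z$ bijectively to the roots of $g(X)-z$, so $T_g\cong T_f\circ c_{\text{\rm AZ}}$ as permutation representations up to $S_n$-conjugacy; hence $c_{\text{\rm AZ}}$ being $S_n$-represented is equivalent to $T_f\cong T_g$ as permutation representations, which (common Galois closure $\Rightarrow$ conjugate point-stabilizers $\Rightarrow$ isomorphic covers of $\prP^1_z$, the isomorphism fixing the point over $\infty$ since both covers are polynomial) would make $f$ and $g$ affine equivalent, contrary to hypothesis.

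\textbf{Step 3 (the trace statement).} With $g=\zeta_v f$ from Step 1, the $g$-cover over a branch point $w$ has the same ramification as the $f$-cover over $u_a^{-1}(w)=\zeta_v^{-1}w$, because $g(Y)=z\iff f(Y)=\zeta_v^{-1}z$; tracking this through $\hat X_g=\hat X_f=M$ via the lift $\rho$ of Step 2 shows that if $\sigma_w\in G$ generates the inertia of $\hat X_f$ over $w$, then $T_g(\sigma_w)$ is $S_n$-conjugate to $T_f(c_{\text{\rm AZ}}^{\mp1}(\sigma_w))$, and $c_{\text{\rm AZ}}^{\mp1}$ carries the inertia over $w$ to that over $u_a^{\mp1}(w)$; so $\tr(T_g(\sigma_w))=\tr(T_f(\sigma_{u_a^{\mp1}(w)}))$ (only class functions are involved, so the classical-generators ambiguity of Rem.~\ref{remBCL} is harmless). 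If \eql{heir}{heira} holds, $T_f$ and $T_g$ have equal character on ${}^aG\supseteq G$, so applying this at $\sigma_w$ gives $\tr(T_f(\sigma_w))=\tr(T_f(\sigma_{u_a^{-1}(w)}))$, i.e. $\tr(T_f(\sigma_{z'}))$ is constant as $z'$ runs over the $u_a$-orbit $Z'$; if only \eql{heir}{heirb} holds, applying $\tr(T_f(\sigma))>0\iff\tr(T_g(\sigma))>0$ at $\sigma_w$ shows the truth value of $\tr(T_f(\sigma_{z'}))>0$ is independent of $z'\in Z'$. For case \eql{redClos}{redClosb} one first notes reducibility is \eql{heir}{heirc}, reduces via Lem.~\ref{pullbackcomps} to a newly reducible pair (the excluded composition with a nontrivial cyclic polynomial being exactly the Ritt-cyclic source of cheap reducibility, as in $h(x)^k-a\,h(y)^k$ factoring), whence the Galois closures agree and Steps 1--3 apply.

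\textbf{Main obstacle.} I expect the real work to be in Step 2: proving $\mu\circ\hat f$ is Galois, correctly normalizing $\sigma_\infty^*$ over $u=\infty$, and showing $c_{\text{\rm AZ}}$ is an automorphism of $G$ not realized inside $S_n$ — this is the only place the \emph{non}-affine-equivalence of $f,g$ is genuinely used, and it rests on the (standard but not entirely formal) fact that two polynomial covers of $\prP^1_z$ sharing a Galois closure and having equivalent permutation representations are affine equivalent. A secondary nuisance is the case \eql{redClos}{redClosb} bookkeeping: making the reduction to a newly reducible pair compatible with enough of the reduced-equivalence structure to rerun Step 1 (or, alternatively, deriving $aB+b=B$ directly from the component structure of the fiber product).
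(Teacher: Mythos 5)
Your proof is correct and follows the paper's overall plan (normalize $a$, $b$ to $g=\zeta_v f$; pass to the cyclic base change $\mu: \prP^1_z\to\prP^1_u$ and its Galois composite; read off $c_{\text{\rm AZ}}$; finally compare inertia classes along a $u_a$-orbit), but two of your local arguments differ from the paper's in ways worth noting. For the ``no element of $S_n$ represents $c_{\text{\rm AZ}}$'' claim, the paper argues through the centralizer: if $h\in S_n$ realized $c_{\text{\rm AZ}}$, then $h$ would commute with the $n$-cycle $T_f(\sigma_\infty)$, hence be a power of it (the observation from Lem.~\ref{commuteCyc}), so $\sigma_\infty^*$ could not ``act through $S_n$.'' You instead show $T_g\cong T_f\circ c_{\text{\rm AZ}}^{\pm1}$ up to $S_n$-conjugacy, so an $S_n$-representative for $c_{\text{\rm AZ}}$ would force $T_f\cong T_g$ as permutation representations, hence conjugate point-stabilizers, hence an isomorphism of the covers $f$ and $g$ fixing $\infty$ — i.e.\ affine equivalence of $f$ and $g$, contrary to hypothesis. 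Your route is actually more airtight: the paper's centralizer step alone only shows a would-be representative must be \emph{inner}, and your chain via $T_f\cong T_g$ is exactly what's needed to close that off. On the other hand, your handling of case \eql{redClos}{redClosb} is thinner than the paper's: both reduce via Lem.~\ref{pullbackcomps}/\eqref{schcond} to a pair $(f_1,g_1)$ with a common Galois closure, but the paper then invokes \cite[Prop.~3.4]{FrM69} (a polynomial has at most one composition factor of each degree up to affine equivalence) to conclude $g_1=af_1+b$ and so land in case \eql{redClos}{redClosa} with the same $(a,b)$; you gesture at this but don't supply the uniqueness result that makes the reduction legal — which you flag yourself as a ``secondary nuisance,'' and is indeed the one real gap. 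The Step 1 variant (arguing $|B|\ge 2$ and $L^v=\mathrm{id}$, versus the paper's direct elimination of $a=1$ via translations of a finite set) is a cosmetic difference; both are fine.
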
 

\begin{proof} Assume \eql{redClos}{redClosa} holds.  Then  the covers given by $f$ and $g$ have  exactly the same branch points. If $a= 1$, then translation by $b$ permutes the finite branch points of $f$. The only translation mapping a finite set in the complex plane into itself is $b=0$. So, this contradicts that $f$ and $g$ are affine inequivalent. 

So, we may assume $a\ne 1$. Substitute $f(x)$ by $f(x)+c$ with $c=b/(1-a)$.  Then, with no loss,  $b=0$. Now our hypothesis says that multiplying by $a$ permutes the finite branch points of $f$. Unless those branch points only consist of $0$ -- so $f$ is a cyclic polynomial contrary to assumption -- then $a$ must be a root of 1. 

Now assume \eql{redClos}{redClosb} holds.  \cite[Prop.~2]{Fr73a}, as  in \eqref{schcond}, says $f_1\circ f_2=f$, and $g_1\circ g_2=g$, where  $f_1$ and $g_1$ satisfy \eql{redClos}{redClosa}; and factors of $f(x)-g(y)$ correspond one-one with those of $f_1(x)-g_1(y)$ with $\deg(f_1)=\deg(g_1)$.  

\cite[Prop.~3.4]{FrM69} says, up to affine equivalence, at most one composition factor, $f_1$ (resp.~$g_1$),  of $f$ (resp.~$g$) has a given degree. So,  we know $g_1=af_1+b$, $(f_1,g_1)$ satisfy \eql{redClos}{redClosa}, and the final conclusion holds in this case, too. 

Assume, again, \eql{redClos}{redClosa} holds to address the 2nd sentence.  Assume the normalization above.  Expand a solution, $x$, of $f(x)=z$ over $z=\infty$ as a Laurent series  in $1/z^{-\frac1 n}$. Express all solutions as $ x(\zeta_n^j/z^{-\frac1 n})\eqdef x_j$, $j=0,\dots,n\nm 1$. 
The hypothesis about $a$ says that  the substitution $\sigma_\infty^*: 1/z^{-\frac1 n} \mapsto \zeta_v/z^{-\frac1 n}$ in all the $x_i\,$s gives elements in the field generated by the $x_j\,$s. The fixed field of $\sigma_\infty^*$ and $G_f$ identifies, with $u=z^{v}$, with $\bC(u)$. Since $\sigma_\infty$ is a power of $\sigma_\infty^*$, the two elements commute. As  in the proof of Lem.~\ref{commuteCyc}, the only elements of $S_n$ commuting with $\sigma_\infty$ (an $n$-cycle) are powers of $\sigma_\infty$. So conjugation by $\sigma_\infty^*$ cannot act through $S_n$. 

Finally, consider a branch point  $z'\in Z$ in the statement. The branch cycle for $az'$ and the cover  $f$ is in the conjugacy class of the branch cycle for $z'$ for the cover $g$.  For example, if  \eql{heir}{heirb} holds, then the positive trace condition must hold simultaneously for both $T_f$ and $T_g$, if it holds for one, etc. \end{proof} 

\begin{guess} \label{gusicConj} If \eql{redClos}{redClosb} holds, but $f(x)-g(y)$ is newly reducible, then $a=-1$, and $\deg(f)=4$ \cite[Conj.]{Gu10}. \end{guess}  \cite{FrGu11} interprets Prop.~\ref{Gusic} entirely in branch cycles. That means it is matter about groups, but here we must face the challenge of dealing with imprimitive groups.  \S\ref{modgenRitt} introduces notation for the Galois closure group of a composite of covers as a subgroup of a wreath product.  In Rem.~\ref{wreath} the whole wreath product occurs. Here, however,  the actual $G_{\mu \circ \hat f}\eqdef G_{f^*}$ is  the smallest subgroup of the full wreath product, $G_f\wr \bZ/v=G_f^v \xs \bZ/v$, satisfying  wreath conditions \eqref{wreathconds}. 

The key element inside  $G_{f^*}$ is the $n\cdot v$-cycle $\sigma_\infty^*$. Akin to the computation in Rem.~\ref{wreath}, identify $v$ copies of $\{1,\dots,n\}$ as $\{1_i,\dots,n_i\}$, $i=1,\dots, v$. With no loss, up to renaming the letters -- using that $(\sigma_\infty^*)^v=\sigma_\infty$ -- you can take $\sigma_\infty^*$ as  
$$ (1_1\,1_2\,\dots\,1_v\,2_1\,\dots\,2_v\,\dots \,n\nm1_1\,\dots \,n\nm1_v\,n_1\,\dots\,n_v).$$ 
Then, as on \cite[p.~47]{Fr70} (see Lem.~\ref{ChebChar}), the conjecture is true if and only if $\sigma_\infty$ generates a normal subgroup in $G$. Exactly then, the other branch cycles acting by conjugation on $\lrang{\sigma_\infty}$ have precisely determinable branch indices; the result is that $f$ is equivalent to a Chebychev (or cyclic) polynomial. 

From Lem.~\ref{ChebChar} we see that the only possibility in this case to assure newly reducible is that $n$ must be even. Yet, even then if $n> 4$, $f=f_1\circ f_2$ with $f_1$ a proper composition Chebychev factor, of degree either odd or 4. So, $g$ has the proper composition factor $-f_1$, and from Lem.~\ref{ChebChar}, $(f,g)$ isn't newly reducible. Note for $n=4$, from Lem.~\ref{ChebChar}, since one finite branch cycle has shape $(2)(2)$ the other of shape $(2)$, \eql{heir}{heirb} does not hold. That is, $(f,g)$ is not a Davenport pair. 

A bigger context for Conj.~\ref{gusicConj} starts with  $f:\prP^1_x\to \prP^1_z$,  $f\in \bC(x)$ and with some torsion $\alpha\in \PGL_2(\bC)$, giving $g\eqdef\alpha\circ f:\prP^1_x\to \prP^1_z$ where $f$ and $g$ have the same Galois closures (as  in \eqref{heir}). 
\begin{prob}   Classify this. Then, restrict to  the subcase where $f$ is a polynomial and decide when $(T_f,T_g)$ could form a Schinzel pair (satisfy \eql{heir}{heirc}). \end{prob} 

The wreath product challenge given by the $(m,n)$ Problem starts with polynomials with simple finite branch points, akin to literature quoted in \S\ref{modgenRitt}. 

\begin{prob}[$(m,n)$ Problem] For a \lq general\rq\ pair $(f',g')$ of polynomials (over the complexes), of respective degrees $m$ and $n$, with $n\ge 3$, is  the following true? 
\begin{triv} \label{*11} No matter what are the nonconstant polynomials $f''(x)$ and $g''(y$),  \\ $f'(f''(x))-g'(g''(y))$ is irreducible \cite[p.~17]{Fr87}. \end{triv} \end{prob} 

\cite[p.~18]{Fr87} has branch cycles for such $(f'(f''(x)),g'(g''(y)))$ of degree 4, given any degree 2 pair $(f',g')$, so that $f'(f''(x))-g'(g''(y))$ reducible. This is essentially the factorization in the case $n=4$ from Lem.~\ref{ChebChar}; also the one case of Conj.~\ref{gusicConj}. That is, the excluded (2,2) problem is false.  

It suffices to take for $(f',g')$ any polynomials of respective degrees $m$ and $n$ ($\ge 3$) giving simple-branched covers, and, outside $\infty$, disjoint branch points. Then, the $(m,n)$ problem holds if, for  nonconstant $f''(x)$ and $g''(y)$ (their degrees are irrelevant), $f'(f''(x))-g'(g''(y))$ is irreducible.

Let $N$ be the least common multiple of $m$ and $n$. Then, the reduction in Thm.~\ref{DS2} shows it suffices to consider $\deg(f'')=kN/m$, $\deg(g'')=kN/n$.

For example, in the (2,3)-problem:  it suffices to consider $f''(x)$ and $g''(y)$ of respective degrees $3k$ and $2k$. \cite[Prop. 2.10]{Fr87} shows neither $k=1$ or 2 gives a contradiction to \eqref{*11}. Still, there was a close  call already with $k=2$ for providing new Schinzel pairs (satisfy \eql{heir}{heirc}), except for a failure of the genus 0 (from Riemann-Hurwitz, \eqref{*10}) condition.

\subsection{Final UM and RET Comments} \label{secVII.4} What attributes would make it clear that I took great advantage from my three years at UM? For me, these come to mind.  I was (almost) never frightened by prestigious mathematicians, or by being on my own in hot-house mathematical environments. Yet, even papers solving long unsolved problems appearing in prestigious journals didn't do much for either myself or those who found those problems attractive.

My career (barely) survived by my interactions with European and Israeli mathematicians, doing what they wanted me to, rather than what my own convictions suggested. Later, I turned to the topics I'd put aside for years.

\subsubsection{UM upon my graduation} \label{grad69} There were over 200 grad students at UM in 1967. I have seen only one from my graduate years more than once after grad school. That was the topologist Bob Edwards who twice sat in on talks of mine at AMS conferences. It would have helped if other UM students, even slightly related, interacted with me from the hundreds of talks I've given, from the many papers to which I've corresponded with -- especially, young -- authors, or the many conferences I've attended or run. Especially for the effort I've put into level-raising and correction of papers for which editors claimed they previously found no referees. 

The three others who got PhDs in 1967 were all analysts, one much more famous than anyone who might be reading this. That was \lq\lq The Unabomber,\rq\rq\ a no-show at the going away party Paul Halmos gave us. You can find a picture of me from years related here --  opposite the page with Grothendieck -- in \cite{HMpicture}. I 'm standing in front of my Schur Conjecture diagram at the end of my 1968 UM lecture on it.

I didn't know about that picture until many years later, just prior to my giving a talk at a conference that, excluding myself, were Harvard affiliated arithmetic geometers in Tempe, Arizona.  Several at that conference were visibly upset that I had maneuvered to give an hour talk. This was thanks to Armand Brumer -- a snowstorm interrupted no-show -- conceding his spot to me.

I discovered Halmos' picture by accident during the coffee break before my talk, while I was purposely off in a side commons room. It was appropriate inspiration -- showing a 25 year-old me, facing the UM audience, in a confident pose. That  helped me handle with equanamity giving my 1987 talk to a likely antagonistic audience. One -- younger than myself -- Harvard faculty member asked me before the talk of my topic. It was a presentation of $G_\bQ$, related, but superior in ways, to that from \cite{FrV92}. His response: \lq\lq Well, that would be a dream come true!\rq\rq\ I never heard another word from him after my presentation, and publication in the conference volume, about the  \lq dream come true.\rq\ 

At the '68 UM talk, Mort Brown (from whom I had algebraic topology) and Jim Kister (a course in vector/micro bundles) had left early while Davenport held forth after my talk. They came up to me later, to explain why they left. They were annoyed by Davenport's remarks, which seemed to suggest that there was nothing new in what I had done. Halmos's picture had a surprisingly sympathetic caption under it about the mathematical direction I seemed to be going, perhaps influenced by how well I had handled Davenport's \lq\lq interrogation.\rq\rq 

Halmos' picture helped me do better than just get through that Tempe Arizona talk. Still, either I, or the Schur Conjecture, must have been funny. Once I saw that picture, I realized it was the answer to a New Yorker cartoon -- containing a version of my Schur Conjecture diagram -- that I had puzzled over years before. It was posted on Paul Kumpel's (a Stony Brook colleague) office door. It  charicatured (I now saw) my  satisfaction with  that  diagram. 

\subsubsection{More on RET?} \label{whenceRET} LeVeque had translated to English Siegel's proof of his Thm. (\S\ref{limitAlt}). That introduced me to $\theta$ functions. Especially, the production from them of an arithmetic form of Riemann's version of Abel's Thm:  {\sl Weil's Decomposition Theorem\/}. Despite its masterful use in the Mordell-Weil Theorem \cite{We28}, you don't see it much  these days. It gave an  apparatus relating function theory and statements about rational points. That topic, led to the influence of Siegel's papers and Riemann upon me.   Springer's book \cite{Sp57}, on Riemann Surfaces, has neither  RET  nor much group theory savvy.  The proof of RET in \cite{chpret4-firsthalf.pdf} is mine. So is the particular use of braids, albeit  braids were long ago in the literature. 

Some mathematicians (several co-writers included) either have no training with analytic continuation, or  like neither it nor paths, etc. One who was in this category, but not a cowriter, had been particularly critical of the value of \cite{Fr77} on a Harvard stage in the late '80s. So, it seems perfectly appropriate that \cite[p.~480, Rem]{Se90} is the residue of my correcting his initial guess at a formula, and informing him he had seen the technique at the  Delange-Pisout-Poutteau talk for \cite{Fr90}. 

Let $R_\bp$ be the completion of the ring of integers of some number field at a non-archimedian prime $\bp$. The (integral domain of) Witt vectors, $\bar R_\bp$, attached to $R_\bp$ contains the latter, and  a generator of its maximal ideal generates the maximal ideal of $\bar R_\bp$. They differ essentially only in that the residue class field of the former is $\bar \bF_p$, rather than $\bZ/p$. Denote by $W_\bp$ the quotient field of $\bar R_\bp$. 

\cite[Thm.~3.3]{Fr99} has a form of Grothendieck's Thm., \cite{Gr59}, emphasizing  it is a result about families of covers attached to a given Nielsen class $\ni(G,\bfC)$ over the base (parameter) space $\Spec(\bar \bR_p)$: a tiny space, but significantly more than one point. Assume $(N_\bfC,p)=1$ (\S\ref{BCLtreat}). The result is that you can form a smooth family with a constant Nielsen class  in either of two situations. 

\begin{edesc} \label{grothEx} \item \label{grothExa}  Start with $f_{W_\bp}: X_{W_\bp}\to \prP^1_z$, a cover over $W_\bp$, {\sl with  $p'$ monodromy group}, but the family ends up over $\bar R_{\bp'}$ a possibly larger Witt vector ring. The family then has a cover equivalent to $f_{W_\bp}$ over its generic point. 
\item \label{grothExb} You start with $f_{\bar \bZ/p}: X_{\bar \bZ/p}\to \prP^1_z$, a tamely ramified cover over $\bar\bZ/p$. The family has this cover over its special point. \end{edesc} Each result  refers to $\prP^1_z$, though the spaces are  over different fields. That is, there is a natural family of $\prP^1_z\,$s reasonably labeled $\prP^1_{z,\bar \bR_p}$. Grothendieck's use of Abhyankar's Lemma in \S\ref{genRitt} produced the change of base in \eql{grothEx}{grothExa}. I understood Grothendieck's theorem from the detailed exposition in \cite{Fu66}, referenced in \cite{Fr70} and discovered in Spring 1968 by accident while I was at IAS. 

Suppose $\Psi: \sT\to \sF\times \prP^1_z$ is a smooth family of $r$ (distinct) branch point covers, with  $\sF$ absolutely irreducible. (Generalizing polynomial families as in \S\ref{polySpaces}.)   Grothendieck's theorem gives the following for tamely ramified covers in positive characteristic, from it holding  in characteristic 0. 

 \begin{triv} \label{constantFam} If the branch points, as a function of $\bp\in \sF$, are constant, then there is an \'etale cover  $\sF'\to \sF$, so that  the  family's pullback over $\sF'$ is  constant. \end{triv} 

In characteristic 0 this reverts to its truth locally in the complex topology. Then,  if the branch points don't move, you don't need to move the classical generators or the base point for them, either. That means, the branch cycle description of the cover doesn't change, and  all covers nearby a given $\bp\in \sF$ are equivalent. 

Prop.~\ref{famWild} includes an analog of \eqref{constantFam} which also holds for wildly ramified covers. All spaces and covers are defined over the algebraic closure of a finite field. We use the phrase \lq\lq in the finite topology\rq\rq\ to mean that we can adjust any morphism by pullback over a finite, not necessarily flat (\S\ref{roleFlatness}), morphism. 

Suppose $f: X\to \prP^1_z$ is a wildly ramified cover. Then, \cite[Iso-trivial Prop.~6.8]{FrMz02} constructs an explicit {\sl configuration space\/}   $\sP_f$ -- generalizing the role of $U_r$ to wild ramification -- with the following property. 

\begin{prop} \label{famWild} Given any irreducible smooth family of covers $\Phi: \sT\to \sP\times \prP^1_z$ containing $f$ at a particular fiber $\bp\in \sP$, then  -- in the finite topology -- there is a morphism (unique  in the finite topology) $\Psi_{\sP,\sP_f}: \sP\to \sP_f$. 

Over the range $\sR_\Psi$ of $\Psi_{\sP,\sP_f}$ there is a finite cover $\sP_\Psi\to \sR_\Psi$ that supports a family of covers of $\prP^1_z$ whose pullback by $\Psi_{\sP,\sP_f}$ is equivalent to $\Phi$. Further, $\Psi_{\sP,\sP_f}$ is constant if and only if $\Phi$ is constant (in the finite topology). \end{prop}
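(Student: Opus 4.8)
\textbf{Proof plan for Proposition~\ref{famWild}.} The strategy is to build $\sP_f$ so that it plays, for a single wildly ramified cover $f$, exactly the role that $U_r$ (together with the data of classical generators, \S\ref{classgens}) plays for tamely ramified covers, and then to run the analytic-continuation-of-covers argument of \S\ref{HurMonUr} in the algebraic (finite-topology) setting. First I would assemble the local data at each ramified place of $f$: over a wildly ramified point $x$ lying over $z_0$, the completed local extension is described by an Artin--Schreier--Witt tower, so its deformation theory is governed by the jumps in the higher ramification filtration together with the residues of the differential forms defining the Artin--Schreier equations. I would package all of this local data, over all ramified places and their images in $\prP^1_z$, into a single affine scheme $\sP_f$ of finitely many parameters: the positions of the branch points, plus, at each branch point, the Artin--Schreier--Witt parameters subject to the constraint that the jump sequence is fixed (so the conductor, hence the genus by the wild Riemann--Hurwitz formula, is constant along $\sP_f$). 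The point $\bp_f\in\sP_f$ corresponding to $f$ itself is distinguished, and $\sP_f$ is irreducible because the jump data forms a space of the expected product shape. This is where I would invoke \cite[Iso-trivial Prop.~6.8]{FrMz02} for the actual construction — the excerpt lets me assume it — so the job reduces to extracting the three asserted properties from that construction.

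Second, given an irreducible smooth family $\Phi:\sT\to\sP\times\prP^1_z$ with $\Phi_{\bp}$ equivalent to $f$ for one $\bp\in\sP$, I would produce the classifying morphism $\Psi_{\sP,\sP_f}:\sP\to\sP_f$ by reading off, fiberwise, the local ramification data of $\Phi_{\bp'}$ at each of its branch points. Smoothness of $\Phi$ forces the jump sequence to be constant in a neighborhood of $\bp$ (the conductor is lower-semicontinuous and the genus, being locally constant in a smooth proper family, pins it down), so this assignment lands in $\sP_f$ and is a morphism where defined. The subtlety is that the local Artin--Schreier--Witt normal form is only canonical up to the choice of a uniformizer and the usual additive indeterminacy $\wp(\cdot)$ in Artin--Schreier equations; this ambiguity is precisely a torsor under a finite group scheme, which is why the morphism is only defined and unique \emph{after} a pullback by a finite (not necessarily flat) morphism — hence the phrase ``in the finite topology.'' So I would first pass to such a finite cover $\sP_\Psi\to\sR_\Psi$ over the range $\sR_\Psi$ of $\Psi_{\sP,\sP_f}$, on which the normalizations are rigidified, and there assemble the universal-type family; its pullback along $\Psi_{\sP,\sP_f}$ is then equivalent to $\Phi$ by construction, since both carry the same local and global (branch-point) data.

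Third, for the rigidity clause I would argue exactly as in the tame case described below \eqref{constantFam}: if $\Psi_{\sP,\sP_f}$ is constant then all the branch points and all the wild ramification parameters are constant along $\sP$, so in the appropriate (finite) topology the cover does not deform at all — one appeals to the fact that, with branch locus and local structure fixed, a smooth family of covers of $\prP^1_z$ is locally trivial, which in characteristic $0$ is the complex-topological statement quoted in the excerpt, and in characteristic $p$ for tame covers is Grothendieck's theorem \eqref{grothEx}; the wild case is handled place-by-place using that an Artin--Schreier--Witt extension with fixed parameters over a base is constant after a finite base change. Conversely, if $\Phi$ is constant then its classifying data is constant, so $\Psi_{\sP,\sP_f}$ is constant. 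I expect the main obstacle to be the second step: pinning down the exact finite group scheme by which the local normal forms are indeterminate, and checking that the resulting $\sP_\Psi\to\sR_\Psi$ is genuinely finite and that the assembled family descends correctly — in other words, making the phrase ``unique in the finite topology'' into an honest representability statement rather than a slogan. The first and third steps are comparatively formal once \cite[Prop.~6.8]{FrMz02} is granted; the deformation-theoretic bookkeeping of wild ramification in families is the part that carries real content.
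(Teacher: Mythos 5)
Your plan replaces the paper's actual local model with Artin--Schreier--Witt towers, and this is not merely a cosmetic difference — it is a genuine gap. The paper's summary (in \S\ref{spacesP_f}, immediately after the statement) says $\sP_f$ is built from \emph{ramification} and \emph{regular ramification} data attached to the local extensions $\bar k((x^*))/\bar k((z))$, where these extensions are explicitly \emph{not necessarily Galois}: the relevant invariants are Newton polygons (from \cite[\S1]{Fr74c}), their convex hulls, and the count of tame embeddings from \cite[Lem.~5.1]{FrMz02}, with the configuration-space structure coming from Garuti's lifting to characteristic $0$ via curves with ordinary cusps (\cite{Ga96} reformulated in \cite[Thm.~6.6]{FrMz02}). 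Artin--Schreier--Witt theory parametrizes abelian $p$-power extensions; it cannot parametrize a general non-Galois degree-$n$ local extension of $\bar k((z))$, and the paper itself warns (just after \eqref{NCp=1}) that ``abelian wild ramification is not a good model for this'' and that Galois closures of wildly ramified extensions \emph{change} in families with fixed branch points. So the finite-group-scheme torsor you propose (the additive $\wp$-indeterminacy plus a uniformizer choice) is not the indeterminacy whose quotient is $\sP_f$; it is the indeterminacy of the wrong, much smaller, local normal form. The passage to the finite topology in the proposition is instead forced by the non-Galois-closure and tame-embedding ambiguities that the Newton-polygon data leaves unresolved.

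Concretely, two steps would fail as written. First, your $\sP_f$ (positions of branch points plus ASW parameters with fixed jump sequence) does not receive a classifying morphism from a general family containing $f$, because the fibers $\Phi_{\bp'}$ need not have abelian inertia and so have no ASW parameters to read off. Second, your rigidity argument in the third step appeals to ``an Artin--Schreier--Witt extension with fixed parameters over a base is constant after a finite base change,'' which is exactly the kind of abelian local statement the paper flags as misleading; the iso-triviality result you need is \cite[Iso-trivial Prop.~6.8]{FrMz02} itself, proved through the Garuti lifting, not through an abelian reduction. You correctly anticipate that the construction rests on \cite[Prop.~6.8]{FrMz02}, but the parts of the proposal you supply on your own (the local model and the indeterminacy analysis) are the parts that contradict what that reference actually does. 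To repair the plan you would need to replace the ASW parameter space with the Newton-polygon/regular-ramification moduli and route the rigidity and representability arguments through Garuti's characteristic-$0$ lifting; at that point the argument becomes essentially a restatement of \cite[Prop.~6.8]{FrMz02}, which is in fact all the paper itself offers here.
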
  

\subsubsection{Families over the space $\sP_f$}  \label{spacesP_f} Denote the ring of formal power series over $\bar k$ by $\bar k[[z]]$. In constructing $\sP_f$ we must deal with this:  \begin{triv} \label{wildram} There are many more wildly, versus tamely, ramified local (separable) ring extensions of $\bar k[[z]]$. \end{triv} 

Further there is a serious complication with going to the Galois closure. Look again at  \lq\lq grabbing a cover by its branch points\rq\rq\ in \S\ref{HurMonUr}. The construction allowed uniquely continuing  a given cover, with  branch points $\bz_0\in U_r$, to a cover with branch points $\bz\in U_r$ along any path in $U_r$ between $\bz_0$ and $\bz$. The branch cycle description continues along the path. So the geometric monodromy -- generated by the branch cycles -- is locally constant. 

Assume we start with any Nielsen class $\ni(G,\bfC)^*$ of $r$-branch point covers, $*$ indicating absolute or inner equivalence.  Over $\bC$, there is always a Hurwitz space $\sH(G,\bfC)^*$. \cite[\S3-\S4]{Fr77} considers the existence of total families $\Phi: \sT\to \sP\times\prP^1_z$ with fibers  $\sT_\bp\to \bp\times \prP^1_z$ that are covers in $\ni(G,\bfC)^*$.  The proof shows by the nature of  $\sH(G,\bfC)^*$, any such family induces an analytic map $\Psi: \sP\to \sH(G,\bfC)^*$ with  $\Psi(\bp)$ the point representing the equivalence class of the fiber. Prop.~\ref{absFamCovers} notes that if fine moduli conditions hold, then there is a family  over $\sH(G,\bfC)^*$ so that the family $\Phi$ is the pullback  by $\Psi$ of this family.  

That construction also includes Prop.~\ref{repFam}, even without fine moduli.  

\begin{prop} \label{repFam}  For $r\ge 3$, there is an \'etale (unramified) cover $\sP\to \sH(G,\bfC)^*$ supporting a {\sl total representing space}.  That is, in one fell swoop, all covers  in $\ni(G,\bfC)^*$  are in one family over $\sP$, though possibly many times.  \end{prop}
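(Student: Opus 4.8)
\textbf{Plan of proof for Proposition~\ref{repFam}.}
The statement asserts the existence, for $r\ge 3$, of an \'etale cover $\sP\to \sH(G,\bfC)^*$ carrying a single family that contains every cover in $\ni(G,\bfC)^*$, possibly with repetition. The natural strategy is to produce $\sP$ as the solution of an auxiliary moduli problem that \emph{rigidifies} the covers in the Nielsen class, so that fine moduli is restored on $\sP$ at the cost of passing to a finite \'etale cover. Concretely, I would enrich the data parametrized by $\sH(G,\bfC)^*$: rather than an equivalence class of covers $\phi:X\to\prP^1_z$ in the Nielsen class, a point of $\sP$ should remember a cover \emph{together with} an ordering (or labelling) of a fiber $\phi^{-1}(z_0)$ over a varying but harmless auxiliary base point $z_0$ — equivalently, a choice of one of the $|N_{S_n}(G,\bfC)\!:\!\text{(stabilizer)}|$ ways of identifying the monodromy action with its model on $\{1,\dots,n\}$. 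This labelling kills all the automorphisms that obstructed fine moduli (compare Def.~\ref{centralizer} and Prop.~\ref{absFamCovers}), because a nontrivial deck transformation would have to act trivially on the labelled fiber. The forgetful map $\sP\to\sH(G,\bfC)^*$ is then finite, and it is \emph{\'etale} precisely because the local monodromy of $\sH(G,\bfC)^*$ as a cover of $U_r$ (the $\pi_1(U_r,\bz')$-action on $\ni(G,\bfC)^*$ coming from the braid/Hurwitz action of \S\ref{spacesCovers}) permutes these labellings without ramification — the labelling data deforms uniquely and freely along any path, just as classical generators do in \S\ref{HurMonUr}.

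The key steps, in order, would be: (1) formalize the rigidified functor on $\bC$-schemes — a family $\Phi:\sT\to\sP\times\prP^1_z$ of covers in the Nielsen class plus a section-wise trivialization of a chosen pulled-back fiber — and verify it is representable; the representing space is the desired $\sP$, constructed as before from the $\pi_1$-action (now on the enlarged set of labelled Nielsen tuples), using \cite[\S3-\S4]{Fr77}. (2) Check that on $\sP$ the centralizer/normalizer conditions of Def.~\ref{centralizer} are satisfied by construction, hence fine moduli holds on $\sP$ and a universal (total) family $\Phi_{\sP}$ exists there, by the same argument that gives Prop.~\ref{absFamCovers}; this is the ``one fell swoop'' family. (3) Show $\sP\to\sH(G,\bfC)^*$ is \'etale: finiteness is clear since the labelling data ranges over a finite set; unramifiedness follows because over a small disk in $U_r$ the Hurwitz-space cover $\sH(G,\bfC)^*\to U_r$ and its labelled refinement $\sP\to U_r$ are both given by the (unramified) $\pi_1$-permutation action, so the map between them is a map of unramified covers, hence unramified. (4) Verify surjectivity onto $\sH(G,\bfC)^*$ — every point (every cover in the Nielsen class) admits at least one labelling — and note that the fibers of $\Phi_{\sP}$ realize every class in $\ni(G,\bfC)^*$, with multiplicity equal to the number of labellings (which is why the proposition only claims ``possibly many times''). (5) The inducing property: given any family $\Phi:\sT\to\sB\times\prP^1_z$ in the Nielsen class, the induced analytic map $\sB\to\sH(G,\bfC)^*$ lifts (after the harmless \'etale base change recording the labelling, which exists locally and glues) to a map $\sB'\to\sP$ with $\Phi$ the pullback of $\Phi_{\sP}$; this is the same bookkeeping as in Prop.~\ref{absFamCovers} and the definition-field argument of \S\ref{famdeffield}.

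The hypothesis $r\ge 3$ enters exactly as it does throughout \S\ref{App.4}: for $r\ge3$ the space $U_r$ (and hence $\sH(G,\bfC)^*$) has no continuous automorphisms fixing the branch divisor pointwise, so the labelling really does rigidify and the cover $\sP\to\sH$ is honestly \'etale rather than merely quasi-finite; for $r\le 2$ extra automorphisms (the torus, or $\PGL_2$ acting on $\le 2$ points) would spoil this. I expect the main obstacle to be \emph{step (3) together with the gluing in step (5)}: one must argue carefully that the labelling datum is itself the datum of a finite \'etale cover (i.e. that ``choose a point of a fiber, varying continuously'' is representable and \'etale over the Hurwitz space), and that the local lifts in (5) are compatible under the $\pi_1(U_r)$-action so that they descend/glue to a global \'etale base change of $\sB$. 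Once that is in place, everything else reduces to re-running the fine-moduli machinery of \cite{Fr77} on the enlarged Nielsen set, and the definition-field refinements (that $\sP$ can be taken over the same field $\bQ_{M_\bfC}$ as in Prop.~\ref{absHurwitzSpaceDef}) follow by the $G_\bQ$-equivariance of the whole construction. I would not write out the representability verification in detail, since it is the standard Hurwitz-space argument; the substance is the \'etaleness and the lifting/gluing, which I would do explicitly using classical generators on a contractible neighborhood in $U_r$.
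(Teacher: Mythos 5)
Your rigidification approach captures the substance of what the paper inherits from \cite[\S3--\S4]{Fr77}: enlarge the data so that cover-automorphisms are killed, observe that fine moduli then holds on the enlarged space, and note that the forgetful map is a finite unramified cover. The cleanest concrete form of your labelling datum, one that sidesteps the base-point ambiguity you correctly flag as the main obstacle, is the unquotiented Nielsen tuple itself: take $\sP$ to be the cover of $U_r$ corresponding to the $H_r$-action on $\ni(G,\bfC)$, with no $*$-quotient. Over a contractible patch of $U_r$ with a fixed set of classical generators, a point of $\sP$ \emph{is} an actual branch cycle description $\bg$, which determines a cover together with a labelled fiber; a deck transformation of a connected cover fixing a labelled fiber is the identity, so the local families glue with unique transition data and the total family descends to $\sP$ directly. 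The map $\sP\to\sH(G,\bfC)^*$ is the map of unramified $U_r$-covers induced by the $H_r$-equivariant surjection $\ni(G,\bfC)\twoheadrightarrow\ni(G,\bfC)^*$, hence itself unramified with constant fiber degree, so every cover in the Nielsen class appears, possibly many times, as claimed; and $r\ge 3$ enters as you say, with the paper citing \cite[\S 3, Ex.~2]{Fr77} for the $r=2$ failure. Two small calibrations: the proposition asserts only existence of $\sP$ with a total family, so your step (5), the inducing property, is a natural companion but not part of what needs proving; and the multiplicity with which a given cover appears is the fiber degree of $\sP\to\sH(G,\bfC)^*$, i.e.\ labellings modulo deck transformations, rather than the raw count of labellings.
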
 

(\cite[\S 3, Ex.~2]{Fr77} shows $r=2$ does not work.) \cite[Prop.~3]{Fr77} gives a condition that shows even without fine moduli we  can choose $\sP=\sH(G,\bfC)^*$ in Prop.~\ref{repFam}. 

\begin{triv}  \label{primetoG} From Grothendieck: If $(p,|G|)=1$, the conclusions just above are the same over the algebraic closure of $\bZ/p$; ditto the fine moduli condition. \end{triv} 

Now consider the other half of Grothendieck, starting with a Nielsen class  and a tamely ramified cover $\phi_0: X\to \prP^1_z$ in this class --$(N_\bfC,p)=1$ as in \S\ref{secVI.1} -- from characteristic $p$ where possibly $(p,|G|)=p$.  Lifting $p$-adically  does allow comparison with results in the complex topology. You can then analytically continue the lifted cover along a path in characteristic 0. Also, the geometric monodromy  is constant in any smooth family of $r$-branch point covers over an irreducible base. 
\begin{triv} \label{NCp=1} Yet, if you only know $(N_\bfC,p)=1$, you may not be able to reduce modulo $p$. You don't know how \lq\lq far\rq\rq\ in characteristic $p$ the cover extends. \end{triv}

By contrast, even the Galois closure of the quotient fields of wildly ramified extensions can change in a family without moving the branch points. Abelian wild ramification is not a good model for this. 
That is,  without $(p,|G|)=1$, there is no notion of continuing a characteristic $p$ cover  with branch points $\bz_0$ to one with branch points $\bz$; not  even with tame ramification. Indeed, for some $\bz\in U_r$, there  may be  no such cover in the Nielsen class in positive characteristic. An extreme version of being supersingular, akin to how supersingular points occur in the modular curve Nielsen class (Ex.~\ref{modcurves}). 

The space $\sP_f$ in Prop.~\ref{famWild} depends on computing two sets of data from the cover $f$:  {\sl ramificiation\/} and and {\sl regular ramification\/} data (introduced first in \cite[\S1]{Fr74c}). The former  is an array  -- indexed by points $x'\in X$ ramified over $\prP^1_z$. Each element in the array is a Newton polygon attached to a not necessarily Galois extension $\bar k((x^*))/\bar k((z))$ with $x^*$ a uniformizing parameter around $x'$. Regular ramification refers to the convex hull of this. \cite[Lem.~5.1]{FrMz02} gives a rubric based on computing  the number of tame embeddings of $\bar k((x^*))/\bar k((z))$. From the slopes in the regular ramification data one computes the composite ramification index of all the tame embeddings.

Some properties of $\sP_f$ as a {\sl configuration\/} space use \cite{Ga96} as reformulated in \cite[Thm.~6.6]{FrMz02}: wild ramification does have a significant lifting to 0 characteristic using curves with ordinary cusps. Here is the fundamental problem. 

\begin{prob}  What part of $\sP_f$ is in the image of a family of covers with given ramification data.\end{prob}   Our approach, assuming $(|G|,p)=p$,  puts the case of wild ramification and tame ramification under one roof. Problems about Davenport pairs and exceptional covers also fit under one roof, as in \cite{Fr05b}. To solve this problem in positive characteristic, no simple reversion to Galois covers works.  

Continuing \S\ref{secVII.3}, Solomon didn't define the phrase\lq appearing in nature.\rq\ Maybe he won't consider these problems as being \lq in nature.\rq\  My response is to ask: Do any rational functions -- in positive or 0 characteristic -- appear \lq in nature?\rq\  As \S\ref{secI.3} notes, characteristic 0 rational functions are intrinsically impossible in 3 dimensions. The same for electricity and magnetism: Many electromagnetic spheres in the world composed, say,  of protein molecules, interact. Those interactions are mostly from van der Waals attractions, hydrogen and ionic bonds. Are these what we should regard as appearing in nature? Or is it the symmetry groups of molecules or particle arrays by which chemists  interpret quantum mechanics that we should regard as in nature?  If the latter I doubt that the topic is any more restricted to simple groups than should the topics be that I've presented here. 

\begin{appendix}
\renewcommand{\labelenumi}{{{\rm (\teql \alph{enumi})}}} 

\section{Group and cover comments} \label{App.1} Standard field notation for an algebraic closure of a field $K$ is $\bar K$. A finite extension $L/K$ is one in which $L$ is  finite dimensional, as a vector space over $K$. That dimension is $\deg(L/K)\eqdef [L:K]$, the {\sl degree\/} of $L/K$.  Any finite extension of $K$ has a field embedding, as an extension of $K$, in $\bar K$. If $L/K$ is separable, the number of such embeddings is $\deg(L/K)$; all characteristic 0 fields (and finite fields) have only separable extensions. 

The maximal cardinality  of automorphisms of $L/K$ (of $L$ fixed on $K$) is $[L:K]$, a cardinality achieved exactly when $L/K$ is Galois.  A field $K$ is {\sl  perfect \/} if it has only separable finite extensions. In that case, $\bar K/K$ is Galois, in that it is a union of Galois extensions. Denote the projective limit of those groups by $G_K$. We call it the {\sl absolute Galois group\/} of $K$. \cite{FrJ86} distinguishes properties of fields by enhancing Galois theory.  It uses no covering space theory or  fundamental groups. 
 
\subsection{Affine groups and related topics} \label{App.1a} Use the notation of \S\ref{secV.3}. An $n$ dimensional group representation of a group $G$ over a field $K$ is a homomorphism $T: G\to \GL_n(K)$. It's character is the function $\sigma\in G\mapsto \tr(T(\sigma))$: $\tr$ denotes the trace of the matrix. The symmetric group on $\{1,\dots,n\}$, $S_n$, natural embeds in $\GL_n(\bQ)$ by mapping a permutation $\sigma(i)=j_i$, $i=1,\dots,n$,  to the matrix with 1 in all $(i,j_i)$ positions, 0 elsewhere. We can apply $\tr$ to a permutation representation. The result is the number of fixed points of $T(\sigma)$.   

\S\ref{secV.3} has defined $\PGL_n(K)$, and there is similarly $\PSL_n(K)$, the quotient of the matrices of determinant 1 over the field $K$ by its diagonal matrices. The relation between primitive groups and simple groups starts by recognizing that the two most common sets of finite, far from abelian groups, are symmetric groups, $S_n\,$s,  and general linear groups, $\GL_n(\bF_q)\,$s, where $q$ is a power $p^t$ of some prime $p$. For most values of $n$ (and $p$) both are in evident ways close to simple.  We call these groups {\sl almost simple\/} for those values $n\ge 5$ (resp. $n$ and $q$, excluding $n=2$ and $p=2$ or 3) for which $A_n$ (resp.~$\PSL_n(\bF_q)$) is simple \cite[Thm. 4.10]{Ar57}. 

The goals of algebraic covers and  group theory don't match perfectly. For the latter, at the end of the 20th century there was an emphasis on   the simple group classification. This could sometimes strip a group to an essential core, tossing  data of significance for covers. We give the full definition of almost simple, to show what it means to get to that core. Still, by staying with primitive groups --  a concept natural for covers -- App.~\ref{App.2} reminds of a tool sufficient, modulo considerable expertise, for handling covers from knowledge of simple groups. 

According to \cite{GLS}, a {\sl quasisimple\/} group $G$ is a perfect central cover $G \to S$ of a simple group $S$. Here: {\sl cover\/} means onto homomorphism; {\sl perfect\/} means the commutators $g_1g_2g_1^{-1}g_2^{-1}$ in $G$ generate $G$; and {\sl central\/} means the kernel is in the center of $G$. Such a cover is a special case -- because we don't assume $S$ is simple -- of a {\sl Frattini\/} central cover: where the map, if restricted to a proper subgroup of $G$, won't be a cover.  Then, if $S$ is perfect, so is $G$.

A component, $H\le G$, of $G$, is a quasisimple subgroup which has, between $H$ and $G$, a composition series -- a sequence of groups each normal in the next.  The group generated by components and the maximal normal nilpotent subgroup of $G$ is called the {\sl generalized Fitting subgroup}, $F^*(G)$, of $G$.  \cite{GLS} calls a group $G$ almost simple if $F^*(G)$ is quasisimple.

We don't lose the almost simple property if we extend $\PGL_n(\bF_q)$ to $\text{P}\Gamma\text{L}_n(\bF_q)$, the extension given  by  adjoining a Frobenius, $\Fr_p$ ($p$th power map on coordinates),  for $\bF_p$ to $\PGL_n(\bF_q)$. That extends  permutations on lines and hyperplanes (on linear spaces of any dimension). The notation differs  from its use today, but \cite[Chap.~XII]{Ca37}  is where I learned about these groups in graduate school.

A {\sl chief series} of a group $G$ is a maximal series of normal subgroups of $G$ (no possible further refinement of the series with normal subgroups of $G$, \cite[p.~102]{Is94}). 
Supersolvable means  $G$ has a chief series whose consecutive  subquotients have prime order, and then the commutator subgroup of $G$ is nilpotent \cite[p.~133]{Is94}.

An affine group is a subgroup of the full group that combines the  actions of $\GL_n(\bF_q)$ and translations on the vector space $(\bF_{q})^n$ of dimension $n$ over $\bF_q$. The case that arose in Burnside's Theorem (\S\ref{secIV.1}) is $n=1$. 

\subsection{Residue class fields and their relation to general algebra} \label{App.1b}  The {\sl normalization\/} subject described in \S\ref{secII.1} applies to any  finite extension $K$ -- number field -- of $\bQ$. The elements, $O_K$, of $K$ satisfying a monic polynomial over $\bZ$ are called  its integral closure (or its ring of integers). Excluding the 0 ideal, all prime ideals $\bp$ are maximal. So their residue classes, $O_K/\bp$, are fields. 

Indeed, the general idea of normalization is based on starting with an object defined \lq\lq locally\rq\rq\ by an integral domain, and taking its integral closure in a field extension. In our cases, when we are close to Davenport's problem, the field is the function field of an algebraic curve that is a component of an algebraic object defined by a fiber product. 

 \cite{Cox05} attempts to define algebra, sufficiently widely to say how it arises where you might not regard it as naturally related to algebra.  His basic premise is that computations involve addition and multiplication, and sometimes division. That is, you work within a ring, and sometimes a field. Actual computations may limit manipulations by considering a finite set of elements which generate -- by computation -- all the others you use. If, then, you assume the multiplication is commutative -- he does not consider quantum mechanics, or Hopf algebras -- you are working in a polynomial ring. So, it is reasonable to say that such computations fall within algebraic geometry. 

{\sl Elimination theory\/}, a very old topic, was the forerunner of \cite{Cox05}. Until desktop computers, comparing {\sl your\/} mathematical objects with {\sl mine\/} by pure computation was difficult. Yet, that was the central topic of elimination theory.  

\subsection{Group theory in \cite{FrGS93}} \label{App.2} I could have phrased this appendix as a question: How 
could I  -- without formal training in groups -- have possibly understood (been confident of) the group theory in \cite[Part III]{FrGS93}? 

\S\ref{coGS} reminds of the essential results about exceptional polynomials, based on using the {\sl factorization\/} of a monodromy group into a product of a stabilizing group and the inertia group over $\infty$.  \cite[Part III]{FrGS93}  establishes a list of group properties of the Galois closure of $f$. These allow a characterization using the A(schbacher)-O('Nan)-S(cott) Classification  of primitive groups \cite{AOS85}. Excluding (primitive) affine groups, there are four primitive group types. Each is shaped by dropping almost simple groups into particular positions. Three points about this process call for clarification.

\begin{edesc} \label{primpoints} \item \label{primpointsa} Reduction to where $^aG_f$ is primitive (in its natural permutation representation; see \S\ref{coGS}). 
\item  \label{primpointsb}  Unlike the  \eql{equats}{equats2} version of Schur's Conjecture, if $(\deg(f),p)\ne 1$, no immediate version of  \eql{primpoints}{primpointsa} assures the geometric group, $G_f$, is primitive.
\item  \label{primpointsd}  \cite[Part III]{FrGS93} starts by clarifying the definitions in \cite{AOS85}. Then, this combines with  the appropriate factorizations of groups that arose from \cite{LPS}. The result is \eqref{excPoly}. 
\end{edesc} 

The most important addendum is to \eql{primpoints}{primpointsd}. I could not have completed this result alone. Also, rarely has academia found a formula for apportioning the significance and interpretation of such respective contributions. Finally, it was the unanticipated surprises in \eql{excPoly}{excPolyb} that got the attention of others. 

A statement due to Wan, that an exceptional polynomial should have degree prime to $q\nm 1$, was immediate from \cite{FrGS93} before Wan formulated his conjecture.  It wouldn't have occurred to the authors of \cite{FrGS93} to take that conjecture seriously, until we found that others mistakenly thought it meant that elementary methods had achieved our result. Wan's statement told little about exceptional polynomials, not even their degrees. By contrast, \cite{FrGS93} characterized much: Even in the one mystery, the precise monodromy groups in the affine case in \eql{primpoints}{primpointsd},  it has the degree of $f $ a power of the characteristic  (see http://math.uci.edu/paplist-ff/carlitz-quick.html). 

\subsection{What is a cover?} \label{whatCover} Grothendieck's definition of a cover of algebraic varieties is a finite, flat morphism $\phi: X\to Z$. We deal with varieties over a field $K$. Points on these spaces are geometric: with coordinates in some extension of $K$.  Components are defined over an algebraic closure $\bar K$. 

\subsubsection{Role of flatness} \label{roleFlatness} 
Finiteness of $\phi$ allows us to put a measure -- degree -- on the fibers of $X_z$, $z\in Z$, of $\phi$.  For irreducible $X$,  flatness says this degree is constant -- the degree of the function field extension $[K(X):K(Z)]$ -- in $z$ \cite[Prop.~2, p.~432]{Mum66}. For finite morphisms, that characterizes  flatness \cite[Cor.~p.~432]{Mum66}. 

It would simplify many things if we could restrict to unramified covers.  In characteristic 0 these come from topology: A finite index subgroup, $H$, of the fundamental group, $\pi_1(Z)$, produces up to equivalence of covers, an unramified cover $X_H\to Z$. The story, however, of monodromy precision, is exactly about going beyond this limitation, as noted in Thm.~\ref{Chebp}. 

The subtlety is that we use fiber products to mean,  after taking the standard fiber product, you normalize the result (\S\ref{secII.1}). If $\phi$ is finite and $X$ and $Z$ are nonsingular, then $\phi$ is automatically flat \cite[p.~266, 9.3a)]{Har77}. This doesn't extend to weakening nonsingular to normal varieties. \cite[p.~434]{Mum66} has a finite morphism, where $X$ is nonsingular (it is $\afA^2$), and $Z$ is normal, where the fiber degree is 2 over each $z\in Z$ excluding one point where it is 3. 

Suppose each of $\phi_i: X_i\to Z$, $i=1,2$, is a cover. Then the usual fiber product, denoted $X_1\times^\set_Z X_2$ in \S\ref{secII.1}, is also flat (therefore a cover) over $Z$. This follows from base change and transitivity of flatness \cite[p. 253, Prop.~9.1a]{Har77}. Yet, I don't know if the normalization, giving  $X_1\times_Z X_2\to Z$, is also. 

Therefore, \cite[\S1.1]{Fr05b} defines the nonsingular locus of $\phi$:  the complement of the (at least) co-dimension 2 union of the image of the singular locus of $X$and the singular locus of $Z$. It calls a finite morphism exceptional  if restricting $\phi$ over  the nonsingular locus -- the resulting morphism is  a cover -- is exceptional. 

There is a similar definition for Davenport pairs. This is conservative. It doesn't say what to expect over the singular locus, but it suffices for now. 

\begin{prob} \label{monpreciseext} Do the monodromy precision results of Davenport pairs, exceptionality, and more generally pr-exceptionality extend over the singular locus? \end{prob} 

\cite{GTuZ08} asserts an affirmative answer to Prob.~\ref{monpreciseext}  for exceptional covers.  \cite{Fr05b} says it should therefore hold for Davenport pairs, and pr-exceptionality. Their proof is exactly the same as that of \cite[Thm.~1]{Fr74b}, except they declare it works even over the singularity locus. 

\subsubsection{Fiber product universality} \label{fibproduniv} As in \S\ref{secII.1}, consider $X_1\times^\set_Z X_2$. As Grothendieck emphasized, it has the following universal property. Given $\phi_W: W\to Z$,  a finite morphism that factors through $\phi_i$, $i=1,2$, it factors through $X_1\times^\set_Z X_2$. 
\begin{trivl} \label{UnivPropGen} If we restrict our morphisms $\phi$ to normal varieties, then $\phi$ factors through the normalization $X_1\times_Z X_2$ of $X_1\times^\set_Z X_2$. 
\end{trivl}

Certain properties of covers come purely from group theory, using the Galois correspondence between subgroups of the monodromy group and quotients of the Galois closure cover. An example is the see-saw correspondence that produced \cite[Prop.~2]{Fr73a} as in Lem.~\ref{pullbackcomps}, especially \eqref{schcond}. It has nothing to do with the covers being genus 0 curves, or  that they cover $\prP^1_z$ or even that they have dimension 1. I did Lem.~\ref{indecompLem} as an example to show how generally it works. 

The use of  Riemann-Hurwitz is just for curves. Using Abyhankar's Lemma in \eqref{abhyLem} is purely local from tame ramification. So,  assume the fiber product of $f: X\to \prP^1_z$ and $g: Y\to \prP^1_z$ is irreducible. More generally replace $\prP^1_z$ by $Z$. Then, to compute the genus of the fiber product use this (well-known) generalization of \eqref{*10} for R-H with $\geng_Z$ denoting the genus of $Z$:  
\begin{equation} \label{RHGen}     2(\deg(f) + \geng_f - 1) =2\deg(f)\geng_Z+ \sum_{i=1}^r \ind(\sigma_i). \end{equation} 

\section{Classical Generators and Definition Fields}  

\subsection{Classical Generators} \label{classgens} 

\begin{figure}
\caption{Example Classical Generators }


$$\put(98, 201){$\scriptscriptstyle\bullet$}
\put(85, 175){$\scriptscriptstyle\bullet$}
\put(89, 192){$\scriptscriptstyle\bullet$}
\hbox{\centerline{\includegraphics[scale=1.2]{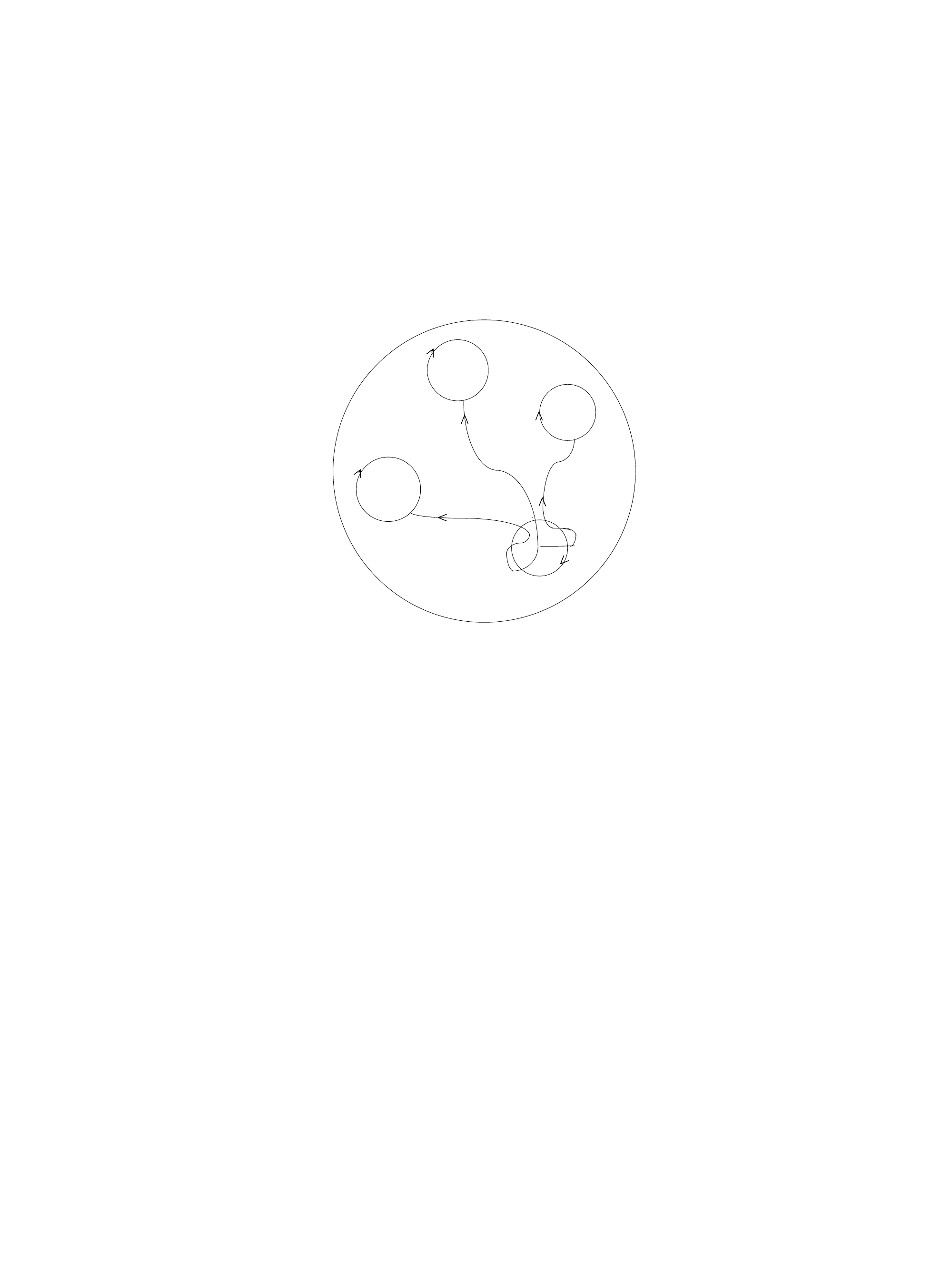}}}
\put(-171, 232){$\scriptscriptstyle\bullet$}
\put(-158, 232) {$\scriptscriptstyle\bullet$}
\put(-142, 225){$\scriptscriptstyle\bullet$}
\put(-154,93){$z_0$}  
\put(-252, 132){$z_1$} 
\put(-200, 221){$z_i$}  
\put(-122, 192){$z_r$} 
\put(-145,93){$\scriptscriptstyle\bullet$}
\put(-255, 132){$\scriptscriptstyle\bullet$}
\put(-203,220){$\scriptscriptstyle\bullet$}
\put(-125, 192){$\scriptscriptstyle\bullet$}
\setbox0=\hbox{$\searrow$} \put(-153, 212){\raise \ht0\hbox
{$\sigma_r^*$}$\searrow$ }  
\setbox0=\hbox{$\nearrow$} \put(-144,84){\lower \ht0\hbox
{$\sigma_0^*$}$\nearrow$ } 
\setbox0=\hbox{$\nearrow$} 
\put(-246, 208){$\sigma_i^*\nearrow$} 
\setbox0=\hbox{$\searrow$} \put(-291, 154){\raise \ht0\hbox
{$\sigma_1^*$}$\searrow$ }  
\put(-202, 107){\lower \ht0\hbox
{$\delta_1$}$\nearrow$ } 
\setbox0=\hbox{$\nearrow$} \put(-215,180){\lower \ht0\hbox
{$\delta_i$}$\nearrow$}   
\setbox0=\hbox{$\leftarrow$} \put(-120, 160){$\leftarrow$\raise \ht0\hbox
{$\delta_r$} } 
\setbox0=\hbox{$\nearrow$} \put(-256,111){\lower \ht0\hbox
{$b_1$}$\nearrow$ } 
\setbox0=\hbox{$\nearrow$} \put(-217, 193){\lower \ht0\hbox
{$b_i$}$\nearrow$ } 
\setbox0=\hbox{$\leftarrow$} \put(-117, 169){$\leftarrow$\raise \ht0\hbox
{$b_r$} }
\put(-239, 115){$\scriptscriptstyle\bullet$}
\put(-201, 201){$\scriptscriptstyle\bullet$}
\put(-119, 169){$\scriptscriptstyle\bullet$}
\setbox0=\hbox{$\nearrow$} 
\put(-178,70){\lower \ht0\hbox{$a_1$}$\nearrow$ }  
\setbox0=\hbox{$\searrow$} 
\put(-166, 116){\raise \ht0\hbox{$a_i$}$\searrow$ }  
\setbox0=\hbox{$\leftarrow$} 
\put(-123,92){$\leftarrow$\raise \ht0\hbox{$a_r$} } 
\put(-159,74){$\scriptscriptstyle\bullet$}
\put(-148, 113){$\scriptscriptstyle\bullet$}
\put(-123,92){$\scriptscriptstyle\bullet$} $$

\end{figure}

Figure 1 explains  {\sl classical generators\/} of the fundamental group, $\pi_1(U_\bz,z_0)$, of the  $r$-punctured sphere, with the punctures given by $\bz=\{\row z r\}$. These are ordered  closed
paths $\delta_i^\sph{\sigma^*_i}^\sph\delta_i^{-1} = \bar\sigma_i$, $i = 1,\dots ,r$.  

Here are their properties. There are discs, $i = 1,\dots ,r$:   $D_i$   with center $z_i$; all disjoint, each excludes $z_0$; 
$b_i$ is  on the boundary of $D_i$.  Their clockwise orientation refers to the  boundary of $D_i$. The path $\sigma^*_i$ has initial and end
point $b_i$;  $\delta_i$ is a simple {\sl simplicial}
path with initial point $z_0$ and end point $b_i$. We also assume 
$\delta_i$ meets none of  $\sigma_1^*,\dots ,\sigma^*_{i-1},\sigma^*_{i+1},\dots 
,\sigma^*_r$,
and it meets
$\sigma^*_i$ only at its endpoint.

There is a crucial condition on meeting the boundary of $D_0$.    First: $D_0$, with center $z_0$, is disjoint from each  $D_1,\dots ,D_r$.  Consider
$a_i$, the first intersection of $\delta_i$ and boundary $\sigma^*_0$ of
$D_0$.  Then,   $\delta_1,\dots
,\delta_r$ satisfy these conditions:  
\begin{edesc} \label{pathcond} \item  \label{pathconda} they are pairwise
nonintersecting, except at $z_0$; and
\item  \label{pathcondb} $a_1,\dots, a_r$ are in order clockwise around $\sigma^*_0$.  
\end{edesc} 

Since the paths are simplicial, \eql{pathcond}{pathconda}  is
independent of  $D_0$, for $D_0$ sufficiently
small.  For any ordering of the collection  $\bz$, many sets of classical generators have orderings corresponding to the order of $\bz$. That means, given branch cycles of a cover there will be several branch cycles descriptions -- up to, say, absolute equivalence -- corresponding to a given cover of $\prP^1_z$ branched over $\bz$. 

\subsection{Hurwitz space definition fields} \label{abs-innTie}  Davenport's problem distinguishes between a cover and its Galois closure. That subtlety  culminates in Thm.~\ref{vectorBundle} distinguishing between parametrizing  Davenport pairs and their Galois closures. While the same space parametrizes both, I will explain the distinction. 

I'll also mend an oversight in \cite[Main Thm.]{FrV91}. In comparing inner and absolute Hurwitz spaces, it didn't appropriately -- as the subject started from the absolute case \cite[Thm.~5.1]{Fr77} -- put their definition fields on the same footing.   

\subsubsection{Inner Hurwitz spaces} \label{innerHur} The space $\sH(G,\bfC)^{\inn}$ parametrizes {\sl  inner equivalence classes\/} of Galois covers $\hat \phi:\hat X\to \prP^1_z$ in the Nielsen class $\ni(G,\bfC)$. Let $(\hat \phi_i,\hat X_i)$ be such covers with an explicit identification $\mu_i$ of $\Aut(\hat X_i/\prP^1_z)$  with $G$, $i=1,2$.   
\begin{defn}  \label{innereq} We say $(\hat \phi_i,\mu_i)$, $i=1,2$, are {\sl inner equivalent\/} if there is a continuous $\psi: \hat X_1\to \hat X_2$,  commuting with $\hat \phi_i$, $i=1,2$, with   $\mu_1\circ \psi^*\circ (\mu_2)^{-1}$ an inner automorphism of $G$. \end{defn} 
 
 Consider this inner analog of expression \eqref{multgps}:    
\begin{equation} \label{innermultgps} \hat M_{\bfC}=\{c\in (\bZ/N_\bfC)^*\mid \exists \beta \in S_r, \C_i^c=\C_{(i)\beta}, i=1,\dots,r\}.\end{equation}  This, too, defines a cyclotomic field, the fixed field of $\hat M_\bfC$ in $\bQ(e^{2\pi i/N_\bfC})$:   $\bQ_{\hat M_\bfC}$.

Given an absolute Nielsen class, \cite[Main.~Thm.]{FrV91} gives three results, using the inner Hurwitz space of a Nielsen class, $\sH(G,\bfC)^{\inn}$. 
\begin{edesc} \label{FrV} \item  \label{FrVa}  There is a natural map $\Psi^{\inn,\abs}: \sH(G,\bfC)^{\inn} \to \sH(G,\bfC)^\abs$:   the class of $\hat \phi: \hat X\to\prP^1_z$ maps to the class of $\phi: \hat X/G(1)\to \prP^1_z$. 
\item  \label{FrVb} The definition field of $(\Psi^{\inn,\abs},\sH(G,\bfC)^\inn)$ is precisely $\bQ_{\hat M_\bfC}$. 
\item  \label{FrVc} Restricting $\Psi^{\inn,\abs}$ to a connected component $\sH'$ of $\sH(G,\bfC)^{\inn}$ gives a Galois cover $\sH'\to \Psi^{\inn,\abs}(\sH')$. Its group is $H\eqdef H_{\sH'}\le  N_{S_n}(G,\bfC)/G$. \end{edesc} 

\begin{proof}[Explaining \eql{FrV}{FrVb}]  
A more precise statement would start: \lq As a moduli space.\rq\ It means  consider the collection of families, $F\in \sF_{G,\bfC}^{*}$, of covers in the Nielsen class $\ni(G,\bfC)^\inn$ defined over $\bar \bQ$. (* is again inner or absolute equivalence.) 

Then, $\gamma\in G_\bQ$ acts on the elements of $F\in \sF_{G,\bfC}^{\inn}$ (through equation coefficients): $F\mapsto  F^\gamma$, giving another family of covers.  \cite[\S4]{Fr77} shows that every cover -- up to equivalence -- in a given Nielsen class appears in a family of covers defined over $\bar\bQ$ parametrized by a finite cover of a Zariski open subset of $U_r$. Further, $F^\gamma$ is in the Nielsen class defined by $(G,\bfC^{c_\gamma})$ with $c_\gamma$  as in \S\ref{secVI.1}. 

Therefore, the collection is fixed under $\gamma$ if and only if the resulting Nielsen class under the equivalence class * is the same as that given by $(G,\bfC)$. That means the respective $\gamma\,$s that fix the families defined by $\abs$ (resp.~$\inn$) equivalence appear from the equation  \eqref{multgps} (resp.~\eqref{innermultgps}). \end{proof} 

The notation of \eql{FrV}{FrVc} indicates that the Galois group of an inner component over an absolute component can vary with the component.  

\subsubsection{Braid Components vs braidable  automorphisms}  \label{missmultiplier} 
\cite[Lem.~3.8]{BiFr82} says, for $h\in G$, there is a  $q\in H_r$ with $(\bg)q=h\bg h^{-1}$: inner automorphisms are  {\sl braidable\/}. Yet, an  $h\in N_{S_n}(G,\bfC)$  may not be (see Ex.~\ref{n=1mod8}).  This is one reason an absolute Hurwitz space may have smaller definition field than its corresponding inner space. 

Denote the braid orbit on $\sH(G,\bfC)^{\inn,\rd}$ corresponding to $\sH'$ by $O'$.  
\begin{defn} \label{braiduatomorphisms}  Define the set of braidable outer automorphisms as follows:  
$$N_{S_n}^\br(G,\bfC)_{O'}  \eqdef\{h \in N_{S_n}(G,\bfC) \mid \exists\ q\in H_r \text{ with  } (\bg)q=h\bg h^{-1}\}.$$ From the above remark, this group contains $G$. \end{defn}

\begin{lem} Per notation, $N_{S_n}^\br(G,\bfC)_{O'}$  depends only on $O'$.  Also, the geometric automorphism group of the cover $\sH'\to \Psi^{\inn,abs}(\sH')$ identifies with $N_{S_n}^\br(G,\bfC)_{O'}/G$. \end{lem}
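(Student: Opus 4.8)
The plan is to verify the two assertions in the lemma — that $N_{S_n}^\br(G,\bfC)_{O'}$ depends only on the braid orbit $O'$ (not on the choice of representative $\bg$ used to define it), and that the quotient $N_{S_n}^\br(G,\bfC)_{O'}/G$ is exactly the geometric automorphism group of the covering $\sH'\to\Psi^{\inn,\abs}(\sH')$ — by combining the braidability of inner automorphisms (\cite[Lem.~3.8]{BiFr82}, recalled just above) with the description of $\sH(G,\bfC)^\inn$ and $\sH(G,\bfC)^\abs$ as moduli of covers from \cite[\S4]{Fr77} and the structure statement \eql{FrV}{FrVc}.

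First I would establish orbit-independence. Suppose $\bg$ and $\bg'=h_0\bg h_0^{-1}$ lie in the same $H_r$-orbit $O'$, say $(\bg)q_0=\bg'$ for some $q_0\in H_r$ (here $h_0$ is inner, by \cite[Lem.~3.8]{BiFr82}, since inner and inner-reduced equivalence classes are what $O'$ records). Given $h\in N_{S_n}(G,\bfC)$, I want $(\bg)q=h\bg h^{-1}$ for some $q\in H_r$ if and only if $(\bg')q'=h\bg' h'{}^{-1}$ for some $q'\in H_r$, \emph{where the right-hand conjugating element is again $h$} modulo the inner ambiguity that $O'$ already quotients out. Concretely: conjugation by $h$ commutes with the $H_r$-action up to relabelling (the $H_r$-action is by the braid formulas in \S\ref{HurMonUr}, which are expressed purely in group-word terms and hence are equivariant for simultaneous conjugation of all entries). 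So if $(\bg)q=h\bg h^{-1}$, then applying the equivariance and $q_0$ one gets $(\bg')\,(q_0^{-1}qq_0)\sim h\bg' h^{-1}$ up to an inner automorphism, and inner automorphisms are braidable, so the composite is realized by a single element of $H_r$. This shows membership of (the image in $\Out$ of) $h$ is constant along $O'$; running the argument for all $h$ gives that the subgroup $N_{S_n}^\br(G,\bfC)_{O'}$ is well-defined on the orbit. I would also note $G\subseteq N_{S_n}^\br(G,\bfC)_{O'}$ is immediate from \cite[Lem.~3.8]{BiFr82}.

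Next I would identify the quotient with $\Aut\bigl(\sH'/\Psi^{\inn,\abs}(\sH')\bigr)$. By \eql{FrV}{FrVa} and \eql{FrV}{FrVc}, $\sH'\to\Psi^{\inn,\abs}(\sH')$ is a Galois cover with group $H_{\sH'}\le N_{S_n}(G,\bfC)/G$, and by the moduli interpretation (\cite[\S4]{Fr77}, as used in the proof of \eql{FrV}{FrVb}) the monodromy of this covering is precisely the $\pi_1(U_r)$-action on the fiber, i.e.\ the action of $H_r$ on the set of inner Nielsen-class representatives lying over a fixed absolute representative. Two inner representatives lie over the same absolute one exactly when they differ by an element of $N_{S_n}(G,\bfC)/G$ acting by simultaneous conjugation; such an element belongs to the deck group $H_{\sH'}$ of the component $\sH'$ precisely when it can be realized by moving within the braid orbit $O'$ — which is exactly the defining condition of $N_{S_n}^\br(G,\bfC)_{O'}$. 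Hence $H_{\sH'}=N_{S_n}^\br(G,\bfC)_{O'}/G$, and since $H_{\sH'}$ is the geometric automorphism (deck) group of the cover, we are done. Passing from $\sH(G,\bfC)^\inn$ to its reduced version $\sH(G,\bfC)^{\inn,\rd}$ changes nothing essential: one replaces $H_r$ by $\bar M_4$ (or uses that $\sQ_r''$ is trivial for $r\ge 5$, as in \S\ref{bcUr}), and the same equivariance argument applies verbatim.

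\textbf{Main obstacle.} I expect the delicate point to be the bookkeeping in the first paragraph: making precise that ``conjugation by $h\in N_{S_n}(G,\bfC)$ intertwines the $H_r$-action with itself'' when $h$ is an \emph{outer} symmetry, so that the realizability of $h$-conjugation by a braid is genuinely a property of the orbit and not of the point. The subtlety — and the reason an absolute Hurwitz space can have strictly smaller definition field than its inner cover, as flagged in \S\ref{missmultiplier} and Ex.~\ref{n=1mod8} — is precisely that not every $h\in N_{S_n}(G,\bfC)$ is braidable, so one cannot simply quote \cite[Lem.~3.8]{BiFr82} for $h$ itself; one must track the inner correction terms carefully and use braidability only for those. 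Checking this equivariance cleanly from the explicit $q_i$-formulas of \S\ref{HurMonUr}, and confirming it descends correctly through the reduced-equivalence quotient by $\sQ_r''$, is where the real work lies; the rest is formal moduli-theoretic translation already packaged in \cite[\S4]{Fr77} and \eql{FrV}{FrVc}.
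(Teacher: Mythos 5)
Your proof takes essentially the same route as the paper's: the first part rests on the observation that the $H_r$-action commutes with simultaneous conjugation (the paper's computation $(h\bg h^{-1})q' = h((\bg)q')h^{-1}$), and the second part identifies the deck group of $\sH'\to\Psi^{\inn,\abs}(\sH')$ with the braidable elements of $N_{S_n}(G,\bfC)/G$ in the fiber of $\Psi^{\inn,\abs}$, exactly as the paper does. Two small cleanups: the opening setup ``$\bg'=h_0\bg h_0^{-1}$ with $h_0$ inner'' is a red herring --- all you need is $\bg' = (\bg)q_0$ for some $q_0\in H_r$ --- and the ``up to an inner automorphism'' patch (followed by invoking braidability of inner elements) is unnecessary, since the tuple-level computation already yields the exact identity $h\bg'h^{-1} = (\bg')\bigl(q_0^{-1}q\,q_0\bigr)$, just as in the paper.
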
 
 
 \begin{proof} First consider $\bg\in O'$ and $(\bg)q'=\bg'$. Assume $h(\bg)h^{-1}=(\bg)q^*$ for some $q^*\in H_r$. Since conjugation by $h$ and application of $q'$ commute,  
 $$ (h\bg h^{-1})q'=h(\bg')h^{-1}=((\bg)q^*)q'=(\bg')(q')^{-1}q^*q'.$$ That proves the first sentence. 
 
 Now consider the 2nd sentence. The fiber of $\Psi_{\inn,\abs}$ is in one-one correspondence with the elements of $N_{S_n}(G,\bfC)/G$. So, if we restrict to the connected component $\sH'$, the fiber restricts to the action of elements of $N_{S_n}(G,\bfC)$ that are braidable. \end{proof} 
 
 Expression \eqref{multgps} defines $M_{\bfC}$. Here is the generalization of that:  
\begin{equation} \label{orbitgps} \begin{array}{rl}
M_{O'}=\{c\in (\bZ/N_\bfC)^* &\mid \exists \beta \in S_r, h\in N_{S_n} (G,\bfC)^\br_{O'},  \\ 
&h^{-1}\C_i^c h=\C_{(i)\beta}, i=1,\dots,r\}.\end{array} \end{equation} 

The argument explaining \eql{FrV}{FrVb} gives the following. 
\begin{prop} With $H$ and other notation as above, consider the collection $J_H$ of components $\sH'$ (with their maps to $U_r$) with the group of  $\sH'\to \Psi^{\inn,abs}(\sH')$ equal to a subgroup of $N_{S_n}(G,\bfC)/G$ isomorphic to $H$. Then, the collection $J_H$ has  definition field the fixed field in $\bQ(e^{2\pi i/N_{\bfC}})$ of $M_{O'}$. \end{prop}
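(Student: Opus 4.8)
The plan is to run the same moduli-theoretic argument that proved \eql{FrV}{FrVb} above, but now track a \emph{finer} invariant of each connected component, namely the isomorphism type $H$ of the Galois group of $\sH'\to \Psi^{\inn,\abs}(\sH')$ together with (implicitly) the conjugacy data recording how $\Gal(\bar\bQ/\bQ)$ can act on such components. First I would set up the moduli-space formulation: regard $J_H$ as the union, inside the moduli of covers in the Nielsen class $\ni(G,\bfC)^\inn$, of those components whose structural group (as in \eql{FrV}{FrVc}) is isomorphic to $H\le N_{S_n}(G,\bfC)/G$. By \cite[\S4]{Fr77} every cover in the Nielsen class occurs in an algebraic family over $\bar\bQ$ parametrized by a finite cover of a Zariski-open subset of $U_r$, so the action of $\gamma\in G_\bQ$ on equation coefficients induces $F\mapsto F^\gamma$, and $F^\gamma$ lies in the Nielsen class $\ni(G,\bfC^{c_\gamma})$ with $c_\gamma$ the cyclotomic character value from \S\ref{secVI.1}. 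This is precisely the mechanism used in the proof explaining \eql{FrV}{FrVb}.

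The key step is to identify exactly which $\gamma$ preserve the collection $J_H$ as a whole. A component $\sH'$ with orbit $O'\subseteq \ni(G,\bfC)^{\inn,\rd}$ is fixed set-theoretically by $\gamma$ iff, after applying $c_\gamma$ to the classes, the resulting reduced inner Nielsen class orbit is again $O'$; by the braidability discussion (Def.~\ref{braiduatomorphisms}, and the fact from \cite[Lem.~3.8]{BiFr82} that inner automorphisms are braidable) the relevant normalizing moves are exactly conjugations by $h\in N_{S_n}^\br(G,\bfC)_{O'}$ composed with a permutation $\beta\in S_r$ of the branch-cycle slots. That is the content of the group $M_{O'}$ in \eqref{orbitgps}: $c\in M_{O'}$ iff there exist $\beta$ and $h\in N_{S_n}^\br(G,\bfC)_{O'}$ with $h^{-1}\C_i^c h=\C_{(i)\beta}$ for all $i$. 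So the $\gamma$ fixing $\sH'$ (hence, as $\sH'$ ranges over $J_H$ and the braid/monodromy action permutes these components compatibly, the $\gamma$ fixing $J_H$) are exactly those whose cyclotomic character value lies in $M_{O'}$; the fixed field of $M_{O'}$ in $\bQ(e^{2\pi i/N_\bfC})$ is then the minimal definition field of $J_H$ with its map to $U_r$. One must check that the structural-group isomorphism type $H$ is a braid invariant and that $\gamma$ preserves it — this follows because conjugation by $h\in N_{S_n}(G,\bfC)$ and the braid action on $\sH(G,\bfC)^\inn$ commute with the covering map $\Psi^{\inn,\abs}$, so $H$ is determined by $O'$ alone (the preceding lemma records exactly this: $N_{S_n}^\br(G,\bfC)_{O'}/G$ is the geometric automorphism group, and it depends only on $O'$).

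The main obstacle I expect is the bookkeeping of the $S_r$-permutation factor $\beta$ together with the reduced (rather than full) Nielsen class when $r=4$: the space $\sH(G,\bfC)^{\inn,\rd}$ carries the $\PSL_2(\bZ)$-action via $\bar M_4$, and one must be careful that the braidable-normalizer group $N_{S_n}^\br(G,\bfC)_{O'}$ is computed on the reduced orbit $O'$ and that passing to the reduced space does not enlarge or shrink the relevant field — this is where the subtlety flagged in App.~\ref{missmultiplier} (that an $h\in N_{S_n}(G,\bfC)$ need not be braidable, cf.~Ex.~\ref{n=1mod8}) genuinely bites. The safest route is to first prove the statement for the unreduced inner Hurwitz space, where the argument is a verbatim transcription of the proof of \eql{FrV}{FrVb} with $\hat M_\bfC$ replaced by $M_{O'}$, and then descend to the reduced space using the compatibility of the $\PGL_2$-action with definition fields (as in \S\ref{bcUr} and \cite[Prop.~4.7]{BFr02}), noting that the $\PGL_2(\bC)$-quotient is defined over the prime field and so does not affect the cyclotomic definition field. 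Once that descent is in place, the proposition follows by applying the ``moduli space'' version of the argument verbatim: $J_H$ is $\gamma$-stable precisely for $c_\gamma\in M_{O'}$, giving the fixed field of $M_{O'}$ in $\bQ(e^{2\pi i/N_\bfC})$ as claimed.
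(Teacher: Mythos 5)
Your proposal takes essentially the same route as the paper: the paper's own proof of this proposition is the single sentence ``The argument explaining \eql{FrV}{FrVb} gives the following,'' and what you have written is precisely that moduli-theoretic argument carried out with $\hat M_\bfC$ replaced by $M_{O'}$, together with the observation (which the preceding lemma supplies) that the structural-group invariant $H$ is a function of the braid orbit $O'$ and is therefore preserved under the $G_\bQ$-action. Your extra caution about the reduced versus unreduced inner space is a legitimate worry the paper silently glosses over -- $O'$ is introduced as a reduced braid orbit while $\Psi^{\inn,\abs}$ is a map of unreduced spaces -- and your proposed fix (prove it unreduced, then descend via the $\PGL_2(\bC)$-quotient, which is defined over the prime field) is the standard and correct resolution, consistent with \S\ref{bcUr}.
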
 

Two techniques have located examples of multiple Hurwitz space components: \begin{edesc}  \label{hurcomps} \item \label{hurcompsa} the Fried-Serre Lifting invariant as in \cite[Part II]{Fr95b} and \cite{Se90}; and \item \label{hurcompsb}  unbraidable outer automorphisms as  above (\cite[\S3]{BiFr82} is the first).  
 \end{edesc} 
 If the lifting invariant precisely delineates the components, then -- generalizing the original \cite[Thm.~5.1]{Fr77} result -- the definition fields of those components are known cyclotomic fields. \cite[Main Thm.]{Fr10} uses 3-cycle Nielsen classes to illustrate how effectively  \eql{hurcomps}{hurcompsa}, based on Frattini central extensions (\S\ref{App.1a}; and their kernels,  quotients of Schur multipliers) detects components. Our approach to Schur multipliers (developed along with Modular Towers) has simplified how they appear, removing the intimidating group theory that once accompanied them. 
 
 If unbraidable outer automorphisms precisely delineate the components, then the story is rougher. Still, among the many known examples, the only mysteries  for definition fields are the two described in \cite[\S 9.1]{BFr02}. Each has components whose descriptions come from both types of \eqref{hurcomps}.  Two $j$-line covers of genus 1 are conjugate by  an unbraidable outer automorphism. A particular Inverse Galois conclusion depends on whether they have a nontorsion $\bQ$ point, and this  depends on whether their definition field is $\bQ$ or a quadratic extension of $\bQ$. 
 
All examples we know that have multiple Hurwitz space components can be ascribed to some combination of the limitations posed by the conditions \eqref{hurcomps}. 
 
 \begin{exmp} \label{n=1mod8}  \cite[Ex.~1.5]{Fr10} has the example of the Nielsen class $\ni(A_n,\bfC_{\frac{n\np1}2}^4)^*$, $n\equiv 1 \mod 8$, four repetitions of $\frac{n\np1}2$-cycles.  For $*=\inn$ there are two braid orbits, corresponding to not being able to braid the outer automorphism of $A_n$.  The corresponding  Hurwitz space components have definition field a quadratic extension of $\bQ$.  There is just one absolute component. For $n\equiv 5 \mod 8$ there is just one braid orbit for both absolute and inner classes. \end{exmp}

 \subsubsection{Little use of branch cycles} \label{nobcs}  Why have so few papers that quote \cite{Fr73a}, and related papers, used branch cycles? (A notable exception is M\"uller, say, in \cite{Mu96} and \cite{GMu97}.) Maybe it was the confluence of three historical events that affected all of mathematics, in addition to the lack of training on these topics.  
 
First: In the early 80's  libraries massively moved many journals to archives. This was to make way for the generation of new journal/society generated publicatons. Mathematics, where positions were rapidly disappearing lost heavily in the politics of that process. This deserves further attention, but where is do such topics have a natural publishing venue? It seems the only convenient means to find many of my papers before 1985 (including \cite{Fr73a} and \cite{Fr77}), and even some afterwards, is from their scanning on my web site. 

Occasional  pdf files from journal web sites (\cite{Fr76}, say) are unsearchable, while mine are mostly now. I've used html expositions to improve access -- even beyond searchability -- to what has turned out most significant. 
\LaTeX\ generated pdf's are theoretically searchable. Still,  I've yet to see that turned into minable data, much less a linked database. So far it looks as if html is easier that way. 

Second: I've noted many examples of the following in this paper.  Refereeing is nowhere near the quality to indicate community awareness of what was proved previously, nor what has a history of relating to ongoing research. An author who wants credit for significant results -- according to what it adds to existing literature --  needs hooks to their work. Then, they need ways to get others to use those hooks. This last is too hard right now for those without high prestige connections.  

I don't agree it is the sole responsibility of the author to assure results are correct. That would  mean the author is the most aware of the area's pitfalls, and has no hidden or psychological reasons to mentally avoid subtle points. I've said how wrong this is in public places \cite{Fr07}. 
I note that mathematics is hardly alone in the neglect of its works. No less than Doris Lessing, she of  \lq\lq The Golden Notebook\rq\rq\ fame, has seen it from a far perspective: \lq\lq The shame of the 20th century will be all the research that is left unread on the shelves.\rq\rq

Third: within algebraic geometry, there was a prevailing attitude in the 60's and '70s that it was now time to diminish moduli of curves for the sake of moduli of higher dimensional objects. While number theory wasn't ready for any such move, the field of arithmetic geometry was not well-defined. It  still suffered from sorting those who used vs those who railed against,  Grothendieck's techniques.  

Mumford's research topics were much into curves and their Jacobians (as in \cite{Mum76}), but neither \cite{Har77} nor \cite{Mum66} touched coverings or group theory and certainly not their moduli. Also, they worked entirely over an algebraically closed field, without any profinite aspects, when they didn't emphasize schemes. For example, you would find it difficult even now to place the Branch Cycle Lemma within either book.  \cite{Se92} doesn't have it despite its clear relevance, though its review discussed and used it  \cite[\S3 and \S7]{Fr94b}.  This, too needs a thoughtful perspective, if it is to be available. 

\end{appendix}


\providecommand{\bysame}{\leavevmode\hbox to3em{\hrulefill}\thinspace}
\providecommand{\MR}{\relax\ifhmode\unskip\space\fi MR }
\providecommand{\MRhref}[2]{%
   \href{http://www.ams.org/mathscinet-getitem?mr=#1}{#2}
}
\providecommand{\href}[2]{#2}


\end{document}